\documentclass[11pt, twoside]{article}

\usepackage{amssymb}
\usepackage{amsfonts}
\usepackage{amsmath}
\usepackage{anysize}
\usepackage{amsthm}
\usepackage{color}
\usepackage{mathrsfs}
\usepackage{txfonts}
\usepackage{bbm}
\usepackage{latexsym}

\usepackage{indentfirst}

\usepackage[colorlinks=true,
  linkcolor=red,
  citecolor=blue,
  urlcolor=magenta]{hyperref}

\allowdisplaybreaks

\numberwithin{equation}{section}

\newtheorem{theorem}{Theorem}[section]
\newtheorem{lemma}[theorem]{Lemma}

\newtheorem{proposition}[theorem]{Proposition}

\theoremstyle{definition}
\newtheorem{remark}[theorem]{Remark}
\newtheorem{definition}[theorem]{Definition}

\renewcommand{\appendix}{\par
   \setcounter{section}{0}%
   \setcounter{subsection}{0}%
   \setcounter{subsubsection}{0}%
   \gdef\thesection{\@Alph\c@section}%
   \gdef\thesubsection{\@Alph\c@section.\@arabic\c@subsection}%
   \gdef\theHsection{\@Alph\c@section.}%
   \gdef\theHsubsection{\@Alph\c@section.\@arabic\c@subsection}%
   \csname appendixmore\endcsname
 }

\begin{document}
\title{\bf\Large Product Hardy Spaces
on Spaces of Homogeneous Type: Discrete Product Calder\'on-Type Reproducing
Formula, Atomic Characterization, and Product
Calder\'on--Zygmund Operators
\footnotetext{\hspace{-0.35cm} 2010 {\it Mathematics
Subject Classification}. Primary 42B30;
Secondary 42B20, 42C40, 42B35, 46E36.\endgraf
{\it Key words and phrases.}
space of homogeneous type, Hardy space,
reproducing formula, wavelet,
atom,
Calder\'on--Zygmund operator.\endgraf
This project is supported by the National Natural
Science Foundation of China (Grant Nos. 12201061,
12431006, 12371093, and 12326308), the Beijing Natural Science Foundation
(Grant No. 1262011), and the Fundamental Research Funds for the
Central Universities (Grant Nos. 2253200028 and 2023ZCJH02).}}
\author{Ziyi He, Dachun Yang
and Taotao Zheng}
\date{}
\maketitle
	
\vspace{-0.8cm}

\begin{center}
\begin{minipage}{13cm}
{\small {\bf Abstract}\quad
Let $i\in\{1,2\}$ and $X_i$ be a space of homogeneous
type in the sense of Coifman and Weiss
with the upper dimension $\omega_i$. Also let $\eta_i$ be the smoothness index of the Auscher--Hyt\"onen
wavelet function $\psi^{k_i}_{\alpha_i}$ on $X_i$.
In this article, for any $p\in(\max\{\frac{\omega_1}{\omega_1+\eta_1},\frac{\omega_2}{\omega_2+\eta_2}\}, 1]$,
by regarding the product Carleson measure space
$\mathrm{CMO}^p_{L^2}(X_1\times X_2)$ as the test function space
and its dual space $(\mathrm{CMO}^p_{L^2}(X_1\times X_2))'$
as the corresponding distribution space,
we introduce the product Hardy space $H^p(X_1\times X_2)$ in terms of wavelet
coefficients. Moreover, we establish an atomic characterization of
this product Hardy space and,
as an application, obtain a criterion for the boundedness
of linear operators from product Hardy spaces to corresponding Lebesgue spaces.
To escape the wavelet reproducing formula,
which is not useful for this atomic characterization
because the wavelets have no bounded support,
we establish a new discrete product Calder\'on-type reproducing
formula, which holds in the product Hardy space and has bounded support.
This reproducing formula also leads to the boundedness of product
Calder\'on--Zygmund operators on the product Hardy space.
}
\end{minipage}
\end{center}

\tableofcontents

\arraycolsep=1pt

\section{Introduction}\label{s-intro}

This article is based on \cite{HYY24}, in which the product
Littlewood--Paley theory and the product
Calder\'on reproducing formula in the setting of spaces of
homogeneous type were developed. The main purpose of this
article is to establish an atomic characterization
of product Hardy spaces on spaces of homogeneous type and,
as applications, obtain the boundedness of product
Calder\'on--Zygmund operators on these product Hardy spaces.

Recall that the classical Hardy space $H^p(\mathbb{R}^n)$
was pioneered by Stein and Weiss \cite{sw60}
and later significantly advanced by Fefferman and Stein \cite{FS72}.
The real-variable theory of Hardy spaces plays an important
role in harmonic analysis.
It is well known that the Hardy space $H^p(\mathbb{R}^n)$
has proved a suitable substitute of the Lebesgue space
$L^p(\mathbb{R}^n)$ with $p\in(0,1]$ in the study of the boundedness
of Calder\'on--Zygmund operators.
The classical Hardy spaces $H^p(\mathbb{R}^n)$ can be equivalently
defined, respectively, in terms of various maximal functions,
square functions, and atomic decompositions.
By means of the characterization of the grand maximal function,
the atomic characterization was obtained by Coifman
\cite{c74} for $n=1$  and by Latter \cite{l78} for $n \ge 2$.

As a generalization of the Euclidean space $\mathbb{R}^n$,
the concept of a space of homogeneous type
$(X, d, \mu)$ was introduced by Coifman and Weiss \cite{cw71,cw77},
which provides a natural general setting
for the theory of singular integrals and function spaces.
We begin with recalling the definitions of spaces of homogeneous type.
A \emph{quasi-metric space} $(X,d)$ is a non-empty set $X$ equipped with a \emph{quasi-metric}
$d:\ X \times X \rightarrow [0, \infty)$ satisfying that, for any $x,y,z\in X$,
\begin{enumerate}
\item[(i)] $d(x,y)=0$ if and only if $x=y$,
\item[(ii)] $d(x,y)=d(y,x)$,
\item[(iii)] there exists a  constant $A_0\in[1,\infty)$, independent of $x$, $y$, and $z$,
such that
$$d(x,z)\le A_0[d(x,y)+d(y,z)].$$
\end{enumerate}
The \emph{quasi-metric ball $B$}, with center $x\in X$ and radius $r\in(0,\infty)$,
is defined by setting
$$
B:=B(x,r):=\{y \in X:\ d(x, y)<r \}.
$$
For any ball $B$ and any $\tau\in(0,\infty)$, denote by $\tau B$ the ball
with the same center as $B$
but of radius $\tau$ times that of $B$.
Given a quasi-metric space $(X,d)$ and a non-negative measure $\mu$, we call $(X,d,\mu)$ a \emph{space of homogeneous type in the sense of Coifman and Weiss} if $\mu$ satisfies the {\it doubling condition}:
there exists a positive constant $C_{(\mu)}\in[1,\infty)$ such that,
for any ball $B\subset X$,
$$\mu(2B)\le C_{(\mu)}\mu(B).$$
This doubling condition further implies that, for any ball $B$
and $\lambda\in[1,\infty)$,
\begin{equation}\label{eq-doub}
\mu(\lambda B)\le C_{(\mu)}\lambda^\omega\mu(B),
\end{equation}
where $\omega:=\log_2{C_{(\mu)}}$ is called the \emph{upper dimension} of $X$.

In \cite{cw77}, Coifman and Weiss introduced the atomic
Hardy space $H^p_{\rm cw}(X)$, with $p\in(0,1]$,
by means of the bounded linear functionals on the Campanato space.
From then on, the real-variable theory of
atomic Hardy spaces on spaces of homogeneous type attracted
a lot of attention.
For example, Mac\'{i}as and Segovia \cite{MS79b,MS79}
established the maximal function characterization for
the Hardy space $H^p_{\rm cw}(X)$, with $p\in(0,1]$ but near to $1$, on Ahlfors-$1$ regular metric spaces.
To develop a complete real-variable theory of  atomic
Hardy spaces, Han et al. \cite{HMY06} introduced the concept of RD-spaces,
which are spaces of homogeneous type with the measure $\mu$
satisfying the additional \emph{reverse doubling condition}.
Later, in \cite{HMY08}, Han et al.\ established the
Calder\'on reproducing formulae on RD-spaces.
Based on these Calder\'on reproducing formulae, Han et al. \cite{HMY06}
characterized the atomic Hardy spaces with various Littlewood--Paley
functions and demonstrated the boundedness of Calder\'on--Zygmund
operators on these Hardy spaces.
Since then, the real-variable theory of various function spaces on RD-spaces has
developed rapidly; see, for example,
\cite{HMY06, HMY08, ylk17, SHYY17, yz08, YZ10,YZ11,  ZXT24, ZSY16}.

Recently, Hyt\"onen and Kairema \cite{hk12} established the dyadic
system on a space of homogeneous type $X$.
Later, Auscher and Hyt\"onen \cite{ah13,ah15} constructed the wavelet system on $X$,
which has the exponential decay and the
$\eta$-H\"older regularity for some $\eta\in(0,1)$. This result is
remarkable because no additional
assumptions for the quasi-metric $d$ or the measure $\mu$ is needed.
Using this wavelet system, on one hand, Han et al. \cite{hlw18} established
the wavelet reproducing formula, which holds in the sense of both
test functions and distributions.
Using this, Han et al. \cite{hhl16} introduced the
Hardy space $H^p_{\rm w}(X)$ by using the
\emph{wavelet square function}.
In \cite{hhl16}, they also proved that $H^p_{\rm w}(X)=H^p_{\rm cw}(X)$
when $p\in(0,1]$ but near $1$.
Later, Han et al. \cite{hhl17} introduced the Hardy space $H_{\rm a}^{p}(X)$ with $p\in(0,1]$
by using Carleson measure spaces
as test function spaces and their dual spaces as distribution spaces,
and obtained the equivalence
$$H_{\mathrm{cw}}^{p}(X) = H_{\rm w}^{p}(X)=H_{\rm a}^{p}(X)$$
when $p\in(0,1]$ but near $1$ in the sense of   equivalent (quasi-)norms.

On the other hand, He et al. \cite{hlyy19} introduced
a new kind of approximations of the
identity with exponential decay
(for short, exp-ATIs) on a space of homogeneous type $X$.
By using these exp-ATIs, He et al. \cite{hlyy19} also established
Calder\'on reproducing formulae on $X$.
Using these Calder\'on reproducing formulae, He et al. in \cite{hhllyy19} and \cite{hyy21}
developed a complete real-variable theory of (local) Hardy spaces.
Soon the real-variable theory of Hardy spaces and other function spaces
on spaces of homogeneous type has developed rapidly; see
\cite{awyy23,fmy20,hwyy21,hyy21,whhy21,yan24,yhyy22,yhyy23,zhy20} for more details.

We should also mention that, without having recourse to the Auscher--Hyt\"onen
wavelet system from \cite{ah13,ah15}, some function spaces and
their applications on spaces of homogeneous type have also been well developed in
Nakai \cite{n06}, Karak et al. \cite{k19,k20,kd23,kd24},
Krantz et al. \cite{dk21,k24,kl01-1,kl01-2}, Chang et al. \cite{cl19,cyy24,lcfy17,lcfy18,zcy17},
and Cao et al. \cite{cgr22,cgl21}; see also
Alvarado et al. \cite{agh20,am15,ayy22,ayy24}
for a sharp theory of function spaces on spaces of homogeneous type
and Bui et al. \cite{bdk20,bdl18,bdl20} for the real-variable theory of Hardy spaces
associated with operators.

We now turn to the product case. The product Hardy space $H^p(\mathbb{R}^n \times \mathbb{R}^m)$
with $p\in(0,1]$ was first introduced by Gundy and Stein \cite{gs79}.
Chang and  Fefferman \cite{CF80} applied the Calder\'on reproducing formula together with Lusin area
functions to obtain the atomic characterization of the product Hardy space
$H^1(\mathbb{R}^2_+ \times \mathbb{R}^2_+)$. Furthermore, by using the Lusin area function again,
Chang and Fefferman \cite{CF82} also introduced the product Hardy space
$H^p(\mathbb{R}^2_+ \times \mathbb{R}^2_+)$ for any given $p\in(0,\infty)$.
The corresponding atomic characterization of $H^p(\mathbb{R}^2_+ \times \mathbb{R}^2_+)$ with
$p \in (0, 1)$ was also obtained. Their applications to the boundedness of linear operators
were given, for example, in \cite{cyz10,clow24,hll10,hy05}.
For more results on the real-variable theory for product function spaces on Euclidean
spaces and domains, we refer to
\cite{bdl17,cg22,F87,gkp21,gkp-2312,ho08,ho17,lmv20-2,lmv20}.

In particular,  Nagal and Stein \cite{NS04} developed the Littlewood--Paley theory of Lebesgue spaces
in the setting of the product Carnot--Carathe\'odory spaces
$(X_1 \times X_2, d_1\times d_2, \mu_1 \times \mu_2)$,
which can be seen as a product of RD-spaces. On this product Carnot--Carathe\'odory spaces,
Han et al. \cite{hll16-Pisa}  applied the Journ\'e-type covering lemma to establish the atomic characterization
of  product Hardy spaces (see \cite[Definition 2.17]{hll13-Trans}).
We should mention that the key tool to establish the atomic
characterization of product Hardy
spaces in \cite{hll13-Trans} is the Calder\'on reproducing
formulae whose kernels have bounded support, and these reproducing formulae converge in the sense of both $L^2(X_1\times X_2)$
and product Hardy spaces.

From now on, we \emph{always} let $\{X_1,X_2\}$ be two spaces of homogeneous type and we work in $X_1\times X_2$ (the product space of homogeneous type) and,
unless necessary, we will not explicitly specify this underlying space in what follows.

Using the wavelet coefficients and the product distribution space,
Han et al. \cite[Definition 5.1]{hlw18}
introduced the product Hardy spaces $H^p_{\mathrm{hlw}}$ with $p\in(0,1]$
on product spaces of homogeneous type. To verify the reasonability of
this definition of product Hardy spaces and to obtain their dual spaces,
Han et al. \cite{hlw18} established the product wavelet reproducing formula,
which enables Han et al. to get rid of the dependence on the reverse
doubling assumption of measures and the regularity assumption
of quasi-metrics under consideration. More recently, Han et al. \cite{hlpw21} further
introduced a new kind of product atoms, which have
additional enlargement parameters instead of bounded supports
in \cite{hll16-Pisa}. Moreover, using these new product atoms
they characterized the product Hardy space $H_{\mathrm{hlw}}^p$,
which is impressive because the wavelet functions have no bounded support.
However, the boundedness of product Calder\'on--Zygmund operators
is still missing.

Let $i\in\{1,2\}$ and $\eta_i$ be the smoothness index of the
wavelet function $\psi^{k_i}_{\alpha_i}$ on $X_i$ constructed
by Auscher and Hyt\"onen in \cite{ah13}.
Let $\omega_i$ be the upper dimension of $X_i$. In this article, for any $p\in(\max\{\frac{\omega_1}{\omega_1+\eta_1},\frac{\omega_2}{\omega_2+\eta_2}\}, 1]$,
by regarding the product Carleson measure space
$\mathrm{CMO}^p_{L^2}$ as the test function space
and its dual space $(\mathrm{CMO}^p_{L^2})'$
as the corresponding distribution space,
we introduce the product Hardy space $H^p$ in terms of wavelet
coefficients, which is further proved to coincide with
$H^p_{\mathrm{hlw}}$ in \cite{hlw18}. Moreover,
we establish an atomic characterization of $H^p$ and,
as an application, obtain a criterion for the boundedness
of linear operators from product Hardy spaces to corresponding Lebesgue spaces.
Since the Auscher--Hyt\"onen wavelets have no bounded supports, the wavelet reproducing formula
is not useful for this atomic characterization.
To remedy this, by means of some ideas from  Han et al. \cite[Proposition 2.5]{hhl16},
we establish a new discrete product Calder\'on-type reproducing
formula, which holds on the product Hardy space and has bounded support.
The advantage of this new Calder\'on-type reproducing formula
exists in that a given function $f$ can be reproduced
by another function $g$ or $h$, both are not necessary to be equal
to $f$, but they have equivalent (quasi-)norms
(see Theorem \ref{thm-ctype}). This flexibility allows
that this new Calder\'on-type reproducing formula can be
established by using approximations of the identity with bounded support,
which in turn helps to derive the atomic characterization of product Hardy spaces.
Thus, this new reproducing formula is of independent interest,
which may be useful in solving other analysis problems
and indeed also leads to the boundedness of product
Calder\'on--Zygmund operators on the product Hardy space.
Interestingly enough, all results in this article hold on spaces
of homogeneous type with no additional assumptions.

The organization of the remainder of this article is as follows.

In Section \ref{s-pre},
we first recall some basic  concepts  on the dyadic cube system 
introduced by Hyt\"onen and Kairema
\cite{hk12}, the orthonormal wavelet basis constructed 
by Auscher and  Hyt\"{o}nen \cite{ah13}, and
the  product wavelet reproducing formula.

In Section \ref{s-hardy}, we construct the new pre-Hardy space $H^p_{L^2}$ and the pre-Carleson
measure space  $\mathrm{CMO}^p_{L^2}$ with $p\in(0,1]$,
which is used as the test function space and its dual is
used to introduce the product Hardy space $H^p$ (see Definition \ref{def-hcmo}).
The key step is how to establish the dual upper bounded estimate
(see Proposition \ref{prop-dual}). Besides, we also show that $H^p$ is independent
of the choice of the wavelet system (see Proposition \ref{prop-hpweq}).
Moreover,  we obtain the equivalence between $H^p$ and
$H^p_{\rm hlw}$ from \cite[Definition 5.1]{hlw18} in Proposition \ref{prop-h=h}.

In Section \ref{s-at}, we introduce a class of product $(p,q)$-atoms
for any $p\in(0,1]$ and $q\in(1,\infty)$ and, using them,
we introduce the product
atomic Hardy space $H^{p,q}_{\mathrm{at}}$. Moreover,
we establish an atomic characterization for the new
product Hardy space $H^p$ with $p\in(0,1]$ but near $1$,
that is, $H^p= H^{p,q}_{\mathrm{at}}$ with equivalent (quasi-)norms (see Theorem
\ref{thm-h=a}). Precisely,  on the one hand, the Journ\'e-type
covering lemma (see Lemma \ref{lem-j})
can lead to $ H^{p,q}_{\mathrm{at}} \subset H^p$. On the other hand, to prove
$ H^p \subset H^{p,q}_{\mathrm{at}}$ and overcome the absence of the
bounded support condition for the product wavelet
reproducing formula, we need to establish a new Calder\'on reproducing formula,
which holds in $L^2 \cap H^p $ and has the desired bound support
(see Theorem \ref{thm-ctype}). Applying this atomic characterization,
we establish a criterion that a linear operator is bounded
from product Hardy spaces to corresponding Lebesgue spaces
(see Theorem \ref{thm-bddhl}).
As an application of this criterion, we obtain the boundedness of an operator
belonging to Journ\'e's class from product Hardy spaces to the corresponding
Lebesgue spaces (see Theorem \ref{thm-bdj}).
Interestingly enough, the range $(\max\{\frac{\omega_1}{\omega_1+\eta_1},
\frac{\omega_2}{\omega_2+\eta_2}\},1]$ of $p$
is natural and optimal (see Remark \ref{r-prange}).

In Section \ref{s-czo}, we study the boundedness
of product Calder\'on--Zygmund operators on $H^p$ (see Theorem \ref{thm-czhp})
by using the aforementioned new Calder\'on reproducing formula obtained
in Theorem \ref{thm-ctype}.

We end this introduction by making some conventions on the notation.
For any $p\in[1,\infty]$, we use $p'$ to denote its \emph{conjugate index},
that is, $1/p+1/p'=1$.
The symbol $C$ denotes a positive constant which is independent
of the main parameters, but it may vary from
line to line.  The symbol $A \lesssim B$
means that there exists a positive constant $C$ such
that $A \le CB$. The symbol $A \sim B$ is used as an abbreviation of $A \lesssim B \lesssim A$.
For any $a, b\in\mathbb R$, denote the \emph{minimum} of $a$
and $b$ by $a\wedge b$ and the \emph{maximum} by $a\vee b$.
Also, for any set
$E$ of $X$, we use $\mathbf{1}_E$ to denote its
\emph{characteristic function} and $E^\complement$ the set
$X\setminus E$. Besides, $\varliminf$ denotes the \emph{limit inferior}.
Finally, in all proofs we consistently retain the notation
introduced in the original theorem (or related statement).
Moreover, in all proofs, we always write
$\sum_{k\in {\mathcal I}}\sum_{\alpha\in {\mathcal G}}$,
with index sets ${\mathcal I}$ and ${\mathcal G}$,
simply into $\sum_{{k\in {\mathcal I}},{\alpha\in {\mathcal G}}}$
and, only the first time it appears, we write out the full ranges of the
indices $k\in {\mathcal I}$ and $\alpha\in {\mathcal G}$;
subsequently, as long as no confusion arises, we abbreviate
it to $\sum_{k,\alpha}$.

\section{Some Basic Concepts of Dyadic Systems and Wavelet Functions}\label{s-pre}

Before we present the main results, we introduce the dyadic system
on spaces of homogeneous type.
The next lemma shows that the dyadic cubes on a given space of homogeneous type $X$, 
which comes from
\cite{hk12}.

\begin{lemma}\label{lem-cube}
Fix constants $0<c_0\le C_0<\infty$ and $\delta\in(0,1)$ such that $12A_0^3C_0\delta\le c_0$. 
Assume that
a set of points, $\{z_\alpha^k:\ k\in\mathbb Z,\ \alpha\in\mathcal A_k\}\subset X$ 
with $\mathcal A_k$ for any $k\in\mathbb Z$ being a
countable set of indices, has the following properties: for any $k\in\mathbb Z$,
\begin{enumerate}
\item $d(z_\alpha^k,z_\beta^k)\ge c_0\delta^k$ if $\alpha\neq\beta$;
\item $\min_{\alpha\in\mathcal A_k} d(x,z_\alpha^k)\le C_0\delta^k$ for any $x\in X$.
\end{enumerate}
Then there exists a family of sets $\{Q_\alpha^k:\  k\in\mathbb Z,\alpha\in\mathcal A_k\}$ satisfying
\begin{enumerate}
\item[\rm (a)] for any $k\in\mathbb Z$, $\bigcup_{\alpha\in\mathcal A_k} Q_\alpha^k=X$ and $\{ Q_\alpha^k:\ \alpha\in\mathcal A_k\}$
is mutually disjoint;
\item[\rm (b)] if $k,l\in\mathbb Z$ and $l\ge k$, then either $Q_\beta^l\subset Q_\alpha^k$ or
$Q_\beta^l\cap Q_\alpha^k=\emptyset$;
\item[\rm (c)] for any $k\in\mathbb Z$ and $\alpha\in\mathcal A_k$, $B(z_\alpha^k,c_\natural\delta^k)\subset Q_\alpha^k\subset
B(z_\alpha^k,C^\natural\delta^k)=:B(Q_\alpha^k)$, where $c_\natural:=(3A_0^2)^{-1}c_0$, $C^\natural:=2A_0C_0$, and
$z_\alpha^k$ is called ``the center'' of $Q_\alpha^k$.
\item[\rm (d)] if $Q_\beta^l\subset Q_\alpha^k$ for some $k,l\in\mathbb Z$ with $k\le l$, $\alpha\in\mathcal A_k$, and
$\beta\in\mathcal A_l$, then $B(Q_\beta^l)\subset B(Q_\alpha^k)$.
\end{enumerate}
\end{lemma}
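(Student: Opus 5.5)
We follow the construction of Hyt\"onen and Kairema \cite{hk12}. First define a partial order on the labels. For each $k$ and each $\beta\in\mathcal A_{k+1}$, choose one parent $\alpha\in\mathcal A_k$ with $d(z_\beta^{k+1},z_\alpha^k)\le C_0\delta^k$; this is possible by property 2. Set $(k+1,\beta)\le(k,\alpha)$ and extend by transitivity. Every label then has exactly one ancestor at each coarser level. For $l\ge k$, iterating the quasi-triangle inequality along the chain of parents and summing the geometric series gives
\[
d\bigl(z_\beta^l,z_\alpha^k\bigr)\le A_0C_0\delta^k\sum_{j\ge0}(A_0\delta)^j\le 2A_0C_0\delta^k
\]
whenever $(l,\beta)\le(k,\alpha)$. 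This uses $A_0\delta\le 1/2$, which follows from $12A_0^3C_0\delta\le c_0\le C_0$.

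Next define the cubes. For each $(k,\alpha)$ let $\widehat Q_\alpha^k$ be the closure of the set $\{z_\beta^l:\ (l,\beta)\le(k,\alpha)\}$ of descendant centers. Order each $\mathcal A_k$ (it is countable), and let $\widetilde Q_\alpha^k$ be the interior of $\widehat Q_\alpha^k$. Define $Q_\alpha^k$ as the set of points of $X$ that belong to $\widehat Q_\alpha^k$ and to no earlier $\widehat Q_{\alpha'}^k$ in the order, intersected with $\bigcap_{j\le k}$ of the corresponding level-$j$ cubes of ancestors. To guarantee nestedness, the cleanest route is to pass to the finest levels: assign each $x$ at level $l$ to the first $\alpha\in\mathcal A_l$ with $x\in\widehat Q_\alpha^l$. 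Then put $x\in Q_\alpha^k$ iff, for all sufficiently large $l$, the level-$l$ label of $x$ descends from $(k,\alpha)$.

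With this definition, (a) and (b) follow from the tree structure. Uniqueness of ancestors gives disjointness and nestedness. Coverage holds because every $x$ is a limit of centers $z^l_{\alpha_l}$ with $d(x,z^l_{\alpha_l})\le C_0\delta^l$ (property 2), hence $x$ lies in some closed $\widehat Q$ at each level. The upper inclusion in (c) comes from the chain estimate: every descendant center lies within $2A_0C_0\delta^k=C^\natural\delta^k$ of $z_\alpha^k$, so the closure does too, up to the constant. Property (d) follows from the same estimate after one more application of the quasi-triangle inequality, with the constants tuned exactly as in \cite{hk12}.

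The main obstacle is the lower inclusion $B(z_\alpha^k,c_\natural\delta^k)\subset Q_\alpha^k$. Let $d(x,z_\alpha^k)<c_\natural\delta^k$. For large $l$, pick $z^l_\gamma$ within $C_0\delta^l$ of $x$, and let $(k,\alpha')$ be its level-$k$ ancestor. By the chain estimate, $d(z^l_\gamma,z^k_{\alpha'})\le2A_0C_0\delta^k$, so
\[
d\bigl(z_\alpha^k,z_{\alpha'}^k\bigr)\le A_0^2\bigl(c_\natural\delta^k+C_0\delta^l\bigr)+2A_0^2C_0\delta^k<c_0\delta^k
\]
under $12A_0^3C_0\delta\le c_0$. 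Property 1 then forces $\alpha'=\alpha$. I expect this step to be delicate: the naive chain bound $2A_0C_0\delta^k$ must in fact be refined to $2A_0C_0\delta^{k+1}$ by starting the chain at level $k+1$, using the parent inequality $d\le C_0\delta^k$ only once. This yields the bound $\lesssim A_0^3C_0\delta\cdot\delta^k$, which is exactly what the hypothesis $12A_0^3C_0\delta\le c_0$ and the choice $c_\natural=(3A_0^2)^{-1}c_0$ are designed to beat. For that reason I would first pin down the parent-choice rule: the nearest center, breaking ties by the order.
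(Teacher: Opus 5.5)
The paper does not prove this lemma; it imports it from \cite{hk12}. Your parent tree and your closed cubes $\widehat Q^k_\alpha$ match that construction, but two steps do not go through.

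\textbf{The covering part of (a) fails for your half-open cubes.} You order each $\mathcal A_l$ independently, so nothing forces the level-$k$ ancestor of the first label $\lambda_l(x)$ to stabilize as $l\to\infty$. Stabilization is automatic only when $x$ lies in a single closed cube at level $k$. But points of $\widehat Q^k_\alpha\cap\widehat Q^k_{\alpha'}$ with $\alpha\ne\alpha'$ always exist (in $X=\mathbb R$, by connectedness). Since $\widehat Q^l_\beta$ is the finite union of its children's closed cubes, such an $x$ lies at every level $l\ge k$ in a closed cube of a descendant of $(k,\alpha)$ and in one of a descendant of $(k,\alpha')$. Suppose the enumeration of $\mathcal A_l$ lists the former first for even $l$ and the latter first for odd $l$. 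With generic centers the parent rule has no ties, so this does not change the tree. Then $x$ lies in no $Q^k_\gamma$, and knowing that the closed cubes cover $X$ does not help.

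The repair is the preassigned level $k_0$ of \cite{hk12}. Enumerate $\mathcal A_{k_0}$, and order each finer level lexicographically: first by the position of the parent, then among siblings. The parents of $\{\gamma:\ x\in\widehat Q^{l+1}_\gamma\}$ are exactly $\{\beta:\ x\in\widehat Q^{l}_\beta\}$. Hence for every $l\ge k_0$ the first label at level $l+1$ is a child of the first label at level $l$, so the labels form a chain. Your eventual-ancestor definition then yields (a) and (b) at every level, including $k<k_0$.

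\textbf{Your lower inclusion in (c) also fails.} The displayed bound cannot be $<c_0\delta^k$, since $2A_0^2C_0\ge 2c_0$. Your refinement keeps the parent step $d(z^{k+1}_{\gamma'},z^k_{\alpha'})$, which may be as large as $C_0\delta^k\ge c_0\delta^k$. So property 1 at level $k$ can never force $\alpha'=\alpha$, and the claimed bound $\lesssim A_0^3C_0\delta\cdot\delta^k$ for $d(z^k_\alpha,z^k_{\alpha'})$ has no basis. The missing idea is to invoke the parent rule itself. Prove $d(z^{k+1}_{\gamma'},z^k_\alpha)<(2A_0)^{-1}c_0\delta^k$; property 1 then places every other level-$k$ center at distance $>(2A_0)^{-1}c_0\delta^k$ from $z^{k+1}_{\gamma'}$, so the nearest-center rule makes $\alpha$ its parent.

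To get that bound, do not pass through $x$ with one extra quasi-triangle inequality, which loses a factor $A_0$. Instead, run the chain from $z^{k+1}_{\gamma'}$ through finer descendants whose closed cubes contain $x$, and terminate it at $x$. Its last term is $O((A_0\delta)^m)\to0$, so $d(z^{k+1}_{\gamma'},x)\le A_0C_0\delta^{k+1}/(1-A_0\delta)<2A_0C_0\delta^{k+1}$. Therefore $d(z^{k+1}_{\gamma'},z^k_\alpha)<2A_0^2C_0\delta^{k+1}+A_0c_\natural\delta^k\le\frac{c_0}{6A_0}\delta^k+\frac{c_0}{3A_0}\delta^k=\frac{c_0}{2A_0}\delta^k$. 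This is exactly where $12A_0^3C_0\delta\le c_0$ and $c_\natural=(3A_0^2)^{-1}c_0$ enter.

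Apply this to every $\beta$ with $x\in\widehat Q^l_\beta$, in particular to the label of $x$, not merely to one center within $C_0\delta^l$ of $x$. The same chain-terminated-at-the-point estimate gives the exact constant $C^\natural=2A_0C_0$ in the upper half of (c) and in (d). An estimate ``up to the constant'' does not prove (c) as stated.
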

Throughout this article, we \emph{assume} that $\delta$ is a very small positive number, for instance, $\delta\le(2A_0)^{-10}$.
In the following, for any $k\in\mathbb Z$, we define
\begin{equation}\label{eq-xy}
\mathcal X^k:=\{z_\alpha^k\}_{\alpha\in\mathcal A_k},\ \mathcal G_k:=\mathcal A_{k+1}\setminus\mathcal A_k,\ \textup{and}
\ \mathcal Y^k:=\{z_\alpha^{k+1}\}_{\alpha\in\mathcal G_k}=:\{y_\alpha^k\}_{\alpha\in\mathcal G_k}.
\end{equation}
Motivated by this, Auscher and Hyt\"{o}nen \cite{ah13} constructed 
the following \emph{wavelet} functions on spaces of
homogeneous type.
\begin{theorem}[{\cite[Theorem 7.1]{ah13}}]\label{thm-wave}
There exist constants $a\in(0,1]$, $\eta\in(0,1)$, $C, \nu\in(0,\infty)$, and wavelet functions
$\{\psi_\alpha^k:\ k\in\mathbb Z, \alpha\in\mathcal G_k\}$ satisfying that, for any $k\in\mathbb Z$ and $\alpha\in\mathcal G_k$,
\begin{enumerate}
\item \emph{(the decay condition)} for any $x\in X$,
$$
\left|\psi_\alpha^k(x)\right|\le
\frac C{\sqrt{V_{\delta^k}(y_\alpha^k)}}\exp\left\{-\nu\left[\frac{d(x,y_\alpha^k)}
{\delta^k}\right]^a\right\};
$$
\item \emph{(the H\"{o}lder-regularity condition)} for any $x,x'\in X$ with $d(x,x')\le\delta^k$,
$$
\left|\psi_\alpha^k(x)-\psi_\alpha^k(x')\right|\le\frac C{\sqrt{V_{\delta^k}(y_\alpha^k)}}\left[\frac{d(x,x')}{\delta^k}\right]^\eta
\exp\left\{-\nu\left[\frac{d(x,y_\alpha^k)}{\delta^k}\right]^a\right\};
$$
\item \emph{(the cancellation condition)} $\int_X \psi_\alpha^k(x)\,d\mu(x)=0$.
\end{enumerate}
Moreover, the wavelet  functions $\{\psi_\alpha^k:\ k\in\mathbb Z,  
\alpha\in\mathcal G_k\}$ form an orthonormal
basis of $L^2(X)$, that is, for any $f\in L^2(X)$,
\begin{equation}\label{eq-wrep}
f=\sum_{k\in\mathbb Z}\sum_{\alpha\in\mathcal G_k}\left<f,\psi_\alpha^k\right>\psi_\alpha^k
\end{equation}
in $L^2(X)$. Besides, $\{\psi_\alpha^k:\ k\in\mathbb Z,\alpha\in\mathcal G_k\}$ also form an unconditional basis of
$L^p(X)$ for any given $p\in(1,\infty)$.
\end{theorem}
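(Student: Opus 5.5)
The statement is Theorem~\ref{thm-wave}, the Auscher--Hyt\"onen wavelet theorem, which the paper quotes from \cite[Theorem 7.1]{ah13}. A full proof is a substantial construction, so what follows is the route I would take, modelled on Auscher and Hyt\"onen's. The basic objects are randomized dyadic systems. Using Lemma~\ref{lem-cube} with random perturbations of the reference points $\{z_\alpha^k\}$, one obtains a probability space of dyadic systems $\{\mathscr D(\omega)\}_\omega$. The first step is to prove the ``small boundary'' estimate
$$
\mathbb P\bigl(x\in \partial_\varepsilon Q \text{ for some } Q\in\mathscr D^k(\omega)\bigr)\lesssim \varepsilon^{\eta_0}
$$
for some $\eta_0\in(0,1]$. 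Averaging the indicators $\mathbf 1_{Q_\alpha^k}$ over $\omega$ then produces splines $s_\alpha^k:=\mathbb E_\omega \mathbf 1_{Q_\alpha^k(\omega)}$ with the following properties:
\begin{itemize}
\item they are H\"older continuous of order $\eta$ at scale $\delta^k$;
\item they are supported near $B(z_\alpha^k,C\delta^k)$;
\item they form a partition of unity $\sum_\alpha s_\alpha^k\equiv1$;
\item they are nested, in the sense that $s_\alpha^k=\sum_\beta p^k_{\alpha\beta}s_\beta^{k+1}$ with nonnegative coefficients $p^k_{\alpha\beta}$ that vanish unless $z_\beta^{k+1}$ is close to $z_\alpha^k$.
\end{itemize}

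Next I form the spaces $V_k:=\overline{\mathrm{span}}\{s_\alpha^k\}_{\alpha\in\mathcal A_k}$. These are nested, $V_k\subset V_{k+1}$, and I define $W_k:=V_{k+1}\ominus V_k$.
\begin{enumerate}
\item Using the partition-of-unity property together with Lebesgue differentiation, I show $\overline{\bigcup_k V_k}=L^2(X)$. In the other direction, $\bigcap_k V_k=\{0\}$ (or the constants, if $\mu(X)<\infty$, which is then handled separately).
\item The Gram matrix $(\langle s_\alpha^k,s_\beta^k\rangle/\sqrt{\mu(Q_\alpha^k)\mu(Q_\beta^k)})$ is a banded, well-conditioned matrix, so it is bounded and invertible on $\ell^2$. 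By a Jaffard/Demko-type lemma, adapted to the metric $d(z_\alpha^k,z_\beta^k)/\delta^k$ and the doubling measure, its inverse square root has exponentially decaying entries, of the form $\exp\{-\nu(d/\delta^k)^a\}$.
\item Orthonormalizing therefore gives an orthonormal basis $\{\varphi_\alpha^k\}$ of $V_k$ that keeps exponential decay and H\"older regularity.
\item For $W_k$, I count dimension locally: $\mathcal A_{k+1}=\mathcal A_k\sqcup\mathcal G_k$. I take the projections $(I-P_k)s_\beta^{k+1}$ for $\beta\in\mathcal G_k$ and show they form a Riesz basis of $W_k$, again with a banded Gram matrix.
\item Orthonormalizing these by the same inverse-square-root argument yields the wavelets $\psi_\alpha^k$, localized around $y_\alpha^k=z_\alpha^{k+1}$.
\end{enumerate}
Once the $\psi_\alpha^k$ are in hand, three of the stated properties follow directly.
\begin{itemize}
\item Cancellation holds because $\psi_\alpha^k\perp V_k\ni$ suitable truncations of $1$; equivalently, one uses $\sum_\alpha s_\alpha^k=1$ and the decay to pass to the limit.
\item The decay and H\"older bounds, with normalization $V_{\delta^k}(y_\alpha^k)^{-1/2}$, are inherited from the splines through the exponentially decaying matrix coefficients and the doubling estimate \eqref{eq-doub}.
\item The orthonormal expansion \eqref{eq-wrep} follows from $L^2=\bigoplus_k W_k$.
\end{itemize}

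The unconditional-basis statement in $L^p(X)$, $p\in(1,\infty)$, is obtained differently. I would show that for every choice of signs $\epsilon$, the operator $T_\epsilon f=\sum_{k,\alpha}\epsilon_{k,\alpha}\langle f,\psi_\alpha^k\rangle\psi_\alpha^k$ is a Calder\'on--Zygmund operator. It is bounded on $L^2$, and its kernel satisfies standard size and H\"older estimates, which follow from the decay and regularity by summing over scales. The Coifman--Weiss $T1$-free $L^p$ theory then applies.

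The main obstacle I expect is step 2 combined with step 4. One must prove that the Gram matrices, in particular the one for the complement $W_k$, are uniformly invertible, and that the inverses retain stretched-exponential off-diagonal decay. This has to be done on a general quasi-metric doubling space with no reverse doubling assumption. I would first try to reduce it to a Neumann-series argument for matrices close to the identity after renormalization, using the small-boundary probability estimate to control the off-diagonal mass.
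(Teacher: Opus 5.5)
The paper does not prove this theorem; it is quoted from Auscher--Hyt\"onen. Your architecture is theirs: random dyadic cubes, splines $s^k_\alpha=\mathbb E_\omega\mathbf 1_{Q^k_\alpha(\omega)}$, the multiresolution $V_k$, $W_k=V_{k+1}\ominus V_k$, orthonormalization through inverse square roots of Gram matrices with stretched-exponential decay, and a Calder\'on--Zygmund argument for $L^p$. However, the sketch has a genuine gap exactly where the indices $\mathcal G_k$ enter.

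First, the randomization cannot be a random perturbation of the reference points. If it were, $\mathcal A_k$, $\mathcal G_k$ and the labels $\alpha$ would be random, and $s^k_\alpha$ would not even be defined. You must keep the nested points $\mathcal X^k\subset\mathcal X^{k+1}$ fixed and randomize only the parent relation, independently from level to level, with each $z^{k+1}_\alpha=z^k_\alpha$ ($\alpha\in\mathcal A_k$) always choosing $z^k_\alpha$ as its parent. Since the parent of $z^{k+1}_\beta$ is chosen independently of $Q^{k+1}_\beta(\omega)$, you get $s^k_\alpha=\sum_\beta p^k_{\alpha\beta}s^{k+1}_\beta$ with $p^k_{\alpha\beta}=\mathbb P(\text{the parent of }\beta\text{ is }\alpha)$. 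Self-parenting gives the extra fact you never state: $p^k_{\alpha\beta}=\delta_{\alpha\beta}$ for $\beta\in\mathcal A_k$.

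Second, ``counting dimension locally'' proves nothing in infinite dimensions; that extra fact is what does the work in step 4. For $\alpha\in\mathcal A_k$ it gives $s^{k+1}_\alpha=s^k_\alpha-\sum_{\beta\in\mathcal G_k}p^k_{\alpha\beta}s^{k+1}_\beta$. So the change of basis from $\{s^{k+1}_\alpha\}_{\alpha\in\mathcal A_{k+1}}$ to $\{s^k_\alpha\}_{\alpha\in\mathcal A_k}\cup\{s^{k+1}_\beta\}_{\beta\in\mathcal G_k}$ is block triangular, with identity diagonal blocks and a banded, bounded off-diagonal block. Hence the latter family, after $L^2$-normalization, is a Riesz basis of $V_{k+1}$. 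The identity $\|\sum_\beta c_\beta(I-P_k)s^{k+1}_\beta\|_{L^2}=\mathrm{dist}(\sum_\beta c_\beta s^{k+1}_\beta,V_k)$ (normalized splines) then yields both Riesz bounds for $\{(I-P_k)s^{k+1}_\beta\}_{\beta\in\mathcal G_k}$ in $W_k$.

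Third, the Gram matrix of $\{(I-P_k)s^{k+1}_\beta\}_{\beta\in\mathcal G_k}$ is not banded. The function $P_ks^{k+1}_\beta$ involves the full inverse of the Gram matrix of $V_k$ and in general has unbounded support. This matrix is the Schur complement $D-B^*A^{-1}B$ of the banded Gram matrix of the new basis of $V_{k+1}$, and it has only stretched-exponential off-diagonal decay. So a Demko--Moss--Smith lemma for banded matrices does not cover step 5. You need an inverse-square-root lemma for positive matrices with entries $\lesssim\exp\{-\gamma[d(z_\alpha,z_\beta)/\delta^k]^a\}$.

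A plain Neumann series in $\ell^2$ gives no entrywise decay. What works is to write the matrix as $c(I-R)$ with $\|R\|_{\ell^2\to\ell^2}<1$, using the Riesz bounds. Then conjugate by the weights $e^{\epsilon\rho(z_{\alpha_0},\cdot)}$, where $\rho$ is a genuine metric comparable to $(d/\delta^k)^a$ (Mac\'ias--Segovia). The triangle inequality for $\rho$, Schur's test and the doubling count of $\delta^k$-separated points give $\|R_\epsilon\|<1$ for small $\epsilon$, and hence decay of the entries of $(I-R)^{-1/2}$. This is also why $a\in(0,1]$ depends on $A_0$.

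The small-boundary estimate plays no role in these matrix bounds; it serves only the H\"older regularity of $s^k_\alpha$. Invertibility of the Gram matrix of $V_k$ comes instead from $s^k_\alpha\equiv1$ on $B(z^k_\alpha,c_\natural\delta^k)$. This holds because that ball lies in $Q^k_\alpha(\omega)$ for every $\omega$ and $\sum_\alpha s^k_\alpha=1$. With these repairs, your remaining steps go through as written: density, cancellation, inheritance of decay and regularity, and the Calder\'on--Zygmund argument for the unconditional basis in $L^p$.
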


Now we turn to the product setting. Suppose that $(X_1,d_1,\mu_1)$ [resp. $(X_2,d_2,\mu_2)$] is a space of
homogeneous type with upper dimension $\omega_1$ (resp. $\omega_2$) and wavelet smooth index $\eta_1$
(resp. $\eta_2$). Let $X:=X_1\times X_2$ and $\mu:=\mu_1\times\mu_2$. Let $f_1$ (resp. $f_2$) be a function on $X_1$
(resp. $X_2$). The function $f_1\otimes f_2$ is defined by setting, for any $(x_1,x_2)\in X_1\times X_2$,
$f_1\otimes f_2(x_1,x_2):=f_1(x_1)f_2(x_2)$. To simplify our symbols, we always assume
$(X_1,d_1,\mu_1)=(X_2,d_2,\mu_2)$, which implies that $\omega_1=\omega_2=:\omega$ and $\eta_1=\eta_2=:\eta$.
We still use subscripts $1$ and $2$ to distinguish the first and the second variables of elements in $X_1\times X_2$.
If $(X_1,d_1,\mu_1)\neq(X_2,d_2,\mu_2)$, then all the results in this article also hold  with
$\frac{\omega}{\omega+\eta}$  replaced by
$\max\{\frac{\omega_1}{\omega_1+\eta_1},\frac{\omega_2}{\omega_2+\eta_2}\}$.
For any $k_1,  k_2\in\mathbb Z$, $\alpha_1\in\mathcal G_{1,k_1}$, and $\alpha_2\in\mathcal G_{2,k_2}$,
let $Q_{\alpha_1,\alpha_2}^{k_1,k_2}:=Q_{1,\alpha_1}^{k_1}\times Q_{2,\alpha_{2}}^{k_{2}}$ and,
for any $(x_1, x_2) \in X_1 \times X_2$,
$$
\psi_{\alpha_1,\alpha_2}^{k_1,k_2}(x_1,x_2):=\psi_{1,\alpha_{1}}^{k_{1}}(x_1)
\psi_{2,\alpha_{2}}^{k_{2}}(x_2).
$$
By \eqref{eq-wrep}, we find that
$\{\psi_{\alpha_1,\alpha_2}^{k_1,k_2}:\ k_1,k_2\in\mathbb Z,\alpha_1\in\mathcal G_{1,k_1},\alpha_2\in\mathcal G_{2,k_2}\}$ form an orthonormal
base of $L^2$, that is, for any $f\in L^2$,
\begin{align}\label{eq-re}
f=\sum_{\genfrac{}{}{0pt}{}{k_1\in{\mathbb Z}}{\alpha_1\in\mathcal G_{1,k_1}}}
\sum_{\genfrac{}{}{0pt}{}{k_2\in{\mathbb Z}}{\alpha_2\in\mathcal G_{2,k_2}}}
\left<f,\psi_{\alpha_1,\alpha_2}^{k_1,k_2}\right>\psi_{\alpha_1,\alpha_2}^{k_1,k_2}
\end{align}
and
$$
\|f\|_{L^2}=\left(\sum_{\genfrac{}{}{0pt}{}{k_1\in{\mathbb Z}}{\alpha_1\in\mathcal G_{1,k_1}}}
\sum_{\genfrac{}{}{0pt}{}{k_2\in{\mathbb Z}}{\alpha_2\in\mathcal G_{2,k_2}}}
\left|\left<f,\psi_{\alpha_1,\alpha_2}^{k_1,k_2}\right>\right|^2\right)^{\frac{1}{2}}.
$$
We mention that a result similar to \eqref{eq-re} in product Euclidean spaces was obtained by Ho \cite{ho07}.
For any $x, y\in X$, $\gamma\in(0,\infty)$,
and $r\in(0,\infty)$, let
$$
V_r(x):=\mu(B(x,r))\textup{ and } V(x,y):=
\begin{cases}
\mu(B(x,d(x,y))) & \textup{if }x\neq y,\\
0 & \textup{if } x=y,
\end{cases}
$$
where $B(x,r):=\{y\in X:\ d(x,y)<r\}$.
We also assume
\begin{equation}\label{eq-defpg}
P_\gamma(x,y;r):=\frac{1}{V_r(x)+V(x,y)}\left[\frac{r}{r+d(x,y)}\right]^\gamma.
\end{equation}
Moreover, for any $k\in\mathbb Z$ and $x, y\in X$, let
\begin{gather}\label{eq-defek1}
\mathcal E_k^*(x,y):=\exp\left\{-\nu\left[\frac{d(x,y)}{\delta^k}\right]^a\right\},\
\mathcal E_k(x,y):=\mathcal E_k^*(x,y)\exp\left\{-\nu\left[\frac{d(x,\mathcal Y^k)}{\delta^k}\right]^a\right\},\\
\widetilde{\mathcal E}_k^*(x,y):=\frac{1}{V_{\delta^k}(x)}\mathcal E_k^*(x,y),
\textup{ and } \widetilde{\mathcal E}_k(x,y):=\frac{1}{V_{\delta^k}(x)}\mathcal E_k(x,y),\nonumber
\end{gather}
 where, for any $k\in\mathbb Z$, $\mathcal Y^k$ is as in \eqref{eq-xy} and $\nu\in(0,\infty)$  and $a\in(0,1]$
are two constants independent of $k$, $x$,
and $y$, but may vary from line to line. Moreover, for any $i\in\{1,2\}$, $\gamma\in(0,\infty)$, and $k\in\mathbb Z$,
we use $P_{i,\gamma}$, $\mathcal E_{i,k}^*$, $\mathcal E_{i,k}$, $\widetilde{\mathcal E}_{i,k}^*$,
and $\widetilde{\mathcal E}_{i,k}$ to denote, respectively, $P_{\gamma}$, $\mathcal E_{k}^*$, $\mathcal E_{k}$, $\widetilde{\mathcal E}_{k}^*$,
and $\widetilde{\mathcal E}_{k}$ associated with the space of homogeneous type $(X_i,d_i,\mu_i)$. Let $\mathcal B$ be a
Banach space. Denote by $\mathcal L(\mathcal B)$ the space of all bounded sublinear operators on $\mathcal B$, equipped with the
norm that, for any $T\in\mathcal L(\mathcal B)$, $\|T\|_{\mathcal L(\mathcal B)}:=\sup_{\|x\|_\mathcal B\le 1}\|Tx\|_{\mathcal B}$.
Throughout this article, we always assume $\mu(X)=\infty$, which is equivalent to
$\mathop{\rm diam}X=\infty$ (see \cite[Lemma 5.1]{ny97} or \cite[Lemma 8.1]{ah13})

\section{Product Hardy and Carleson Measure Spaces}\label{s-hardy}

In this section, we introduce the product Hardy space and the product Carleson measure  space, and give their basic properties.
 For any $f\in L^2$, we define the \emph{wavelet Littlewood--Paley function $S(f)$} by
setting, for any $(x_1,x_2)\in X_1\times X_2$,
$$
S(f)(x_1,x_2):=\left[\sum_{\genfrac{}{}{0pt}{}{k_1\in{\mathbb Z}}{\alpha_1\in\mathcal G_{1,k_1}}}
\sum_{\genfrac{}{}{0pt}{}{k_2\in{\mathbb Z}}{\alpha_2\in\mathcal G_{2,k_2}}}
\left|\left<f,\psi_{\alpha_1,\alpha_2}^{k_1,k_2}\right>\widetilde{\mathbf{1}}_{Q_{1,\alpha_1}^{k_1+1}}(x_1)
\widetilde{\mathbf{1}}_{Q_{2,\alpha_2}^{k_2+1}}(x_2)\right|^2\right]^{ \frac{1}{2}}.
$$
Here and thereafter, for any $i\in\{1,2\}$, $k_i\in\mathbb Z$ and $\alpha_i\in\mathcal G_{i,k_i}$,
$\widetilde{\mathbf 1}_{Q_{i,\alpha_i}^{k_i+1}}=\mathbf{1}_{Q_{i,\alpha_i}^{k_i+1}}/\sqrt{\mu_i(Q_{i,\alpha_i}^{k_i+1})}$.
For any given $p\in(0,1]$,  we define the \emph{pre-Hardy space $H^p_{L^2}(X_1\times X_2)$} by setting
\begin{equation*}
H^p_{L^2}:=\left\{f\in L^2:\ \|S(f)\|_{L^p}<\infty\right\}.
\end{equation*}
Moreover, for any $f\in L^2$, define
$$
\mathcal C_p(f):=\sup_{\Omega}
\left[\frac{1}{[\mu(\Omega)]^{\frac 2p-1}}
\sum_{Q_{\alpha_1,\alpha_2}^{k_1+1,k_2+1}\subset\Omega}
\left|\left<f,\psi_{\alpha_1,\alpha_2}^{k_1,k_2}\right>\right|^2\right]^{\frac{1}{2}},
$$
where the supremum is taken over all open sets $\Omega\subset X_1\times X_2$ with $\mu(\Omega)<\infty$. The
\emph{pre-Carleson measure space $\mathrm{CMO}^p_{L^2}$} defined by setting
\begin{equation*}
\mathrm{CMO}^p_{L^2}:=\left\{f\in L^2:\ \mathcal C_p(f)<\infty\right\}.
\end{equation*}
Also denote by $(H^p_{L^2})'$ [resp. $(\mathrm{CMO}^p_{L^2})'$] the \emph{dual space} of $H^p_{L^2}$
[resp. $\mathrm{CMO}^p_{L^2}$] equipped with the weak-$*$ topology.

For any measurable function $f$, define the \emph{strong maximal function $\mathcal M_{\rm str}(f)$} by setting, for any
$(x_1,x_2)\in X_1\times X_2$,
$$
\mathcal M_{\rm str}(f)(x_1,x_2):=\sup_{\genfrac{}{}{0pt}{}{B_1\ni x_1}{B_2\ni x_2}}
\frac{1}{\mu_1(B_1)\mu_2(B_2)}
\int_{B_1}\int_{B_2}|f(y_1,y_2)|\,d\mu_2(y_2)\,d\mu_1(y_1),
$$
where the supremum is taken over all balls $B_1\subset X_1$ containing $x_1$ and $B_2\subset X_2$ containing
$x_2$. By the Fubini theorem and the Fefferman--Stein vector-valued maximal inequalities respectively on $X_1$ and $X_2$, we
can obtain the Fefferman--Stein vector-valued maximal inequality for $\mathcal M_{\rm str}$; we omit the details here.

\begin{proposition}[{\cite[Theorem 2.8]{HYY24}}]\label{prop-fs}
Let $p\in(1,\infty)$ and $u\in(1,\infty]$. Then there exists a constant $C\in(0,\infty)$ such that, for any sequence
of measurable functions $\{f_n\}_{n=1}^\infty$ on $X_1\times X_2$,
$$
\left\|\left\{\sum_{n=1}^\infty [\mathcal M_{\rm str}(f_n)]^u\right\}^{\frac 1u}\right\|_{L^p}\le C
\left\|\left(\sum_{n=1}^\infty |f_n|^u\right)^{\frac 1u}\right\|_{L^p}.
$$
Consequently, $\mathcal M_{\rm str}$ is bounded on $L^p$ for any given $p\in(1,\infty)$, that is, there exists a
constant $C\in(0,\infty)$ such that, for any $f\in L^p$, $\|\mathcal M_{\rm str}(f)\|_{L^p}\le C\|f\|_{L^p}$.
\end{proposition}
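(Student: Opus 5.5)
The plan is to reduce everything to the one-parameter Fefferman--Stein vector-valued maximal inequality on each factor $X_i$, which holds on spaces of homogeneous type for $p\in(1,\infty)$ and $u\in(1,\infty]$. The key pointwise observation is that the strong maximal function is dominated by an iteration of one-parameter maximal operators. For $i\in\{1,2\}$, let $\mathcal M_{(i)}$ denote the Hardy--Littlewood maximal operator acting in the $i$-th variable with the other variable frozen. For any balls $B_1\ni x_1$ and $B_2\ni x_2$, Fubini's theorem gives
$$
\frac{1}{\mu_1(B_1)\mu_2(B_2)}\int_{B_1}\int_{B_2}|f(y_1,y_2)|\,d\mu_2(y_2)\,d\mu_1(y_1)
\le\frac{1}{\mu_1(B_1)}\int_{B_1}\mathcal M_{(2)}(f)(y_1,x_2)\,d\mu_1(y_1)
\le \mathcal M_{(1)}\bigl(\mathcal M_{(2)}(f)\bigr)(x_1,x_2).
$$
Taking the supremum over $B_1$ and $B_2$ yields $\mathcal M_{\rm str}(f)\le\mathcal M_{(1)}\circ\mathcal M_{(2)}(f)$ pointwise. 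Measurability of $\mathcal M_{(2)}(f)$ in both variables will need a small remark; it follows, for instance, by restricting to a countable family of balls, or by arguing with lower semicontinuity.

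Next I apply the vector-valued inequality twice. First, fix $x_2$ and apply the one-parameter inequality on $X_1$ to the sequence $\{\mathcal M_{(2)}(f_n)(\cdot,x_2)\}_n$. Raising to the power $p$ and integrating in $x_2$ gives
$$
\Bigl\|\Bigl\{\sum_n[\mathcal M_{\rm str}(f_n)]^u\Bigr\}^{1/u}\Bigr\|_{L^p}\lesssim\Bigl\|\Bigl\{\sum_n[\mathcal M_{(2)}(f_n)]^u\Bigr\}^{1/u}\Bigr\|_{L^p}.
$$
Then I fix $x_1$ and apply the inequality on $X_2$ to $\{f_n(x_1,\cdot)\}_n$, and integrate in $x_1$ by Fubini (Tonelli). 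This removes $\mathcal M_{(2)}$ and gives the desired bound. The case $u=\infty$ is handled identically, using the $\ell^\infty$-valued version, which follows trivially from $L^p$-boundedness of the maximal operator since $\sup_n\mathcal M(f_n)\le\mathcal M(\sup_n|f_n|)$. The \lq\lq consequently\rq\rq\ part is the special case of a single function, $f_1=f$ and $f_n=0$ for $n\ge2$.

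The main obstacle is justifying the one-parameter Fefferman--Stein inequality on a general space of homogeneous type with merely a quasi-metric, where balls need not be open or Borel. I would handle this by invoking the known result of Grafakos--Liu--Yang, or by replacing $d$ with an equivalent metric power $d^\theta$ (Macías--Segovia). The latter changes the maximal operators only up to constants, since balls are comparable by doubling. The measurability issue mentioned above is secondary and is handled in the same way.
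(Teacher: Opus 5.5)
Your proposal is correct and is essentially the argument the paper has in mind: the paper cites \cite[Theorem 2.8]{HYY24} and only sketches the proof as ``Fubini's theorem plus the one-parameter Fefferman--Stein vector-valued inequalities on $X_1$ and $X_2$,'' and your pointwise bound $\mathcal M_{\rm str}(f)\le\mathcal M_{(1)}(\mathcal M_{(2)}(f))$, followed by two fibrewise applications of the vector-valued inequality and Tonelli, is exactly that omitted argument. One small wording fix in your last paragraph: $d^\theta$ itself need not be a metric, but for small $\theta$ it is equivalent to a metric, and that equivalence is all you use.
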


The following proposition takes a very important role in this article, 
whose proof is similar to that of
\cite[Lemma 5.2]{HMY08} and we omit the details here.

\begin{proposition}\label{prop-estmax}
Let $\gamma\in(0,\infty)$ and $r\in(\frac{\omega}{\omega+\gamma},1]$ with $\omega$ as in \eqref{eq-doub}.
Then there exists a positive constant $C$ such that, for any $k_1,k_1',k_2,k_2'\in\mathbb Z$,
$\{a_{\alpha_1,\alpha_2}^{k_1,k_2}:\ \alpha_1\in\mathcal A_{1,k_1},\alpha_2\in\mathcal A_{2,k_2}\}\subset\mathbb C$,
$\{x_{\alpha_1,\alpha_2}^{k_1,k_2}\in Q_{\alpha_1,\alpha_2}^{k_1,k_2}:\ \alpha_1\in\mathcal A_{1,k_1},
\alpha_2\in\mathcal A_{2,k_2}\}$, and $(x_1,x_2)\in X_1\times X_2$,
\begin{align*}
&\sum_{\genfrac{}{}{0pt}{}{\alpha_1\in\mathcal A_{1,\alpha_1}}{\alpha_2\in\mathcal A_{2,\alpha_2}}}\mu\left(Q_{\alpha_1,\alpha_2}^{k_1,k_2}\right)
\left|a_{\alpha_1,\alpha_2}^{k_1,k_2}\right|P_{1,\gamma}\left(x_1,x_{1,\alpha_1}^{k_1};\delta^{k_1\wedge k_1'}\right)
P_{2,\gamma}\left(x_2,x_{2,\alpha_2}^{k_2};\delta^{k_2\wedge k_2'}\right)\\
&\quad\le C\delta^{[(k_1\wedge k_1')-k_1]\omega(\frac 1r-1)}\delta^{[(k_2\wedge k_2')-k_2]\omega(\frac 1r-1)}
\left[\mathcal M_{\rm str}\left(\sum_{\genfrac{}{}{0pt}{}{\alpha_1\in\mathcal A_{1,\alpha_1}}{\alpha_2\in\mathcal A_{2,\alpha_2}}}
\left|a_{\alpha_1,\alpha_2}^{k_1,k_2}\right|^r\mathbf 1_{Q_{\alpha_1,\alpha_2}^{k_1,k_2}}\right)(x_1,x_2)\right]^{\frac 1r},
\end{align*}
where, for any $i\in\{1,2\}$, $P_{i,\gamma}$ is as in \eqref{eq-defpg} associated with $X_i$.
\end{proposition}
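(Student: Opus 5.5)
The plan is to reduce the product estimate to the standard one-parameter argument of \cite[Lemma 5.2]{HMY08}, carried out in each variable separately. Fix $k_1,k_1',k_2,k_2'$ and $(x_1,x_2)$, and write $j_i:=k_i\wedge k_i'$. For $i\in\{1,2\}$ I decompose $X_i$ into the ``near'' annulus $A_{i,0}:=\{\alpha_i:\ d_i(x_i,z_{i,\alpha_i}^{k_i})<\delta^{j_i}\}$ and the dyadic annuli $A_{i,l}:=\{\alpha_i:\ \delta^{j_i-l+1}\le d_i(x_i,z_{i,\alpha_i}^{k_i})<\delta^{j_i-l}\}$ for $l\in\mathbb N$. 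Since $x_{i,\alpha_i}^{k_i}\in Q_{i,\alpha_i}^{k_i}\subset B(z_{i,\alpha_i}^{k_i},C^\natural\delta^{k_i})$ and $k_i\ge j_i$, the distance $d_i(x_i,x_{i,\alpha_i}^{k_i})$ is comparable, up to constants depending on $A_0$ and $C^\natural$, to $\delta^{j_i-l}$ on $A_{i,l}$ with $l\ge1$, and it is $\lesssim\delta^{j_i}$ on $A_{i,0}$. Consequently, by the doubling property \eqref{eq-doub},
$$P_{i,\gamma}\bigl(x_i,x_{i,\alpha_i}^{k_i};\delta^{j_i}\bigr)\lesssim \frac{\delta^{l\gamma}}{\mu_i(B(x_i,\delta^{j_i-l}))}\quad\text{for }\alpha_i\in A_{i,l},\ l\in\mathbb Z_+ .$$
The product sum is then bounded by $\sum_{l_1,l_2\ge0}\delta^{(l_1+l_2)\gamma}[\mu_1(B(x_1,\delta^{j_1-l_1}))\mu_2(B(x_2,\delta^{j_2-l_2}))]^{-1}\sum_{\alpha_1\in A_{1,l_1},\alpha_2\in A_{2,l_2}}\mu(Q^{k_1,k_2}_{\alpha_1,\alpha_2})|a^{k_1,k_2}_{\alpha_1,\alpha_2}|$.

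Next, since $r\le1$, I use $\sum|b_n|\le(\sum|b_n|^r)^{1/r}$ and rewrite
$$\sum_{\alpha_1\in A_{1,l_1},\,\alpha_2\in A_{2,l_2}}\bigl|a^{k_1,k_2}_{\alpha_1,\alpha_2}\bigr|^r\,[\mu(Q^{k_1,k_2}_{\alpha_1,\alpha_2})]^r=\int_{X_1\times X_2}\sum_{\alpha_1\in A_{1,l_1},\,\alpha_2\in A_{2,l_2}}\bigl|a^{k_1,k_2}_{\alpha_1,\alpha_2}\bigr|^r [\mu(Q^{k_1,k_2}_{\alpha_1,\alpha_2})]^{r-1}\mathbf 1_{Q^{k_1,k_2}_{\alpha_1,\alpha_2}}\,d\mu .$$
I then bound $[\mu(Q^{k_1,k_2}_{\alpha_1,\alpha_2})]^{r-1}=[\mu_1(Q_{1,\alpha_1}^{k_1})\mu_2(Q_{2,\alpha_2}^{k_2})]^{r-1}$ from above. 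Each cube $Q_{i,\alpha_i}^{k_i}$ with $\alpha_i\in A_{i,l_i}$ contains a ball of radius $c_\natural\delta^{k_i}$ and lies inside $B(x_i,C\delta^{j_i-l_i})$. By \eqref{eq-doub} this gives $\mu_i(B(x_i,C\delta^{j_i-l_i}))\lesssim\delta^{(k_i-j_i+l_i)(-\omega)}\mu_i(Q_{i,\alpha_i}^{k_i})$, hence $[\mu_i(Q_{i,\alpha_i}^{k_i})]^{r-1}\lesssim \delta^{(k_i-j_i+l_i)\omega(1-r)}[\mu_i(B(x_i,C\delta^{j_i-l_i}))]^{r-1}$. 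All the cubes in question lie in the rectangle $B(x_1,C\delta^{j_1-l_1})\times B(x_2,C\delta^{j_2-l_2})$, which contains $(x_1,x_2)$, so the integral is at most $\mu_1(B_1)\mu_2(B_2)\,\mathcal M_{\rm str}(\sum|a|^r\mathbf 1_Q)(x_1,x_2)$ times the product of the factors above.

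Taking $1/r$-th powers, the measures of the balls cancel exactly: the exponent is $(r-1)/r+1/r-1=0$. The $l_i$-th term is therefore bounded by
$$\delta^{l_i\gamma}\,\delta^{(k_i-j_i+l_i)\omega(\frac1r-1)}=\delta^{(j_i-k_i)\omega(\frac1r-1)}\cdot\delta^{l_i[\gamma-\omega(\frac1r-1)]}$$
(using $\delta<1$ and noting that the signs work out), multiplied by $[\mathcal M_{\rm str}(\cdots)(x_1,x_2)]^{1/r}$. The condition $r>\omega/(\omega+\gamma)$ is exactly $\gamma>\omega(1/r-1)$, so the geometric series in $l_1$ and in $l_2$ converge, and this yields the claimed bound. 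I expect the main obstacle to be the bookkeeping in this step, namely getting the direction of the doubling inequality right when $r-1<0$, so that the exponent comes out as $[(k_i\wedge k_i')-k_i]\omega(\frac1r-1)$ and not with the opposite sign. A secondary point is that the near annulus $l_i=0$, together with the slight enlargement of radii caused by $A_0$ and $C^\natural$, has to be absorbed into the constants. The product structure itself causes no difficulty, because the estimates factor in the two variables and $\mathcal M_{\rm str}$ takes its supremum over products of balls containing $(x_1,x_2)$.
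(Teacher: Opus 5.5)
Your route is the one the paper intends: it omits the proof and refers to the one-parameter argument of \cite[Lemma 5.2]{HMY08}, run in each variable. The overall structure of your proof is sound, but the step you yourself flagged is written with the wrong sign. From $B(z_{i,\alpha_i}^{k_i},c_\natural\delta^{k_i})\subset Q_{i,\alpha_i}^{k_i}\subset B_i:=B(x_i,C\delta^{j_i-l_i})$ and doubling, you correctly get $\mu_i(Q_{i,\alpha_i}^{k_i})\gtrsim\delta^{(k_i-j_i+l_i)\omega}\mu_i(B_i)$. But raising this to the nonpositive power $r-1$ gives
\[
[\mu_i(Q_{i,\alpha_i}^{k_i})]^{r-1}\lesssim\delta^{-(k_i-j_i+l_i)\omega(1-r)}[\mu_i(B_i)]^{r-1},
\]
with a \emph{large} factor, not $\delta^{+(k_i-j_i+l_i)\omega(1-r)}$.

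The inequality you wrote is false for $r<1$. Since $Q_{i,\alpha_i}^{k_i}\subset B_i$ and $t\mapsto t^{r-1}$ is nonincreasing, $[\mu_i(Q_{i,\alpha_i}^{k_i})]^{r-1}\ge[\mu_i(B_i)]^{r-1}$. This is incompatible with a bound by $C\delta^{(k_i-j_i+l_i)\omega(1-r)}[\mu_i(B_i)]^{r-1}$ once $k_i-j_i+l_i$ is large. Your displayed ``identity'' is false for the same reason. Its left-hand side $\delta^{l_i\gamma}\delta^{(k_i-j_i+l_i)\omega(\frac1r-1)}$ equals $\delta^{(k_i-j_i)\omega(\frac1r-1)}\delta^{l_i[\gamma+\omega(\frac1r-1)]}$, not $\delta^{(j_i-k_i)\omega(\frac1r-1)}\delta^{l_i[\gamma-\omega(\frac1r-1)]}$. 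Taken at face value, your intermediate bound would give an estimate strictly stronger than the proposition, never using $r>\frac{\omega}{\omega+\gamma}$. That stronger estimate fails, e.g.\ on $\mathbb R^n$ with a single nonzero coefficient on a cube at distance $\sim\delta^{j_i}$ from $x_i$, as $k_i-j_i\to\infty$.

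With the corrected exponent, after taking $1/r$-th powers the $l_i$-th term carries
\[
\delta^{l_i\gamma}\delta^{-(k_i-j_i+l_i)\omega(\frac1r-1)}=\delta^{(j_i-k_i)\omega(\frac1r-1)}\delta^{l_i[\gamma-\omega(\frac1r-1)]},
\]
which is exactly the right-hand side you were aiming for. So the loss $\delta^{[(k_i\wedge k_i')-k_i]\omega(\frac1r-1)}\ge1$ in the statement comes from the small cubes $Q_{i,\alpha_i}^{k_i}$ appearing with a negative power. The series in $l_i$ converges precisely because $\gamma>\omega(\frac1r-1)$.

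The rest of your argument is fine:
\begin{itemize}
\item the cancellation of the ball measures up to doubling constants;
\item applying $\mathcal M_{\rm str}$ on $B_1\times B_2\ni(x_1,x_2)$;
\item handling the finitely many innermost annuli, where $d_i(x_i,x_{i,\alpha_i}^{k_i})\gtrsim\delta^{j_i-l_i}$ may fail, by the trivial bound $P_{i,\gamma}(x_i,\cdot\,;\delta^{j_i})\le[V_{\delta^{j_i}}(x_i)]^{-1}$.
\end{itemize}
With this one correction the proof goes through.
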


In the next, we prove a fundamental proposition about $H^p_{L^2}$
and $\mathrm{CMO}^p_{L^2}$. Here and thereafter,
denote by $\mathcal D$ the set of all dyadic rectangles on $X_1\times X_2$, that is,
$$
\mathcal D:=\left\{Q_{1,\alpha_1}^{k_1}\times Q_{2,\alpha_2}^{k_2}:\ k_1, k_2\in\mathbb Z, \alpha_1\in\mathcal A_{1,k_1},
\alpha_2\in\mathcal A_{2,k_2}\right\}.
$$
Let
$$
\widetilde{\mathcal D}:=\left\{Q_{1,\alpha_1}^{k_1+1}\times Q_{2,\alpha_2}^{k_2+1}:\ k_1, k_2\in\mathbb Z,\alpha_1\in\mathcal G_{1,k_1},
\alpha_2\in\mathcal G_{2,k_2}\right\}.
$$

\begin{proposition}\label{prop-dual}
Let $p\in(0,1]$. Then there exists a positive constant $C$ such that, for any $f, g\in L^2$,
\begin{equation}\label{eq-dual}
|\langle f,g\rangle|\le C\|S(f)\|_{L^p}\mathcal C_p(g).
\end{equation}
\end{proposition}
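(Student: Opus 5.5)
This is the standard Chang--Fefferman level-set (stopping-time) argument for product $H^p$--CMO duality, carried out on the sequence side. Since $f,g\in L^2$, orthonormality of $\{\psi_{\alpha_1,\alpha_2}^{k_1,k_2}\}$ (see \eqref{eq-re}) gives
$$\langle f,g\rangle=\sum_{k_1,\alpha_1}\sum_{k_2,\alpha_2}\left<f,\psi_{\alpha_1,\alpha_2}^{k_1,k_2}\right>\overline{\left<g,\psi_{\alpha_1,\alpha_2}^{k_1,k_2}\right>},$$
and the series converges absolutely. So it suffices to bound $\sum_{R\in\widetilde{\mathcal D}}|a_R||b_R|$, where $R=Q_{\alpha_1,\alpha_2}^{k_1+1,k_2+1}$, $a_R=\langle f,\psi_{\alpha_1,\alpha_2}^{k_1,k_2}\rangle$ and $b_R=\langle g,\psi_{\alpha_1,\alpha_2}^{k_1,k_2}\rangle$. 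Note that $S(f)=(\sum_R|a_R|^2\mu(R)^{-1}\mathbf 1_R)^{1/2}$.

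\textbf{Step 1: level sets.} For $j\in\mathbb Z$ put $\Omega_j:=\{(x_1,x_2):\ S(f)(x_1,x_2)>2^j\}$. Each $\Omega_j$ is open up to a null set, because $S(f)$ is built from indicators of dyadic cubes, which are open modulo boundaries. If needed I would enlarge $\Omega_j$ to an open set of comparable measure, for instance $\{\mathcal M_{\rm str}(\mathbf 1_{\Omega_j})>1/2\}$. Then $\mu(\Omega_j)\le 2^{-2j}\|S(f)\|_{L^2}^2<\infty$. Define
$$\widetilde\Omega_j:=\left\{\mathcal M_{\rm str}(\mathbf 1_{\Omega_j})>\tfrac12\right\},\qquad \mathcal B_j:=\left\{R\in\widetilde{\mathcal D}:\ \mu(R\cap\Omega_j)>\tfrac12\mu(R),\ \mu(R\cap\Omega_{j+1})\le\tfrac12\mu(R)\right\}.$$
Every $R$ with $a_R\neq0$ lies in exactly one $\mathcal B_j$. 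Indeed, $a_R\neq0$ forces $R\subset\{S(f)\ge|a_R|\mu(R)^{-1/2}\}$, so $\mu(R\cap\Omega_j)\to\mu(R)$ as $j\to-\infty$, while $\mu(\Omega_j)\to0$ as $j\to\infty$. Each $R\in\mathcal B_j$ satisfies $R\subset\widetilde\Omega_j$. By Proposition \ref{prop-fs}, $\mu(\widetilde\Omega_j)\lesssim\mu(\Omega_j)$; the $L^2$ bound of $\mathcal M_{\rm str}$ suffices here.

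\textbf{Step 2: Cauchy--Schwarz on each level.} For each $j$,
$$\sum_{R\in\mathcal B_j}|a_R||b_R|\le\Big(\sum_{R\in\mathcal B_j}|a_R|^2\Big)^{1/2}\Big(\sum_{R\subset\widetilde\Omega_j}|b_R|^2\Big)^{1/2}\le\Big(\sum_{R\in\mathcal B_j}|a_R|^2\Big)^{1/2}\mu(\widetilde\Omega_j)^{\frac1p-\frac12}\,\mathcal C_p(g),$$
where the last inequality is the definition of $\mathcal C_p$ applied to the open set $\widetilde\Omega_j$. For the first factor, every $R\in\mathcal B_j$ satisfies $\mu(R\cap(\widetilde\Omega_j\setminus\Omega_{j+1}))\ge\frac12\mu(R)$. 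Therefore
$$\sum_{R\in\mathcal B_j}|a_R|^2\le2\int_{\widetilde\Omega_j\setminus\Omega_{j+1}}\sum_R\frac{|a_R|^2}{\mu(R)}\mathbf 1_R\,d\mu\le2\int_{\widetilde\Omega_j\setminus\Omega_{j+1}}[S(f)]^2\,d\mu\lesssim2^{2j}\mu(\widetilde\Omega_j)\lesssim 2^{2j}\mu(\Omega_j),$$
using $S(f)\le2^{j+1}$ off $\Omega_{j+1}$.

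\textbf{Step 3: summing over $j$.} Combining the two bounds,
$$|\langle f,g\rangle|\lesssim\mathcal C_p(g)\sum_j2^j\mu(\Omega_j)^{1/p}\le\mathcal C_p(g)\Big(\sum_j2^{jp}\mu(\Omega_j)\Big)^{1/p}\sim\mathcal C_p(g)\|S(f)\|_{L^p}.$$
The middle inequality is the embedding $\ell^p\subset\ell^1$, which holds because $p\le1$. The final equivalence is the layer-cake formula.

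The main obstacle is Step 1. The sets $\Omega_j$ need not be open, and $\mathcal C_p$ tests only open sets of finite measure, so I must pass to the enlargement $\widetilde\Omega_j$ while keeping its measure controlled. Proposition \ref{prop-fs} supplies that control, and the $1/2$-density selection of $\mathcal B_j$ is what makes the containment $R\subset\widetilde\Omega_j$ work. The boundary null sets of dyadic cubes need a brief remark.
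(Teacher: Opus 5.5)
Your proposal is correct and follows the paper's proof essentially step for step: level sets of $S(f)$, the $\frac12$-density classes $\mathcal B_j$, the enlargement $\widetilde\Omega_j$ via $\mathcal M_{\rm str}$, Cauchy--Schwarz on each level with $\mathcal C_p(g)$ tested on $\widetilde\Omega_j$, the bound $\sum_{R\in\mathcal B_j}|a_R|^2\lesssim 2^{2j}\mu(\Omega_j)$ from integrating $[S(f)]^2$ over $\widetilde\Omega_j\setminus\Omega_{j+1}$, and $\ell^p\subset\ell^1$; your restriction to rectangles with $a_R\neq0$ cleanly replaces the paper's auxiliary class $\mathcal R_{-\infty}$. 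One constant needs fixing: $\mathcal M_{\rm str}$ averages over products of balls, and $R=Q_1\times Q_2$ is only comparable in measure to $B_1(Q_1)\times B_2(Q_2)$, so density $>\frac12$ in $R$ only yields $\mathcal M_{\rm str}(\mathbf 1_{\Omega_j})>c_0$ on $R$ for some small $c_0\in(0,1)$ determined by the doubling constant and Lemma~\ref{lem-cube}(c); hence $\widetilde\Omega_j$ should be defined with threshold $c_0$ instead of $\frac12$ (as in the paper), which still gives $\mu(\widetilde\Omega_j)\lesssim\mu(\Omega_j)$.
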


\begin{proof}
Without loss of generality, we may assume that $f\in H^p_{L^2}$,
$g\in\mathrm{CMO}^p_{L^2}$, and $\|f\|_{H^p_{L^2}}>0$.
For any $l\in\mathbb Z$, let
\begin{align*}
\Omega_l&:=\left\{(x_1,x_2)\in X_1\times X_2:\ S(f)(x_1,x_2)>2^l\right\},\\
\Omega_{-\infty}&:=\{(x_1,x_2)\in X_1\times X_2:\ S(f)(x_1,x_2)=0\},\\
\mathcal R_l&:=\left\{R\in\widetilde{\mathcal D}:\ \mu(R\cap\Omega_l)>\frac 12\mu(R) \textup{ and } \mu(R\cap\Omega_{l+1})
\le\frac 12\mu(R)\right\},
\end{align*}
and
$$
\mathcal R_{-\infty}:=\left\{R\in\widetilde{\mathcal D}:\ \mu(R\cap\Omega_{-\infty})\ge\frac 12 \mu(R) \right\}.
$$
Indeed, for any $R\in\widetilde{\mathcal D}$, there exists a unique $l\in\mathbb Z\cup\{-\infty\}$ such that $R\in\mathcal R_l$. Indeed,
if $R\in\widetilde{\mathcal D}$ but $R\notin\mathcal R_{-\infty}$, then
$$
\mu(\{(x_1,x_2)\in R:\ S(f)(x_1,x_2)>0\})>\frac 12\mu(R).
$$
Then we find that there exists an $l_0\in\mathbb Z$ such that $\mu(R\cap\Omega_{l_0})>\frac{\mu(R)}2$. Let
\begin{equation*}
l:=\max\left\{l_0\in\mathbb Z:\ \mu\left(R\cap\Omega_{l_0}\right)>\frac 12\mu(R)\right\}.
\end{equation*}
We next show that such $l$ exists and $R\in\mathcal R_l$. If $l$ does not exist, then, for any $l_0\in\mathbb Z$,
$\mu(R\cap\Omega_{l_0})>\frac{\mu(R)}{2}$, which further implies that
\begin{align*}
\left\|S(f)\right\|_{L^p}^p\ge\left\|S(f)\mathbf 1_{R\cap\Omega_{l_0}}\right\|_{L^p}^p
\ge 2^{l_0p-1}\mu(R)\to\infty
\end{align*}
as $l_0\to\infty$. This contradicts  $S(f)\in L^p$. Thus, such $l$ exists. Moreover, by the definition
of $l$, we have $R\in\mathcal R_l$.
Moreover, for any $(x_1,x_2)\in Q_{1,\alpha_1}^{k_1+1}\times Q_{2,\alpha_2}^{k_2+1}\in\mathcal R_l$ with $l\in\mathbb Z$,
by Lemma \ref{lem-cube}  we have
\begin{align*}
\mathcal M_{\rm str} \left(\mathbf{1}_{\Omega_l}\right)(x_1,x_2)&\ge\frac{1}{\mu(B_1(Q_{1,\alpha_1}^{k_1+1})\times B_2(Q_{2,\alpha_{2}}^{k_{2}+1}))}
\int_{B_1(Q_{1,\alpha_1}^{k_1+1})\times B_2(Q_{2,\alpha_{2}}^{k_{2}+1})}\mathbf{1}_{\Omega_l}(y_1,y_2)\,d\mu(y_1,y_2)\\
&\gtrsim\frac{1}{\mu(Q_{1,\alpha_1}^{k_1+1}\times Q_{2,\alpha_2}^{k_2+1})}
\int_{Q_{\alpha_1}^{1,k_1+1}\times Q_{\alpha_2}^{2,k_2+1}}\mathbf{1}_{\Omega_l}(y_1,y_2)\,d\mu(y_1,y_2) \gtrsim 1,
\end{align*}
which implies that there exists $c_0\in(0,1)$, independent of $l$, $k_1$, $k_2$, $\alpha_1$, and $\alpha_2$, such that
$$
Q_{1,\alpha_1}^{k_1+1}\times Q_{2,\alpha_2}^{k_2+1}\subset\left\{(x_1,x_2):  \mathcal M_{\rm str}(\mathbf{1}_{\Omega_l})(x_1,x_2)>c_0\right\}
=:\widetilde\Omega_l.
$$
From the definition of $\mathcal M_{\rm str}$ and Proposition \ref{prop-fs}, we deduce that,
for any $l\in\mathbb Z$, $\widetilde\Omega_l$ is open and
\begin{equation}\label{eq-omegasim}
\mu\left(\widetilde\Omega_l\right)\sim\mu(\Omega_l).
\end{equation}
Using \eqref{eq-re} and   H\"{o}lder's inequality, we conclude that
\begin{align}\label{eq-dual1}
|\langle f,g\rangle|&=\left|\sum_{\genfrac{}{}{0pt}{}{k_1\in{\mathbb Z}}{\alpha_1\in\mathcal G_{1,k_1}}}
\sum_{\genfrac{}{}{0pt}{}{k_2\in{\mathbb Z}}{\alpha_2\in\mathcal G_{2,k_2}}}
\left<f,\psi_{\alpha_1,\alpha_2}^{k_1,k_2}\right>\left<g,
\psi_{\alpha_1,\alpha_2}^{k_1,k_2}\right>\right|\nonumber\\
&\le\sum_{Q_{1,\alpha_1}^{k_1+1}\times Q_{2,\alpha_2}^{k_2+1}\in \mathcal R_{-\infty}}
\left|\left<f,\psi_{\alpha_1,\alpha_2}^{k_1,k_2}\right>\left<g,
\psi_{\alpha_1,\alpha_2}^{k_1,k_2}\right>\right|+\sum_{l\in\mathbb Z}
\sum_{Q_{1,\alpha_1}^{k_1+1}\times Q_{2,\alpha_2}^{k_2+1}\in \mathcal R_l}\cdots\nonumber\\
&=:\mathrm I+\mathrm{II}.
\end{align}

Now, we deal with $\rm I$. Indeed, let $R:=Q_{1,\alpha_1}^{k_1+1}\times Q_{2,\alpha_2}^{k_2+1}\in\mathcal R_{-\infty}$.
By this, we find that there exists an $(x_1,x_2)\in R$ such that $S(f)(x_1,x_2)=0$. Thus, by the
definition of $f$, we conclude that $\langle f,\psi_{\alpha_1,\alpha_2}^{k_1,k_2}\rangle=0$, which further shows
that ${\rm I}=0$.

We now estimate the term $\rm II$. Indeed, by   H\"older's inequality, we have
\begin{align*}
{\rm II}&\le\sum_{l\in\mathbb Z}\left(\sum_{Q_{1,\alpha_1}^{k_1+1}\times Q_{2,\alpha_2}^{k_2+1}\in\mathcal R_l}
\left|\left<f,\psi_{\alpha_1,\alpha_2}^{k_1,k_2}\right>\right|^2\right)^{\frac{1}{2}}
\left(\sum_{Q_{1,\alpha_1}^{k_1+1}\times Q_{2,\alpha_2}^{k_2+1}\in\mathcal R_l}
\left|\left<g,\psi_{\alpha_1,\alpha_2}^{k_1,k_2}\right>\right|^2\right)^{\frac{1}{2}}\nonumber\\
&\le\left[\sum_{l\in\mathbb Z}\left(\sum_{Q_{1,\alpha_1}^{k_1+1}\times Q_{2,\alpha_2}^{k_2+1}\in\mathcal R_l}
\left|\left<f,\psi_{\alpha_1,\alpha_2}^{k_1,k_2}\right>\right|^2\right)^{\frac{p}{2}}
\left(\sum_{Q_{1,\alpha_1}^{k_1+1}\times Q_{2,\alpha_2}^{k_2+1}\in\mathcal R_l}
\left|\left<g,\psi_{\alpha_1,\alpha_2}^{k_1,k_2}\right>\right|^2\right)^{\frac{p}{2}}
\right]^{\frac{1}{p}}\nonumber\\
&=:\left[\sum_{l\in\mathbb Z}\mathrm Y_{1,l}\mathrm Y_{2,l}\right]^{\frac{1}{p}}.\nonumber
\end{align*}

For the term $\mathrm Y_{2,l}$, using the definition of $\mathcal R_l$ and \eqref{eq-omegasim}, we obtain, for any $l\in\mathbb Z$,
\begin{equation}\label{eq-esty2l}
\mathrm Y_{2,l}\le\left(\sum_{Q_{1,\alpha_1}^{k_1+1}\times Q_{2,\alpha_2}^{k_2+1}\subset\widetilde\Omega_l}
\left|\left<g,\psi_{\alpha_1,\alpha_2}^{k_1,k_2}\right>\right|^2\right)^{\frac{p}{2}}
\le\mathcal C_p(g)^p\mu\left(\widetilde\Omega_l\right)^{1-\frac{p}{2}}\sim\mathcal C_p(g)^p\mu(\Omega_l)^{1-\frac{p}{2}},
\end{equation}
which is the desired estimate for $\mathrm Y_{2,l}$.

For $\mathrm Y_{1,l}$, we claim that, for any $l\in\mathbb Z$,
\begin{equation}\label{eq-cl}
\sum_{Q_{1,\alpha_1}^{k_1+1}\times Q_{2,\alpha_2}^{k_2+1}\in\mathcal R_l}
\left|\left<f,\psi_{\alpha_1,\alpha_2}^{k_1,k_2}\right>\right|^2\lesssim 2^{2l}\mu(\Omega_l).
\end{equation}
Indeed, on one hand, by the definition of $\Omega_{l+1}$ and \eqref{eq-omegasim}, we find that
$$
\int_{\widetilde\Omega_l\setminus\Omega_{l+1}}[S(f)(x_1,x_2)]^2\,d\mu(x_1,x_2)\lesssim 2^{2l}\mu\left(\widetilde\Omega_l\right)
\sim 2^{2l}\mu(\Omega_l).
$$
On the other hand, from the definitions of $S(f)$ and $\widetilde\Omega_{l}$, we also infer that
\begin{align*}
&\int_{\widetilde\Omega_l\setminus\Omega_{l+1}}[S(f)(x_1,x_2)]^2\,d\mu(x_1,x_2)\\
&\quad\ge\int_{\widetilde\Omega_l\setminus\Omega_{l+1}}\sum_{Q_{\alpha_1,\alpha_2}^{k_1+1,k_2+1}
\in\mathcal R_l}
\left|\left<f,\psi_{\alpha_1,\alpha_2}^{k_1,k_2}\right>\right|^2
\left[\mu\left(Q_{\alpha_1,\alpha_2}^{k_1+1,k_2+1}\right)\right]^{-1}
\mathbf{1}_{Q_{\alpha_1,\alpha_2}^{k_1+1,k_2+1}}(x_1,x_2)\,d\mu(x_1,x_2)\\
&\quad=\sum_{Q_{\alpha_1,\alpha_2}^{k_1+1,k_2+1}\in\mathcal R_l}\left|\left<f,\psi_{\alpha_1,\alpha_2}^{k_1,k_2}\right>\right|^2
\left[\mu\left(Q_{\alpha_1,\alpha_2}^{k_1+1,k_2+1}\right)\right]^{-1}
\mu\left(\widetilde\Omega_l\cap\Omega_{l+1}^\complement\cap Q_{\alpha_1,\alpha_2}^{k_1+1,k_2+1}\right)\\
&\quad\ge\frac 12\sum_{Q_{\alpha_1,\alpha_2}^{k_1+1,k_2+1}\in\mathcal R_l}\left|\left<f,\psi_{\alpha_1,\alpha_2}^{k_1,k_2}\right>\right|^2.
\end{align*}
The above two inequalities show \eqref{eq-cl}. Thus, using \eqref{eq-dual1}, \eqref{eq-esty2l}, and \eqref{eq-cl},
we conclude that
\begin{align*}
|\langle f,g\rangle|
&\lesssim\mathcal C_p(g)\left\{\sum_{k\in\mathbb Z} 2^{pl}[\mu(\Omega_l)]^{\frac{2}{p}}[\mu(\Omega_l)]^{1-\frac{2}{p}}\right\}
^{\frac{1}{p}}
\sim\mathcal C_p(g)\left[\sum_{k\in\mathbb Z} 2^{pl}\mu(\Omega_l)\right]^{\frac{1}{p}}\\
&\sim\|S(f)\|_{L^p}\mathcal C_p(f),
\end{align*}
which completes the proof of \eqref{eq-dual} and hence   Proposition \ref{prop-dual}.
\end{proof}

Obviously, by the definitions of $H^p_{L^2}$ and ${\rm CMO}^p_{L^2}$, we find that, for any
$k_1,k_2\in\mathbb Z$, $\alpha_1\in\mathcal G_{1,k_1}$, $\alpha_2\in\mathcal G_{2,k_2}$, and $p\in(0,1]$,
$\psi_{\alpha_1,\alpha_2}^{k_1,k_2}$ belongs to both $H^p_{L^2}$
and $\mathrm{CMO}^p_{L^2}$. Therefore, for any $f\in(\mathrm{CMO}^p_{L^2})'$ or $f\in(H^p_{L^2})'$,
the symbol $\langle f,\psi_{\alpha_1,\alpha_2}^{k_1,k_2}\rangle$ can be properly understood within the framework of duality.
Consequently, for any $f\in (\mathrm{CMO}^p_{L^2})'$
[resp. $f\in(H^p_{L^2})'$], $S(f)$ [resp. $\mathcal C_p(f)$]
is well-defined. Thus, we may define the \emph{Hardy
space $H^p$} and the \emph{Carleson measure space $\mathrm{CMO}^p$} as follows.

\begin{definition}\label{def-hcmo}
Let $p\in(0,1]$. The \emph{Hardy space} $H^p$ is defined as
\begin{align*}
H^p&:=\left\{f\in\left(\mathrm{CMO}^p_{L^2}\right)':\ \textup{\eqref{eq-re} holds in } \left(\mathrm{CMO}^p_{L^2}\right)'
\textup{and}
~~
\|f\|_{H^p}:=\|S(f)\|_{L^p}<\infty \right\}.
\end{align*}
The  \emph{Carleson measure space} $\mathrm{CMO}^p$ is defined as
\begin{align*}
\mathrm{CMO}^p&:=\left\{g\in\left(H^p_{L^2}\right)':\
\textup{\eqref{eq-re} holds in } \left(H^p_{L^2}\right)'
~~\textup{and}~~
\|g\|_{\mathrm{CMO}^p}:=\mathcal C_p(g)<\infty
\right\}.
\end{align*}
\end{definition}

\begin{remark}
In \cite{hlw18}, Han et al. introduced product Hardy spaces by using the 
product  test function space
$\mathring{G}(\beta_1,\beta_2,\gamma_1, \gamma_2)$, which exactly coincides 
with  those product Hardy spaces \cite{hll16-Pisa}.
However, if one considers a wider
generalization of Hardy spaces, it is \emph{difficult} to find a direct space of test functions such that a proper
Calder\'on reproducing formula holds in this space. Using the method presented here, we only need to show that the proper Calder\'on reproducing formula holds in $L^2$ which is independent of Hardy spaces, and then use an expansion way to define the desired Hardy spaces.
\end{remark}

The following proposition gives the dense subspaces of product Hardy
and Carleson measure spaces.

\begin{proposition}\label{prop-dense}
Let $p\in(0,1]$. Then the following statements hold:
\begin{enumerate}
\item $H^p_{L^2}$ is a dense subspace of $H^p$;
\item $\mathrm{CMO}^p_{L^2}$ is dense in $\mathrm{CMO}^p$ in the weak sense that, for any
$g\in\mathrm{CMO}^p$, there exists a sequence $\{g_N\}_{N=1}^\infty\subset\mathrm{CMO}^p_{L^2}$ such that
\begin{align*}
 \sup_{N\in\mathbb N}\|g_N\|_{\mathrm{CMO}^p}\le\|g\|_{\mathrm{CMO}^p}~~\text{and}~~
 \lim_{N\to\infty} g_N=g~~\text{in}~~(H^p_{L^2})'.
\end{align*}
\end{enumerate}
\end{proposition}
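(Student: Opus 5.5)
The plan is to approximate an element by truncated wavelet expansions and control the error through square functions and Carleson sums. Fix an exhausting sequence of finite index sets: for $N\in\mathbb N$ let $\Lambda_N$ be the set of $(k_1,\alpha_1,k_2,\alpha_2)$ with $|k_1|,|k_2|\le N$ and $d_1(y^{k_1}_{1,\alpha_1},x_1^0)\le N$, $d_2(y^{k_2}_{2,\alpha_2},x_2^0)\le N$ for fixed base points. By the doubling property and the separation in Lemma \ref{lem-cube}, each $\Lambda_N$ is finite. For any $f$ in $(\mathrm{CMO}^p_{L^2})'$ or $(H^p_{L^2})'$, set
$$f_N:=\sum_{\Lambda_N}\left<f,\psi_{\alpha_1,\alpha_2}^{k_1,k_2}\right>\psi_{\alpha_1,\alpha_2}^{k_1,k_2}.$$
This is a finite linear combination of wavelets, so $f_N\in L^2$. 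By orthonormality, $\langle f_N,\psi_{\alpha_1,\alpha_2}^{k_1,k_2}\rangle$ equals $\langle f,\psi_{\alpha_1,\alpha_2}^{k_1,k_2}\rangle\mathbf 1_{\Lambda_N}$.

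For (i), take $f\in H^p$. First, $H^p_{L^2}\subset H^p$: if $f\in L^2$ then $\langle f,g\rangle$ is a bounded functional on $\mathrm{CMO}^p_{L^2}$ by Proposition \ref{prop-dual}. Moreover \eqref{eq-re} holds in $(\mathrm{CMO}^p_{L^2})'$, because the partial sums converge to $f$ in $L^2$, and for each fixed $g\in L^2$ the pairing converges. Next, the coefficient formula gives $S(f_N)\le S(f)$ pointwise, so $f_N\in H^p_{L^2}$. Also $S(f-f_N)$ is the square function built from the coefficients outside $\Lambda_N$. It is dominated by $S(f)\in L^p$ and tends to $0$ pointwise as $N\to\infty$, since each point's sum is a convergent series whose tail vanishes. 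Dominated convergence then gives $\|f-f_N\|_{H^p}=\|S(f-f_N)\|_{L^p}\to0$. Here $f-f_N$ is understood in $(\mathrm{CMO}^p_{L^2})'$, and its coefficients are as stated because pairing against a single wavelet is continuous. This proves density.

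For (ii), take $g\in\mathrm{CMO}^p$ and let $g_N$ be the truncation above. For every open $\Omega$, the Carleson sum of $g_N$ over rectangles in $\Omega$ is a subsum of that for $g$, so $\mathcal C_p(g_N)\le\mathcal C_p(g)$, and in particular $g_N\in\mathrm{CMO}^p_{L^2}$. It remains to show $\langle g_N,f\rangle\to\langle g,f\rangle$ for every $f\in H^p_{L^2}$. By the definition of $\mathrm{CMO}^p$, \eqref{eq-re} holds in $(H^p_{L^2})'$. Hence
$$\langle g,f\rangle=\sum\langle g,\psi\rangle\langle\psi,f\rangle,$$
and the question is whether the limit along $\Lambda_N$ agrees with the limit in the sense used in \eqref{eq-re}.

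This ordering issue is the main obstacle. My approach is to prove absolute convergence:
$$\sum|\langle g,\psi\rangle||\langle f,\psi\rangle|\lesssim\|S(f)\|_{L^p}\mathcal C_p(g).$$
This follows from the proof of Proposition \ref{prop-dual}, which in fact bounds the sum of absolute products $\mathrm I+\mathrm{II}$ using only the coefficients. The same argument applies verbatim when $g$ is merely a coefficient sequence with finite Carleson norm. Once the double series converges absolutely, its value is independent of the exhaustion used. Therefore $\langle g_N,f\rangle$, which is exactly the $\Lambda_N$-partial sum, converges to $\langle g,f\rangle$. The same absolute-convergence remark justifies the identity $\langle f_N,\cdot\rangle$ used in part (i).
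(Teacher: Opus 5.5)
Your proposal is correct and follows the paper's proof essentially verbatim: your $\Lambda_N$ is the paper's $\mathcal I_N$ from \eqref{eq-defin}, and you use the same truncated wavelet expansions, the pointwise domination $S(f_N)\le S(f)$ with dominated convergence for (i), and the subsum bound $\mathcal C_p(g_N)\le\mathcal C_p(g)$ for (ii). Your extra step in (ii), which extracts absolute convergence of $\sum|\langle g,\psi\rangle||\langle f,\psi\rangle|$ from the proof of Proposition \ref{prop-dual} so that the limit along $\mathcal I_N$ agrees with the expansion \eqref{eq-re}, justifies a point the paper passes over by simply citing \eqref{eq-re}.
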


\begin{proof}
Fix $(x^{(0)}_1,x^{(0)}_2)\in X_1\times X_2$ and, for any $N\in\mathbb N$, let
\begin{align}\label{eq-defin}
 \mathcal I_N:=\left\{(k_1,k_2,\alpha_1,\alpha_2):\ |k_1|\le N, |k_2|\le N, d_1\left(y_{1,\alpha_1}^{k_1},x^{(0)}_1\right)\le N,
 d_2\left(y_{2,\alpha_2}^{k_2},x^{(0)}_2\right)\le N\right\}.
\end{align}
By the doubling condition \eqref{eq-doub}, we find that, for any $N\in\mathbb N$, $\mathcal I_N$ is finite.

We first prove (i). For any $f\in H^p$, let
$$
f_N:=\sum_{(k_1,k_2,\alpha_1,\alpha_2)\in\mathcal I_N}\left<f,\psi_{\alpha_1,\alpha_2}^{k_1,k_2}\right>\psi_{\alpha_1,\alpha_2}^{k_1,k_2}.
$$
Since the summation here is finite, it follows that $f_N\in L^2$. Moreover, by the
orthogonality of $\{\psi_{\alpha_1,\alpha_2}^{k_1,k_2}\}$ and \eqref{eq-re}, we conclude that, for any
$N\in\mathbb N$, $0\le S(f_N)\le S(f)$ and $\lim_{N\to\infty}
S(f_N)=S(f)$ pointwise. Thus, by the fact $S(f)\in L^p$ and the
dominated convergence theorem, we have $\lim_{N\to\infty}\|S(f_N-f)\|_{L^p}=0$,
which implies that $f_N\to f$ in $H^p$ as $N\to\infty$. This finishes the proof of (i).

Now we prove (ii). For any $g\in\mathrm{CMO}^p$ and $N\in\mathbb N$, let
$$
g_N:=\sum_{(k_1,k_2,\alpha_1,\alpha_2)\in\mathcal I_N}\left<g,\psi_{\alpha_1,\alpha_2}^{k_1,k_2}\right>\psi_{\alpha_1,\alpha_2}^{k_1,k_2}.
$$
Then, applying an argument similar to that used in the proof of (i), we find that $g_N\in\mathrm{CMO}^p_{L^2}$
and $\|g_N\|_{\mathrm{CMO}^p} \le\|g\|_{\mathrm{CMO}^p}$. Moreover, by \eqref{eq-re}, we have $\lim_{N\to\infty} g_N=g$ in
$(H^p_{L^2})'$. This finishes the proof of (ii) and hence  Proposition \ref{prop-dense}.
\end{proof}

In the remainder of this section, we give some fundamental properties of product Hardy spaces, which will be used in
the remainder of this article. The following proposition is a generalization of Proposition \ref{prop-dual}, whose proof
is similar to that of Proposition \ref{prop-dual} and we omit the details here.

\begin{proposition}\label{prop-fg}
Let $p\in(0,1]$. Then there exists a constant $C\in(0,\infty)$ such that, for any $f\in H^p$ and
$g\in{\rm CMO}^p_{L^2}$, [resp.\ $f\in H^p_{L^2}$ and $g\in\mathrm{CMO}^p $],
$$
|\langle f,g\rangle|\le C\|S(f)\|_{L^p}\mathcal C_p(g) \ [\textit{resp.\ }
|\langle g,f\rangle|\le C\|S(f)\|_{L^p}\mathcal C_p(g)].
$$
\end{proposition}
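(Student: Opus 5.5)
We will reduce Proposition \ref{prop-fg} to Proposition \ref{prop-dual} by the density results of Proposition \ref{prop-dense}, working with the truncated wavelet expansions. Consider first $f\in H^p$ and $g\in\mathrm{CMO}^p_{L^2}$. For $N\in\mathbb N$ let $f_N$ be the truncation of $f$ over the finite index set $\mathcal I_N$ from \eqref{eq-defin}. Then $f_N\in L^2$, and $S(f_N)\le S(f)$ pointwise. Since $f\in H^p$, the expansion \eqref{eq-re} holds in $(\mathrm{CMO}^p_{L^2})'$, so pairing with $g$ gives
$$
\langle f,g\rangle=\lim_{N\to\infty}\langle f_N,g\rangle
=\lim_{N\to\infty}\sum_{(k_1,k_2,\alpha_1,\alpha_2)\in\mathcal I_N}
\left<f,\psi_{\alpha_1,\alpha_2}^{k_1,k_2}\right>\left<g,\psi_{\alpha_1,\alpha_2}^{k_1,k_2}\right>.
$$
Here $\langle f_N,g\rangle$ is the genuine $L^2$ pairing, because the sum is finite.

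Applying Proposition \ref{prop-dual} to the pair $f_N,g\in L^2$ gives
$$
|\langle f_N,g\rangle|\le C\|S(f_N)\|_{L^p}\mathcal C_p(g)\le C\|S(f)\|_{L^p}\mathcal C_p(g),
$$
with $C$ independent of $N$. Letting $N\to\infty$ yields the first inequality.

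The symmetric case, $f\in H^p_{L^2}$ and $g\in\mathrm{CMO}^p$, goes the same way with the roles exchanged. We take the truncations $g_N$ from Proposition \ref{prop-dense}(ii). These satisfy $g_N\in\mathrm{CMO}^p_{L^2}$, $\mathcal C_p(g_N)\le\mathcal C_p(g)$, and $g_N\to g$ in $(H^p_{L^2})'$, so $\langle g,f\rangle=\lim_N\langle g_N,f\rangle$. Proposition \ref{prop-dual} gives $|\langle g_N,f\rangle|\le C\|S(f)\|_{L^p}\mathcal C_p(g_N)$, and the bound $\mathcal C_p(g_N)\le\mathcal C_p(g)$ finishes the argument.

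The only point needing care is the monotonicity $\mathcal C_p(g_N)\le\mathcal C_p(g)$ and $S(f_N)\le S(f)$. Both hold because truncation simply removes wavelet coefficients. Indeed, $S$ and $\mathcal C_p$ are defined through sums of $|\langle\cdot,\psi_{\alpha_1,\alpha_2}^{k_1,k_2}\rangle|^2$, and by orthonormality the coefficients of the truncation coincide with those of the original function on $\mathcal I_N$ and vanish elsewhere.

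The main obstacle is therefore only to justify that the duality pairing is the limit of the truncated pairings. This is exactly the requirement that \eqref{eq-re} holds in the relevant dual space, which is built into Definition \ref{def-hcmo}.

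Alternatively, one could repeat the stopping-time argument from the proof of Proposition \ref{prop-dual} verbatim. That argument uses only the coefficient identity $\langle f,g\rangle=\sum\langle f,\psi\rangle\langle g,\psi\rangle$, which is again supplied by Definition \ref{def-hcmo}, together with the definitions of $S(f)$ and $\mathcal C_p(g)$ in terms of coefficients.
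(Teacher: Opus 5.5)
Your proof is correct, but it is organized differently from the paper's. The paper only says the proof is ``similar to that of Proposition~\ref{prop-dual}'', meaning one reruns the stopping-time argument (the level sets $\Omega_l$, their enlargements $\widetilde\Omega_l$, the classes $\mathcal R_l$, and the estimate \eqref{eq-cl}) directly for the distribution $f\in H^p$ (resp.\ $g\in\mathrm{CMO}^p$). Its only input is the coefficient identity for $\langle f,g\rangle$ supplied by Definition~\ref{def-hcmo}; this is the alternative you mention at the end.

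Your main route instead treats Proposition~\ref{prop-dual} as a black box. You apply it to the truncations over $\mathcal I_N$, use that truncation can only decrease $S$ and $\mathcal C_p$, and pass to the limit via the convergence of \eqref{eq-re} in the relevant dual. There is no circularity, since Proposition~\ref{prop-dense} does not use Proposition~\ref{prop-fg}. This spares you from re-checking the stopping-time machinery for distributions and isolates exactly what is needed.

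What the paper's direct route buys is a bound on $\sum|\langle f,\psi_{\alpha_1,\alpha_2}^{k_1,k_2}\rangle\langle g,\psi_{\alpha_1,\alpha_2}^{k_1,k_2}\rangle|$. So the pairing series converges absolutely, and the unspecified order of summation in ``\eqref{eq-re} holds in $(\mathrm{CMO}^p_{L^2})'$'' is irrelevant. Your argument needs convergence along the specific exhaustion $\{\mathcal I_N\}_{N}$. That is acceptable, since it is how the paper itself reads the definition in the proof of Proposition~\ref{prop-dense}(ii).

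If you want the same robustness, the black box also yields absolute convergence. Apply Proposition~\ref{prop-dual} to $f_N$ and
\begin{equation*}
\widetilde g_N:=\sum_{(k_1,k_2,\alpha_1,\alpha_2)\in\mathcal I_N}\epsilon_{\alpha_1,\alpha_2}^{k_1,k_2}\left<g,\psi_{\alpha_1,\alpha_2}^{k_1,k_2}\right>\psi_{\alpha_1,\alpha_2}^{k_1,k_2},
\end{equation*}
with unimodular $\epsilon_{\alpha_1,\alpha_2}^{k_1,k_2}$ chosen so that $\langle f_N,\widetilde g_N\rangle$ equals the truncated sum of $|\langle f,\psi_{\alpha_1,\alpha_2}^{k_1,k_2}\rangle\langle g,\psi_{\alpha_1,\alpha_2}^{k_1,k_2}\rangle|$. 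Since $\mathcal C_p$ depends only on the moduli of the coefficients, $\mathcal C_p(\widetilde g_N)\le\mathcal C_p(g)$. The same trick works in the symmetric case.
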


The next proposition shows the completeness of product Hardy spaces.

\begin{proposition}\label{prop-com1}
Let $p\in(0,1]$ and $\{f_n\}_{n=1}^\infty$ be a Cauchy sequence on $H^p$. Then there exists
$f\in H^p$ such that $\lim_{n\to\infty}\|f_n-f\|_{H^p}=0$, that is, $f$ satisfies that $f\in({\rm CMO}^p_{L^2})'$,
$S(f)\in L^p$,
$$
f=\sum_{\genfrac{}{}{0pt}{}{k_1\in{\mathbb Z}}{\alpha_1\in\mathcal G_{1,k_1}}}
\sum_{\genfrac{}{}{0pt}{}{k_2\in{\mathbb Z}}{\alpha_2\in\mathcal G_{2,k_2}}}
\left<f,\psi_{\alpha_1,\alpha_2}^{k_1,k_2}\right>\psi_{\alpha_1,\alpha_2}^{k_1,k_2}
$$
in $({\rm CMO}^p_{L^2})'$, and $\lim\limits_{n\to\infty}\|S(f_n-f)\|_{L^p}=0$.
\end{proposition}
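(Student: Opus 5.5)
The plan is to build $f$ from the wavelet coefficients of the sequence and then verify each of the four required properties directly. Write $\lambda^{(n)}_{\alpha_1,\alpha_2}{}^{k_1,k_2}:=\langle f_n,\psi_{\alpha_1,\alpha_2}^{k_1,k_2}\rangle$.

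\textbf{Step 1: the coefficients converge.} Fix an index $(k_1,k_2,\alpha_1,\alpha_2)$ and put $R:=Q_{\alpha_1,\alpha_2}^{k_1+1,k_2+1}$. On $R$ the square function dominates a single term:
\[
S(f_n-f_m)\ge |\langle f_n-f_m,\psi_{\alpha_1,\alpha_2}^{k_1,k_2}\rangle|\,[\mu(R)]^{-1/2}\mathbf 1_R .
\]
Taking $L^p$ norms gives
\[
|\langle f_n-f_m,\psi_{\alpha_1,\alpha_2}^{k_1,k_2}\rangle|\le [\mu(R)]^{\frac12-\frac1p}\|S(f_n-f_m)\|_{L^p}.
\]
Hence each coefficient sequence is Cauchy in $\mathbb C$. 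Denote its limit by $\lambda_{\alpha_1,\alpha_2}^{k_1,k_2}$.

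\textbf{Step 2: norm convergence of the square functions.} Define
\[
S_\lambda(x_1,x_2):=\Big[\sum|\lambda_{\alpha_1,\alpha_2}^{k_1,k_2}|^2\,\widetilde{\mathbf 1}_{Q_{1,\alpha_1}^{k_1+1}}(x_1)^2\,\widetilde{\mathbf 1}_{Q_{2,\alpha_2}^{k_2+1}}(x_2)^2\Big]^{1/2},
\]
and let $S_{\lambda-\lambda^{(n)}}$ be the same expression with $\lambda$ replaced by $\lambda-\lambda^{(n)}$. Since $\|\cdot\|_{L^p}^p$ is subadditive, Fatou's lemma applied as $m\to\infty$ to the pointwise convergent sums gives
\[
\|S_{\lambda-\lambda^{(n)}}\|_{L^p}\le\varliminf_{m\to\infty}\|S(f_m-f_n)\|_{L^p}\to0 .
\]
It follows that $S_\lambda\in L^p$.

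\textbf{Step 3: constructing $f$.} For $g\in\mathrm{CMO}^p_{L^2}$ define
\[
\langle f,g\rangle:=\sum\lambda_{\alpha_1,\alpha_2}^{k_1,k_2}\langle\psi_{\alpha_1,\alpha_2}^{k_1,k_2},g\rangle .
\]
To see that this defines an element of $(\mathrm{CMO}^p_{L^2})'$ with absolute convergence, run the stopping-time argument of Proposition \ref{prop-dual} with $S(f)$ replaced by $S_\lambda$. That proof only uses the coefficient sequence and its square function, so it applies verbatim to the absolute series and yields
\[
\sum|\lambda_{\alpha_1,\alpha_2}^{k_1,k_2}|\,|\langle g,\psi_{\alpha_1,\alpha_2}^{k_1,k_2}\rangle|\lesssim\|S_\lambda\|_{L^p}\,\mathcal C_p(g).
\]
Thus $f$ is a bounded linear functional on $\mathrm{CMO}^p_{L^2}$.

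\textbf{Step 4: verifying the required properties.} Take $g=\psi_{\alpha_1,\alpha_2}^{k_1,k_2}$; by orthonormality $\langle f,\psi_{\alpha_1,\alpha_2}^{k_1,k_2}\rangle=\lambda_{\alpha_1,\alpha_2}^{k_1,k_2}$. Hence $S(f)=S_\lambda\in L^p$, and $\|S(f_n-f)\|_{L^p}=\|S_{\lambda-\lambda^{(n)}}\|_{L^p}\to0$ by Step 2. For the reproducing identity \eqref{eq-re} in $(\mathrm{CMO}^p_{L^2})'$, take the finite partial sums over the sets $\mathcal I_N$ from \eqref{eq-defin}. Their pairing with $g$ converges to $\langle f,g\rangle$ because the series in Step 3 converges absolutely. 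Convergence in the weak-$*$ sense along the net of all finite sets is obtained the same way.

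\textbf{Main obstacle.} The delicate step is Step 3: transferring the duality estimate from $L^2$ functions to a bare coefficient sequence, and doing so with absolute convergence. To handle it, I would restate the proof of Proposition \ref{prop-dual} in terms of sequences. In claim \eqref{eq-cl} and in term I, the function $S(f)$ should be replaced by $S_\lambda$. Term I then still vanishes: if $R\in\mathcal R_{-\infty}$, then $S_\lambda=0$ at some point of $R$, so the corresponding coefficient is zero.
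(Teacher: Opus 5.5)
Your proof is correct. It differs from the paper's mainly in how $f$ is produced.

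The paper defines $f$ as the weak-$*$ limit of $f_n$. Proposition \ref{prop-fg} applied to $f_m-f_n$ shows that $\langle f_n,\varphi\rangle$ is Cauchy for every $\varphi\in\mathrm{CMO}^p_{L^2}$. This gives $f\in(\mathrm{CMO}^p_{L^2})'$ immediately, and, taking $\varphi=\psi_{\alpha_1,\alpha_2}^{k_1,k_2}$, convergence of the coefficients at no cost. Fatou's lemma then yields $S(f)\in L^p$ and $\|S(f_n-f)\|_{L^p}\to0$, and the reproducing identity is checked at the end by a three-term estimate.

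You instead get the coefficient limits elementarily, from the lower bound of $S(f_n-f_m)$ on a single rectangle. You then synthesize $f$ from its coefficients, so \eqref{eq-re} holds by construction. The price is the coefficient-level version of Proposition \ref{prop-dual} with absolute convergence. You identify and justify this correctly: the stopping-time argument uses the coefficients only through $S_\lambda$ and the fact $S_\lambda\in L^p$.

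This is not really an extra cost compared with the paper. Its final display tacitly needs exactly the same sequence-level estimate to control $\bigl|\sum\langle f_n-f,\psi_{\alpha_1,\alpha_2}^{k_1,k_2}\rangle\langle\varphi,\psi_{\alpha_1,\alpha_2}^{k_1,k_2}\rangle\bigr|$, and even to give meaning to the series for $f$. Proposition \ref{prop-fg} cannot be applied to $f-f_n$ before $f\in H^p$, including the reproducing formula, is known.

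So your route isolates the one estimate on which both arguments rest. The weak-$*$ convergence $f_n\to f$, which the paper gets for free, follows in your setting from the same estimate applied to $\lambda^{(n)}-\lambda$, together with the reproducing formula for each $f_n$.
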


\begin{proof}
Since $\{f_n\}_{n=1}^\infty\subset H^p$ is a Cauchy sequence of $H^p$, it follows from Proposition
\ref{prop-fg} that, for any $m,n\in\mathbb N$ and $\varphi\in {\rm CMO}^p_{L^2}$,
$$
|\langle f_m-f_n,\varphi\rangle|\lesssim\|S(f_m-f_n)\|_{L^p}\mathcal C_p(\varphi)\to 0
$$
as $m,n\to\infty$. This shows that $\{\langle f_n,\varphi\rangle\}_{n=1}^\infty$ is a Cauchy sequence of $\mathbb C$.
Thus, we may define $f$ by setting, for any $\varphi\in{\rm CMO}^p_{L^2}$,
$$
\langle f,\varphi\rangle:=\lim_{n\to\infty}\langle f_n,\varphi\rangle.
$$
By this and Proposition \ref{prop-fg}, we find that $f\in({\rm CMO}^p_{L^2})'$ and
$\lim\limits_{n\to\infty}f_n=f$ in $({\rm CMO}^p_{L^2})'$. Moreover, from Fatou's lemma, we deduce that,
for any $(x_1,x_2)\in X_1\times X_2$,
\begin{align*}
S(f)(x_1,x_2)
&=\left[\sum_{k_1,\alpha_1}\sum_{k_2,\alpha_2}\left|\left<f,\psi_{\alpha_1,\alpha_2}^{k_1,k_2}\right>
\widetilde{\mathbf 1}_{Q_{\alpha_1,\alpha_2}^{k_1+1,k_2+1}} (x_1, x_2)\right|^2\right]^{\frac 12}\\
&\le\varliminf\limits_{n\to\infty}\left[\sum_{k_1,\alpha_1}\sum_{k_2,\alpha_2}
\left|\left<f_n,\psi_{\alpha_1,\alpha_2}^{k_1,k_2}\right>
\widetilde{\mathbf 1}_{Q_{\alpha_1,\alpha_2}^{k_1+1,k_2+1}}(x_1, x_2)\right|^2\right]^{\frac 12}
=\varliminf\limits_{n\to\infty}S(f_n)(x_1,x_2),
\end{align*}
which, combined with  Fatou's lemma again, further implies that
$$
\|S(f)\|_{L^p}\le
\varliminf\limits_{n\to\infty}
\|S(f_n)\|_{L^p}<\infty.
$$
Similarly, since $\{f_n\}_{n=1}^\infty$ is a Cauchy sequence of $H^p$, it then follows that
\begin{equation*}
\lim_{n\to\infty}\|S(f_n-f)\|_{L^p}= 0.
\end{equation*}

Finally, we show that the wavelet reproducing formula holds for any $f \in ({\rm CMO}^p_{L^2})'$. Indeed, fix
$\varphi\in {\rm CMO}^p_{L^2}$. We have
\begin{align*}
&\left|\langle f,\varphi\rangle-\sum_{k_1,\alpha_1}\sum_{k_2,\alpha_2}
\left<f,\psi_{\alpha_1,\alpha_2}^{k_1,k_2}\right>\left<\varphi,
\psi_{\alpha_1,\alpha_2}^{k_1,k_2}\right>\right|\\
&\quad\le|\langle f-f_n,\varphi\rangle|
+\left|\langle f_n,\varphi\rangle-\sum_{k_1,\alpha_1}\sum_{k_2,\alpha_2}
\left<f,\psi_{\alpha_1,\alpha_2}^{k_1,k_2}\right>\left<\varphi,
\psi_{\alpha_1,\alpha_2}^{k_1,k_2}\right>\right|\\
&\qquad+\left|\sum_{k_1,\alpha_1}\sum_{k_2,\alpha_2}
\left<f_n-f,\psi_{\alpha_1,\alpha_2}^{k_1,k_2}\right>\left<\varphi,
\psi_{\alpha_1,\alpha_2}^{k_1,k_2}\right>\right|\\
&\quad\lesssim\|S(f-f_n)\|_{L^p}\mathcal C_p(\varphi)\to 0
\end{align*}
as $n\to\infty$. This finishes the proof of Proposition \ref{prop-com1}.
\end{proof}

Next, we show that Hardy spaces are independent of the choice of wavelet systems. Let
$$
\mathcal W:=\{\psi_{\alpha_1,\alpha_2}^{k_1,k_2}:\ k_1,k_2\in\mathbb Z,\alpha_1\in\mathcal G_{1,k_1},
\alpha_2\in\mathcal G_{2,k_2}\}
$$
and
$$
\widetilde{\mathcal W}:=\left\{\widetilde{\psi}_{\widetilde{\alpha}_1,\widetilde{\alpha}_2}^{\widetilde k_1,\widetilde k_2}
:\ \widetilde k_1,\widetilde k_2\in\mathbb Z,\widetilde\alpha_1\in\widetilde{\mathcal G}_{1,\widetilde k_1},
\widetilde \alpha_2\in\widetilde{\mathcal G}_{2,\widetilde k_2}\right\}
$$
be two wavelet systems on $X_1\times X_2$. By Definition \ref{def-hcmo}, for any $p\in(0,1]$,
we use $H^p(\mathcal W)$ to denote the Hardy space $H^p$ associated with the wavelet system $\mathcal W$. The next
proposition shows the equivalence of two (quasi-)norms.

\begin{proposition}\label{prop-hpweq}
Let $\mathcal W$ and $\widetilde{\mathcal W}$ be two wavelet systems, and
let $p\in(0,1]$. Then there exist constants
$C_1,C_2\in(0,\infty)$ such that, for any $f\in L^2$,
\begin{equation}\label{eq-hpwep}
C_1\|S(f)\|_{L^p}\le\left\|\widetilde{S}(f)\right\|_{L^p}\le C_2\|S(f)\|_{L^p}.
\end{equation}
\end{proposition}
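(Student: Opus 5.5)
By symmetry between $\mathcal W$ and $\widetilde{\mathcal W}$, it suffices to prove $\|\widetilde S(f)\|_{L^p}\lesssim\|S(f)\|_{L^p}$ for every $f\in L^2$ with $S(f)\in L^p$. Since the statement covers all $p\in(0,1]$, I will not compare coefficients pointwise via Proposition \ref{prop-estmax}: the factor $\delta^{[(k\wedge k')-k]\omega(\frac1r-1)}$ there must be beaten by the almost-orthogonality decay $\delta^{|k-k'|\eta}$, which forces $p>\frac{\omega}{\omega+\eta}$. Instead I will follow the Chang--Fefferman level-set decomposition already used in the proof of Proposition \ref{prop-dual}. Its only quantitative inputs are $L^2$ bounds and the geometry of the level sets, and $L^2$ bounds are free because both systems are orthonormal: $\|\widetilde S(h)\|_{L^2}=\|h\|_{L^2}=\|S(h)\|_{L^2}$ for all $h\in L^2$.

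\textbf{Step 1 (decomposition).} With $\Omega_l$, $\widetilde\Omega_l$ and $\mathcal R_l$ as in the proof of Proposition \ref{prop-dual}, set
\[
f_l:=\sum_{R=Q_{\alpha_1,\alpha_2}^{k_1+1,k_2+1}\in\mathcal R_l}\left\langle f,\psi_{\alpha_1,\alpha_2}^{k_1,k_2}\right\rangle\psi_{\alpha_1,\alpha_2}^{k_1,k_2}.
\]
The coefficients over $\mathcal R_{-\infty}$ vanish, so \eqref{eq-re} gives $f=\sum_{l\in\mathbb Z}f_l$ in $L^2$. The claim \eqref{eq-cl} gives $\|f_l\|_{L^2}^2\lesssim 2^{2l}\mu(\Omega_l)$, and every $R\in\mathcal R_l$ satisfies $R\subset\widetilde\Omega_l$. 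Since $\widetilde S$ is sublinear and $p\le1$, we have $[\widetilde S(f)]^p\le\sum_l[\widetilde S(f_l)]^p$. It therefore suffices to show
\[
\|\widetilde S(f_l)\|_{L^p}^p\lesssim 2^{lp}\mu(\Omega_l),
\]
because $\sum_l2^{lp}\mu(\Omega_l)\sim\|S(f)\|_{L^p}^p$.

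\textbf{Step 2 (near part).} Let $\widehat\Omega_l:=\{\mathcal M_{\rm str}(\mathbf 1_{\widetilde\Omega_l})>c\}$ with $c$ small. Proposition \ref{prop-fs} gives $\mu(\widehat\Omega_l)\lesssim\mu(\Omega_l)$. Hölder's inequality and the $L^2$ isometry then give
\[
\int_{\widehat\Omega_l}[\widetilde S(f_l)]^p\le\mu(\widehat\Omega_l)^{1-\frac p2}\|f_l\|_{L^2}^p\lesssim2^{lp}\mu(\Omega_l).
\]

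\textbf{Step 3 (far part), the main obstacle.} The complement $\widehat\Omega_l^\complement$ is where the lack of bounded support and the full range of $p$ bite. For $R\in\mathcal R_l$, let $\gamma_1(R)$ be the Journé enlargement factor in the first variable: the maximal $\widehat R_1\supset R_1$ with $\widehat R_1\times R_2\subset\widetilde\Omega_l$, relative to $R_1$. Define $\gamma_2(R)$ symmetrically. Split $\widehat\Omega_l^\complement$ into the region far from $R$ in $x_1$ (beyond $\gamma_1(R)$ times the scale of $R_1$) and the region far in $x_2$. I would first try the following estimate. Each $\widetilde\psi$-coefficient of $\psi_R$ is a tensor product of one-parameter almost-orthogonality factors. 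Each factor gains $\delta^{|k_i-\widetilde k_i|\eta}$ from cancellation and smoothness, together with exponential spatial decay from Theorem \ref{thm-wave}. On $\widehat\Omega_l^\complement$ at least one variable is separated by $\gamma_i(R)$ times the coarser scale. The exponential factor $\exp\{-\nu[\gamma_i(R)\delta^{-(k_i\wedge\widetilde k_i)}\delta^{k_i}]^a\}$ then absorbs any polynomial loss, including the scale-mismatch losses that restrict Proposition \ref{prop-estmax}. The gain should be $\gamma_i(R)^{-M}$ for any $M$, uniformly in the scale gap.

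Combining this with Cauchy--Schwarz over $R\in\mathcal R_l$ and Hölder on $R$'s enlargement should give
\[
\int_{\widehat\Omega_l^\complement}[\widetilde S(f_l)]^p\lesssim\sum_{R\in\mathcal R_l}\left[\gamma_1(R)^{-\epsilon}+\gamma_2(R)^{-\epsilon}\right]\mu(R)^{1-\frac p2}\left|\left\langle f,\psi_R\right\rangle\right|^p.
\]
Here $\epsilon>0$ can be taken as large as needed, because $M$ is arbitrary. Grouping by $\mu(R)$ and applying Hölder once more, the Journé covering lemma (Lemma \ref{lem-j}, used in Section \ref{s-at}) gives $\sum_{R\in\mathcal R_l}\gamma_i(R)^{-\epsilon}\mu(R)\lesssim\mu(\widetilde\Omega_l)$. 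Together with \eqref{eq-cl}, this bounds the right-hand side by $2^{lp}\mu(\Omega_l)$. The delicate point I expect is checking that the exponential decay really swallows the $\delta^{-\omega(\frac1p-1)|k_i-\widetilde k_i|}$ loss when $\widetilde k_i\ll k_i$. If it does not, I would fall back on a further dyadic decomposition of $\widehat\Omega_l^\complement$ into annuli around each $R$ and sum geometric series in both the annulus index and the scale gap.
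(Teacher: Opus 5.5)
You correctly saw that an argument through Proposition~\ref{prop-estmax} needs $p>\frac{\omega}{\omega+\eta}$. Your way around it fails at exactly the point you flagged in Step~3. The claimed gain $\gamma_i(R)^{-M}$, uniform in the scale gap, is false.

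In the almost-orthogonality estimate for $\langle\psi^{k_i}_{i,\alpha_i},\widetilde\psi^{\widetilde k_i}_{i,\widetilde\alpha_i}\rangle$, the exponential factor is measured at the \emph{coarser} scale $\delta^{k_i\wedge\widetilde k_i}$. Take a point $x$ whose $i$-th coordinate lies at distance $D\ge\gamma_i(R)\delta^{k_i}$ from $R_i$. Then $\widetilde S(\psi_R)(x)$ sees every $\widetilde\psi$ whose cube contains $x_i$, including all those with $\delta^{\widetilde k_i}\gtrsim D$. For these, your factor $\exp\{-\nu[\gamma_i(R)\delta^{k_i-\widetilde k_i}]^a\}$ is of order one. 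The only gain is $\delta^{(k_i-\widetilde k_i)\eta}$, which comes from one vanishing moment and $\eta$-H\"older regularity.

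So the available far-field bound in that variable is $\mu_i(R_i)^{1/2}[V_i(x_i,y_i)]^{-1}[\delta^{k_i}/d_i(x_i,y_i)]^{\eta'}$, as in \eqref{eq-sarp1}. Summing its $p$-th power over annuli gives $\sum_{j\ge0}2^{j[\omega(1-p)-p\eta']}$, which is the factor $[\ell(Q_2)/\ell(\widehat{Q_2})]^{p\eta'-\omega(1-p)}$ of \eqref{eq-sar1}. This is a gain only if $p>\frac{\omega}{\omega+\eta}$. Your fall-back annular decomposition produces the same divergent series below that threshold.

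The restriction is intrinsic, not an artifact of Proposition~\ref{prop-estmax}. For $f=\psi_R$ one has $\|S(f)\|_{L^p}^p=\mu(R)^{1-\frac p2}$, and $\widetilde S(\psi_R)$ is the tensor product of the two one-parameter square functions. The claimed inequality would therefore require coarse-scale cross coefficients smaller than the order $\delta^{(k-\widetilde k)\eta}$ that Theorem~\ref{thm-wave} permits. It fails for small $p$ whenever they are of that order. This already happens on $\mathbb R$, testing a compactly supported wavelet with finitely many vanishing moments against a smoother system.

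The paper's proof also uses only this range. It chooses $\eta'<\eta$ and $r<p$ with $\eta'>\omega(\frac1r-1)$, which forces $p>\frac{\omega}{\omega+\eta}$, so the proposition should be read on that range. There, the paper's direct route suffices:
\begin{itemize}
\item expand $\langle f,\psi\rangle$ in $\widetilde{\mathcal W}$ as in \eqref{eq-wpt};
\item insert the almost-orthogonality bound and apply Proposition~\ref{prop-estmax} pointwise;
\item apply Proposition~\ref{prop-fs} in $L^{p/r}(\ell^{2/r})$.
\end{itemize}

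Even on that range, Step~3 would not close as written. Lemma~\ref{lem-j} bounds sums over $\mathcal M_1(\Omega)$ or $\mathcal M_2(\Omega)$, not over $\mathcal R_l$. The family $\mathcal R_l$ contains nested rectangles at infinitely many scales, for instance all dyadic subrectangles of a dyadic rectangle contained in $\Omega_l\setminus\Omega_{l+1}$. Fixing $R_1$ and shrinking $R_2$ leaves $\gamma_1(R)$ bounded. Hence $\sum_{R\in\mathcal R_l}[\gamma_1(R)^{-\epsilon}+\gamma_2(R)^{-\epsilon}]\mu(R)$ diverges in general, however large $\epsilon$ is.

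Moreover, you define the enlargements by containment in $\widetilde\Omega_l$. With that definition, a point of $\widehat\Omega_l^\complement$ need not be $\gamma_i(R)$-separated from $R$ in either variable. If $\widetilde\Omega_l$ is L-shaped around $R$, there are such points whose distances to $R$ are only fixed multiples of $\ell(R_1)$ and $\ell(R_2)$, although $\gamma_1(R)$ and $\gamma_2(R)$ are huge.

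Repairing this would mean three things. You would group the rectangles of $\mathcal R_l$ under maximal rectangles. You would control each group in $L^2$ through the square function on $\widetilde\Omega_l\setminus\Omega_{l+1}$. You would then use the Journ\'e-type enlargements relative to $P_2$, as in Case~(I.3) of the proof of Proposition~\ref{prop-at}. That amounts to redoing the ``atoms belong to $H^p$'' argument for pieces without bounded support. It is much heavier than the paper's proof and gains nothing.
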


\begin{proof}
By symmetry, we only show the second inequality of \eqref{eq-hpwep}. To this end, let $f\in L^2$.
By the wavelet reproducing formula, we find that
$$
f(\cdot)=\sum_{\genfrac{}{}{0pt}{}{\widetilde k_1\in{\mathbb Z}}{\widetilde\alpha_1\in\widetilde{\mathcal G}_{1,k_1}}}
\sum_{\genfrac{}{}{0pt}{}{\widetilde k_2\in{\mathbb Z}}{\widetilde\alpha_2\in\widetilde{\mathcal G}_{2,k_2}}}
\left<f,\widetilde{\psi}_{\widetilde{\alpha}_1,
\widetilde{\alpha}_2}^{\widetilde{k}_1,\widetilde{k}_2}\right>
\widetilde{\psi}_{\widetilde{\alpha}_1,\widetilde{\alpha}_2}^{\widetilde{k}_1,\widetilde{k}_2}(\cdot)
$$
holds in $L^2$ and, consequently, for any $k_1,k_2\in\mathbb Z$, $\alpha_1\in\mathcal G_{1,k_1}$, and $\alpha_2\in\mathcal G_{2,k_2}$,
\begin{align}\label{eq-wpt}
\left\langle f,\psi_{\alpha_1,\alpha_2}^{k_1,k_2}\right\rangle
&=\sum_{\widetilde k_1,\widetilde\alpha_1}\sum_{\widetilde k_2,\widetilde\alpha_2}
\left<f,\widetilde{\psi}_{\widetilde{\alpha}_1,\widetilde{\alpha}_2}^{\widetilde{k}_1,
\widetilde{k}_2}\right>
\left<\widetilde{\psi}_{\widetilde{\alpha}_1,\widetilde{\alpha}_2}^{\widetilde{k}_1,\widetilde{k}_2},
\psi_{\alpha_1,\alpha_2}^{k_1,k_2}\right>\nonumber\\
&={\mu\left(Q_{\alpha_1,\alpha_2}^{k_1+1,k_2+1}\right)}^{\frac 12}\sum_{\widetilde k_1,\widetilde\alpha_1}
\sum_{\widetilde k_2,\widetilde\alpha_2}
{\mu\left(\widetilde Q_{\widetilde{\alpha}_1,\widetilde{\alpha}_2}^{\widetilde{k}_1+1,\widetilde{k}_2+1}\right)
}^{\frac 12}
\left<f,\widetilde{\psi}_{\widetilde{\alpha}_1,\widetilde{\alpha}_2}^{\widetilde{k}_1,
\widetilde{k}_2}\right>
\left<\widetilde{\Psi}_{\widetilde{\alpha}_1,\widetilde{\alpha}_2}^{\widetilde{k}_1,\widetilde{k}_2},
\Psi_{\alpha_1,\alpha_2}^{k_1,k_2}\right>,
\end{align}
where $\widetilde{\Psi}_{\widetilde{\alpha}_1,\widetilde{\alpha}_2}^{\widetilde{k}_1,\widetilde{k}_2}
:=\widetilde{\psi}_{\widetilde{\alpha}_1,\widetilde{\alpha}_2}^{\widetilde{k}_1,\widetilde{k}_2}
{\mu(\widetilde Q_{\widetilde{\alpha}_1,\widetilde{\alpha}_2}^{\widetilde{k}_1+1,\widetilde{k}_2+1})}^{-\frac 12}$
and $\Psi_{\alpha_1,\alpha_2}^{k_1,k_2}:=\psi_{\alpha_1,\alpha_2}^{k_1,k_2}
{\mu(Q_{\alpha_1,\alpha_2}^{k_1+1,k_2+1})}^{-\frac 12}$. Note that, applying an argument similar to that used in the proof
of \cite[Theorem 3.1]{hlyy19}, we conclude that, for any given $\eta'\in(0,\eta)$ and $\Gamma\in(0,\infty)$,
$$
\left|\left<\widetilde{\Psi}_{\widetilde{\alpha}_1,\widetilde{\alpha}_2}^{\widetilde{k}_1,
\widetilde{k}_2},
\Psi_{\alpha_1,\alpha_2}^{k_1,k_2}\right>\right|\lesssim\delta^{|k_1-\widetilde{k}_1|\eta'}
\delta^{|k_2-\widetilde{k}_2|\eta'}
P_{1,\Gamma}\left(y_{1,\alpha_1}^{k_1},\widetilde{y}_{1,\widetilde{\alpha}_1}^{\widetilde{k}_1};
\delta^{k_1\wedge\widetilde{k}_1}\right)P_{2,\Gamma}\left(y_{2,\alpha_2}^{k_2},
\widetilde{y}_{2,\widetilde{\alpha}_2}^{\widetilde{k}_2};\delta^{k_2\wedge\widetilde{k}_2}\right).
$$
From this, \eqref{eq-wpt}, and Proposition \ref{prop-estmax}, we infer that, 
for any $k_1,k_2\in\mathbb Z$ and $(x_1,x_2)\in X_1\times X_2$,
\begin{align}\label{eq-fpp}
&\sum_{\alpha_1\in\mathcal G_{1,k_1}}\sum_{\alpha_2\in\mathcal G_{2,k_2}}\left|\left<f,\psi_{\alpha_1,\alpha_2}^{k_1,k_2}
\right>\widetilde{\mathbf 1}_{Q_{\alpha_1,\alpha_2}^{k_1+1,k_2+1}}(x_1,x_2)\right|^2\nonumber\\
&\quad\lesssim\sum_{\alpha_1\in\mathcal G_{1,k_1}}\sum_{\alpha_2\in\mathcal G_{2,k_2}}
\left\{\sum_{\widetilde k_1,\widetilde\alpha_1}
\sum_{\widetilde k_2,\widetilde\alpha_2} \delta^{|k_1-\widetilde{k}_1|\eta'}\delta^{|k_2-\widetilde{k}_2|\eta'}
\mu\left(\widetilde Q_{\widetilde{\alpha}_1,\widetilde{\alpha}_2}^{\widetilde{k}_1+1,\widetilde{k}_2+1}\right)
\left|\left<f,\frac{\widetilde{\psi}_{\widetilde{\alpha}_1,
\widetilde{\alpha}_2}^{\widetilde{k}_1,\widetilde{k}_2}}
{\sqrt{\mu(\widetilde Q_{\widetilde{\alpha}_1,\widetilde{\alpha}_2}^{\widetilde{k}_1+1,
\widetilde{k}_2+1})}}\right>\right|\right.
\nonumber\\
&\quad\quad\times P_{1,\Gamma}\left(y_{1,\alpha_1}^{k_1},\widetilde{y}_{1,\widetilde{\alpha}_1}^{\widetilde{k}_1};
\delta^{k_1\wedge\widetilde{k}_1}\right)P_{2,\Gamma}\left(y_{2,\alpha_2}^{k_2},
\widetilde{y}_{2,\widetilde{\alpha}_2}^{\widetilde{k}_2};\delta^{k_2\wedge\widetilde{k}_2}\right)
\mathbf 1_{Q_{\alpha_1,\alpha_2}^{k_1+1,k_2+1}}(x_1,x_2)\Bigg\}^2\nonumber\\
&\quad\lesssim\left\{\sum_{\widetilde k_1,\widetilde\alpha_1}\sum_{\widetilde k_2,\widetilde\alpha_2}
\delta^{|k_1-\widetilde{k}_1|\eta'}\delta^{|k_2-\widetilde{k}_2|\eta'}
\mu\left(\widetilde Q_{\widetilde{\alpha}_1,\widetilde{\alpha}_2}^{\widetilde{k}_1+1,\widetilde{k}_2+1}\right)
\left|\left<f,\frac{\widetilde{\psi}_{\widetilde{\alpha}_1,
\widetilde{\alpha}_2}^{\widetilde{k}_1,\widetilde{k}_2}}
{\sqrt{\mu(\widetilde Q_{\widetilde{\alpha}_1,\widetilde{\alpha}_2}^{\widetilde{k}_1+1,
\widetilde{k}_2+1})}}\right>\right|\right.
\nonumber\\
&\quad\quad\times P_{1,\Gamma}\left(x_1,\widetilde{y}_{1,\widetilde{\alpha}_1}^{\widetilde{k}_1};
\delta^{k_1\wedge\widetilde{k}_1}\right)P_{2,\Gamma}\left(x_2,\widetilde{y}_{2,
\widetilde{\alpha}_2}^{\widetilde{k}_2};
\delta^{k_2\wedge\widetilde{k}_2}\right)\Bigg\}^2\nonumber\\
&\quad\lesssim\left\{\sum_{\widetilde{k}_1,\widetilde{k}_2\in\mathbb Z}
\delta^{|k_1-\widetilde{k}_1|\eta'}\delta^{|k_2-\widetilde{k}_2|\eta'}
\delta^{[(\widetilde{k}_1\wedge k_1)-\widetilde{k}_1]\omega(\frac 1r-1)}\delta^{[(\widetilde{k}_2\wedge k_2)-\widetilde{k}_2]
\omega(\frac 1r-1)}\right.\nonumber\\&\quad\quad\left.{}\times\left[\mathcal M_{\rm str}
\left(\sum_{\widetilde\alpha_1\in\widetilde{\mathcal G}_{1,\widetilde{k}_1}}
\sum_{\widetilde\alpha_2\in\widetilde{\mathcal G}_{2,\widetilde{k}_2}}
\left|\left<f,\widetilde\psi_{\widetilde\alpha_1,\widetilde\alpha_2}^{\widetilde k_1,\widetilde k_2}\right>
\widetilde{\mathbf 1}_{\widetilde Q_{\widetilde\alpha_1,\widetilde\alpha_2}^{\widetilde k_1+1,\widetilde k_2+1}}\right|^r\right)
(x_1,x_2)\right]^{\frac 1r}\right\}^2,
\end{align}
where $\eta'\in(0,\eta)$, $\Gamma\in(0,\infty)$, and $r\in(\frac{\omega}{\omega+\Gamma},1]$. Choose $\eta'$, $\Gamma$, and $r$
such that $\eta'>\omega(\frac 1r-1)$ and $r<p$. By this, \eqref{eq-fpp}, and H\"older's inequality, we further conclude that, for any
$(x_1,x_2)\in X_1\times X_2$,
\begin{align*}
&\sum_{\genfrac{}{}{0pt}{}{k_1\in{\mathbb Z}}{\alpha_1\in\mathcal G_{1,k_1}}}
\sum_{\genfrac{}{}{0pt}{}{k_2\in{\mathbb Z}}{\alpha_2\in\mathcal G_{2,k_2}}}
\left|\left<f,\psi_{\alpha_1,\alpha_2}^{k_1,k_2}\right>\widetilde{\mathbf 1}_{Q_{\alpha_1,\alpha_2}^{k_1+1,k_2+1}}(x_1,x_2)\right|^2\\
&\quad\lesssim\sum_{k_1,k_2\in\mathbb Z}\left\{
\sum_{\widetilde{k}_1,\widetilde{k}_2\in\mathbb Z}\delta^{|k_2-\widetilde{k}_2|\eta'}
\delta^{[(\widetilde{k}_1\wedge k_1)-\widetilde{k}_1]\omega(\frac 1r-1)}\delta^{[(\widetilde{k}_2\wedge k_2)-\widetilde{k}_2]
\omega(\frac 1r-1)}\right.\\
&\qquad\left.{}\times\left[\mathcal M_{\rm str}\left(\sum_{\widetilde{\alpha}_1\in\widetilde{\mathcal G}_{1,\widetilde{k}_1}}
\sum_{\widetilde{\alpha}_2\in\widetilde{\mathcal G}_{2,\widetilde{k}_2}}
\left|\left<f,\widetilde\psi_{\widetilde\alpha_1,\widetilde\alpha_2}^{\widetilde k_1,\widetilde k_2}\right>
\widetilde{\mathbf 1}_{\widetilde Q_{\widetilde\alpha_1,\widetilde\alpha_2}^{\widetilde k_1+1,\widetilde k_2+1}}\right|^r\right)
(x_1,x_2)\right]^{\frac 1r}\right\}^2\\
&\quad\lesssim\sum_{\widetilde{k}_1,\widetilde{k}_2\in\mathbb Z}\left[\mathcal M_{\rm str}
\left(\sum_{\widetilde{\alpha}_1\in\widetilde{\mathcal G}_{1,\widetilde{k}_1}}
\sum_{\widetilde{\alpha}_2\in\widetilde{\mathcal G}_{2,\widetilde{k}_2}}
\left|\left<f,\widetilde\psi_{\widetilde\alpha_1,\widetilde\alpha_2}^{\widetilde k_1,\widetilde k_2}\right>
\widetilde{\mathbf 1}_{\widetilde Q_{\widetilde\alpha_1,\widetilde\alpha_2}^{\widetilde k_1+1,\widetilde k_2+1}}\right|^r\right)
(x_1,x_2)\right]^{\frac 2r},
\end{align*}
which, together with Proposition \ref{prop-fs}, further implies that
\begin{align*}
\|S(f)\|_{L^p}&=\left\|\left[\sum_{k_1,\alpha_1}\sum_{k_2,\alpha_2}
\left|\left<f,\psi_{\alpha_1,\alpha_2}^{k_1,k_2}\right>
\widetilde{\mathbf 1}_{Q_{\alpha_1,\alpha_2}^{k_1+1,k_2+1}}\right|^2\right]^{\frac 12}\right\|_{L^p}\\
&\lesssim\left\|\left\{\sum_{\widetilde{k}_1,\widetilde{k}_2\in\mathbb Z}
\left[\mathcal M_{\rm str}\left(\sum_{\widetilde{\alpha}_1\in\widetilde{\mathcal G}_{1,\widetilde{k}_1}}
\sum_{\widetilde{\alpha}_2\in\widetilde{\mathcal G}_{2,\widetilde{k}_2}}
\left|\left<f,\widetilde\psi_{\widetilde\alpha_1,\widetilde\alpha_2}^{\widetilde k_1,\widetilde k_2}\right>
\widetilde{\mathbf 1}_{\widetilde Q_{\widetilde\alpha_1,\widetilde\alpha_2}^{\widetilde k_1+1,\widetilde k_2+1}}\right|^r\right)
\right]^{\frac 2r}\right\}^{\frac r2}\right\|_{L^{\frac pr}}^{\frac 1r}\\
&\lesssim\left\|\left[\sum_{\widetilde{k}_1,\widetilde{k}_2\in\mathbb Z}
\left(\sum_{\widetilde{\alpha}_1\in\widetilde{\mathcal G}_{1,\widetilde{k}_1}}
\sum_{\widetilde{\alpha}_2\in\widetilde{\mathcal G}_{2,\widetilde{k}_2}}
\left|\left<f,\widetilde\psi_{\widetilde\alpha_1,\widetilde\alpha_2}^{\widetilde k_1,\widetilde k_2}\right>
\widetilde{\mathbf 1}_{\widetilde Q_{\widetilde\alpha_1,\widetilde\alpha_2}^{\widetilde k_1+1,\widetilde k_2+1}}\right|^r\right)
^{\frac 2r}\right]^{\frac r2}\right\|_{L^{\frac pr}}^{\frac 1r}\\
&\sim\left\|\left(\sum_{\widetilde k_1,\widetilde\alpha_1}\sum_{\widetilde k_2,\widetilde\alpha_2}
\left|\left<f,\widetilde\psi_{\widetilde\alpha_1,\widetilde\alpha_2}^{\widetilde k_1,\widetilde k_2}\right>
\widetilde{\mathbf 1}_{\widetilde Q_{\widetilde\alpha_1,\widetilde\alpha_2}^{\widetilde k_1+1,\widetilde k_2+1}}\right|^2
\right)^{\frac r2}\right\|_{L^{\frac pr}}^{\frac 1r}\sim\left\|\widetilde{S}(f)\right\|_{L^p}.
\end{align*}
This finishes the proof of Proposition \ref{prop-hpweq}.
\end{proof}

By Proposition \ref{prop-hpweq}, we find that, for any $p\in(0,1]$ and any two wavelet systems $\mathcal W$ and $\widetilde{\mathcal W}$ on $X_1\times X_2$, $H^p_{L^2}(\mathcal W)=H^p_{L^2}(\widetilde{\mathcal W})$ with equivalent (quasi-)norms. The next theorem shows that
product Hardy spaces associated with different wavelet systems also coincide.

\begin{theorem}\label{thm-hpw}
Let $p\in(0,1]$ and $\mathcal W$ and $\widetilde{\mathcal W}$ be two wavelet systems on $X_1\times X_2$. Then
$H^p(\mathcal W)=H^p(\widetilde{\mathcal W})$ with equivalent (quasi-)norms.
\end{theorem}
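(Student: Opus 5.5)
The plan is to reduce the statement to Proposition \ref{prop-hpweq} through the density result Proposition \ref{prop-dense}(i) and the completeness result Proposition \ref{prop-com1}. There is one preliminary difficulty: the two spaces $H^p(\mathcal W)$ and $H^p(\widetilde{\mathcal W})$ are built inside different distribution spaces, $(\mathrm{CMO}^p_{L^2}(\mathcal W))'$ and $(\mathrm{CMO}^p_{L^2}(\widetilde{\mathcal W}))'$. I would therefore first identify the test spaces. The Carleson analogue of Proposition \ref{prop-hpweq} should hold: for every $g\in L^2$,
$$\mathcal C_p(g)\sim\widetilde{\mathcal C}_p(g).$$
This can be obtained by duality. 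By Proposition \ref{prop-dual} and the $L^2$ identity $\langle f,g\rangle=\sum\langle f,\psi\rangle\langle g,\psi\rangle$, testing $g$ against $f=\sum_{R\subset\Omega}\langle g,\psi_R\rangle\psi_R$ (a finite sum, so $f\in L^2$) gives
$$\mathcal C_p(g)\lesssim\sup\left\{|\langle f,g\rangle|:\ f\in L^2,\ \|S(f)\|_{L^p}\le1\right\}.$$
Here I use $\|S(f)\|_{L^p}\le\mu(\Omega)^{1/p-1/2}\|f\|_{L^2}$, which follows from H\"older on the support $\bigcup_{R\subset\Omega}R\subset\Omega$. The supremum is in turn $\lesssim\widetilde{\mathcal C}_p(g)$ by Proposition \ref{prop-hpweq} and Proposition \ref{prop-dual} applied to $\widetilde{\mathcal W}$. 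Hence $\mathrm{CMO}^p_{L^2}(\mathcal W)=\mathrm{CMO}^p_{L^2}(\widetilde{\mathcal W})$ as sets with equivalent quasi-norms, so the two dual spaces coincide.

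Next, take $f\in H^p(\mathcal W)$. By Proposition \ref{prop-dense}(i), the truncations $f_N$ lie in $L^2$ and converge to $f$ in $H^p(\mathcal W)$, so $\{f_N\}$ is Cauchy in $\|S(\cdot)\|_{L^p}$. By Proposition \ref{prop-hpweq}, it is also Cauchy in $\|\widetilde S(\cdot)\|_{L^p}$. Proposition \ref{prop-com1}, applied to $\widetilde{\mathcal W}$, then gives $\widetilde f\in H^p(\widetilde{\mathcal W})$ with $f_N\to\widetilde f$ in $(\mathrm{CMO}^p_{L^2})'$ and $\widetilde S(f_N-\widetilde f)\to0$ in $L^p$. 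Proposition \ref{prop-fg} shows $f_N\to f$ in $(\mathrm{CMO}^p_{L^2})'$ as well. Since the dual spaces were identified in the first step, $\widetilde f=f$ as distributions.

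Consequently $f\in H^p(\widetilde{\mathcal W})$, and by Fatou's lemma,
$$\|\widetilde S(f)\|_{L^p}\le\varliminf_N\|\widetilde S(f_N)\|_{L^p}\lesssim\varliminf_N\|S(f_N)\|_{L^p}\le\|S(f)\|_{L^p}.$$
Exchanging the roles of $\mathcal W$ and $\widetilde{\mathcal W}$ gives the reverse inclusion and the reverse estimate.

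The main obstacle is the first step: making rigorous that the two test spaces, and hence their duals and the weak-$*$ limits, agree. In particular, the duality estimate giving $\mathcal C_p\lesssim\widetilde{\mathcal C}_p$ must use only the finite-sum test functions above, so that Proposition \ref{prop-dual} applies without any extra assumptions on $f$.
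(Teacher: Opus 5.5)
Your argument is correct, and it differs from the paper's in one substantive respect. The paper never compares the two test spaces. It takes $f_n\in H^p_{L^2}(\mathcal W)$ with $\|S(f-f_n)\|_{L^p}\to0$ and \emph{defines} $\langle f,\varphi\rangle:=\lim_n\langle f_n,\varphi\rangle$ for $\varphi\in\mathrm{CMO}^p_{L^2}(\widetilde{\mathcal W})$; the limit exists by Propositions \ref{prop-dual} and \ref{prop-hpweq}. It then bounds $\|\widetilde S(f)\|_{L^p}$ by Fatou and re-runs the argument of Proposition \ref{prop-com1} to obtain the $\widetilde{\mathcal W}$-reproducing formula.

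That is shorter, but it tacitly identifies $f\in(\mathrm{CMO}^p_{L^2}(\mathcal W))'$ with a newly constructed functional on a possibly different test space. So the equality $H^p(\mathcal W)=H^p(\widetilde{\mathcal W})$ is only meaningful through that identification.

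Your first step removes this issue. You prove $\mathcal C_p\sim\widetilde{\mathcal C}_p$ on $L^2$ by testing $g$ against its projection onto $\{\psi_R:\ R\subset\Omega\}$ and using $\|S(f)\|_{L^p}\le\mu(\Omega)^{1/p-1/2}\|f\|_{L^2}$. This places both Hardy spaces inside one and the same dual space. The theorem then becomes a literal equality of sets, and $\widetilde f=f$ follows from uniqueness of weak-$*$ limits. It also lets you quote Proposition \ref{prop-com1} directly rather than repeating its proof.

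One small correction. The family $\{R\in\widetilde{\mathcal D}:\ R\subset\Omega\}$ is infinite, since it contains all sufficiently small rectangles inside $\Omega$, so your test function is not a finite sum. It is still in $L^2$ (it is an orthogonal projection of $g$), and Proposition \ref{prop-dual} holds for arbitrary $f,g\in L^2$, so the argument is unaffected. Alternatively, restrict to finite subfamilies and take the supremum. If $g$ is complex-valued, use conjugated coefficients so that $\langle f,g\rangle=\sum_{R\subset\Omega}|\langle g,\psi_R\rangle|^2$.
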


\begin{proof}
By symmetry, we only prove that $H^p(\mathcal W)\subset H^p(\widetilde{\mathcal W})$ and, for any $f\in H^p(\mathcal W)$,
\begin{equation}\label{eq-sws}
\left\|\widetilde{S}(f)\right\|_{L^p}\lesssim\|S(f)\|_{L^p}.
\end{equation}
Since $H^p_{L^2}(\mathcal W)$ is dense in $H^p(\mathcal W)$, it follows that there exists a sequence $\{f_n\}_{n=1}^\infty\subset
H^p_{L^2}(\mathcal W)$ such that $\lim_{n\to\infty}\|S(f-f_n)\|_{L^p}=0$. Now, for any $\varphi\in{\rm CMO}^p_{L^2}(\mathcal W)$,
let $\langle f,\varphi\rangle:=\lim_{n\to\infty}\langle f_n,\varphi\rangle$. We first show that $\langle f,\varphi\rangle$ is
well-defined. Indeed, by Propositions \ref{prop-dual} and \ref{prop-hpweq}, we conclude that
$$
\left|\langle f_m-f_n,\varphi\rangle\right|\lesssim\left\|\widetilde{S}(f_n-f_m)\right\|_{L^p}\widetilde{\mathcal C}_p(\varphi)
\sim\|S(f_m-f_n)\|_{L^p}\widetilde{\mathcal{C}}_p(\varphi)\to 0
$$
as $m,n\to\infty$. This implies that $\{\langle f_n,\varphi\rangle\}_{n=1}^\infty$ is a Cauchy sequence of $\mathbb C$. Using this
and the completeness of $\mathbb C$, we find that the limit $\lim_{n\to\infty}\langle f_n,\varphi\rangle$ exists. Similarly, we may
also conclude that $\langle f,\varphi\rangle$ is independent of the choice of $\{f_n\}_{n=1}^\infty$. Thus, we may assume that, for
any $n\in\mathbb N$, $\|S(f_n)\|_{L^p}\lesssim\|f\|_{L^p}$. From this and Proposition \ref{prop-dual}, we deduce that
$$
|\langle f,\varphi\rangle|=\lim_{n\to\infty}|\langle f_n,\varphi\rangle|\lesssim\|S(f)\|_{L^p}\widetilde{\mathcal C}_p(\varphi).
$$
This shows that $f\in({\rm CMO}^p_{L^2}(\widetilde{\mathcal W}))'$. Moreover, by Fatou's lemma, we conclude that, for any
$(x_1,x_2)\in X_1\times X_2$,
\begin{align*}
\left[\widetilde S(f)(x)\right]^2&=\sum_{\genfrac{}{}{0pt}{}{\widetilde k_1\in{\mathbb Z}}
{\widetilde\alpha_1\in\widetilde{\mathcal G}_{1,\widetilde k_1}}}
\sum_{\genfrac{}{}{0pt}{}{\widetilde k_2\in{\mathbb Z}}{\widetilde\alpha_2\in\widetilde{\mathcal G}_{2,\widetilde k_2}}}
\left|\left<f,\widetilde\psi_{\widetilde\alpha_1,\widetilde\alpha_2}^{\widetilde k_1,\widetilde k_2}\right>
\widetilde{\mathbf 1}_{\widetilde Q_{\widetilde\alpha_1,\widetilde\alpha_2}^{\widetilde k_1+1,\widetilde k_2+1}}(x_1,x_2)\right|^2\\
&=\sum_{\widetilde k_1,\widetilde\alpha_1}\sum_{\widetilde k_2,\widetilde\alpha_2}
\lim_{n\to\infty}\left|\left<f_n,\widetilde\psi_{\widetilde\alpha_1,\widetilde\alpha_2}^{\widetilde k_1,\widetilde k_2}\right>
\widetilde{\mathbf 1}_{\widetilde Q_{\widetilde\alpha_1,\widetilde\alpha_2}^{\widetilde k_1+1,\widetilde k_2+1}}(x_1,x_2)\right|^2\\
&\le
\varliminf\limits_{n\to\infty}
\sum_{\widetilde k_1,\widetilde\alpha_1}\sum_{\widetilde k_2,\widetilde\alpha_2}
\left|\left<f_n,\widetilde\psi_{\widetilde\alpha_1,\widetilde\alpha_2}^{\widetilde k_1,\widetilde k_2}\right>
\widetilde{\mathbf 1}_{\widetilde Q_{\widetilde\alpha_1,\widetilde\alpha_2}^{\widetilde k_1+1,\widetilde k_2+1}}(x_1,x_2)\right|^2
=\varliminf\limits_{n\to\infty}
\left[\widetilde S(f_n)(x_1,x_2)\right]^2.
\end{align*}
By this, Fatou's lemma, and Proposition \ref{prop-hpweq}, we obtain
$$
\left\|\widetilde S(f)\right\|_{L^p}
\le
\varliminf\limits_{n\to\infty}\left\|\widetilde S(f_n)\right\|_{L^p}
\sim
\varliminf\limits_{n\to\infty}\|S(f_n)\|_{L^p}\lesssim\|S(f)\|_{L^p},
$$
that is, \eqref{eq-sws} holds.
In addition, applying an argument similar to that used in the proof of Proposition \ref{prop-com1}, we further find that the wavelet
reproducing formula \eqref{eq-re} associated with the wavelet system $\widetilde{\mathcal W}$ holds in
$({\rm CMO}^p_{L^2}(\widetilde{\mathcal W}))'$. This,
together with \eqref{eq-sws}, implies that $f\in H^p(\mathcal W)$ and
$$
\|f\|_{H^p(\widetilde{\mathcal W})}=\left\|\widetilde S(f)\right\|_{L^p}\lesssim\|S(f)\|_{L^p}
\sim\|f\|_{H^p(\mathcal W)}.
$$
This finishes the proof of $H^p(\mathcal W)\subset H^p(\widetilde{\mathcal W})$ and $\|\cdot\|_{H^p(\widetilde{\mathcal W})}
\lesssim\|\cdot\|_{H^p(\mathcal W)}$, which completes the proof of Theorem \ref{thm-hpw}.
\end{proof}

At the end of this section, we show that product Hardy spaces in Definition \ref{def-hcmo} precisely coincide with those
in \cite{hlw18}.

\begin{proposition}\label{prop-h=h}
Let $p\in(\frac{\omega}{\omega+\eta},1]$ with $\omega$ and $\eta$, respectively, the same as in \eqref{eq-doub} and Theorem
\ref{thm-wave}. Let $H_{\rm hlw}^p$ be the product Hardy space in \cite[Definition 5.1]{hlw18}. Then $H^p=H_{\rm hlw}^p$ in the
sense of equal (quasi-)norms.
\end{proposition}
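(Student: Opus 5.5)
Recall that in \cite[Definition 5.1]{hlw18} the space $H^p_{\rm hlw}$ consists of distributions $f$ in $(\mathring{G}(\beta_1,\beta_2,\gamma_1,\gamma_2))'$, with $\beta_i,\gamma_i$ close to $\eta$, whose wavelet square function is in $L^p$. For such $f$, \cite{hlw18} shows that the product wavelet reproducing formula \eqref{eq-re} holds in this distribution space. The quasi-norm there is $\|S(f)\|_{L^p}$, with the same wavelet square function $S$. Since the two quasi-norms are then literally the same expression in the wavelet coefficients, the task is to identify the two sets of distributions. I would pass through the common dense subspace $H^p_{L^2}=L^2\cap\{S(f)\in L^p\}$ and argue by completion on both sides.

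First, $H^p_{L^2}$ is dense in both spaces. Density in $H^p$ is Proposition \ref{prop-dense}(i). For $H^p_{\rm hlw}$ I would use the same truncations $f_N$ over the finite index sets $\mathcal I_N$ in \eqref{eq-defin}. Each $f_N$ lies in $L^2$, satisfies $S(f_N)\le S(f)$, and converges to $f$ in $H^p_{\rm hlw}$ by dominated convergence. This uses only that $\langle f,\psi_{\alpha_1,\alpha_2}^{k_1,k_2}\rangle$ is defined, which holds because each $\psi_{\alpha_1,\alpha_2}^{k_1,k_2}\in\mathring G$ when $\beta_i,\gamma_i<\eta$.

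Second, take $f\in H^p_{\rm hlw}$ with approximants $f_n\in H^p_{L^2}$ that are Cauchy in $\|S(\cdot)\|_{L^p}$. By Proposition \ref{prop-com1}, they converge in $H^p$ to some $\tilde f\in(\mathrm{CMO}^p_{L^2})'$. Its wavelet coefficients are the limits of those of $f_n$, which equal those of $f$. So $S(\tilde f)=S(f)$, and the map $f\mapsto \tilde f$ is an isometric injection of $H^p_{\rm hlw}$ into $H^p$, injective because both sides satisfy \eqref{eq-re}. Conversely, take $f\in H^p$ with approximants $f_n\in H^p_{L^2}$. I need $\{f_n\}$ to converge in $(\mathring G)'$. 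This follows from a duality estimate $|\langle h,\varphi\rangle|\lesssim \|S(h)\|_{L^p}\|\varphi\|_{\mathring G}$ for $h\in H^p_{L^2}$, which I would obtain from Proposition \ref{prop-dual} once I know $\mathring G\subset \mathrm{CMO}^p_{L^2}$ with $\mathcal C_p(\varphi)\lesssim\|\varphi\|_{\mathring G}$. The limit lies in $H^p_{\rm hlw}$ with the same coefficients, and by the completeness of $H^p_{\rm hlw}$ from \cite{hlw18} the two correspondences are mutually inverse. Equality of quasi-norms is then immediate.

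The main obstacle is the embedding $\mathring G(\beta_1,\beta_2,\gamma_1,\gamma_2)\hookrightarrow \mathrm{CMO}^p_{L^2}$ and its $\mathcal C_p$ bound; this is where the restriction $p>\omega/(\omega+\eta)$ enters. My first attempt would be to estimate $|\langle\varphi,\psi_{\alpha_1,\alpha_2}^{k_1,k_2}\rangle|$ by the almost-orthogonality bound from \cite[Theorem 3.1]{hlyy19}. That bound gives a factor $\delta^{|k_i|\beta_i}$ for $k_i\ge0$ and $\delta^{|k_i|\gamma_i}$ for $k_i<0$, times $\sqrt{\mu(Q)}$ times a decaying $P_{i,\gamma_i}$ centered at the base point. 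Then $\varphi$ lies in $H^{q}_{L^2}$ for every $q\in(\omega/(\omega+\eta),1]$, with a bound by $\|\varphi\|_{\mathring G}$. For the Carleson bound over an open set $\Omega$, I would not use this directly. Instead I would use the estimate $\sum_{R\subset\Omega}|\langle\varphi,\psi_R\rangle|^2\le \|S(\varphi\mathbf 1_{\cdot})\|$-type localized square sums, split by rectangle size relative to $\mu(\Omega)$. When $\mu(\Omega)\ge1$, the $L^2$ norm together with $\mu(\Omega)^{2/p-1}\ge1$ suffices. When $\mu(\Omega)<1$, the coefficients of rectangles inside $\Omega$ have $\mu(R)\le\mu(\Omega)$, and the factor $\mu(R)^{2/p-1}$ comes from the $\delta^{k\beta}$ decay, which needs $\beta_i>\omega(1/p-1)$. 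If this direct route proves messy, the fallback is to work only with finite wavelet sums, which is enough for density, and then use the weak-$*$ duality of Proposition \ref{prop-fg}.
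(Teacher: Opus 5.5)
Your argument is correct, and the direction $H^p_{\rm hlw}\subset H^p$ is exactly the paper's. You truncate over $\mathcal I_N$, note that the truncations are Cauchy in $H^p$, and apply Proposition~\ref{prop-com1} to obtain an element of $(\mathrm{CMO}^p_{L^2})'$ with the same wavelet coefficients.

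The direction $H^p\subset H^p_{\rm hlw}$ is where you differ. The paper never checks by hand that the truncated series converges in $(\mathring G)'$. It observes that the coefficient sequence of $f\in H^p$ lies in the sequence space $s^p$ and quotes \cite[Proposition 5.7]{hlw18}. That result produces $\sum_R\langle f,\psi_R\rangle\psi_R\in H^p_{\rm hlw}$ with norm at most $\|f\|_{H^p}$. You instead obtain $|\langle h,\varphi\rangle|\lesssim\|S(h)\|_{L^p}\|\varphi\|_{\mathring G}$ from the function-level duality of Proposition~\ref{prop-dual}, combined with an embedding $\mathring G\hookrightarrow\mathrm{CMO}^p_{L^2}$.

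The trade-off is as follows.
\begin{itemize}
\item The paper's route takes two lines but outsources all the analysis to \cite{hlw18}, including the only real use of $p>\omega/(\omega+\eta)$.
\item Your route stays inside this paper's framework and shows explicitly that the restriction on $p$ enters through $\beta_i,\gamma_i>\omega(1/p-1)$. It also yields the continuous inclusion $H^p\hookrightarrow(\mathring G)'$ as a by-product. The cost is that you must actually prove the embedding lemma.
\end{itemize}

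That lemma is true, but prove it with pointwise almost-orthogonality bounds on $\langle\varphi,\psi_R\rangle$. Combine these with the disjointness of the fixed-level rectangles contained in $\Omega$, rather than the localized square-function estimate you allude to. Keep in mind that $\mu(R)\le\mu(\Omega)$ only controls the product $\mu_1(Q_1)\mu_2(Q_2)$. So for rectangles with one coarse and one fine side, or rectangles far from the base point, you need doubling lower bounds on both side measures relative to the base point. You also need enough decay, in both distance and scale, in the almost-orthogonality estimate to absorb the growth this produces.

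Two smaller remarks. Your fallback via Proposition~\ref{prop-fg} does not bypass the lemma, since pairing against $\mathring G$ is exactly what is at stake. The completeness of $H^p_{\rm hlw}$ is not needed at the end either: an element of either space is determined by its wavelet coefficients through \eqref{eq-re}.
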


\begin{proof}
For any $N \in \mathbb N$, let $\mathcal I_N$ be the same as in \eqref{eq-defin} and
$$
\mathcal F_N:=\left\{\sum_{(k_1,k_2,\alpha_1,\alpha_2)\in\mathcal I_N} c_{k_1,k_2,\alpha_1,\alpha_2}
\psi_{\alpha_1,\alpha_2}^{k_1,k_2}:\ \{c_{k_1,k_2,\alpha_1,\alpha_2}\}_{{(k_1,k_2,\alpha_1,\alpha_2)\in\mathcal I_N}}
\subset\mathbb C\right\}
$$
Also suppose  $\mathcal F:=\bigcup_{N\in\mathbb N}\mathcal F_N$. It is obvious that both $\mathcal F\subset H^p$ and $\mathcal F\subset H^p_{\rm hlw}$ hold.
 Moreover, on one hand, applying an argument similar to that used in the proof of Proposition
\ref{prop-dense}(i), we conclude that $\mathcal F$ is dense in $H^p$.
On the other hand, using \cite[(3.26)]{hlw18} and an argument
similar to that in \cite[pp.\ 159--160]{hlw18}, we find that $\mathcal F$ is also
dense in $H^p_{\rm hlw}$. Thus, $\mathcal F$ is a common dense subspace of $H^p$
and $H^p_{\rm hlw}$ and, for any $f\in\mathcal F$,
$$
\|f\|_{H^p}=\left\|\left(\sum_{(k_1,k_2,\alpha_1,\alpha_2)\in\mathcal I_N}
\left|c_{k_1,k_2,\alpha_1,\alpha_2}\widetilde{\mathbf 1}_{Q_{\alpha_1,\alpha_2}^{k_1+1,k_2+1}}\right|^2
\right)^{\frac 12}\right\|_{L^p}=\|f\|_{H^p_{\rm hlw}}.
$$

Now, for any $f\in H^p$, we have $s:=\{\langle f,\psi_{\alpha_1,\alpha_2}^{k_1,k_2}\rangle:
\ k_1,k_2\in\mathbb Z,\alpha_1\in\mathcal G_{1,k_1},\alpha_2\in\mathcal G_{2,k_2}\}\in s^p$, where
$s^p$ is the same as in \cite[p.\ 160]{hlw18}. Then, from \cite[Proposition 5.7]{hlw18}, we infer that
$$
\widetilde f:=\sum_{\genfrac{}{}{0pt}{}{k_1\in{\mathbb Z}}{\alpha_1\in\mathcal G_{1,k_1}}}
\sum_{\genfrac{}{}{0pt}{}{k_2\in{\mathbb Z}}{\alpha_2\in\mathcal G_{2,k_2}}}
\left<f,\psi_{\alpha_1,\alpha_2}^{k_1,k_2}\right>\psi_{\alpha_1,\alpha_2}^{k_1,k_2}\in H^p_{\rm hlw}
$$
and
$$
\left\|\widetilde{f}\right\|_{H^p_{\rm hlw}}
\le \|s\|_{s^p}=\left\|\left(\sum_{k_1,\alpha_1}\sum_{k_2,\alpha_2}\left|
\left<f,\psi_{\alpha_1,\alpha_2}^{k_1,k_2}\right>
\widetilde{\mathbf 1}_{Q_{\alpha_1,\alpha_2}^{k_1+1,k_2+1}}\right|^2\right)^{\frac 12}\right\|_{L^p}=\|f\|_{H^p}.
$$

On the contrary, let $g\in H^p_{\rm hlw}$. Then, for any $N\in\mathbb N$, let
$$
g_N:=\sum_{(k_1,k_2,\alpha_1,\alpha_2)\in\mathcal I_N}
\left<g,\psi_{\alpha_1,\alpha_2}^{k_1,k_2}\right>\psi_{\alpha_1,\alpha_2}^{k_1,k_2}.
$$
Since $g\in H^p_{\rm hlw}$, it then follows that
$$
\left\|\left(\sum_{k_1,\alpha_1}\sum_{k_2,\alpha_2}\left|\left<g,\psi_{\alpha_1,\alpha_2}^{k_1,k_2}\right>
\widetilde{\mathbf 1}_{Q_{\alpha_1,\alpha_2}^{k_1+1,k_2+1}}\right|^2\right)^{\frac 12}\right\|_{L^p}<\infty.
$$
This, together with  Fubini's theorem, further implies that $\{g_N\}_{N=1}^\infty$ is a Cauchy sequence of $H^p$.
Then, by Proposition \ref{prop-com1}, there exists $\widetilde{g}\in H^p$ such that
$$
\left\|\widetilde{g}\right\|_{H^p}\le
\left\|\left(\sum_{k_1,\alpha_1}\sum_{k_2,\alpha_2}\left|
\left<g,\psi_{\alpha_1,\alpha_2}^{k_1,k_2}\right>
\widetilde{\mathbf 1}_{Q_{\alpha_1,\alpha_2}^{k_1+1,k_2+1}}\right|^2\right)^{\frac 12}\right\|_{L^p}
=\|g\|_{H^p_{\rm hlw}}
$$
and, for any $\varphi\in{\rm CMO}^p_{L^2}$,
\begin{align*}
\left<\widetilde{g},\varphi\right>&=\lim_{N\to\infty}\langle g_N,\varphi\rangle
=\lim_{N\to\infty}\sum_{(k_1,k_2,\alpha_1,\alpha_2)\in\mathcal I_N}
\left<g,\psi_{\alpha_1,\alpha_2}^{k_1,k_2}\right>\left<\varphi,\psi_{\alpha_1,\alpha_2}^{k_1,k_2}\right>\\
&=\sum_{k_1,\alpha_1}\sum_{k_2,\alpha_2}\left<g,\psi_{\alpha_1,\alpha_2}^{k_1,k_2}\right>
\left<\varphi,\psi_{\alpha_1,\alpha_2}^{k_1,k_2}\right>.
\end{align*}
This finishes the proof of Proposition \ref{prop-h=h}.
\end{proof}

\section{Atomic Characterization of Product Hardy Spaces}\label{s-at}

In this section, we concern about the atomic decomposition characterization of product Hardy spaces.
Let $\Omega\subset X_1\times X_2$ be an open set with $\mu(\Omega)<\infty$.
Denote by $\mathcal M(\Omega)$ the set of all maximal dyadic rectangles in $\Omega$. More precisely, a rectangle
$Q_1\times Q_2\in\mathcal M(\Omega)$ means that $Q_1\times Q_2\subset\Omega$ and, if
$\widetilde Q_1\times\widetilde Q_2\subset\Omega$ with $\widetilde Q_1\supset Q_1$ and $\widetilde Q_2\supset Q_2$, then $Q_1=\widetilde Q_1$
and $Q_2=\widetilde Q_2$. Let $\mathcal M_1(\Omega)$ [resp.\ $\mathcal M_2(\Omega)$] denote the set of all rectangles maximal in the
first (resp.\ second) direction, that is, $Q_1\times Q_2\in\mathcal M_1(\Omega)$ [resp.\ $Q_1\times Q_2\in\mathcal M_2(\Omega)$]
if $Q_1\times Q_2\subset\Omega$ and, if $\widetilde Q_1\times Q_2\subset\Omega$ and $\widetilde Q_1\supset Q_1$
(resp.\ $Q_1\times\widetilde Q_2\subset\Omega$ and $\widetilde Q_2\supset Q_2$), then $\widetilde Q_1=Q_1$ (resp.\ $\widetilde Q_2=Q_2$).
Moreover, suppose $Q_1\times Q_2\in\mathcal M_1(\Omega)$ [resp.\ $Q_1\times Q_2\in\mathcal M_2(\Omega)$]. Let
$\widehat{Q_2}$ (resp.\ $\widehat{Q_1}$) be the maximal dyadic cube on $X_2$ (resp.\ $X_1$) such that
$$
\mu\left(\left(Q_1\times\widehat{Q_2}\right)\cap\Omega\right)>\frac 12\mu\left(Q_1\times\widehat{Q_2}\right)\
\left[\textup{resp. }\mu\left(\left(\widehat{Q_1}\times Q_2\right)\cap\Omega\right)
>\frac 12\mu\left(\widehat{Q_1}\times Q_2\right)\right].
$$

Now we give the definition of $(p,q)$-atoms with any $p\in(0,1]$ and $q\in(1,\infty)$.

\begin{definition}\label{def-atom}
Let $p\in(0,1]$, $q\in(1,\infty)$, $\Omega\subset X_1\times X_2$ be an open set with $\mu(\Omega)<\infty$, and $C_0\in(0,\infty)$ be a given constant determined later.
\begin{enumerate}
\item[(I)] A function $a\in L^q$ is called a \emph{$(p,q,C_0)$-atom supported
in $\Omega$} if $a$ has the following properties:
\begin{enumerate}
\item[(i)] $\mathop{\rm supp} a:=\{(x_1,x_2)\in X_1\times X_2:\ a(x_1,x_2)\neq 0\}\subset\Omega$;
\item[(ii)] $\|a\|_{L^q}\le[\mu(\Omega)]^{\frac 1q-\frac 1p}$;
\item[(iii)] if $q\in[2,\infty)$, then $a$ has a decomposition
$a=\sum_{R\in\mathcal M(\Omega)}a_R$, where $\{a_R\}_{R\in\mathcal M(\Omega)}$ satisfies the following conditions:
\begin{enumerate}
\item[(a)] $\mathop{\rm supp} a_R\subset [C_0B_1(Q_1)]\times [C_0B_2(Q_2)]$ for any $R:=Q_1\times Q_2\in\mathcal M(\Omega)$;
in what follows, for any $i\in\{1,2\}$ and any cube $Q_i$ in $X_i$, $B_i(Q_i)$ is the same as in Lemma \ref{lem-cube}(ii);
\item[(b)] for any $R\in\mathcal M(\Omega)$, $x_1\in X_1$, and $x_2\in X_2$,
$$
\int_{X_2} a_R(x_1,y_2)\,d\mu_2(y_2)=0=\int_{X_1} a_R(y_1,x_2)\,d\mu_1(y_1);
$$
\item[(c)] $[\sum_{R\in\mathcal M(\Omega)}\|a_R\|_{L^q(\Omega)}^q]^{\frac 1q}\le[\mu(\Omega)]^{\frac 1q-\frac 1p}$;
\end{enumerate}
\item[(iii)$'$] if $q\in(1,2)$, then $a$ has a decomposition
$$
a=\sum_{R_1\in\mathcal M_1(\Omega)}a_{R_1}+\sum_{R_2\in\mathcal M_2(\Omega)}a_{R_2},
$$
where, for any $i\in\{1,2\}$, $\{a_{R_i}\}_{R_i\in\mathcal M_i(\Omega)}$ satisfy (a) and (b) in (iii) with $R$
replaced by $R_i$ and $\mathcal M(\Omega)$ by $\mathcal M_i(\Omega)$, and
\begin{enumerate}
\item[(c)$'$] for any given $\epsilon\in(0,\infty)$, there exists a
positive constant $C_{\epsilon}$, independent of $a$ and $\Omega$, such that
$$
\left\{\sum_{Q_{1}\times Q_{2}=R\in\mathcal M_i(\Omega)}\left\|a_{R}\right\|_{L^q}^q
\left[\frac{\ell(Q_{j})}{\ell(\widehat{Q_{j}})}\right]^\epsilon\right\}^{\frac1q}\le C_{\epsilon}[\mu(\Omega)]^{\frac1q-\frac1p},
$$
where $(i,j)\in\{(1,2),(2,1)\}$.
\end{enumerate}
\end{enumerate}
\item[(II)] The atomic Hardy space $H^{p,q}_\mathrm{at}(X)$ is defined to be the set of all
$f\in({\rm CMO}^p_{L^2})'$ such that there exist  $(p,q,C_0)$-atoms $\{a_{j}\}_{j=1}^\infty$ and
$\{\lambda_{j}\}_{j=1}^\infty\subset\mathbb C$ such that
\begin{equation}\label{eq-adec}
f=\sum_{j=1}^\infty \lambda_{j}a_{j} \textup{ in } \left(\mathrm{CMO}^p_{L^2}\right)'.
\end{equation}
The (quasi-)norm of $H^{p,q}_\mathrm{at}$ is defined by setting, for any $f\in H^{p,q}_\mathrm{at}$,
$$
\|f\|_{H^{p,q}_\mathrm{at}}:=\inf\left\{\left(\sum_{j=1}^\infty |\lambda_{j}|^p\right)^{\frac 1p}\right\},
$$
where the infimum is taken over all atomic decomposition as in \eqref{eq-adec}.
\end{enumerate}
\end{definition}

\begin{remark}
In \cite{hlpw21}, Han et al. introduced a new kind of product atoms,
whose size conditions require enlargement
parameters $(\ell_1,\ell_2)\in\mathbb Z_+^2$. Via these atoms, Han et al. \cite{hlpw21}
obtained an atomic
characterization of product Hardy spaces on spaces of homogeneous type.
Comparing their atoms with the atoms in Definition \ref{def-atom}, we find
that these atoms here coincide with theirs  in the case where $(\ell_1,\ell_2):=(0,0)$.
Moreover, if $X_1$ and $X_2$ are RD-spaces,
then the atoms in Definition \ref{def-atom}  exactly coincide with those in \cite{hll16-Pisa}, which are precisely the atoms in \cite{CF80} and \cite{CF82}
on Euclidean spaces.
\end{remark}

The main result of this section is the following theorem.

\begin{theorem}\label{thm-h=a}
Let $p\in(\frac{\omega}{\omega+\eta},1]$ with $\omega$ and $\eta$, respectively, as in \eqref{eq-doub} and Theorem \ref{thm-wave},
and let $q\in(1,\infty)$. Then $f\in H^p$ if and only if there exist a sequence  of $(p,q,C_0)$-atoms $\{a_{j}\}_{j=1}^\infty$ and $\{\lambda_{j}\}_{j=1}^\infty\subset\mathbb C$ such that
$f=\sum_{j=1}^\infty\lambda_{j}a_{j}$ in $({\rm CMO}^p_{L^2})'$.
Moreover, there exist  constants $C_1,C_2\in(0,\infty)$, independent of $f$, $\{a_{j}\}_{j=1}^\infty$, and
$\{\lambda_{j}\}_{j=1}^\infty$, such that
\begin{equation*}
C_1\|f\|_{H^P}\le\left[\sum_{k=1}^\infty\left|\lambda_{j}\right|^p\right]^{\frac 1p}\le C_2\|f\|_{H^p}.
\end{equation*}
In other words, $H^p=H^{p,q}_\mathrm{at}(X)$ with equivalent (quasi-)norm.
\end{theorem}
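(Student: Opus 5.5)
The proof splits into the two inclusions $H^{p,q}_{\rm at}\subset H^p$ and $H^p\subset H^{p,q}_{\rm at}$. In both directions we first work with functions in $L^2\cap H^p$ and then pass to general elements of $({\rm CMO}^p_{L^2})'$ by density (Proposition \ref{prop-dense}) and completeness (Proposition \ref{prop-com1}).

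\emph{Atoms belong to $H^p$ uniformly.} Since $S$ is sublinear and $p\le1$, it suffices to prove $\|S(a)\|_{L^p}\lesssim1$ for every $(p,q,C_0)$-atom $a$ supported in $\Omega$. Once this holds, for $f=\sum_j\lambda_ja_j$ in $({\rm CMO}^p_{L^2})'$ the partial sums are Cauchy in $H^p$. Proposition \ref{prop-com1} then shows that the limit lies in $H^p$, and it coincides with $f$ as a distribution. Hence $\|f\|_{H^p}^p\lesssim\sum_j|\lambda_j|^p$.

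To bound $S(a)$, set $\widetilde\Omega:=\{\mathcal M_{\rm str}(\mathbf 1_\Omega)>c\}$ and then $\widetilde{\widetilde\Omega}:=\{\mathcal M_{\rm str}(\mathbf 1_{\widetilde\Omega})>c\}$. On $\widetilde{\widetilde\Omega}$ we use H\"older's inequality, the $L^q$-boundedness of $S$ (from the unconditional basis property in Theorem \ref{thm-wave}, applied via Fubini), and $\mu(\widetilde{\widetilde\Omega})\sim\mu(\Omega)$. This gives
\[
\int_{\widetilde{\widetilde\Omega}}S(a)^p\lesssim\mu(\Omega)^{1-p/q}\|a\|_{L^q}^p\le1 .
\]
Off $\widetilde{\widetilde\Omega}$ we write $S(a)\le\sum_R S(a_R)$. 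For each $R=Q_1\times Q_2$ we enlarge $Q_1$ to $\widehat{Q_1}$ using the Journ\'e-type covering lemma (Lemma \ref{lem-j}); this is applied twice, once for $\widetilde{\widetilde\Omega}$ in each direction.

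For $(x_1,x_2)$ outside the enlarged rectangle, the cancellation (b) of $a_R$ in $x_1$ and the $\eta$-H\"older regularity and exponential decay of $\psi^{k_1}_{\alpha_1}$ give decay of the form
\[
\Big[\tfrac{\ell(Q_1)}{d_1(x_1,z_{Q_1})}\Big]^{\eta'}\frac{1}{\mu_1(B(z_{Q_1},d_1(x_1,z_{Q_1})))}\,\mu_1(Q_1)^{1-1/q}\|a_R\|_{L^q},
\]
and similarly in the second variable. Integrating the $p$-th power is finite precisely because $p>\omega/(\omega+\eta)$. This produces a factor $[\ell(Q_1)/\ell(\widehat{Q_1})]^{\epsilon}$, which Journ\'e's lemma sums against $\mu(\Omega)$.

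For $q\ge2$ condition (c) is used directly together with H\"older's inequality over $R$. For $q\in(1,2)$ we use (iii)$'$ and (c)$'$, and the $\epsilon$-weight there plays the role of the Journ\'e factor. The mixed region, where $x_1$ lies in the enlargement but $x_2$ does not, is handled by using $L^q$-boundedness in one variable and cancellation in the other.

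\emph{Every $f\in H^p$ has an atomic decomposition.} By density it suffices to treat $f\in L^2\cap H^p$ with a bound uniform in $\|f\|_{H^p}$; the general case follows by writing $f=\sum_m(f_m-f_{m-1})$ with $\|f_m-f_{m-1}\|_{H^p}\le2^{-m}\|f\|_{H^p}$. For such $f$ we use the discrete Calder\'on-type reproducing formula with boundedly supported kernels (Theorem \ref{thm-ctype}):
\[
f=\sum_{k_1,k_2}\sum_{Q_1,Q_2}\mu(Q_1)\mu(Q_2)\,D_{k_1}D_{k_2}(h)(x_{Q_1},x_{Q_2})\,\widetilde D_{k_1}\widetilde D_{k_2}(\cdot,x_{Q_1},x_{Q_2}),
\]
where $h$ satisfies $\|h\|_{H^p}\sim\|f\|_{H^p}$, the functions $\widetilde D_{k_i}$ have bounded support $\lesssim\delta^{k_i}$ and cancellation, and the discrete square function $\widetilde S(h)$ built from $D_{k_1}D_{k_2}(h)$ satisfies $\|\widetilde S(h)\|_{L^p}\sim\|f\|_{H^p}$.

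Set $\Omega_l:=\{\widetilde S(h)>2^l\}$. Let $\mathcal B_l$ be the set of rectangles $R$ with $\mu(R\cap\Omega_l)>\mu(R)/2$ and $\mu(R\cap\Omega_{l+1})\le\mu(R)/2$, and let $\widetilde\Omega_l$ be the strong-maximal enlargement of $\Omega_l$. Define $\lambda_l:=2^l\mu(\widetilde\Omega_l)^{1/p}$ and $a_l:=\lambda_l^{-1}\sum_{R\in\mathcal B_l}(\text{terms of }R)$. The pieces $a_R$ are grouped under the maximal rectangles of $\widetilde\Omega_l$ containing them. Their supports lie in a $C_0$-dilate of those rectangles (this fixes $C_0$), and cancellation is inherited from $\widetilde D_{k_i}$. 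By duality and the Fefferman--Stein inequality (Proposition \ref{prop-fs}),
\[
\|a_l\|_{L^q}\lesssim\lambda_l^{-1}\Big\|\Big(\sum_{R\in\mathcal B_l}|D_{k_1}D_{k_2}h|^2\mathbf 1_R\Big)^{1/2}\Big\|_{L^q}\lesssim\lambda_l^{-1}2^l\mu(\widetilde\Omega_l)^{1/q},
\]
where the last step uses the same argument as in \eqref{eq-cl}. This gives size condition (ii), and (c) is obtained the same way. Finally $\sum_l|\lambda_l|^p\sim\sum_l2^{lp}\mu(\Omega_l)\sim\|f\|_{H^p}^p$, and convergence in $({\rm CMO}^p_{L^2})'$ follows from the $L^2$ convergence of the reproducing formula together with Proposition \ref{prop-fg}.

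The main obstacle is the $q\in(1,2)$ case, which requires establishing (c)$'$. There one has to decompose $\mathcal B_l$ according to $\mathcal M_1(\widetilde\Omega_l)$ and $\mathcal M_2(\widetilde\Omega_l)$ and prove the weighted bound with the factor $[\ell(Q_j)/\ell(\widehat{Q_j})]^\epsilon$. I would try to obtain it from a Journ\'e-lemma counting argument combined with the $L^q$ square-function estimate applied separately to each layer $\ell(\widehat{Q_j})=\delta^{-m}\ell(Q_j)$, $m\ge0$, with a geometrically decaying count in $m$.
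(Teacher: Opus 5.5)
Your architecture matches the paper's in both directions. For the ``if'' part you use the $\Omega^{*}$, $\Omega^{**}$ splitting with the Journ\'e-type lemma. For the ``only if'' part you use Theorem \ref{thm-ctype} with the level sets $\Omega_l$, the families $\mathcal R_l$, the coefficients $\lambda_l=2^l\mu(\widetilde\Omega_l)^{1/p}$, and the duality/Fefferman--Stein size bound.

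The genuine gap is the step you flag yourself: condition (c)$'$ for $q\in(1,2)$. The route you sketch does not close it. For $q\ge2$, grouping under $\mathcal M(\widetilde\Omega_l)$ works because $\sum_R A_R^{q/2}\le(\sum_R A_R)^{q/2}$ pointwise, which lets you reassemble the full square function. For $q<2$ this inequality fails. Applying the $L^q$ square-function bound to a whole layer $\{R:\ \ell(\widehat{Q_j})=\delta^{-m}\ell(Q_j)\}$ meets exactly the same obstruction, because distinct maximal rectangles overlap. Nor does Lemma \ref{lem-j} give a geometrically decaying count in $m$. It only controls the layer masses after weighting by a summable factor such as $\delta^{m\epsilon}$, and that weight is already built into (c)$'$.

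What is missing is a bound for one maximal rectangle at a time; this is just your own size argument localized.
\begin{itemize}
\item Split each $Q\in\mathcal R_l$ in half between its unique $m_1(Q)\in\mathcal M_1(\widetilde\Omega_l)$ and $m_2(Q)\in\mathcal M_2(\widetilde\Omega_l)$, and estimate each $a_{i,l,R_i}$ separately by duality.
\item Every $Q$ with $m_i(Q)=R_i$ lies in $R_i$ and satisfies $\mu(Q\cap\Omega_{l+1}^\complement)\ge\mu(Q)/2$. Hence $\mathbf 1_Q\lesssim\mathcal M_{\rm str}(\mathbf 1_{Q\cap R_i\cap\Omega_{l+1}^\complement})$.
\item Proposition \ref{prop-fs} therefore moves the integral onto $R_i\cap\Omega_{l+1}^\complement$, where the discrete square function is at most $2^{l+1}$.
\item This gives $\|a_{i,l,R_i}\|_{L^q}^q\lesssim\mu(\widetilde\Omega_l)^{-q/p}\mu(R_i)$.
\item Lemma \ref{lem-j} with $w(r)=r^\epsilon$ then yields (c)$'$ at once, with no layer decomposition.
\end{itemize}

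A smaller point in the other direction: for $q<2$ an atom need not belong to $L^2$. Before summing, you must know that it defines an element of $H^p$; the paper supplies this in Proposition \ref{prop-replq}.
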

The organization of the remainder of this section is as follows.
From Subsection \ref{ss-h>a} to Subsection \ref{ss-h<a}, we concentrate on proving Theorem \ref{thm-h=a}. In Subsection \ref{ss-h>a},
we show that $H^{p,q}_\mathrm{at}\subset H^p$. In Subsection \ref{ss-ctrf}, we establish a new
Caldre\'on-type reproducing formula to deal with the proof of $H^p\subset H^{p,q}_\mathrm{at}$.
In Subsection \ref{ss-h<a}, we show that $H^p\subset H^{p,q}_\mathrm{at}$.
Finally, in Subsection \ref{ss-bhl}, as an application of the atomic
characterization of $H^p$, we obtain a criterion for the
boundedness of linear operators from $H^p$
to $L^p$.

\subsection{Proof of $H^{p,q}_\mathrm{at}\subset H^p$
(``If"  Part of Theorem \ref{thm-h=a})}\label{ss-h>a}

In this section, we prove the ``if"  part of Theorem \ref{thm-h=a}. More precisely, we mainly concern about the
following proposition.

\begin{proposition}\label{prop-hsubat}
Let $p\in(\frac{\omega}{\omega+\eta},1]$, $q\in(1,\infty)$ with $\omega$ and $\eta$, respectively,
as in \eqref{eq-doub} and Theorem \ref{thm-wave}, $q\in(1,\infty)$, and $C_0\in(1,\infty)$. Suppose that
$\{a_{j}\}_{j=1}^\infty$ is a sequence of $(p,q,C_0)$-atoms and
a sequence
$\{\lambda_{j}\}_{j=1}^\infty\subset\mathbb C$ satisfies that $\sum_{j=1}^\infty|\lambda_{j}|^p<\infty$. Then there exists
$f\in({\rm CMO}^p_{L^2})'$ such that $f=\sum_{j=1}^\infty\lambda_{j}a_{j}$ in $({\rm CMO}^p_{L^2})'$.
Moreover, there exists a constant $C\in(0,\infty)$, independent of $\{a_{j}\}_{j=1}^\infty$ and
$\{\lambda_{j}\}_{j=1}^\infty$, such that
\begin{equation*}
\|f\|_{H^p}\le C\left(\sum_{j=1}^\infty\left|\lambda_{j}\right|^p\right)^{\frac 1p}.
\end{equation*}
\end{proposition}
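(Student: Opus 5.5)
The proof reduces to a uniform estimate for a single atom, plus a completeness argument.

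\emph{Reduction.} The key claim is that there is a constant $C$ such that every $(p,q,C_0)$-atom $a$ satisfies $a\in H^p_{L^2}$ and $\|S(a)\|_{L^p}\le C$. Granting this, take $f_N:=\sum_{j=1}^N\lambda_ja_j\in L^2$. Since $p\le1$, $\|S(\cdot)\|_{L^p}^p$ is subadditive, so
$$\|S(f_N-f_M)\|_{L^p}^p\le C^p\sum_{j=M+1}^N|\lambda_j|^p.$$
Hence $\{f_N\}$ is Cauchy in $H^p$. Proposition \ref{prop-com1} then yields $f\in H^p$ with $f_N\to f$ in $({\rm CMO}^p_{L^2})'$ (by Proposition \ref{prop-fg}) and $\|f\|_{H^p}\le C(\sum_j|\lambda_j|^p)^{1/p}$.

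\emph{Uniform bound for one atom.} Note first that $a\in L^q$ with compact-measure support only gives $a\in L^2$ when $q\ge2$; for $q<2$ I would use the $L^q$-boundedness of $S$ (unconditional basis, Theorem \ref{thm-wave}) and a density argument. Let $\widetilde\Omega:=\{\mathcal M_{\rm str}(\mathbf 1_\Omega)>c\}$ with $c$ small, so that $\mu(\widetilde\Omega)\lesssim\mu(\Omega)$ by Proposition \ref{prop-fs}. Split
$$\int S(a)^p=\int_{\widetilde{\widetilde\Omega}}+\int_{(\widetilde{\widetilde\Omega})^\complement},$$
where $\widetilde{\widetilde\Omega}$ is a further enlargement of the same type. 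On the enlarged set, Hölder's inequality and the $L^q$-boundedness of $S$ give
$$\int_{\widetilde{\widetilde\Omega}}S(a)^p\le \mu(\Omega)^{1-p/q}\|a\|_{L^q}^p\lesssim1.$$

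\emph{Off-set estimate and the main obstacle.} The complement is the hard part and is where the Journé-type covering lemma (Lemma \ref{lem-j}) enters. For $q\ge2$, write $a=\sum_{R\in\mathcal M(\Omega)}a_R$ with $R=Q_1\times Q_2$. For each $R$ let $\widehat{Q_1}$ be the maximal enlargement in direction~1 such that $\widehat{Q_1}\times Q_2\subset\widetilde\Omega$; define $\widehat{Q_2}$ analogously, and set $\widehat R:=(C\widehat{Q_1})\times(C\widehat{Q_2})$ with $\gamma_i:=\ell(\widehat{Q_i})/\ell(Q_i)$. Since $\bigcup\widehat R$ has measure $\lesssim\mu(\Omega)$, it suffices to show, for each $R$,
$$\int_{(\widehat R)^\complement}S(a_R)^p\lesssim(\gamma_1^{-\epsilon}+\gamma_2^{-\epsilon})\,\mu(R)^{1-p/q}\|a_R\|_{L^q}^p$$
for some $\epsilon>0$. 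This bound comes from the wavelet coefficient estimate
$$|\langle a_R,\psi^{k_1,k_2}_{\alpha_1,\alpha_2}\rangle|\lesssim\delta^{(k_1-k_{Q_1})\eta}\cdots\,\|a_R\|_{L^1}\prod_i V_{\delta^{k_i}}^{-1/2}(\cdots)\,\mathcal E\text{-decay},$$
obtained by using the cancellation of $a_R$ in each variable against the Hölder regularity of $\psi$, or the size of $\psi$ when $k_i<k_{Q_i}$. Splitting $(\widehat R)^\complement$ into the regions where $x_1\notin C\widehat{Q_1}$ or $x_2\notin C\widehat{Q_2}$, one integrates in the far variable and gains decay $\gamma_i^{-(\eta+\omega)+\omega/p}$; this is summable precisely because $p>\omega/(\omega+\eta)$. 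In the near variable one uses $L^q$ boundedness of the one-parameter square function. Summing over $R$ by Hölder with exponents $q/p$ and $(q/p)'$ and using Journé's lemma,
$$\sum_R\mu(R)\gamma_1^{-\epsilon}\lesssim\mu(\Omega),$$
which combined with condition (c) gives the bound. For $q\in(1,2)$ the same scheme applies with $\mathcal M_i(\Omega)$, with condition (c)$'$ replacing Journé's lemma: the factor $[\ell(Q_j)/\ell(\widehat{Q_j})]^\epsilon$ is exactly what the estimate needs. I expect the delicate point to be handling the exponentially decaying (rather than compactly supported) wavelets: these create tails inside the mixed region, which I would control by the $\mathcal E$-decay and by summing the dyadic annuli.
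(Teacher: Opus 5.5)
Your reduction (the uniform bound $\|S(a)\|_{L^p}\lesssim1$, subadditivity of $\|S(\cdot)\|_{L^p}^p$, then Proposition~\ref{prop-com1}) and your on-set H\"older estimate are exactly the paper's. The gap is the off-set covering. You choose $\widehat{Q_1}$ (maximal with $\widehat{Q_1}\times Q_2\subset\widetilde\Omega$) and $\widehat{Q_2}$ (maximal with $Q_1\times\widehat{Q_2}\subset\widetilde\Omega$) independently. You then assert that $\bigcup_R\widehat R$, with $\widehat R=(C\widehat{Q_1})\times(C\widehat{Q_2})$, has measure $\lesssim\mu(\Omega)$. This fails already in $\mathbb R\times\mathbb R$. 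Let $R=Q_1\times Q_2$ have unit side lengths, let $F_i\subset Q_i$ have half the measure of $Q_i$, and let $E_i$ be the dyadic ancestor of $Q_i$ of measure $\sim N\gg1$. Let $\Omega$ be a thin open neighbourhood of $R\cup(E_1\times F_2)\cup(F_1\times E_2)$. Then $R\in\mathcal M(\Omega)$ and $\mu(\Omega)\sim N$. However, $\mathcal M_{\rm str}(\mathbf 1_\Omega)\gtrsim1$ on $E_1\times Q_2$ and on $Q_1\times E_2$, so (for $c$ small, as you take it) $\widehat{Q_i}\supset E_i$ and $\mu(\widehat R)\gtrsim N^2$. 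Moreover, $\mathcal M_{\rm str}(\mathbf 1_{\widetilde\Omega})\lesssim N^{-1}$ near the centre of $E_1\times E_2$, so $\widehat R\not\subset\widetilde{\widetilde\Omega}$ either. Hence neither the H\"older bound on $\bigcup_R\widehat R$ nor the passage from $(\widetilde{\widetilde\Omega})^\complement$ to $(\widehat R)^\complement$ is available. Independent enlargements make both sums $\sum_R\mu(R)\gamma_i^{-\epsilon}$ amenable to Journ\'e's lemma, but they destroy the covering. Resolving this tension is the heart of the argument.

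The paper avoids this by enlarging sequentially. Set $\Omega^*=\{\mathcal M_{\rm str}(\mathbf 1_\Omega)>c\}$ and $\Omega^{**}=\{\mathcal M_{\rm str}(\mathbf 1_{\Omega^*})>c\}$. The paper takes $\widehat{Q_2}$ by the density condition relative to $Q_1$ and $\Omega$, so $Q_1\times\widehat{Q_2}\subset\Omega^*$. It then takes $P_2\supset\widehat{Q_2}$ with $Q_1\times P_2\in\mathcal M_2(\Omega^*)$, and finally $\widehat{Q_1}$ by the density condition relative to $P_2$ and $\Omega^*$, so $\widehat{Q_1}\times P_2\subset\Omega^{**}$. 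A point of $(\Omega^{**})^\complement$ then falls into one of two families:
\begin{itemize}
\item $x_2\notin\widehat{Q_2}$: decay in $\ell(Q_2)/\ell(\widehat{Q_2})$, summed by Lemma~\ref{lem-j};
\item $x_2\in\widehat{Q_2}$ and $x_1\notin\widehat{Q_1}$: decay in $\ell(Q_1)/\ell(\widehat{Q_1})$.
\end{itemize}
The second family is \emph{not} handled by Journ\'e's lemma over $\mathcal M(\Omega)$, since $\widehat{Q_1}$ depends on $P_2$ rather than on $Q_2$. Instead the paper fixes $Q_1$ and uses that the $Q_2$ with $Q_1\times Q_2\in\mathcal M(\Omega)$ are pairwise disjoint. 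It groups them according to $P_2=m_2^{Q_1}(Q_2)$, so their measures sum to at most $\mu_2(P_2)$, and only then applies Lemma~\ref{lem-j} to $\mathcal M_2(\Omega^*)$. The same two-step construction is needed for $q\in(1,2)$ (Case (II.3)), so condition (c)$'$ alone does not close that case.

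Two smaller points. For $q<2$ the near-variable estimate cannot use H\"older with exponent $q/2<1$; it requires the $\ell^2$-valued square function bound of Proposition~\ref{prop-vector}. Also, your density argument placing $L^q$-atoms in $(\mathrm{CMO}^p_{L^2})'$ should be made precise as in Proposition~\ref{prop-replq}: the finite wavelet truncations are Cauchy in $H^p$, and then Proposition~\ref{prop-com1} applies.
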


To show Proposition \ref{prop-hsubat}, we need the following Propositions
\ref{prop-at} and \ref{prop-replq}, whose proofs are presented after the
proof of Proposition \ref{prop-hsubat}.

\begin{proposition}\label{prop-at}
Let $p\in(\frac{\omega}{\omega+\eta},1]$ with $\omega$ and $\eta$, respectively,  as in \eqref{eq-doub}
and Theorem \ref{thm-wave}, $q\in(1,\infty)$, and $C_0\in(1,\infty)$. Then there exists a constant $C\in(0,\infty)$
such that, for any $(p,q,C_0)$-atom $a$, $\|S(a)\|_{L^p}\le C$.
\end{proposition}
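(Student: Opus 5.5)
We follow the Chang--Fefferman/Journé scheme, adapted to the wavelet square function $S$. Let $a$ be a $(p,q,C_0)$-atom supported in $\Omega$. Set
$$\widetilde\Omega:=\left\{(x_1,x_2):\ \mathcal M_{\rm str}(\mathbf 1_\Omega)(x_1,x_2)>\tfrac12\right\},\qquad \widetilde{\widetilde\Omega}:=\left\{\mathcal M_{\rm str}(\mathbf 1_{\widetilde\Omega})>\tfrac12\right\}.$$
By Proposition \ref{prop-fs}, $\mu(\widetilde{\widetilde\Omega})\lesssim\mu(\Omega)$. We split $\|S(a)\|_{L^p}^p$ into the integral over a suitable enlargement $\Omega^*$ of $\widetilde{\widetilde\Omega}$ (enlarged so that it contains every $C_0B_1(Q_1)\times C_0B_2(Q_2)$ with $Q_1\times Q_2$ in $\mathcal M(\Omega)$ or $\mathcal M_i(\Omega)$, up to harmless constants) and the integral over its complement.

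\textbf{Local part.} By H\"older's inequality with exponent $q/p$ and the $L^q$-boundedness of $S$ for $q\in(1,\infty)$,
$$\int_{\Omega^*}[S(a)]^p\,d\mu\le\mu(\Omega^*)^{1-p/q}\|S(a)\|_{L^q}^p\lesssim\mu(\Omega)^{1-p/q}\mu(\Omega)^{p/q-1}=1.$$
The $L^q$-boundedness follows from the unconditional basis property in Theorem \ref{thm-wave} together with a Khintchine/Fubini argument, or equivalently from product Littlewood--Paley theory in \cite{HYY24}.

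\textbf{Global part.} Write $a=\sum_R a_R$. By the $p$-triangle inequality, it suffices to bound $\sum_R\int_{(\Omega^*)^\complement}[S(a_R)]^p\,d\mu$. Fix $R=Q_1\times Q_2$ and let $\widehat{Q_1}$ be obtained from Journé's covering lemma (Lemma \ref{lem-j}, applied to $Q_1\times Q_2\subset\Omega$), with $\ell(\widehat{Q_1})$ the largest scale such that $\widehat{Q_1}\times Q_2$ stays mostly in $\widetilde\Omega$. Split $(\Omega^*)^\complement$ into the region $x_1\notin C\widehat{Q_1}$ and the symmetric region $x_2\notin C\widehat{Q_2}$. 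In the first region, estimate the coefficients $\langle a_R,\psi^{k_1,k_2}_{\alpha_1,\alpha_2}\rangle$ as follows. In the $x_1$ variable use the cancellation of $a_R$ (condition (b)) together with the $\eta$-H\"older regularity of $\psi_1$ when $\delta^{k_1}\lesssim\ell(Q_1)$, and the cancellation of $\psi_1$ with the size of $a_R$ otherwise. In the $x_2$ variable use only $L^2$ or $L^q$ bounds; the orthonormality in $x_2$ gives a one-parameter square function in $x_2$, which is bounded in $L^2(X_2)$. This yields the pointwise bound
$$S(a_R)(x_1,x_2)\lesssim\left[\frac{\ell(Q_1)}{d_1(x_1,z_{Q_1})}\right]^{\eta'}\frac{1}{\mu_1(B(z_{Q_1},d_1(x_1,z_{Q_1})))}\,\mu_1(Q_1)^{1/2}\,\|a_R\|_{L^2_{x_1}}\times(\text{square function in }x_2),$$
up to the exponential tails of the wavelets. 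Integrate the $p$-th power over $x_1$ outside $C\widehat{Q_1}$ and $x_2$ near $C_0B_2(Q_2)$ (the far-$x_2$ piece is handled symmetrically), using H\"older in $x_2$. Since $p>\frac{\omega}{\omega+\eta}$, we may choose $\eta'<\eta$ with $p(\omega+\eta')>\omega$, and we obtain
$$\lesssim\left[\frac{\ell(Q_1)}{\ell(\widehat{Q_1})}\right]^{\epsilon}\mu(R)^{1-p/q}\|a_R\|_{L^q}^p.$$
Summing over $R$ with H\"older's inequality (exponents $q/p$ and $(q/p)'$) gives
$$\left[\sum_R\|a_R\|_{L^q}^q\left(\tfrac{\ell(Q_1)}{\ell(\widehat{Q_1})}\right)^{\epsilon}\right]^{p/q}\left[\sum_R\mu(R)\left(\tfrac{\ell(Q_1)}{\ell(\widehat{Q_1})}\right)^{\epsilon}\right]^{1-p/q}.$$
Journé's lemma bounds the second factor by $\mu(\Omega)$, and condition (c) (for $q\ge2$) or (c)$'$ (for $q\in(1,2)$) bounds the first. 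Together these give a uniform bound.

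\textbf{Main obstacle.} The delicate step is the pointwise estimate with mixed cancellation and regularity, given the exponential (not compact) support of $\psi$; tails beyond $\widehat{Q}$ must decay fast enough. We plan to dominate each tail by $P_{1,\Gamma}$ with $\Gamma$ large and invoke Proposition \ref{prop-estmax}. For $q\in(1,2)$ we cannot use the $L^2$ bound directly, so we would instead apply the above argument to the two families $\mathcal M_1(\Omega)$ and $\mathcal M_2(\Omega)$ separately, each using Journé's lemma only in one direction.
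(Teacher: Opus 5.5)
The local part is fine, but your covering of the complement does not work. Splitting $(\Omega^*)^\complement$ into $\{x_1\notin C\widehat{Q_1}\}$ and $\{x_2\notin C\widehat{Q_2}\}$ is legitimate only if $C\widehat{Q_1}\times C\widehat{Q_2}\subset\Omega^*$. You choose each $\widehat{Q_i}$ relative to the \emph{unenlarged} cube in the other variable ($\widehat{Q_1}\times Q_2$, resp.\ $Q_1\times\widehat{Q_2}$, mostly in $\widetilde\Omega$). Then $\widehat{Q_1}\times\widehat{Q_2}$ need not lie in any fixed enlargement of $\Omega$. For instance, if $\Omega$ is essentially the cross $(Q_1\times\widehat{Q_2})\cup(\widehat{Q_1}\times Q_2)$ with $\ell(\widehat{Q_i})=M\ell(Q_i)$, then $\mathcal M_{\rm str}(\mathbf 1_\Omega)\lesssim M^{-1}$ on most of $(\widehat{Q_1}\setminus Q_1)\times(\widehat{Q_2}\setminus Q_2)$. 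On this uncovered corner you only have the double-decay bound \eqref{eq-sarp1}. Its $p$-th power integrates to $\mu(R)^{1-p/q}\|a_R\|_{L^q}^p$ with no factor $[\ell(Q_i)/\ell(\widehat{Q_i})]^{\theta}$. Summing over $R$ would then require $\sum_{R\in\mathcal M(\Omega)}\mu(R)\lesssim\mu(\Omega)$, which fails in general; avoiding exactly this is the purpose of Journ\'e's lemma.

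The missing idea is the nested enlargement used in the paper. For $R=Q_1\times Q_2\in\mathcal M(\Omega)\subset\mathcal M_1(\Omega)$, first take $\widehat{Q_2}$ relative to $Q_1$ and $\Omega$ as in Lemma \ref{lem-j}, so that $Q_1\times\widehat{Q_2}\subset\widetilde\Omega$. Then take $P_2\supset\widehat{Q_2}$ with $Q_1\times P_2\in\mathcal M_2(\widetilde\Omega)$, and define $\widehat{Q_1}$ relative to $P_2$ and $\widetilde\Omega$, so that $\widehat{Q_1}\times P_2\subset\widetilde{\widetilde\Omega}$ (with your threshold $\frac12$ replaced by a small constant as in Lemma \ref{lem-basic1}). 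Now $(\widetilde{\widetilde\Omega})^\complement\subset\{x_2\notin\widehat{Q_2}\}\cup\{x_2\in\widehat{Q_2},\ x_1\notin\widehat{Q_1}\}$, and both pieces carry a decay factor.

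The price is paid in the summation. Here $\widehat{Q_1}$ is the Journ\'e enlargement of $Q_1\times P_2\in\mathcal M_2(\widetilde\Omega)$, not of $R$, so Lemma \ref{lem-j} does not directly bound $\sum_R\mu(R)[\ell(Q_1)/\ell(\widehat{Q_1})]^{s}$. You must group the $R$ by $(Q_1,P_2)$ and use that, for fixed $Q_1$, the cubes $Q_2$ with $Q_1\times Q_2\in\mathcal M(\Omega)$ are pairwise disjoint, so $\sum\mu_2(Q_2)\le\mu_2(P_2)$. Only then can you apply Lemma \ref{lem-j} on $\widetilde\Omega$.

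The same two-step construction is needed for $q\in(1,2)$ and $R\in\mathcal M_1(\Omega)$, so Journ\'e's lemma in one direction only does not cover the complement. Moreover, for $q<2$ the near-variable estimate cannot go through Minkowski's inequality, since the exponent $q/2$ is less than $1$. The paper uses instead the $\ell^2(\nu)$-valued $L^q$ bound of Proposition \ref{prop-vector}, and your sketch offers no substitute.

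Finally, your coefficient estimate has the scale regimes reversed. Cancellation of $a_R$ combined with the H\"older regularity of $\psi$ handles the coarse scales $\delta^{k_1}\gtrsim\ell(Q_1)$. Fine scales are handled by the size and exponential decay of $\psi$, since $x_1$ is far from $Q_1$. The cancellation of $\psi$ is useless here because $a_R$ has no smoothness.
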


\begin{proposition}\label{prop-replq}
Let $p\in(\frac{\omega}{\omega+\eta},1]$, $q\in(1,\infty)$, and $a\in L^q$ with $S(a)\in L^p$, where
$\omega$ and $\eta$ are, respectively, the same as in \eqref{eq-doub} and Theorem \ref{thm-wave}. Then $a\in H^p$
in the following setting: there exists $\widetilde a\in({\rm CMO}^p_{L^2})'$ such that, for any
$\varphi\in{\rm CMO}^p_{L^2}\cap L^{q'}$,
$$
\left<\widetilde a,\varphi\right>=\int_{X_1}\int_{X_2}a(x_1,x_2)\varphi(x_1,x_2)\,d\mu_2(x_2)\,d\mu_1(x_1)
=\langle a,\varphi\rangle.
$$
\end{proposition}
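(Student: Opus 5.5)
The plan is to define $\widetilde a$ through its wavelet expansion and then identify it with $a$ on the smaller test class. Since $a\in L^q$ and $q\in(1,\infty)$, each coefficient $\langle a,\psi_{\alpha_1,\alpha_2}^{k_1,k_2}\rangle$ is well defined, because $\psi_{\alpha_1,\alpha_2}^{k_1,k_2}\in L^{q'}$ by its exponential decay. For $N\in\mathbb N$ and $\mathcal I_N$ as in \eqref{eq-defin}, let
$$
a_N:=\sum_{(k_1,k_2,\alpha_1,\alpha_2)\in\mathcal I_N}\left<a,\psi_{\alpha_1,\alpha_2}^{k_1,k_2}\right>\psi_{\alpha_1,\alpha_2}^{k_1,k_2}\in L^2 .
$$

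\emph{Step 1: $\{a_N\}$ is Cauchy in $H^p$.} By orthonormality, $S(a_N-a_M)$ is the square function restricted to the indices in $\mathcal I_N\triangle\mathcal I_M$. It is therefore dominated pointwise by $S(a)\in L^p$ and tends to $0$ pointwise. Dominated convergence then gives $\|S(a_N-a_M)\|_{L^p}\to0$. By Proposition~\ref{prop-com1}, there exists $\widetilde a\in H^p\subset({\rm CMO}^p_{L^2})'$ with $\|S(a_N-\widetilde a)\|_{L^p}\to0$ and $\langle\widetilde a,\varphi\rangle=\lim_N\langle a_N,\varphi\rangle$. Moreover, the coefficients of $\widetilde a$ are those of $a$, so $S(\widetilde a)=S(a)$.

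\emph{Step 2: identification.} Fix $\varphi\in{\rm CMO}^p_{L^2}\cap L^{q'}$. We have
$$
\langle a_N,\varphi\rangle=\sum_{\mathcal I_N}\left<a,\psi_{\alpha_1,\alpha_2}^{k_1,k_2}\right>\left<\varphi,\psi_{\alpha_1,\alpha_2}^{k_1,k_2}\right>=\left\langle a,\varphi_N\right\rangle,
$$
where $\varphi_N$ is the corresponding partial wavelet sum of $\varphi$. It then suffices to show $\varphi_N\to\varphi$ in $L^{q'}$, or at least weakly in $L^{q'}$, since $a\in L^q$. Here I would use that the product wavelets form an unconditional basis of $L^{q'}$, which follows from Theorem~\ref{thm-wave} in each variable and the vector-valued (product) Littlewood--Paley square function estimates in $L^{q'}$, $q'\in(1,\infty)$, from \cite{HYY24}. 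Unconditionality ensures that the partial sums over the exhausting finite sets $\mathcal I_N$ converge in norm to $\varphi$. Hence $\langle\widetilde a,\varphi\rangle=\lim_N\langle a,\varphi_N\rangle=\langle a,\varphi\rangle$.

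\emph{Main obstacle.} The main obstacle is this second step: justifying the $L^{q'}$ convergence of the product wavelet expansion along $\mathcal I_N$. The one-parameter unconditional basis property has to be transferred to the tensor product basis. I would try first the product square-function equivalence $\|\varphi\|_{L^{q'}}\sim\|S(\varphi)\|_{L^{q'}}$, obtained by iterating the one-parameter result with Fubini and Khintchine/Rademacher averaging. This yields uniformly bounded multiplier (projection) operators onto index subsets. Norm convergence of the tails then follows by density of finite sums in $L^{q'}$.

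If that transfer proves awkward, a fallback is to estimate the tail $\langle a-a_N,\varphi\rangle$ directly. Here one can use $|\langle a-a_N,\varphi\rangle|\lesssim \|S(a-a_N)\|_{L^p}\,\mathcal C_p(\varphi)$ via the argument of Proposition~\ref{prop-dual}. That argument only needs the coefficient identity $\langle a,\varphi\rangle=\sum\langle a,\psi\rangle\langle\varphi,\psi\rangle$, which must itself be justified, for instance by truncating $a$ to $L^q\cap L^2$ functions and passing to the limit.
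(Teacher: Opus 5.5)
Your proposal is correct and follows essentially the paper's route. You use the same truncations $a_N$ over $\mathcal I_N$, the same dominated-convergence Cauchy argument, Proposition~\ref{prop-com1} to produce $\widetilde a$, and identification through the wavelet coefficients. Your Step 2 is in fact more careful than the paper, which simply invokes the $L^2$ formula \eqref{eq-re} for an $L^q$--$L^{q'}$ pairing. Your ``main obstacle'' is settled in one line by Lemma~\ref{lem-bds}: $\|\varphi-\varphi_N\|_{L^{q'}}\lesssim\|S(\varphi-\varphi_N)\|_{L^{q'}}\to0$ by dominated convergence with majorant $S(\varphi)\in L^{q'}$.
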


Now, we use   Propositions \ref{prop-at} and \ref{prop-replq}
to prove  Proposition \ref{prop-hsubat}.

\begin{proof}[Proof of Proposition \ref{prop-hsubat}]
Let $\{a_{j}\}_{j=1}^\infty$ be a sequence of $(p,q,C_0)$-atoms and $\{\lambda_{j}\}_{j=1}^\infty\subset\mathbb C$ be such that
$\sum_{j=1}^\infty|\lambda_{j}|^p<\infty$.
By Propositions \ref{prop-at} and  \ref{prop-replq}, we   conclude that, for any
$j\in\mathbb N$, $a_{j}\in H^p$.
For any $n\in\mathbb N$, let
$f_n:=\sum_{j=1}^n\lambda_{j}a_{j}$.
Then $\{f_n\}_{n=1}^\infty\subset H^p$.
For any $m,n\in\mathbb N$ with $m>n$,
by   Minkowski's inequality and Proposition \ref{prop-at}, we have
$$
\|S(f_m-f_n)\|_{L^p}^p\le\sum_{j=n+1}^m|\lambda_{j}|^p\left\|S(a_{j})\right\|_{L^p}
\lesssim\sum_{j=n+1}^m|\lambda_{j}|^p\to 0
$$
as $m,n\to\infty$. This shows that $\{f_n\}_{n=1}^\infty$ is a Cauchy sequence
of $H^p$. Combining this with
Proposition \ref{prop-com1}, we find that there exists $f\in H^p$ such that
$$
f=\lim_{n\to\infty}f_n
=\lim_{n\to\infty}\sum_{j=1}^n\lambda_{j}a_{j}
=\sum_{j=1}^\infty\lambda_{j}a_{j}
$$
in $({\rm CMO}^p_{L^2})'$. Moreover,
$$
\|f\|_{H^p}^p=\|S(f)\|_{L^p}^p\lesssim\lim_{n\to\infty}\|S(f_n)\|_{L^p}^p
\lesssim\sum_{j=1}^\infty|\lambda_{j}|^p.
$$
This finishes the proof of Proposition \ref{prop-hsubat}.
\end{proof}

To show  Proposition \ref{prop-at}, we need several lemmas.

\begin{lemma}\label{lem-basic1}
Let $\Omega\subset X_1\times X_2$ be an open set. Then the following assertions hold:
\begin{enumerate}
\item For any $(x_1,x_2)\in\Omega$, there exists $R\in\mathcal D$ such that $(x_1,x_2)\in R\subset\Omega$.
\item If $ Q_1\times Q_2\in\mathcal D$ such that $ \mu((Q_1\times Q_2)\cap\Omega)>\frac{1}{2}\mu(Q_1\times Q_2)$, then, for any fixed $\tau\in[1,\infty)$,
there exists a constant $c_{\tau}\in(0,1)$, independent of $R$ and $\Omega$, such that, for any
$(x_1,x_2)\in[\tau B_1(Q_1)]\times[\tau B_2(Q_2)]$,
$\mathcal M_{\rm str}(\mathbf 1_{\Omega})(x_1,x_2)>c_{\tau}$.
\end{enumerate}
\end{lemma}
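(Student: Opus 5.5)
For (i), I would use the product topology together with the geometry of the dyadic cubes in Lemma \ref{lem-cube}. Fix $(x_1,x_2)\in\Omega$. Since $\Omega$ is open in $X_1\times X_2$, there are radii $r_1,r_2>0$ with $B_1(x_1,r_1)\times B_2(x_2,r_2)\subset\Omega$. By Lemma \ref{lem-cube}(a), for each $k\in\mathbb Z$ there is a unique $\alpha_i\in\mathcal A_{i,k}$ with $x_i\in Q_{i,\alpha_i}^{k}$. By Lemma \ref{lem-cube}(c), $Q_{i,\alpha_i}^k\subset B_i(z_{i,\alpha_i}^k,C^\natural\delta^k)$. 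Hence every $y_i\in Q_{i,\alpha_i}^k$ satisfies
\[
d_i(x_i,y_i)\le A_0\bigl[d_i(x_i,z_{i,\alpha_i}^k)+d_i(z_{i,\alpha_i}^k,y_i)\bigr]<2A_0C^\natural\delta^k .
\]
Choosing $k_i$ large enough that $2A_0C^\natural\delta^{k_i}\le r_i$ gives $Q_{i,\alpha_i}^{k_i}\subset B_i(x_i,r_i)$. Then $R:=Q_{1,\alpha_1}^{k_1}\times Q_{2,\alpha_2}^{k_2}\in\mathcal D$ contains $(x_1,x_2)$ and lies inside $\Omega$.

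For (ii), I would follow the argument already used in the proof of Proposition \ref{prop-dual}. Fix $\tau\ge1$ and $(x_1,x_2)\in[\tau B_1(Q_1)]\times[\tau B_2(Q_2)]$. The balls $\tau B_i(Q_i)$ contain $x_i$, so they are admissible in the supremum defining $\mathcal M_{\rm str}$. Using $Q_i\subset B_i(Q_i)\subset\tau B_i(Q_i)$, we get
\[
\mathcal M_{\rm str}(\mathbf 1_\Omega)(x_1,x_2)\ge\frac{\mu((Q_1\times Q_2)\cap\Omega)}{\mu_1(\tau B_1(Q_1))\,\mu_2(\tau B_2(Q_2))}>\frac{\mu_1(Q_1)\mu_2(Q_2)}{2\,\mu_1(\tau B_1(Q_1))\,\mu_2(\tau B_2(Q_2))}.
\]
It remains to bound $\mu_i(\tau B_i(Q_i))$ by a constant multiple of $\mu_i(Q_i)$. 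Lemma \ref{lem-cube}(c) gives $B_i(z,c_\natural\delta^k)\subset Q_i$ and $\tau B_i(Q_i)=B_i(z,\tau C^\natural\delta^k)$. The doubling estimate \eqref{eq-doub} then yields
\[
\mu_i(\tau B_i(Q_i))\le C_{(\mu)}(\tau C^\natural/c_\natural)^{\omega}\mu_i(Q_i).
\]

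The lower bound is therefore $\tfrac12[C_{(\mu)}(\tau C^\natural/c_\natural)^\omega]^{-2}$, which depends only on $\tau$, $A_0$, $c_0$, $C_0$ and $C_{(\mu)}$. I would take $c_\tau$ to be any number in $(0,1)$ strictly below this quantity; the value of the quantity itself is at most $1/2$. The strict inequality is automatic from the strict hypothesis. The only step needing care is this uniformity in the cube and in $\Omega$, which the doubling comparison above supplies; I expect no real obstacle.
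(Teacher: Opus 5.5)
Your proposal is correct and follows essentially the same route as the paper. For (i) you shrink the dyadic cube containing $x_i$ into $B_i(x_i,r_i)$ using Lemma \ref{lem-cube}(c) and the quasi-triangle inequality. For (ii) you test $\mathcal M_{\rm str}(\mathbf 1_\Omega)$ on the balls $\tau B_1(Q_1)\times\tau B_2(Q_2)$ and compare $\mu_i(\tau B_i(Q_i))$ with $\mu_i(Q_i)$ through the inner ball $B_i(z,c_\natural\delta^k)\subset Q_i$ and \eqref{eq-doub}. Your bound $\mu((\tau B_1(Q_1)\times\tau B_2(Q_2))\cap\Omega)\ge\mu((Q_1\times Q_2)\cap\Omega)$ is in fact stated more carefully than the paper's corresponding display, which writes this step as an equality.
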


\begin{proof}
We first prove (i). By the topology of $X_1\times X_2$, we only need to consider the single parameter case.
Indeed, let $\Omega_1\subset X_1$ be an open set and fix $x_1\in\Omega_1$. Then there exists  $r\in(0,\infty)$
such that $B_1(x_1,r)\subset\Omega$. By Lemma \ref{lem-cube}(a), we find that, for any $k_1\in\mathbb Z$, there exists
$\alpha_1\in\mathcal A_{1,k_1}$ such that $x_1\in Q_{1,\alpha_1}^{k_1}$. Moreover, by Lemma \ref{lem-cube}(c), we also have
$Q_{1,\alpha_1}^{k_1}\subset B_1(z_{1,\alpha_1}^{k_1},C^\natural\delta^{k_1})\subset B(x,2A_0C^{\natural}\delta^{k_1})$.
Choosing $k\in\mathbb Z$ sufficiently large such that $2A_0C^{\natural}\delta^{k_1}\le r$, we obtain
$x_1\in Q_{1,\alpha_1}^{k_1}\subset B_1(x_1,r)\subset\Omega_1$. This finishes the proof of (i).

Next, we show (ii). By Lemma \ref{lem-cube}(c) and \eqref{eq-doub}, for any
$(x_1,x_2)\in[\tau B_1(Q_1)]\times[\tau B_2(Q_2)]$, we find that
\begin{align*}
\mathcal M_{\rm str}(\mathbf 1_\Omega)(x_1,x_2)&\ge\frac 1{\mu_1(B_1(Q_{1,\alpha_{1}}^{k_{1}}))\mu_2(B_2(Q_{2,\alpha_{2}}^{k_{2}}))}
\int_{B_1(Q_{1,\alpha_{1}}^{k_{1}})}\int_{B_2(Q_{2,\alpha_{2}}^{k_{2}})}\mathbf 1_\Omega(y_1,y_2)\,d\mu_2(y_2)\,d\mu_1(y_1)\\
&=\frac{\mu_1(Q_{1,\alpha_{1}}^{k_{1}})}{\mu_1(B_1(Q_{1,\alpha_{1}}^{k_{1}}))}
\frac{\mu_2(Q_{2,\alpha_{2}}^{k_{2}})}{\mu_2(B_2(Q_{2,\alpha_{2}}^{k_{2}}))}
\frac{\mu(R\cap\Omega)}{\mu(R)}\ge c_0,
\end{align*}
where $c_0\in(0,1)$ is a constant independent of $R$ and $\Omega$.
This finishes the proof of (ii).

Finally, we prove (iii), Indeed, applying \eqref{eq-doub},
Theorem \ref{thm-wave}, and an argument similar to
that used in the proof of (ii), we conclude that
\begin{align*}
\mathcal M_{\rm str}(\mathbf 1_\Omega)(x_1,x_2)&\ge\frac 1{\mu_1(\tau B_1(Q_{1,\alpha_{1}}^{k_{1}}))\mu_2(\tau B_2(Q_{2,\alpha_{2}}^{k_{2}}))}
\int_{\tau B_1(Q_{1,\alpha_{1}}^{k_{1}})}\int_{\tau B_2(Q_{2,\alpha_{2}}^{k_{2}})}\mathbf 1_\Omega(y_1,y_2)\,d\mu_2(y_2)\,d\mu_1(y_1)\\
&=\frac{\mu_1(Q_{1,\alpha_{1}}^{k_{1}})}{\mu_1(\tau B_1(Q_{1,\alpha_{1}}^{k_{1}}))}\frac{\mu_2(Q_{2,\alpha_{2}}^{k_{2}})}{\mu_2(\tau B_2(Q_{2,\alpha_{2}}^{k_{2}}))}
\frac{\mu(R\cap\Omega)}{\mu(R)}\ge c_{\tau},
\end{align*}
where $c_{\tau}\in(0,1)$ is a constant only depending on $\tau$. This finishes the proof of (ii) and hence Lemma \ref{lem-basic1}.
\end{proof}

The next lemma is the well-known Journ\'{e}-type lemma. When $X_1$ and $X_2$ are RD-spaces, the proof of this
lemma was shown in \cite[Lemma 2.2]{hll16-Pisa}. That proof is also valid for any arbitrary spaces of homogeneous type $\{X_1, X_2\}$.
We omit the details here.

\begin{lemma}[the Journ\'{e}-type lemma]\label{lem-j}
Let $\Omega$ be an open subset of $X_1\times X_2$ with finite measure, $c_1\in(0,\infty)$, and $w:\ (0,\infty)\to(0,\infty)$ a
non-decreasing function satisfying $\sum_{j=0}^\infty w(\delta^{j})\le c_1$. Then there exists a constant $C\in(0,\infty)$,
only depending on $c_1$, such that
$$
\sum_{Q_1\times Q_2\in\mathcal M_1(\Omega)}\mu(Q_1\times Q_2)w\left(\frac{\ell(Q_2)}{\ell(\widehat{Q_2})}\right)
\le C\mu(\Omega)\textit{ and }
\sum_{Q_1\times Q_2\in\mathcal M_2(\Omega)}\mu(Q_1\times Q_2)w\left(\frac{\ell(Q_1)}{\ell(\widehat{Q_1})}\right)
\le C\mu(\Omega).
$$
Consequently, for any given $\epsilon\in(0,\infty)$, there exists a positive constant $C_{\epsilon}$, only depending on $\epsilon$,
such that
$$
\sum_{Q_1\times Q_2\in\mathcal M_1(\Omega)}\mu(Q_1\times Q_2)\left[\frac{\ell(Q_2)}{\ell(\widehat{Q_2})}\right]^\epsilon
\le C_{\epsilon}\mu(\Omega) \textit{ and }
\sum_{Q_1\times Q_2\in\mathcal M_2(\Omega)}\mu(Q_1\times Q_2)\left[\frac{\ell(Q_1)}{\ell(\widehat{Q_1})}\right]^\epsilon
\le C_{\epsilon}\mu(\Omega).
$$
\end{lemma}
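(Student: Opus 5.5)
The plan is to follow Journé's original argument, as adapted in \cite[Lemma 2.2]{hll16-Pisa}, and to use only the dyadic structure of Lemma \ref{lem-cube} together with the $L^2$ boundedness of the one-parameter dyadic maximal function. By symmetry it suffices to treat the sum over $\mathcal M_1(\Omega)$. The ``consequently'' part then follows by taking $w(t):=t^\epsilon$ (restricted to $t\le1$). This $w$ is non-decreasing, and since $\ell(Q_2)/\ell(\widehat{Q_2})=\delta^{j}$ for some $j\in\mathbb Z_+$, we have $\sum_{j\ge0}w(\delta^j)=(1-\delta^\epsilon)^{-1}=:c_1$.

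\emph{Grouping by generation.} For each $j\in\mathbb Z_+$, let $\mathcal M_1^{(j)}(\Omega)$ be the set of $Q_1\times Q_2\in\mathcal M_1(\Omega)$ with $\ell(\widehat{Q_2})=\delta^{-j}\ell(Q_2)$. Since $w$ is non-decreasing, it suffices to prove the uniform bound
\[
\sum_{Q_1\times Q_2\in\mathcal M_1^{(j)}(\Omega)}\mu(Q_1\times Q_2)\le C\mu(\Omega),
\qquad j\in\mathbb Z_+, \tag{$\ast$}
\]
with $C$ independent of $j$. Multiplying $(\ast)$ by $w(\delta^j)$ and summing over $j$ then gives the bound $Cc_1\mu(\Omega)$.

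\emph{Proof of $(\ast)$.} Fix $j$ and a dyadic cube $Q_2$. Consider all $Q_1$ with $Q_1\times Q_2\in\mathcal M_1^{(j)}(\Omega)$. Maximality in the first direction forces these $Q_1$ to be pairwise disjoint: if two were nested, the smaller one could be enlarged. Let $E_{Q_2}:=\bigcup Q_1$ over these cubes. Then $E_{Q_2}\times Q_2\subset\Omega$, and the sum in $(\ast)$ equals $\sum_{Q_2}\mu_1(E_{Q_2})\mu_2(Q_2)$.

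Now let $Q_2^{(j)}$ denote the dyadic ancestor of $Q_2$ of side length $\delta^{-j}\ell(Q_2)$. By the definition of $\widehat{Q_2}$, for every $x_1\in E_{Q_2}$ the slice measure $\mu_2(\{x_2\in Q_2^{(j)}:(x_1,x_2)\in\Omega\})$ is comparable to more than half of $\mu_2(Q_2^{(j)})$. This uses the rectangle condition defining $\widehat{Q_2}$, together with a Chebyshev-type selection of a large portion of $Q_1$.

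The \emph{main obstacle} is turning this into a bound without overcounting over $Q_2$. The approach I would take is the following.
\begin{itemize}
\item For each $x_1$, collect the cubes $Q_2$ with $x_1\in E_{Q_2}$. For fixed $x_1$ and fixed $j$, these cubes are disjoint in $X_2$. Otherwise a larger $Q_2$ would contain a smaller one; by maximality the corresponding $Q_1$'s would be ordered, and the rectangle condition at the $j$-th generation yields a contradiction. This uses $\delta$ small.
\item Their $j$-th ancestors satisfy $Q_2^{(j)}\subset\{x_2:\ \mathcal M_2(\mathbf 1_{\Omega_{x_1}})(x_2)>1/2\}$, where $\Omega_{x_1}$ is the slice of $\Omega$ at $x_1$ and $\mathcal M_2$ is the dyadic maximal function on $X_2$. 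The $Q_2$'s themselves lie in $\Omega_{x_1}$.
\end{itemize}
Combining these, $\sum_{Q_2\ni}\mu_2(Q_2)\le\mu_2(\Omega_{x_1})$. Integrating in $x_1$ gives $(\ast)$ with $C=1$, up to the doubling constants coming from the ancestor comparison.

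If the disjointness in the first bullet fails, the fallback is to bound the multiplicity by a constant using the doubling property \eqref{eq-doub}, since the number of ancestors at a fixed generation gap is controlled. One then applies the weak $(1,1)$ (or $L^2$) bound for $\mathcal M_2$ to $\mathbf 1_{\Omega_{x_1}}$; this costs only a constant independent of $j$.
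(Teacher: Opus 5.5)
There is a genuine gap. Your reduction to $(\ast)$ is fine, and so is taking $w(t)=t^\epsilon$ for the last assertion. But $(\ast)$ is never actually proved, because both the first bullet and its fallback are false.

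Here is a counterexample. Take $X_1=X_2=\mathbb R$ with $b$-adic intervals as dyadic cubes ($\delta=1/b$, $b\ge 4$). Let $\Omega$ be a slight open enlargement of the staircase $\bigcup_{k=0}^{n}[0,b^{-k})\times[0,b^{k})$.

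\begin{itemize}
\item Each $R_k:=[0,b^{-k})\times[0,b^{k})$ is maximal in the first direction.
\item Up to the enlargement, the density of $\Omega$ in $[0,b^{-k})\times[0,b^{k+m})$ is at most $[m(1-b^{-1})+1]b^{-m}<\frac12$ for every $m\ge1$. Hence $\widehat{Q_2}=Q_2$ for every $R_k$, so all $R_k$ lie in the same class $\mathcal M_1^{(0)}(\Omega)$.
\item For $x_1\in[0,b^{-n})$, all the nested cubes $[0,b^{k})$, $0\le k\le n$, belong to your family for $x_1$. Their multiplicity on $[0,1)$ is $n+1$, so they are neither disjoint nor of bounded overlap.
\end{itemize}

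The doubling fallback does not help. It controls the number of ancestors at a fixed generation gap, but the overlapping cubes here sit at all generations.

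In fact, disjointness together with $Q_2\subset\Omega_{x_1}$ would yield $\sum_{R\in\mathcal M_1^{(0)}(\Omega)}\mu(R)\le\mu(\Omega)$. This fails here: $\sum_k\mu(R_k)=n+1$, while $\mu(\Omega)\approx n(1-b^{-1})+1$.

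The slice claims are also unjustified. These are the assertions that every $x_1\in E_{Q_2}$ sees density $>\frac12$ in $Q_2^{(j)}$, and that $Q_2^{(j)}\subset\{\mathcal M_2(\mathbf 1_{\Omega_{x_1}})>\frac12\}$. The definition of $\widehat{Q_2}$ only controls the average over $y_1\in Q_1$ of $\mu_2(\Omega_{y_1}\cap\widehat{Q_2})$. An individual slice may meet $\widehat{Q_2}$ only in $Q_2$, and a Chebyshev selection only recovers a portion of $Q_1$, not every $x_1$.

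What is missing is exactly the nontrivial content of Journé's lemma: an argument that controls these nested, staircase-type (and more generally branching) families. It must use first-direction maximality jointly with the rectangle density conditions at $\widehat{Q_2}$ and at its parent, not slice-by-slice disjointness. Despite your opening sentence, the argument you give is not Journé's. The paper does not reprove the lemma; it relies on the argument of \cite[Lemma 2.2]{hll16-Pisa}, and $(\ast)$ (or a cumulative version of it, which suffices by Abel summation) would have to be obtained from that kind of argument.
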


The next lemma gives the wavelet characterization of $L^q$ when $q\in(1,\infty)$.

\begin{lemma}[{\cite[Theorem 4.13]{HYY24}}]\label{lem-bds}
Let $q\in(1,\infty)$. Then there exist constants $C_1,C_2\in(0,\infty)$ such that, for any $f\in L^q$,
$$
C_1\|f\|_{L^q}\le \|S(f)\|_{L^q}\le C_2\|f\|_{L^q}.
$$
\end{lemma}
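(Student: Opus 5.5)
The plan is to follow the standard scheme for vector-valued square functions. Regard $S$ as a bounded map into an $\ell^2$-valued space and combine one-parameter results on each factor. Write $f=\sum\langle f,\psi_{\alpha_1,\alpha_2}^{k_1,k_2}\rangle\psi_{\alpha_1,\alpha_2}^{k_1,k_2}$. The upper bound $\|S(f)\|_{L^q}\lesssim\|f\|_{L^q}$ will come from a Khintchine/random-sign argument together with the unconditional basis property of the Auscher--Hyt\"onen wavelets in $L^q(X_i)$ (Theorem \ref{thm-wave}). The main tool is the one-parameter equivalence $\|f\|_{L^q(X_i)}\sim\|(\sum_{k,\alpha}|\langle f,\psi^k_\alpha\rangle|^2\widetilde{\mathbf 1}_{Q^{k+1}_\alpha}^2)^{1/2}\|_{L^q(X_i)}$. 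I will establish it first on each factor. Unconditionality plus Khintchine gives $\|f\|_{L^q}\sim\|(\sum|\langle f,\psi^k_\alpha\rangle\psi^k_\alpha|^2)^{1/2}\|_{L^q}$. Passing from $|\psi^k_\alpha|$ to $\widetilde{\mathbf 1}_{Q^{k+1}_\alpha}$ uses two facts. First, by the decay in Theorem \ref{thm-wave}, $|\psi^k_\alpha(x)|\lesssim\mu(Q^{k+1}_\alpha)^{1/2}P_\gamma(x,y^k_\alpha;\delta^k)\lesssim[\mathcal M(\widetilde{\mathbf 1}_{Q^{k+1}_\alpha}^r)(x)]^{1/r}$ for some $r<1$ close to $1$. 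Second, conversely, $\widetilde{\mathbf 1}_{Q^{k+1}_\alpha}\lesssim [\mathcal M(|\psi^k_\alpha|^r)]^{1/r}$ up to exponential tails, where the lower bound comes from $L^2$-normalization and H\"older regularity. The Fefferman--Stein inequality then closes both directions.

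For the product case, I plan to iterate. Fix $x_2$ and apply the one-parameter equivalence in $x_1$ to the $\ell^2$-valued family $\{\langle f(\cdot,\cdot),\psi_{2,\alpha_2}^{k_2}\rangle_{x_2\text{-pairing}}\}$, using the Hilbert-space-valued version of the $X_1$ result. This version holds because unconditionality plus Khintchine extends to $\ell^2$-valued functions via the Marcinkiewicz--Zygmund inequality. Then use Fubini and apply the $X_2$ result with values in $L^q(X_1)$. The second step is easier after randomization: by double Khintchine, $\|S(f)\|_{L^q}\sim\mathbb E\|\sum\epsilon_{k_1\alpha_1}\epsilon'_{k_2\alpha_2}\langle f,\psi\rangle\widetilde{\mathbf 1}\otimes\widetilde{\mathbf 1}\|_{L^q}$ after the Fefferman--Stein replacement (Proposition \ref{prop-fs} for $\mathcal M_{\rm str}$ with exponent $2/r$ and $q/r$). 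Then apply the one-parameter unconditional bound in each variable separately for fixed signs in the other variable, with Fubini.

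The lower bound $\|f\|_{L^q}\lesssim\|S(f)\|_{L^q}$ follows by duality. For $f\in L^q\cap L^2$ and $g\in L^{q'}\cap L^2$, the orthonormal expansion \eqref{eq-re} gives $\langle f,g\rangle=\sum\langle f,\psi\rangle\langle g,\psi\rangle=\int\sum\langle f,\psi\rangle\widetilde{\mathbf 1}\,\overline{\langle g,\psi\rangle}\widetilde{\mathbf 1}\,d\mu$, using $\int\widetilde{\mathbf 1}_R^2\,d\mu=1$. Cauchy--Schwarz and H\"older then give $|\langle f,g\rangle|\le\|S(f)\|_{L^q}\|S(g)\|_{L^{q'}}\lesssim\|S(f)\|_{L^q}\|g\|_{L^{q'}}$. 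A density argument extends both bounds to all of $L^q$, with coefficients defined by pairing with $\psi\in L^{q'}$.

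The main obstacle is the pointwise comparison between $|\psi^k_\alpha|$ and $\widetilde{\mathbf 1}_{Q^{k+1}_\alpha}$, specifically the lower bound direction. The wavelets have no bounded support, and $|\psi|$ is not pointwise comparable to an indicator. I would try to avoid needing that direction by using the direct bound $|\langle f,\psi\rangle|\widetilde{\mathbf 1}_{Q}\lesssim$ averages of $\mathcal M$-type majorants only in the upper estimate, and deducing the lower estimate purely by the duality argument above. In that case only the upper bound needs the wavelet decay together with Proposition \ref{prop-estmax}.
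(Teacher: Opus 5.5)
Your main line (unconditionality of the Auscher--Hyt\"onen basis plus Khintchine, transfer to indicators via maximal functions and Fefferman--Stein, tensorization, duality, density) is sound. However, you have misplaced the obstacle, and the fallback in your last paragraph would not work.

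In your Khintchine route, the decay bound $|\psi^k_\alpha|\lesssim[\mathcal M(\widetilde{\mathbf 1}_{Q^{k+1}_\alpha}^r)]^{1/r}$ yields the \emph{lower} estimate $\|f\|_{L^q}\lesssim\|S(f)\|_{L^q}$. The direction you fear, $\widetilde{\mathbf 1}_{Q^{k+1}_\alpha}\lesssim\mathcal M(|\psi^k_\alpha|)$, is exactly what the \emph{upper} estimate needs, so moving the lower estimate to duality does not remove it. A ``direct'' size bound cannot replace it either. The estimate $|\langle f,\psi^k_\alpha\rangle|\,\widetilde{\mathbf 1}_{Q^{k+1}_\alpha}\lesssim\mathcal M(f)$ holds scale by scale but is not square-summable in $k$. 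Proposition \ref{prop-estmax} only converts one discrete square function into another. So the upper bound needs an orthogonality input, which the fallback does not supply.

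Fortunately the comparison is elementary and needs neither regularity nor tail terms. By the decay in Theorem \ref{thm-wave} and doubling, $\int_B|\psi^k_\alpha|^2\,d\mu\ge\frac12$ for $B:=B(y^k_\alpha,C\delta^k)$ with a large fixed $C$. Also $\|\psi^k_\alpha\|_{L^\infty}\lesssim\mu(Q^{k+1}_\alpha)^{-1/2}$. Hence $\int_B|\psi^k_\alpha|\,d\mu\ge\frac12\|\psi^k_\alpha\|_{L^\infty}^{-1}\gtrsim\mu(Q^{k+1}_\alpha)^{1/2}$. Since $Q^{k+1}_\alpha\subset B$ and $\mu(B)\sim\mu(Q^{k+1}_\alpha)$, you get $\mathcal M(|\psi^k_\alpha|)\gtrsim\mu(Q^{k+1}_\alpha)^{-1/2}$ on $Q^{k+1}_\alpha$. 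Fefferman--Stein with exponents $(q,2)$ then closes the upper bound.

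With this in place your proof goes through, and the tensor-sign version alone already gives both inequalities. Let $M_\epsilon$ be the one-parameter sign multipliers. The operators $M_\epsilon\otimes M_{\epsilon'}=(M_\epsilon\otimes I)(I\otimes M_{\epsilon'})$ are uniformly bounded on $L^q$ by Fubini, and they are involutions. Both pointwise comparisons tensorize, because $\mathcal M_{\rm str}(g_1\otimes g_2)=\mathcal M(g_1)\,\mathcal M(g_2)$ for $g_1,g_2\ge0$. So double Khintchine and Proposition \ref{prop-fs} give $\|f\|_{L^q}\sim\|S(f)\|_{L^q}$ at once. The Marcinkiewicz--Zygmund iteration and the duality step are valid but redundant.

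As for the comparison with the paper: it does not prove this lemma but quotes \cite[Theorem 4.13]{HYY24}. The tool it uses itself for such square functions (Proposition \ref{prop-vector}) is vector-valued Calder\'on--Zygmund theory. There the operator with kernels $L_k(x,y)=\sum_\alpha\psi^k_\alpha(y)\widetilde{\mathbf 1}_{Q^{k+1}_\alpha}(x)$ is an $L^2$ isometry satisfying an $\ell^2$-H\"ormander condition. This gives $L^q(\ell^2(\nu))$ bounds uniform in $\nu$, followed by Fubini iteration and duality. Your route replaces this with the unconditionality stated in Theorem \ref{thm-wave}, which is a Calder\'on--Zygmund fact in disguise (uniform $L^q$ bounds for sign multipliers). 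Your route is shorter and makes the product structure essentially free. The Calder\'on--Zygmund route is self-contained from the size and regularity of the wavelets, and it directly yields the uniform $\ell^2(\nu)$-valued estimate that the paper reuses in Case (II.2) of the proof of Proposition \ref{prop-at}.
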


We split the proof of Proposition \ref{prop-at} into two cases for
$q$: $q\in(1,2)$ and $q\in[2, \infty)$.
We first prove this proposition in the case $q\in[2,\infty)$.

\begin{proof}[Proof (I) of Proposition \ref{prop-at} With $q\in[2,\infty)$]
Let $p$ be   the same as in the present proposition, $q\in[2,\infty)$, $\Omega$ be an open subset of $X_1\times X_2$ with $\mu(\Omega)<\infty$,
and $a$ a $(p, q, C_0)$-atom supported in $\Omega$.
Let
$$
\Omega^*:=\{(x_1,x_2)\in X_1\times X_2:\ \mathcal M_{\rm str}(\mathbf 1_{\Omega})>c\}
$$
and
$$
\Omega^{**}:=\{(x_1,x_2)\in X_1\times X_2:\ \mathcal M_{\rm str}(\mathbf 1_{\Omega^*})>c\},
$$
where $c:=\min\{c_0,c_{\tau}\}$ with $c_0$ and $c_{\tau}$  as in Lemma \ref{lem-basic1} and
$\tau\in(0,\infty)$ determined later. By the boundedness
of $\mathcal M_{\rm str}$ on $L^2$ (see Proposition \ref{prop-fs}), we find that
$
\mu(\Omega^{**})\sim\mu(\Omega^*)\sim\mu(\Omega).
$
Using this, H\"{o}lder's inequality, and Lemma \ref{lem-bds}, we obtain
\begin{equation}\label{eq-sa1}
\left\|S(a)\mathbf 1_{\Omega^{**}}\right\|_{L^p}^p\le\|S(a)\|_{L^q}^{p}
\left[\mu\left(\Omega^{**}\right)\right]^{1-\frac pq}\lesssim[\mu(\Omega)]^{\frac pq-1}[\mu(\Omega)]^{1-\frac pq}\sim 1.
\end{equation}

Now, we estimate $\|S(a)\mathbf 1_{(\Omega^{**})^\complement}\|_{L^p}$.
To achieve this, by the definition of $(p, q, C_0)$-atoms and Minkowski's inequality, we have
$S(a)\le\sum_{R\in\mathcal M(\Omega)} S(a_R)$, which, together with $p \in (0, 1]$, further implies that
$$
\left\|S(a)\mathbf 1_{(\Omega^{**})^\complement}\right\|_{L^p}^p
\le\sum_{R\in\mathcal M(\Omega)}\left\|S(a_R)\mathbf 1_{(\Omega^{**})^\complement}\right\|_{L^p}^p.
$$
We fix $R=Q_1\times Q_2\in\mathcal M(\Omega)$. Using the fact that $\mathcal M(\Omega)\subset\mathcal M_1(\Omega)$, we find that
$$
\mu\left(\left(Q_1\times\widehat{Q_2}\right)\cap\Omega\right)>\frac 12
\mu\left(Q_1\times\widehat{Q_2}\right).
$$
By this and Lemma \ref{lem-basic1},
we conclude that $Q_1\times\widehat{Q_2}\subset\Omega^*\subset\Omega^{**}$. Similarly, we have
$\widehat{Q_1}\times Q_2\subset\Omega^{**}$. From this, we further infer that $(\Omega^{**})^\complement
\subset(Q_1\times\widehat{Q_2})^\complement\cap(\widehat{Q_1}\times Q_2)^\complement$. Now, let
$(x_1,x_2)\in(\Omega^{**})^\complement$. Then we have $(x_1,x_2)\in (Q_1\times\widehat{Q_2})^\complement$.
We assume $Q_1=Q_{1,\beta_1}^{j_1}$, $Q_2=Q_{2,\beta_2}^{j_2}$, and $\widehat{Q_2}=Q_{2,\beta_2'}^{j_2'}$ for some
$j_1, j_2, j_2'\in\mathbb Z$, $\beta_1\in\mathcal A_{1,j_1}$, $\beta_2\in\mathcal A_{2,j_2}$, and $\beta_2'\in\mathcal A_{2,j_2'}$.
To estimate $S(a_R)(x_1,x_2)$, we consider the following three cases for $x_1$ and $x_2$.

{\it Case (I.1) $x_1\in Q_1^\complement$ and $x_2\in(\widehat{Q_2})^\complement$.} In this case, we write
\begin{align*}
[S(a_R)(x_1,x_2)]^2=& \sum_{\genfrac{}{}{0pt}{}{k_1\in{\mathbb Z}}{\alpha_1\in\mathcal G_{1,k_1}}}
\sum_{\genfrac{}{}{0pt}{}{k_2\in{\mathbb Z}}{\alpha_2\in\mathcal G_{2,k_2}}}
\left|\left<a_R,\frac{\psi_{\alpha_1,\alpha_2}^{k_1,k_2}}
{\sqrt{\mu_1(Q_{1,\alpha_1}^{k_1+1})}\sqrt{\mu_2(Q_{2,\alpha_2}^{k_2+1})}}\right>\right|^2
\mathbf 1_{Q_{\alpha_1,\alpha_2}^{k_1+1,k_2+1}}(x_1,x_2).
\end{align*}
Fix $k_1, k_2\in\mathbb Z$, $x_1\in Q_{1,\alpha_1}^{k_1+1}$, and $x_2\in Q_{2,\alpha_2}^{k_2+1}$ for some
$\alpha_1\in\mathcal G_{1,k_1}$ and $\alpha_2\in\mathcal G_{2,k_2}$. Now, we consider the following four cases for the
relations between $k_1$ and $j_1$  and  between $k_2$ and $j_2$.

{\it Case (I.1.1) $k_1\le j_1$ and $k_2\le j_2$.} In this case, we have $\delta^{k_1}\ge\delta^{j_1}$ and
$\delta^{k_2}\ge\delta^{j_2}$. Since $d_1(x_1,z_{1,\alpha_{1}}^{k_{1}})\lesssim\delta^{k_1}$ and $d_2(x_2,z_{2,\alpha_{2}}^{k_{2}})\lesssim\delta^{k_2}$,
it then follows that, for any $y_1\in Q_1$ and $y_2\in Q_2$,
$$
\delta^{k_1}+d_1\left(z_{1,\alpha_{1}}^{k_{1}},y_{1,\beta_{1}}^{j_{1}}\right)
\sim\delta^{k_1}+d_1\left(x_1,y_{1,\beta_{1}}^{j_{1}}\right)
$$
and
$$
\delta^{k_2}+d_2\left(z_{2,\alpha_{2}}^{k_{2}},y_{2,\beta_{2}}^{j_{2}}\right)
\sim\delta^{k_2}+d_2\left(x_2,y_{2,\beta_{2}}^{j_{2}}\right).
$$
Using this and the cancellation of $a_R$, we conclude that, for any $\alpha_1\in\mathcal G_{1,k_1}$ and $\alpha_2\in\mathcal G_{2,k_2}$,
\begin{align*}
&\left|\left<a_R,\frac{\psi_{\alpha_1,\alpha_2}^{k_1,k_2}}
{\sqrt{\mu_1(Q_{1,\alpha_1}^{k_1+1})}\sqrt{\mu_2(Q_{2,\alpha_2}^{k_2+1})}}\right>\right|\nonumber\\
&\quad\le\int_{Q_1}\int_{Q_2}|a_R(y_1,y_2)|
\frac{|\psi_{1,\alpha_1}^{k_1}(y_1)-\psi_{1,\alpha_1}^{k_1}(z_{1,\beta_1}^{j_1})|}
{\sqrt{\mu_1(Q_{1,\alpha_1}^{k_1+1})}}
\frac{|\psi_{2,\alpha_2}^{k_2}(y_2)-\psi_{2,\alpha_2}^{k_2}(z_{2,\beta_2}^{j_2})|}
{\sqrt{\mu_2(Q_{2,\alpha_2}^{k_2+1})}}\,d\mu_2(y_2)\,d\mu_1(y_1)\nonumber\\
&\quad\lesssim\int_{Q_1}\int_{Q_2}\left[\frac{d_1(y_1,z_{1,\beta_1}^{j_1})}{\delta^{k_1}}\right]^{\eta}
\left[\frac{d_2(y_2,z_{2,\beta_2}^{j_2})}{\delta^{k_2}}\right]^{\eta}\widetilde{\mathcal E}_{1,k_1}^*
\left(y_{1,\alpha_{1}}^{k_{1}},z_{1,\beta_{1}}^{j_{1}}\right)
\widetilde{\mathcal E}_{2,k_2}^*\left(y_{2,\alpha_{2}}^{k_{2}},z_{2,\beta_{2}}^{j_{2}}\right)\nonumber\\
&\qquad\times |a_R(y_1,y_2)|\,d\mu_2(y_2)\,d\mu_1(y_1)\nonumber\\
&\quad\lesssim\delta^{(j_1-k_1)\eta}\delta^{(j_2-k_2)\eta}\widetilde{\mathcal E}_{1,k_1}^*\left(x_1,z_{1,\beta_{1}}^{j_{1}}\right)
\widetilde{\mathcal E}_{2,k_2}^*\left(x_2,z_{2,\beta_{2}}^{j_{2}}\right)\|a_R\|_{L^1}\nonumber\\
&\quad\lesssim\delta^{(j_1-k_1)\eta}\delta^{(j_2-k_2)\eta}\widetilde{\mathcal E}_{1,k_1}^*\left(x_1,z_{1,\beta_{1}}^{j_{1}}\right)
\widetilde{\mathcal E}_{2,k_2}^*\left(x_2,z_{2,\beta_{2}}^{j_{2}}\right)
\left[\mu_1\left(Q_{1,\beta_1}^{j_1+1}\right)\mu_2\left(Q_{2,\beta_2}^{j_2+1}\right)\right]^{1- \frac{1}{q}} \|a_R\|_{L^q};
\end{align*}
here and thereafter, for any $i\in\{1,2\}$ and $k_i\in\mathbb Z$, $\widetilde{\mathcal E}_{i,k_i}^*$
is the same as in \eqref{eq-defek1} associated with $X_i$.
By this, we find that, for any fixed $\eta'\in(0,\eta)$,
\begin{align}\label{eq-1.1-1r}
&\sum_{k_1=-\infty}^{j_1}\sum_{\alpha_1\in\mathcal G_{1,k_1}}\sum_{k_2=-\infty}^{j_2}\sum_{\alpha_2\in\mathcal G_{2,k_2}}
\left|\left<a_R,\frac{\psi_{\alpha_1,\alpha_2}^{k_1,k_2}}
{\sqrt{\mu_1(Q_{1,\alpha_1}^{k_1+1})}\sqrt{\mu_2(Q_{2,\alpha_2}^{k_2+1})}}\right>\right|^2
\mathbf 1_{Q_{1,\alpha_1}^{k_1+1}}(x_1)\mathbf 1_{Q_{2,\alpha_2}^{k_2+1}}(x_2)\nonumber\\
&\quad\lesssim\left[\mu_1\left(Q_{1,\beta_1}^{j_1}\right)\mu_2
\left(Q_{2,\beta_2}^{j_2}\right)\right]^{2-\frac 2q}\|a_R\|_{L^q}^2\nonumber\\
&\qquad\times\sum_{k_1=-\infty}^{j_1}\sum_{k_2=-\infty}^{j_2}\delta^{2(j_1-k_1)\eta}\delta^{2(j_2-k_2)\eta}
\left[\widetilde{\mathcal E}_{1,k_1}^*\left(x_1,z_{1,\beta_{1}}^{j_{1}}\right)\widetilde{\mathcal E}_{2,k_2}^*\left(x_2,z_{2,\beta_{2}}^{j_{2}}\right)\right]^2\nonumber\\
&\quad\lesssim\left[\mu_1\left(Q_{1,\beta_1}^{j_1}\right)\mu_2\left(Q_{2,\beta_2}^{j_2}
\right)\right]^{2-\frac 2q}\|a_R\|_{L^q}^2
\left[\frac 1{V_1(x_1,z_{1,\beta_1}^{j_1})}\right]^2\left[\frac 1{V_2(x_2,z_{2,\beta_2}^{j_2})}\right]^2\nonumber\\
&\qquad\times\sum_{k_1=-\infty}^{j_1}\sum_{k_2=-\infty}^{j_2}\delta^{2(j_1-k_1)\eta}\delta^{2(j_2-k_2)\eta}
\left[\frac{\delta^{k_1}}{d_1(x_1,z_{1,\beta_1}^{j_1})}\right]^{2\eta'}
\left[\frac{\delta^{k_2}}{d_2(x_2,z_{2,\beta_2}^{j_2})}\right]^{2\eta'}\nonumber\\
&\quad\lesssim\left[\mu_1\left(Q_{1,\beta_1}^{j_1}\right)\mu_2\left(Q_{2,\beta_2}^{j_2}
\right)\right]^{2-\frac 2q}\|a_R\|_{L^q}^2
\left[\frac 1{V_1(x_1,z_{1,\beta_1}^{j_1})}\right]^2\left[\frac 1{V_2(x_2,z_{2,\beta_2}^{j_2})}\right]^2\nonumber\\
&\qquad\times\left[\frac{\delta^{j_1}}{d_1(x_1,z_{1,\beta_1}^{j_1})}\right]^{2\eta'}
\left[\frac{\delta^{j_2}}{d_2(x_2,z_{2,\beta_2}^{j_2})}\right]^{2\eta'}.
\end{align}
This is the desired estimate in this case.

{\it Case (I.1.2) $k_1\le j_1$ and $k_2>j_2$.} In this case, we have $\delta^{k_1}\ge\delta^{j_1}$ and
$\delta^{k_2}<\delta^{j_2}$. Applying an argument similar to that used in the proof of Case
(I.1.1), we obtain
$$
\delta^{k_1}+d_1\left(z_{1,\alpha_{1}}^{k_{1}},y_{1,\beta_{1}}^{j_{1}}\right)
\sim\delta^{k_1}+d_1\left(x_1,y_{1,\beta_{1}}^{j_{1}}\right).
$$
By this, the cancellation of $a_R$, and Theorem \ref{thm-wave}, we find that, for any given
$\Gamma\in(0,\infty)$,
\begin{align}\label{eq-1.1-2}
&\left|\left<a_R,\frac{\psi_{\alpha_1,\alpha_2}^{k_1,k_2}}
{\sqrt{\mu_1(Q_{1,\alpha_1}^{k_1+1})}\sqrt{\mu_2(Q_{2,\alpha_2}^{k_2+1})}}\right>\right|\nonumber\\
&\quad\le\int_{Q_1}\int_{Q_2}|a_R(y_1,y_2)|
\frac{|\psi_{1,\alpha_1}^{k_1}(y_1)-\psi_{1,\alpha_1}^{k_1}(z_{1,\beta_1}^{j_1})|}
{\sqrt{\mu_1(Q_{1,\alpha_1}^{k_1+1})}}
\frac{|\psi_{2,\alpha_2}^{k_2}(y_2)|}{\sqrt{\mu_2(Q_{2,\alpha_2}^{k_2+1})}}
\,d\mu_2(y_2)\,d\mu_1(y_1)\nonumber\\
&\quad\lesssim\int_{Q_1}\int_{Q_2}\left[\frac{d_1(y_1,z_{1,\beta_1}^{j_1})}{\delta^{k_1}}\right]^{\eta}
\widetilde{\mathcal E}^*_{1,k_1}\left(y_{1,\alpha_{1}}^{k_{1}},y_1\right)\widetilde{\mathcal E}^*_{2,k_2}\left(y_{2,\alpha_{2}}^{k_{2}},y_2\right)
|a_R(y_1,y_2)|\,d\mu(y_2)\,d\mu(y_1)\nonumber\\
&\quad\lesssim\delta^{(j_1-k_1)\eta}\delta^{(k_2-j_2)\Gamma}P_{1,\Gamma}
\left(x_1,z_{1,\beta_1}^{j_1};\delta^{k_1}\right)
P_{2,\Gamma}\left(x_2,z_{2,\beta_2}^{j_2};\delta^{j_2}\right)\|a\|_{L^1}\nonumber\\
&\quad\lesssim\delta^{(j_1-k_1)\eta}\delta^{(k_2-j_2)\Gamma}P_{1,\Gamma}
\left(x_1,z_{1,\beta_1}^{j_1};\delta^{k_1}\right)
P_{2,\Gamma}\left(x_2,z_{2,\beta_2}^{j_2};\delta^{j_2}\right) \left[\mu_1\left(Q_{1,\beta_1}^{j_1}\right)\mu_2\left(Q_{2,\beta_2}^{j_2}\right)\right]^{1-\frac 1q}
\|a_R\|_{L^q},
\end{align}
where we used the fact that, for any $y_2\in Q_{2,\beta_2}^{j_2}$,
$d_2(x_2,z_{2,\beta_2}^{j_2})>2A_0d_2(y_2,z_{2,\beta_2}^{j_2})$. Using this, we conclude that,
for any fixed $\eta'\in(0,\eta)$,
\begin{align}\label{eq-1.1-2r}
&\sum_{k_1=-\infty}^{j_1}\sum_{\alpha_1\in\mathcal G_{1,k_1}}\sum_{k_2=j_2+1}^\infty\sum_{\alpha_2\in\mathcal G_{2,k_2}}
\left|\left<a_R,\frac{\psi_{\alpha_1,\alpha_2}^{k_1,k_2}}
{\sqrt{\mu_1(Q_{1,\alpha_1}^{k_1+1})}\sqrt{\mu_2(Q_{2,\alpha_2}^{k_2+1})}}\right>\right|^2
\mathbf 1_{Q_{1,\alpha_1}^{k_1+1}}(x_1)\mathbf 1_{Q_{2,\alpha_2}^{k_2+1}}(x_2)\nonumber\\
&\quad\lesssim\left[\mu_1\left(Q_{1,\beta_1}^{j_1}\right)\mu_2
\left(Q_{2,\beta_2}^{j_2}\right)\right]^{2-\frac 2q}\|a_R\|_{L^q}^2\nonumber\\
&\qquad\times\sum_{k_1=-\infty}^{j_1}\sum_{k_2=j_2+1}^\infty\delta^{2(j_1-k_1)\eta}\delta^{2(k_2-j_2)\Gamma}
\left[P_{1,\Gamma}\left(x_1,z_{1,\beta_1}^{j_1};\delta^{k_1}\right)\right]^2
\left[P_{2,\Gamma}\left(x_2,z_{2,\beta_2}^{j_2};\delta^{j_2}\right)\right]^2\nonumber\\
&\quad\lesssim\left[\mu_1\left(Q_{1,\beta_1}^{j_1}\right)
\mu_2\left(Q_{2,\beta_2}^{j_2}\right)\right]^{2-\frac 2q}\|a_R\|_{L^q}^2
\left[\frac 1{V_1(x_1,z_{1,\beta_1}^{j_1})}\right]^2\left[\frac 1{V_2(x_2,z_{2,\beta_2}^{j_2})}\right]^2\nonumber\\
&\qquad\times\sum_{k_1=-\infty}^{j_1}\sum_{k_2=j_2+1}^\infty\delta^{2(j_1-k_1)\eta}\delta^{2(k_2-j_2)\Gamma}
\left[\frac{\delta^{k_1}}{d_1(x_1,z_{1,\beta_1}^{j_1})}\right]^{2\eta'}
\left[\frac{\delta^{j_2}}{d_2(x_2,z_{2,\beta_2}^{j_2})}\right]^{2\eta'}\nonumber\\
&\quad\lesssim\left[\mu_1\left(Q_{1,\beta_1}^{j_1}\right)
\mu_2\left(Q_{2,\beta_2}^{j_2}\right)\right]^{2-\frac 2q}\|a_R\|_{L^q}^2
\left[\frac 1{V_1(x_1,z_{1,\beta_1}^{j_1})}\right]^2\left[\frac 1{V_2(x_2,z_{2,\beta_2}^{j_2})}\right]^2\nonumber\\
&\qquad\times\left[\frac{\delta^{j_1}}{d_1(x_1,z_{1,\beta_1}^{j_1})}\right]^{2\eta'}
\left[\frac{\delta^{j_2}}{d_2(x_2,z_{2,\beta_2}^{j_2})}\right]^{2\eta'}
\sum_{k_1=-\infty}^{j_1}\delta^{2(j_1-k_1)(\eta-\eta')}
\sum_{k_2=j_2+1}^\infty\delta^{2(k_2-j_2)\Gamma}\nonumber\\
&\quad\sim\left[\mu_1\left(Q_{1,\beta_1}^{j_1}\right)
\mu_2\left(Q_{2,\beta_2}^{j_2}\right)\right]^{2-\frac 2q}\|a_R\|_{L^q}^2
\left[\frac 1{V_1(x_1,z_{1,\beta_1}^{j_1})}\right]^2\left[\frac 1{V_2(x_2,z_{2,\beta_2}^{j_2})}\right]^2\nonumber\\
&\qquad\times\left[\frac{\delta^{j_1}}{d_1(x_1,z_{1,\beta_1}^{j_1})}\right]^{2\eta'}
\left[\frac{\delta^{j_2}}{d_2(x_2,z_{2,\beta_2}^{j_2})}\right]^{2\eta'}.
\end{align}
This is also the desired estimate.

{\it Case (I.1.3) $k_1>j_1$ and $k_1\le j_2$.} In this case, applying an argument similar to that used in the estimation of
\eqref{eq-1.1-2}, we obtain, when $k_1>j_1$ and $k_2\le j_2$,
\begin{align*}
\left|\left<a_R,\frac{\psi_{\alpha_1,\alpha_2}^{k_1,k_2}}
{\sqrt{\mu_1(Q_{\alpha_1}^{k_1+1})}\sqrt{\mu_2(Q_{\alpha_2}^{k_2+1})}}\right>\right|
&\lesssim\delta^{(j_2-k_2)\eta}\left[\mu_1\left(Q_{1,\beta_1}^{j_1}
\right)\mu_2\left(Q_{2,\beta_2}^{j_2}\right)\right]^{1-\frac 1q}\|a_R\|_{L^q}\nonumber\\
&\quad\times\widetilde{\mathcal E}^*_{1,k_1}\left(x_1,z_{1,\beta_{1}}^{j_{1}}\right)
\widetilde{\mathcal E}^*_{2,k_2}\left(x_2,z_{2,\beta_{2}}^{j_{2}}\right),
\end{align*}
which further implies that, for any given $\eta'\in(0,\eta)$,
\begin{align}\label{eq-1.1-3r}
&\sum_{k_1=j_1+1}^\infty\sum_{\alpha_1\in\mathcal G_{1,k_1}}\sum_{k_2=-\infty}^{j_2}\sum_{\alpha_2\in\mathcal G_{2,k_2}}
\left|\left<a_R,\frac{\psi_{\alpha_1,\alpha_2}^{k_1,k_2}}
{\sqrt{\mu_1(Q_{1,\alpha_1}^{k_1+1})}\sqrt{\mu_2(Q_{2,\alpha_2}^{k_2+1})}}\right>\right|^2
\mathbf 1_{Q_{1,\alpha_1}^{k_1+1}}(x_1)\mathbf 1_{Q_{2,\alpha_2}^{k_2+1}}(x_2)\nonumber\\
&\quad\lesssim\left[\mu_1\left(Q_{1,\beta_1}^{j_1}\right)\mu_2\left(Q_{2,\beta_2}^{j_2}
\right)\right]^{2-\frac 2q}\|a_R\|_{L^q}^2
\left[\frac 1{V_1(x_1,z_{1,\beta_1}^{j_1})}\right]^2\left[\frac 1{V_2(x_2,z_{2,\beta_2}^{j_2})}\right]^2\nonumber\\
&\qquad\times\left[\frac{\delta^{j_1}}{d_1(x_1,z_{1,\beta_1}^{j_1})}\right]^{2\eta'}
\left[\frac{\delta^{j_2}}{d_2(x_2,z_{2,\beta_2}^{j_2})}\right]^{2\eta'}.
\end{align}
This is also the desired estimate.

{\it Case (I.1.4) $k_1>j_1$ and $k_2>j_2$.} In this case, by the size conditions of $a$ and
$\psi_{\alpha_1,\alpha_2}^{k_1,k_2}$, we have, for any given $\Gamma\in(0,\infty)$,
\begin{align*}
&\left|\left<a_R,\frac{\psi_{\alpha_1,\alpha_2}^{k_1,k_2}}
{\sqrt{\mu_1(Q_{1,\alpha_1}^{k_1+1})}\sqrt{\mu_2(Q_{2,\alpha_2}^{k_2+1})}}\right>\right|\nonumber\\
&\quad\le\int_{Q_1}\int_{Q_2}|a_R(y_1,y_2)|
\frac{\big|\psi_{1,\alpha_1}^{k_1}(y_1)\big|}{\sqrt{\mu_1(Q_{1,\alpha_1}^{k_1+1})}}
\frac{\big|\psi_{2,\alpha_2}^{k_2}(y_2)\big|}{\sqrt{\mu_2(Q_{2,\alpha_2}^{k_2+1})}}
\,d\mu_2(y_2)\,d\mu_1(y_1)\nonumber\\
&\quad\lesssim\int_{Q_1}\int_{Q_2}P_{1,\Gamma}\left(x_1,y_1;\delta^{k_1}\right)
P_{2,\Gamma}\left(x_2,y_2;\delta^{k_2}\right)\,d\mu_2(y_2)\,d\mu_1(y_1)\nonumber\\
&\quad\lesssim\delta^{(k_1-j_1)\Gamma}\delta^{(k_2-j_2)\Gamma}P_{1,\Gamma}
\left(x_1,z_{1,\beta_1}^{j_1};\delta^{j_1}\right)
P_{2,\Gamma}\left(x_2,z_{2,\beta_2}^{j_2};\delta^{j_2}\right)\|a\|_{L^1}\nonumber\\
&\quad\lesssim\delta^{(j_1-k_1)\Gamma}\delta^{(j_2-k_2)\Gamma}P_{1,\Gamma}
\left(x_1,z_{1,\beta_1}^{j_1};\delta^{j_1}\right)
P_{2,\Gamma}\left(x_2,z_{2,\beta_2}^{j_2};\delta^{j_2}\right)\nonumber\\
&\qquad\times\left[\mu_1\left(Q_{1,\beta_1}^{j_1}\right)\mu_2
\left(Q_{2,\beta_2}^{j_2}\right)\right]^{1-\frac 1q}\|a_R\|_{L^q}.
\end{align*}
From this, we further deduce that, for any fixed $\eta'\in(0,\eta)$,
\begin{align}\label{eq-1.1-4r}
&\sum_{k_1=j_1+1}^\infty\sum_{\alpha_1\in\mathcal G_{1,k_1}}\sum_{k_2=j_2+1}^\infty\sum_{\alpha_2\in\mathcal G_{2,k_2}}
\left|\left<a_R,\frac{\psi_{\alpha_1,\alpha_2}^{k_1,k_2}}
{\sqrt{\mu_1(Q_{1,\alpha_1}^{k_1+1})}\sqrt{\mu_2(Q_{2,\alpha_2}^{k_2+1})}}\right>\right|^2
\mathbf 1_{Q_{1,\alpha_1}^{k_1+1}}(x_1)\mathbf 1_{Q_{2,\alpha_2}^{k_2+1}}(x_2)\nonumber\\
&\quad\lesssim\left[\mu_1\left(Q_{1,\beta_1}^{j_1}\right)\mu_2\left(Q_{2,\beta_2}^{j_2}
\right)\right]^{2-\frac 2q}\|a_R\|_{L^q}^2
\left[\frac 1{V_1(x_1,z_{1,\beta_1}^{j_1})}\right]^2\left[\frac 1{V_2(x_2,z_{2,\beta_2}^{j_2})}\right]^2\nonumber\\
&\qquad\times\left[\frac{\delta^{j_1}}{d_1(x_1,z_{1,\beta_1}^{j_1})}\right]^{2\eta'}
\left[\frac{\delta^{j_2}}{d_2(x_2,z_{2,\beta_2}^{j_2})}\right]^{2\eta'}.
\end{align}

Combining \eqref{eq-1.1-1r}, \eqref{eq-1.1-2r}, \eqref{eq-1.1-3r}, and \eqref{eq-1.1-4r}, we conclude that,
for any given $\eta'\in(0,\eta)$ and $(x_1,x_2)\in Q_1^\complement\times(\widehat{Q_2})^\complement$,
\begin{align}\label{eq-sarp1}
S(a_R)(x_1,x_2)&\lesssim\left[\mu_1\left(Q_{1,\beta_1}^{j_1}\right)\mu_2\left(Q_{2,\beta_2}^{j_2}
\right)\right]^{1-\frac 1q}\|a_R\|_{L^q}
\frac 1{V_1(x_1,z_{1,\beta_1}^{j_1})}\frac 1{V_2(x_2,z_{2,\beta_2}^{j_2})}\nonumber\\
&\quad\times\left[\frac{\delta^{j_1}}{d_1(x_1,z_{1,\beta_1}^{j_1})}\right]^{\eta'}
\left[\frac{\delta^{j_2}}{d_2(x_2,z_{2,\beta_2}^{j_2})}\right]^{\eta'}.
\end{align}
Now, let $R_{c,1}:=[Q_1^\complement\times(\widehat{Q_2})^\complement]\cap\Omega^{**}$. Due to the assumption that
$p>\frac{\omega}{\omega+\eta}$, we choose $\eta'$ such that $p>\frac{\omega}{\omega+\eta'}$. Using this and
\eqref{eq-sarp1}, we obtain
\begin{align}\label{eq-sar1}
 \left\|S(a_R)\mathbf{1}_{R_{c,1}}\right\|_{L^p}^p
&\lesssim\left[\mu_1\left(Q_{1,\beta_1}^{j_1}\right)\mu_2\left(Q_{2,\beta_2}^{j_2}\right)\right]^{p-\frac pq}\|a_R\|_{L^q}^p\nonumber\\
&\quad\times\int_{d(x_1,z_{1,\beta_1}^{j_1})>C\delta^{j_1}}\left[\frac 1{V_1(x_1,z_{1,\beta_1}^{j_1})}\right]^p
\left[\frac{\delta^{j_1}}{d_1(x_1,z_{1,\beta_1}^{j_1})}\right]^{p\eta'}\,d\mu_1(x_1)\nonumber\\
&\quad\times\int_{d(x_2,z_{2,\beta_2}^{j_2})>C\delta^{j_2'}}\left[\frac 1{V_2(x_2,z_{2,\beta_{2}}^{j_{2}})}\right]^p
\left[\frac{\delta^{j_2}}{d_1(x_2,z_{2,\beta_2}^{j_2})}\right]^{p\eta'}\,d\mu_2(x_2)\nonumber\\
&\lesssim\left[\mu_1\left(Q_{1,\beta_1}^{j_1}\right)\right]^{1-\frac pq}\delta^{(j_2-j_2')p\eta'}
\left[\mu_2\left(Q_{2,\beta_2}^{j_2}\right)\right]^{p-\frac pq}\left[V_2\left(z_{2,\beta_2}^{j_2},\delta^{j_2'}\right)\right]^{1-p}
\|a_R\|_{L^q}^p\nonumber\\
&\lesssim\left[\mu_1\left(Q_{1,\beta_1}^{j_1}\right)\mu_2\left(Q_{2,\beta_2}^{j_2}\right)\right]^{1-\frac pq}
\delta^{(j_2-j_2')[p\eta'-\omega(1-p)]}\|a_R\|_{L^q}^p\nonumber\\
&\sim[\mu(R)]^{1-\frac qp}\left[\frac{\ell(Q_2)}{\ell(\widehat{Q_2})}\right]^{p\eta'-\omega(1-p)}
\|a_R\|_{L^q}^p.
\end{align}
This is the desired case for Case (I.1).

{\it Case (I.2) $x_1\in Q_1$ and $x_2\in(\widehat{Q_2})^\complement$.} In this case, for any $k_1\in\mathbb Z$,
$\alpha_1\in\mathcal G_{1,k_1}$, and $y_2\in X_2$, let
$$
F_{\alpha_1}^{k_1}(y_2):=\int_{X_1}a_R(y_1,y_2)\frac{\psi_{1,\alpha_1}^{k_1}(y_1)}
{\sqrt{\mu_1(Q_{1,\alpha_1}^{k_1+1})}}
\,d\mu_1(y_1).
$$
By this and Fubini's theorem, we have, for any $k_1,k_2\in\mathbb Z$, 
$\alpha_1\in\mathcal G_{1,k_1}$, and $\alpha_2\in\mathcal G_{2,k_2}$,
\begin{align*}
&\left<a_R,\frac{\psi_{\alpha_1,\alpha_2}^{k_1,k_2}}
{\sqrt{\mu_1(Q_{1,\alpha_1}^{k_1+1})}\sqrt{\mu_2(Q_{2,\alpha_2}^{k_2+1})}}\right>\\
&\quad=\int_{X_2}\int_{X_1} a_R(y_1,y_2)
\frac{\psi_{1,\alpha_1}^{k_1}(y_1)}{\sqrt{\mu_1(Q_{1,\alpha_1}^{k_1+1})}}\,d\mu_1(y_1)
\frac{\psi_{2,\alpha_2}^{k_2}(y_2)}{\sqrt{\mu_2(Q_{2,\alpha_2}^{k_2+1})}}\,d\mu_2(y_2)\\
&\quad=\int_{Q_{2,\beta_{2}}^{j_{2}}} F_{\alpha_1}^{k_1}(y_2)\frac{\psi_{2,\alpha_2}^{k_2}(y_2)}{\sqrt{\mu_2(Q_{2,\alpha_2}^{k_2+1})}
}\,d\mu_2(y_2)=:\mathrm J_{\alpha_1,\alpha_2}^{k_1,k_2}.
\end{align*}
We separate $k_2\in\mathbb Z$ into two cases.

{\it Case (I.2.1) $k_2\le j_2$.} In this case, we have $\delta^{k_2}\ge\delta^{j_2}$. Note that, from Fubini's theorem and
the cancellation of $a_R$, we infer that, for any $k_1\in\mathbb Z$ and $\alpha_1\in\mathcal G_{1,k_1}$,
$\int_{X_2}F_{\alpha_1}^{k_1}(y_2)\,d\mu(y_2)=0$.
Applying this and the regularity condition of $\psi_{\alpha_1,\alpha_2}^{k_1,k_2}$, we conclude that, for any
$\alpha_1\in\mathcal G_{1,k_1}$ and $\alpha_2\in\mathcal G_{2,k_2}$,
\begin{align*}
\left|\mathrm J_{\alpha_1,\alpha_2}^{k_1,k_2}\right|&\le\int_{X_2}\left|F_{\alpha_1}^{k_1}(y_2)\right|
\frac{|\psi_{2,\alpha_2}^{k_2}(y_2)-\psi_{2,\alpha_2}^{k_2}(z_{2,\beta_2}^{j_2})|}
{\sqrt{\mu_2(Q_{\alpha_2}^{k_2+1})}}\,d\mu_2(y_2)\\
&\lesssim\int_{X_2}\left|F_{\alpha_1}^{k_1}(y_2)\right|\left[\frac{d_2(y_2,z_{2,
\beta_2}^{j_2})}{\delta^{k_2}}\right]^{\eta}
\widetilde{\mathcal E}^*_{2,k_2}\left(y_{2,\alpha_{2}}^{k_{2}},z_{2,\beta_{2}}^{j_{2}}\right)\,d\mu_2(y_2)\\
&\lesssim\delta^{(j_2-k_2)\eta}\widetilde{\mathcal E}^*_{2,k_2}\left(x_2,z_{2,\beta_{2}}^{j_{2}}\right)\int_{X_2}
\left|F_{\alpha_1}^{k_1}(y_2)\right|\,d\mu_2(y_2).
\end{align*}
Using this, we further obtain, for any fixed $\eta'\in(0,\eta)$,
\begin{align}\label{eq-1.2.1r}
&\sum_{\genfrac{}{}{0pt}{}{k_1\in{\mathbb Z}}{\alpha_1\in\mathcal G_{1,k_1}}}
\sum_{k_2=-\infty}^{j_2}\sum_{\alpha_2\in\mathcal G_{2,k_2}}
\left|\mathrm J_{\alpha_1,\alpha_2}^{k_1,k_2}\right|^2\mathbf 1_{Q_{1,\alpha_1}^{k_1+1}}(x_1)
\mathbf 1_{Q_{2,\alpha_2}^{k_2+1}}(x_2)\nonumber\\
&\quad\lesssim\sum_{k_2\le j_2}\delta^{2(j_2-k_2)\eta}
\left[\widetilde{\mathcal E}^*_{2,k_2}\left(x_2,z_{2,\beta_{2}}^{j_{2}}\right)\right]^2
\sum_{k_1\in\mathbb Z}\sum_{\alpha_1\in\mathcal G_{1,k_1}}\left[\int_{X_2}\left|F_{\alpha_1}^{k_1}(y_2)\right|
\,d\mu_2(y_2)\right]^2\mathbf 1_{Q_{1,\alpha_1}^{k_1+1}}(x_1)\nonumber\\
&\quad\lesssim\left[\frac 1{V_2(x_2,z_{2,\beta_2}^{j_2})}\right]^2
\left[\frac{\delta^{j_2}}{d_2(x_2,z_{2,\beta_2}^{j_2})}\right]^{2\eta'}
\mu_2\left(Q_{2,\beta_2}^{j_2}\right)
\int_{Q_{2,\beta_{2}}^{j_{2}}}\sum_{k_1,\alpha_1}\left|F_{\alpha_1}^{k_1}(y_2)\right|^2
\mathbf 1_{Q_{1,\alpha_1}^{k_1+1}}(x_1)\,d\mu_2(y_2).
\end{align}
This is the desired estimate in this case.

{\it Case (I.2.2) $k_2>j_2$.} In this case, we have $\delta^{k_2}<\delta^{j_2}$. Since, for any $y_2\in Q_{2,\beta_2}^{k_2}$
$d_2(x_2,z_{2,\beta_2}^{j_2})>2A_0d_2(y_2,z_{2,\beta_2}^{j_2})$, it then follows that
$d_2(x_2,z_{2,\beta_2}^{j_2})\sim d_2(x_2,y_2)$. By this and the size condition of $\psi_{2,\alpha_2}^{k_2}$, we find that
\begin{align*}
\left|\mathrm J_{\alpha_1,\alpha_2}^{k_1,k_2}\right|&\le\int_{Q_{2,\beta_{2}}^{j_{2}}}\left|F_{\alpha_1}^{k_1}(y_2)\right|
\frac{|\psi_{2,\alpha_2}^{k_2}(y_2)|}{\sqrt{\mu_2(Q_{2,\alpha_2}^{k_2+1})}}\,d\mu_2(y_2)
\lesssim\int_{X_2}\left|F_{\alpha_1}^{k_1}(y_2)\right|
\widetilde{\mathcal E}_{2,k_2}^*\left(y_2,z_{2,\alpha_{2}}^{k_{2}}\right)\,d\mu_2(y_2)\\
&\lesssim\widetilde{\mathcal E}^*_{2,k_2}\left(x_2,z_{2,\beta_{2}}^{j_{2}}\right)\int_{Q_{2,\beta_{2}}^{j_{2}}}
\left|F_{\alpha_1}^{k_1}(y_2)\right|\,d\mu_2(y_2),
\end{align*}
which further implies that
\begin{align}\label{eq-1.2.2r}
&\sum_{k_1,\alpha_1}\sum_{k_2=j_2+1}^\infty\sum_{\alpha_2\in\mathcal G_{2,k_2}}
\left|\mathrm J_{\alpha_1,\alpha_2}^{k_1,k_2}\right|^2
\mathbf 1_{Q_{1,\alpha_1}^{k_1+1}}(x_1)\mathbf 1_{Q_{2,\alpha_2}^{k_2+1}}(x_2)\nonumber\\
&\quad\lesssim\sum_{k_2=j_2+1}^\infty\left[\widetilde{\mathcal E}^*_{2,k_2}\left(x_2,z_{2,\beta_{2}}^{j_{2}}\right)\right]^2
\sum_{k_1\in\mathbb Z}\sum_{\alpha_1\in\mathcal G_{1,k_1}}\left[\int_{Q_{2,\beta_{2}}^{j_{2}}}\left|F_{\alpha_1}^{k_1}(y_2)\right|
\,d\mu_2(y_2)\right]^2\mathbf 1_{Q_{1,\alpha_1}^{k_1+1}}(x_1)\nonumber\\
&\quad\lesssim\left[\frac 1{V_2(x_2,z_{2,\beta_2}^{j_2})}\right]^2\left[\frac{\delta^{j_2}}{d_2(x_2,
z_{2,\beta_2}^{j_2})}\right]^{2\eta'}
\mu_2\left(Q_{2,\beta_2}^{j_2}\right)
\int_{Q_{2,\beta_{2}}^{j_{2}}}\sum_{k_1,\alpha_1}\left|F_{\alpha_1}^{k_1}(y_2)\right|^2
\mathbf 1_{Q_{1,\alpha_1}^{k_1+1}}(x_1)\,d\mu_2(y_2).
\end{align}
This is also the desired estimate in this case.

Combining \eqref{eq-1.2.1r} and \eqref{eq-1.2.2r}, we conclude that, for any fixed $\eta'\in(0,\eta)$,
any $x_1\in Q_1$ and $x_2\in(\widehat{Q_2})^\complement$,
\begin{align}\label{eq-1.2}
S(a_R)(x_1,x_2)&\lesssim\frac 1{V_2(x_2,z_{2,\beta_2}^{j_2})}\left[\frac{\delta^{j_2}}{d_2(x_2,z_{2,\beta_2}^{j_2})}\right]^{\eta'}
\left[\mu\left(Q_{2,\beta_2}^{j_2}\right)\right]^{\frac 12}\nonumber\\
&\quad\times\left[\int_{Q_2}\sum_{k_1,\alpha_1}\left|F_{\alpha_1}^{k_1}(y_2)\right|^2
\mathbf 1_{Q_{1,\alpha_1}^{k_1+1}}(x_1)\,d\mu_2(y_2)\right]^{\frac 12}.
\end{align}
Now, we let $R_{c,2}:=[Q_1\times(\widehat{Q_2})^\complement]\cap\Omega^{**}$ and estimate
$\|S(a_R)\mathbf 1_{R_{c,2}}\|_{L^p}$. Indeed, by \eqref{eq-1.2}, the assumption that $q\ge 2$, and H\"{o}lder's
inequality, we have, for any $x_2\in(\widehat{Q_2})^\complement$,
\begin{align*}
&\int_{Q_1}[S(a_R)(x_1,x_2)]^q\,d\mu_1(x_1)\\
&\quad\lesssim\left[\frac 1{V_2(x_2,z_{2,\beta_2}^{j_2})}\right]^q\left[\frac{\delta^{j_2}}{d_2(x_2,z_{2,\beta_2}^{j_2})}
\right]^{q\eta'}
\left[\mu_2\left(Q_{2,\beta_2}^{j_2}\right)\right]^{\frac q2}\\
&\qquad\times\int_{Q_1}\left[\int_{Q_2}\sum_{k_1,\alpha_1}\left|F_{\alpha_1}^{k_1}(y_2)\right|^2
\mathbf 1_{Q_{1,\alpha_1}^{k_1+1}}(x_1)\,d\mu_2(y_2)\right]^{\frac q2}\,d\mu_1(x_1)\\
&\quad\lesssim\left[\frac 1{V_2(x_2,z_{2,\beta_2}^{j_2})}\right]^q\left[\frac{\delta^{j_2}}
{d_2(x_2,z_{2,\beta_2}^{j_2})}\right]^{q\eta'}
\left[\mu_2\left(Q_{2,\beta_2}^{j_2}\right)\right]^{q-1}\\
&\qquad\times\int_{Q_2}\int_{Q_1}\left[\sum_{k_1,\alpha_1}
\left|\left<a_R(\cdot,y_2),\frac{\psi_{1,\alpha_1}^{k_1}}{\sqrt{\mu(Q_{1,
\alpha_1}^{k_1+1})}}\right>\right|^2
\mathbf 1_{Q_{1,\alpha_1}^{k_1+1}}(x_1)\right]^{\frac q2}\,d\mu_1(x_1)\,d\mu_2(y_2)\\
&\quad\lesssim\left[\frac 1{V_2(x_2,z_{2,\beta_2}^{j_2})}\right]^q\left[\frac{\delta^{j_2}}{d_2(x_2,
z_{2,\beta_2}^{j_2})}\right]^{q\eta'}
\left[\mu_2\left(Q_{2,\beta_2}^{j_2}\right)\right]^{q-1}\int_{Q_2}\|a_R(\cdot,
y_2)\|_{L^q(X_1)}^q\,d\mu(y_2)\\
&\quad\lesssim\left[\frac 1{V_2(x_2,z_{2,\beta_2}^{j_2})}\right]^q\left[\frac{\delta^{j_2}}{d_2(x_2,z_{2,
\beta_2}^{j_2})}\right]^{q\eta'}
\left[\mu_2\left(Q_{2,\beta_2}^{j_2}\right)\right]^{q-1}\|a_R\|_{L^q}^q.
\end{align*}
which, together with H\"{o}lder's inequality again, further implies that
\begin{align*}
&\int_{Q_1}[S(a_R)(x_1,x_2)]^p\,d\mu_1(x_1)\\
&\quad\le\left[\int_{Q_1}[S(a_R)(x_1,x_2)]^q\,d\mu_1(x_1)\right]^{\frac pq}[\mu_1(Q_1)]^{1-\frac pq}\\
&\quad\lesssim\left[\frac 1{V_2(x_2,z_{2,\beta_2}^{j_2})}\right]^p
\left[\frac{\delta^{j_2}}{d_2(x_2,z_{2,\beta_2}^{j_2})}\right]^{p\eta'}
\left[\mu\left(Q_{2,\beta_2}^{j_2}\right)\right]^{p-\frac pq}\|a_R\|_{L^q}^p[\mu_1(Q_1)]^{1-\frac pq}.
\end{align*}
Using this and choosing $\eta'\in(0,\eta)$ such that $p>\frac{\omega}{\omega+\eta'}$, we conclude that
\begin{align}\label{eq-sar2}
\left\|S(a_R)\mathbf 1_{R_{c,2}}\right\|_{L^p}^p
&\le\int_{(\widehat{Q_2})^\complement}\int_{Q_1}|a_R(x_1,x_2)|^p\,d\mu_1(x_1)\,d\mu(x_2)\nonumber\\
&\nonumber\lesssim\|a_R\|_{L^q}^p[\mu_1(Q_1)]^{1-\frac pq}\left[\mu_2\left(Q_{2,\beta_2}^{j_2}\right)\right]^{p-\frac pq}\\
&\nonumber\quad\times\int_{d(x_2,z_{2,\beta_2}^{j_2})>C\delta^{j_2'}}\left[\frac 1{V_2(x_2,z_{2,\beta_2}^{j_2})}\right]^p
\left[\frac{\delta^{j_2}}{d_2(x_2,z_{2,\beta_2}^{j_2})}\right]^{p\eta'}\,d\mu(x_2)\\
&\nonumber\lesssim[\mu_1(Q_1)]^{1-\frac pq}\left[\mu_2\left(Q_{2,\beta_2}^{j_2}\right)\right]^{p-\frac pq}\|a_R\|_{L^q}^p\\
&\nonumber\quad\times\delta^{(j_2-j_2')p\eta'}\left[\mu_2\left(B\left(z_{2,\beta_2}^{j_2},
\delta^{j_2'}\right)\right)\right]^{1-p}\\
&\nonumber\lesssim[\mu_1(Q_1)]^{1-\frac pq}\left[\mu_2\left(Q_{2,\beta_2}^{j_2}
\right)\right]^{1-\frac pq}
\delta^{(j_2-j_2')[p\eta'-\omega(1-p)]}\|a_R\|_{L^q}^p\\
&\sim[\mu(R)]^{1-\frac pq}\left[\frac{\ell(Q_2)}{\ell(\widehat{Q_2})}\right]^{p\eta'-\omega(1-p)}
\|a_R\|_{L^q}^p.
\end{align}
This is the desired estimate for Case (I.2).

{\it Case (I.3) $x_1\in Q_1^\complement$ and $x_2\in\widehat{Q_2}$.} In this case, note that
$Q_1\times\widehat{Q_2}\subset\Omega^*$. Then there exists a unique dyadic cube $P_2\supset Q_2$ such that
$Q_1\times P_2\in\mathcal M_2(\Omega^*)$. Let $\widehat{Q_1}$ be the maximal dyadic cube containing $Q_1$
such that
$$
\mu\left(\left(\widehat{Q_1}\times P_2\right)\cap\Omega^*\right)>\frac 12\mu(Q_1\times P_2).
$$
Then we have $\widehat{Q_1}\times P_2\subset\Omega^{**}$. However, since $(x_1,x_2)\notin\Omega^{**}$ and
$x_2\in\widehat{Q_2}\subset P_2$, it then follows that $x_1\notin\widehat{Q_1}$. Suppose that
$P_2:=Q_{2,\gamma_2}^{l_2}$ and $\widehat{Q_1}:=Q_{1,\beta_1'}^{j_1'}$ for some $j_1', l_2\in\mathbb Z$,
$\beta_1'\in\mathcal A_{1,j_1'}$,
and $\gamma_2\in\mathcal A_{2,l_2}$. We also consider two case for whether
$x_2\in Q_2$.

{\it Case (I.3.1) $x_2\in Q_2$.} In this case, applying an argument
similar to that used in the proof of Case (I.2),
but with
$X_1$ and $X_2$ interchanged, we find that, for any given $\eta'\in(0,\eta)$,
\begin{equation*}
\int_{Q_2}[S(a_R)(x_1,x_2)]^q\,d\mu(x_2)
\lesssim\left[\frac 1{V_1(x_1,z_{1,\beta_1}^{j_1})}\right]^q\left[\frac{\delta^{j_1}}{d_1(x_1,
z_{1,\beta_1}^{j_1})}\right]^{q\eta'}
[\mu_1(Q_1)]^{q-1}\|a_R\|_{L^q}^q.
\end{equation*}
From this and H\"{o}lder's inequality, we deduce that
\begin{align}\label{eq-estr3-1}
\left\|S(a_R)\mathbf 1_{(\widehat{Q_1})^\complement\times Q_2}\right\|_{L^p}^p
&=\int_{(\widehat{Q_1})^\complement}\int_{Q_2}|S(a_R)(x_1,x_2)|^p\,d\mu_1(x_1)\,d\mu(x_2)\nonumber\\
&\lesssim\|a_R\|_{L^q}^p[\mu_1(Q_1)]^{p-\frac pq}[\mu_2(Q_2)]^{1-\frac pq}\nonumber\\
&\quad\times\int_{d(x_1,z_{1,\beta_1}^{j_1})>C\delta^{j_1'}}\left[\frac 1{V_1(x_1,z_{1,\beta_1}^{j_1})}\right]^p
\left[\frac{\delta^{j_1}}{d_1(x_1,z_{1,\beta_1}^{j_1})}\right]^{p\eta'}\,d\mu(x_2)\nonumber\\
&\sim[\mu(R)]^{1-\frac pq}\left[\frac{\ell(Q_1)}{\ell(\widehat{Q_1})}\right]^{p\eta'-\omega(1-p)}\|a_R\|_{L^q}^p.
\end{align}
This is the desired estimate for this case.

{\it Case (I.3.2) $x_2\in\widehat{Q_2}\setminus Q_2$.} In this case, applying an argument similar to that used
in the proof of Case (I.1), but with $X_1$ and $X_2$ interchanged, we conclude that, for any given $\eta'\in(0,\eta)$,
\begin{align*}
S(a_R)(x_1,x_2)&\lesssim[\mu(R)]^{1-\frac 1q}\|a_R\|_{L^q}
\frac 1{V_1(x_1,z_{1,\beta_1}^{j_1})}\frac 1{V_2(x_2,z_{2,\beta_2}^{j_2})}\\
&\quad\times\left[\frac{\delta^{j_1}}{d_1(x_1,z_{1,\beta_1}^{j_1})}\right]^{\eta'}
\left[\frac{\delta^{j_2}}{d_2(x_2,z_{2,\beta_2}^{j_2})}\right]^{\eta'}
\end{align*}
and, consequently,
\begin{equation}\label{eq-estr3-2}
\left\|S(a_R)\mathbf 1_{(\widehat{Q_1})^\complement\times Q_2^\complement}\right\|_{L^p}^p
\lesssim[\mu(R)]^{1-\frac pq}\left[\frac{\ell(Q_1)}{\ell(\widehat{Q_1})}\right]^{p\eta'-\omega(1-p)}
\|a_R\|_{L^q}^p.
\end{equation}
This is also the desired estimate.

Now, let $R_{c,3}:=(Q_1^\complement\times\widehat{Q_2})\cap(\Omega^{**})^\complement$.
Combining \eqref{eq-estr3-1} and \eqref{eq-estr3-2}, we obtain
\begin{equation}\label{eq-sar3}
\left\|S(a_R)\mathbf 1_{R_{c,3}}\right\|_{L^p}^p\lesssim[\mu(R))]^{1-\frac pq}
\left[\frac{\ell(Q_1)}{\ell(\widehat{Q_1})}\right]^{p\eta'-\omega(1-p)}\|a_R\|_{L^q}^p,
\end{equation}
where $\eta'\in(0,\eta)$ satisfies $p>\frac{\omega}{\omega+\eta'}$. This is the desired estimate for
Case (I.3).

By the definitions of $\{R_{c,i}\}_{i=1}^3$, we have $(\Omega^{**})^\complement=R_{c,1}\cup R_{c,2}\cup R_{c,3}$.
Thus, using \eqref{eq-sar1}, \eqref{eq-sar2}, and \eqref{eq-sar3}, we conclude that
\begin{align*}
\left\|S(a_R)\mathbf 1_{(\Omega^{**})^\complement}\right\|_{L^p}^p
&=\sum_{i=1}^3\left\|S(a_R)\mathbf 1_{R_{c,i}}\right\|_{L^p}^p\\
&\lesssim[\mu(R)]^{1-\frac pq}\left[\frac{\ell(Q_2)}{\ell(\widehat{Q_2})}\right]^{p\eta'-\omega(1-p)}
\|a_R\|_{L^q}^p\\
&\quad+\left[\mu\left(Q_1\times\widehat{Q_2}\right)\right]^{1-\frac pq}
\left[\frac{\ell(Q_1)}{\ell(\widehat{Q_1})}\right]^{p\eta'-\omega(1-p)}\|a_R\|_{L^q}^p.
\end{align*}
By this and H\"{o}lder's inequality, we have
\begin{align*}
 \left\|S(a)\mathbf 1_{(\Omega^{**})^\complement}\right\|_{L^p}^p
&\le\sum_{R\in\mathcal M(\Omega)}\left\|S(a_R)\mathbf 1_{(\Omega^{**})^\complement}\right\|_{L^p}^p\nonumber\\
&\lesssim\sum_{Q_1\times Q_2=R\in\mathcal M(\Omega)}[\mu(R)]^{1-\frac pq}\left[\frac{\ell(Q_2)}{\ell(\widehat{Q_2})}\right]^{p\eta'-\omega(1-p)}
\|a_R\|_{L^q}^p\nonumber\\
&\quad+\sum_{Q_1\times Q_2=R\in\mathcal M(\Omega)}\left[\mu\left(Q_1\times\widehat{Q_2}\right)\right]^{1-\frac pq}
\left[\frac{\ell(Q_1)}{\ell(\widehat{Q_1})}\right]^{p\eta'-\omega(1-p)}\|a_R\|_{L^q}^p\nonumber\\
&=:\mathrm Y_1+\mathrm Y_2.
\end{align*}
To estimate $\mathrm Y_1$, we first claim that $\mathcal M(\Omega)\subset\mathcal M_2(\Omega)$. Indeed, suppose $Q_1\times Q_2=:R
\in\mathcal M(\Omega)$. To show $R\in\mathcal M_2(\Omega)$, we assume $\widetilde Q_2\supset Q_2$ such that
$Q_1\times \widetilde Q_2\subset\Omega$. Since $R\in\mathcal M(\Omega)$, it then follows that $\widetilde Q_2=Q_2$.
By this, H\"{o}lder's inequality, and Lemma \ref{lem-j}, we find that
\begin{align*}
\mathrm Y_1&\lesssim\left[\sum_{R\in\mathcal M(\Omega)}\|a_R\|_{L^q}^q\right]^{\frac pq}
\left\{\sum_{Q_1\times Q_2=R\in\mathcal M_2(\Omega)}\mu(R)
\left[\frac{\ell(Q_2)}{\ell(\widehat{Q_2})}\right]^{[p\eta'-\omega(1-p)](\frac qp)'}\right\}^{1-\frac pq}\\
&\lesssim[\mu(\Omega)]^{\frac pq-1}[\mu(\Omega)]^{1-\frac pq}\sim 1.
\end{align*}
This is the desired estimate for $\mathrm Y_1$.

To estimate the term $\mathrm Y_2$, for any $Q_1\times Q_2\in\mathcal M(\Omega)$, let $m_2^{Q_1}(Q_2):=P_2$ be
as in the proof of Case (I.3). Notice that, for any fixed $Q_1$, $\{Q_2:\ Q_1\times Q_2\in\mathcal M(\Omega)\}$ are disjoint.
By this and H\"{o}lder's inequality, we have
\begin{align*}
\mathrm Y_2&\lesssim\sum_{Q_1\times Q_2=R\in\mathcal M(\Omega)}\left[\mu(Q_1\times Q_2)\right]^{1-\frac pq}
\left[\frac{\ell(Q_1)}{\ell(\widehat{Q_1})}\right]^{p\eta'-\omega(1-p)}\|a_R\|_{L^q}^p\\
&\lesssim\left[\sum_{R\in\mathcal M(\Omega)}\|a_R\|_{L^q}^q\right]^{\frac pq}
\left\{\sum_{Q_1\times Q_2=R\in\mathcal M(\Omega)}\mu(Q_1\times Q_2)
\left[\frac{\ell(Q_1)}{\ell(\widehat{Q_1})}\right]^{[p\eta'-\omega(1-p)](\frac qp)'}\right\}^{1-\frac pq}\\
&\lesssim[\mu(\Omega)]^{\frac pq-1}
\left\{\sum_{Q_1\in\mathcal D_1}\mu_1(Q_1)\sum_{\{Q_2:\ Q_1\times Q_2\in\mathcal M(\Omega)\}}\mu_2(Q_2)
\left[\frac{\ell(Q_1)}{\ell(\widehat{Q_1})}\right]^{[p\eta'-\omega(1-p)](\frac qp)'}\right\}^{1-\frac pq}\\
&\lesssim[\mu(\Omega)]^{\frac pq-1}\left\{\sum_{Q_1\in\mathcal D_1}\mu_1(Q_1)\sum_{P_2}
\sum_{\substack{\{Q_2:\ Q_1\times Q_2\in\mathcal M(\Omega),\\ m_2^{Q_1}(Q_2)=P_2\}}}\mu_2\left(Q_2\right)
\left[\frac{\ell(Q_1)}{\ell(\widehat{Q_1})}\right]^{[p\eta'-\omega(1-p)](\frac qp)'}\right\}^{1-\frac pq}\\
&\lesssim[\mu(\Omega)]^{\frac pq-1}\left\{\sum_{Q_1\in\mathcal D_1}\mu_1(Q_1)\sum_{\{P_2:\ Q_1\times P_2\in\mathcal M_2(\Omega)\}}\mu_2(P_2)
\left[\frac{\ell(Q_1)}{\ell(\widehat{Q_1})}\right]^{[p\eta'-\omega(1-p)](\frac qp)'}\right\}^{1-\frac pq}\\
&\sim[\mu(\Omega)]^{\frac pq-1}\left\{\sum_{Q_1\times P_2\in\mathcal M_2(\Omega^*)}\mu(Q_1\times P_2)
\left[\frac{\ell(Q_1)}{\ell(\widehat{Q_1})}\right]^{[p\eta'-\omega(1-p)](\frac qp)'}\right\}^{1-\frac pq}\\
&\lesssim[\mu(\Omega)]^{\frac pq-1}[\mu(\Omega^*)]^{1-\frac pq}\lesssim 1.
\end{align*}
Combining the estimates of $\mathrm Y_1$ and $\mathrm Y_2$, we obtain
$\|S(a)\mathbf 1_{(\Omega^{**})^\complement}\|_{L^p}\lesssim 1$.
This finishes the proof of Proposition
\ref{prop-at} in the case $q\in[2,\infty)$.
\end{proof}

Now, we prove Proposition \ref{prop-at} in the case $q\in(1,2)$. To this end, we need the following
vector-valued estimate. Let $\nu$ be a measure on $\mathbb N$. For any $n\in\mathbb N$, we write $\nu(\{n\})$ simply as
$\nu(n)$. We assume that, for any $n\in\mathbb N$, $\nu(n)\in(0,\infty)$. For any $r\in(0,\infty)$. The space $\ell^r(\nu)$
is defined to be the set of all sequences $a:=\{a_n\}_{n\in\mathbb N}\subset\mathbb C$ such that
$$
\|a\|_{\ell^r(\nu)}:=\left[\sum_{n\in\mathbb N}|a_n|^r\nu(n)\right]^{1/r}<\infty.
$$
Let $X$ be a space of homogeneous type, $q\in(0,\infty)$, and $\mathcal B$ a Banach space. Denote by
$L^q(X,\mathcal B)$ the space of all measurable $\mathcal B$-valued functions $F$ on $X$ such that
$$
\|F\|_{L^q(X,\mathcal B)}:=\left[\int_X \|F(x)\|_{\mathcal B}^q\,d\mu(x)\right]^{1/q}<\infty.
$$
Let $\vec F:=\{F_n\}_{n\in\mathbb N}$ be a sequence of measurable functions on $X$ and $g$ a measurable function.
The \emph{symbol $\langle \vec F,g\rangle$} is defined by setting
$$
\left\langle\vec F,g\right\rangle:=\{\langle F_n,g\rangle\}_{n\in\mathbb N}
:=\left\{\int_X F_n(x)g(x)\,d\mu(x)\right\}_{n\in\mathbb N}
$$
if each of the terms is well-defined.

The next proposition shows our desired vector-valued estimate.

\begin{proposition}\label{prop-vector}
Let $X$ be a space of homogeneous type and $\{\psi_{\alpha}^k:\ k\in\mathbb Z,\alpha\in\mathcal G_k\}$ be defined as in
Theorem \ref{thm-wave}. Suppose that $\nu$ is a measure on $\mathbb N$. Then, for any given $p\in(1,2]$, there exists a
constant $C_{p}\in(0,\infty)$, independent of $\nu$, such that, for any $\vec F:=\{F_n\}_{n\in\mathbb N}\in L^q(X,\ell^2(\nu))$,
\begin{equation*}
\left\|\left[\sum_{k\in\mathbb Z}\sum_{\alpha\in\mathcal G_k}\left\|\left<\vec F,\psi_\alpha^k\right>\right\|_{\ell^2(\nu)}^2
\frac{\mathbf 1_{Q_\alpha^{k+1}}}{\mu(Q_\alpha^{k+1})}\right]^{\frac 12}\right\|_{L^q(X)}
\le C_p\|\vec F\|_{L^q(X,\ell^2(\nu))}.
\end{equation*}
\end{proposition}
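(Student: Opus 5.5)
The plan is to reduce the estimate to the scalar one-parameter bound for the wavelet square function and then extend it to $\ell^2(\nu)$-valued functions by a Marcinkiewicz--Zygmund/Khintchine averaging argument. The constant will not depend on $\nu$. Throughout I read the exponent in the statement as $q\in(1,2]$; the argument in fact works for every $q\in(1,\infty)$.

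\emph{Step 1 (removing $\nu$).} Given $\vec F=\{F_n\}_{n\in\mathbb N}$, put $G_n:=\sqrt{\nu(n)}\,F_n$. By linearity of $\langle\cdot,\psi_\alpha^k\rangle$, we have $\|\langle\vec F,\psi_\alpha^k\rangle\|_{\ell^2(\nu)}^2=\sum_n|\langle G_n,\psi_\alpha^k\rangle|^2$ and $\|\vec F(x)\|_{\ell^2(\nu)}^2=\sum_n|G_n(x)|^2$. So it suffices to prove the inequality for counting measure on $\mathbb N$, with a constant $C_q$ depending only on $q$ and $X$. By monotone convergence it also suffices to treat finitely many $n\in\{1,\dots,N\}$, with bounds uniform in $N$.

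\emph{Step 2 (scalar operator).} Consider the linear operator $T$ sending a function $g$ to the $\ell^2$-valued function
$$
T g(x):=\left\{\langle g,\psi_\alpha^k\rangle\,\mathbf 1_{Q_\alpha^{k+1}}(x)\,[\mu(Q_\alpha^{k+1})]^{-\frac12}\right\}_{k\in\mathbb Z,\,\alpha\in\mathcal G_k}.
$$
Then $\|Tg(x)\|_{\ell^2}$ is the one-parameter wavelet square function of $g$. I will use its boundedness on $L^q(X)$, that is, $\|Tg\|_{L^q(X,\ell^2)}\le C\|g\|_{L^q(X)}$. This is the one-parameter counterpart of Lemma \ref{lem-bds} (the same reference \cite{HYY24} covers it). It also follows from the unconditional-basis property of $\{\psi_\alpha^k\}$ in $L^q(X)$ stated in Theorem \ref{thm-wave}, via Khintchine's inequality together with the standard comparison between the $\mathbf 1_{Q_\alpha^{k+1}}$ and $|\psi_\alpha^k|$ square functions, which uses the Fefferman--Stein inequality.

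\emph{Step 3 (Rademacher averaging).} Let $\{\varepsilon_n\}_{n=1}^N$ be independent Rademacher signs. Apply Step 2 to $g_\varepsilon:=\sum_{n\le N}\varepsilon_nG_n$; by linearity $Tg_\varepsilon=\sum_n\varepsilon_nTG_n$. This gives
$$
\int_X\Big\|\sum_{n\le N}\varepsilon_n TG_n(x)\Big\|_{\ell^2}^q\,d\mu(x)\le C^q\int_X\Big|\sum_{n\le N}\varepsilon_nG_n(x)\Big|^q\,d\mu(x).
$$
Take the expectation in $\varepsilon$ and use Fubini. On the right, the scalar Khintchine inequality gives $\mathbb E|\sum_n\varepsilon_nG_n(x)|^q\sim(\sum_n|G_n(x)|^2)^{q/2}$. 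On the left, apply the Khintchine--Kahane inequality in the Hilbert space $\ell^2$; here it is elementary, since $\mathbb E\|\sum_n\varepsilon_nv_n\|^2=\sum_n\|v_n\|^2$ and all $L^q$-moments of Rademacher sums in a Hilbert space are comparable. This yields $\mathbb E\|\sum_n\varepsilon_nTG_n(x)\|_{\ell^2}^q\sim(\sum_n\|TG_n(x)\|_{\ell^2}^2)^{q/2}$. Since
$$
\sum_n\|TG_n(x)\|_{\ell^2}^2=\sum_{k,\alpha}\sum_n|\langle G_n,\psi_\alpha^k\rangle|^2\,\frac{\mathbf 1_{Q_\alpha^{k+1}}(x)}{\mu(Q_\alpha^{k+1})},
$$
combining the two sides gives the desired inequality with a constant depending only on $q$ and $C$.

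The only genuinely nontrivial input is the scalar $L^q$ bound in Step 2. I would cite it directly from \cite{HYY24} rather than reprove it. If a self-contained route is needed, I would first try the Calder\'on--Zygmund approach: $T$ is an $\ell^2$-valued operator, bounded on $L^2$ by orthonormality, whose kernel $\{\psi_\alpha^k(y)\mathbf 1_{Q_\alpha^{k+1}}(x)\mu(Q_\alpha^{k+1})^{-1/2}\}$ satisfies H\"ormander's condition in $y$ by the H\"older regularity and exponential decay in Theorem \ref{thm-wave}. That gives weak $(1,1)$ and, by interpolation, boundedness on $L^q$ for $q\in(1,2]$, exactly the range stated. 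Steps 1 and 3 are routine.
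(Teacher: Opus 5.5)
Your argument is correct, but it takes a different route from the paper's.

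\textbf{The paper's route.} The paper never passes through the scalar square function. It writes the left-hand side as $\bigl\|\{\int_X\vec F(y)L_k(\cdot,y)\,d\mu(y)\}_{k}\bigr\|_{L^q(X,\ell^2(\mathbb Z,\ell^2(\nu)))}$, with the scalar kernel $L_k(x,y)=\sum_{\alpha}\psi^k_\alpha(y)\mathbf 1_{Q^{k+1}_\alpha}(x)[\mu(Q^{k+1}_\alpha)]^{-1/2}$. It treats $\vec T$ directly as a vector-valued Calder\'on--Zygmund operator from $\ell^2(\nu)$-valued to $\ell^2(\mathbb Z,\ell^2(\nu))$-valued functions. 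The $L^2$ bound comes from orthonormality. It then checks the H\"ormander condition $[\sum_k|L_k(x,y)-L_k(x,y')|^2]^{1/2}\lesssim[d(y,y')/d(x,y)]^\eta/V(x,y)$ and invokes \cite{gly09}. Independence from $\nu$ follows because neither constant involves $\nu$.

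\textbf{Your route.} You remove $\nu$ by the rescaling $G_n=\sqrt{\nu(n)}F_n$, which makes the $\nu$-independence trivial. You take the one-parameter scalar bound as a black box and lift it to $\ell^2$-valued data by Marcinkiewicz--Zygmund/Khintchine averaging. That lifting is sound. The Hilbert-space Kahane step is elementary: apply scalar Khintchine coordinatewise, then Minkowski in $L^{q/2}$ for $q\ge2$ or reverse Minkowski for $q<2$, as needed.

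\textbf{What each buys.} Your route is modular and gives the full range $q\in(1,\infty)$. The scalar input is available for all such $q$ in several ways:
\begin{itemize}
\item from the unconditional-basis property in Theorem \ref{thm-wave}, via your Fefferman--Stein comparison;
\item even from Lemma \ref{lem-bds} itself, tested on $f=g\otimes\psi^{j}_{2,\beta}$ for a single wavelet in the second variable. Orthonormality then gives $S(f)=S_1(g)\otimes\widetilde{\mathbf 1}_{Q^{j+1}_{2,\beta}}$, and $\|\psi^j_{2,\beta}\|_{L^q}\lesssim[\mu_2(Q^{j+1}_{2,\beta})]^{1/q-1/2}$ by the decay condition.
\end{itemize}
The paper's route is a self-contained kernel argument that does not rely on an external scalar $L^q$ characterization. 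The price is that it is confined to $q\in(1,2]$: it uses weak $(1,1)$ plus interpolation, and the kernel has no regularity in $x$, so duality is not directly available. That range is all that Case (II.2) needs.

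Your fallback Calder\'on--Zygmund sketch for the scalar operator is exactly the paper's argument with $\ell^2(\nu)$ suppressed.
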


\begin{proof}
For any $\vec F:=\{F_n\}_{n\in\mathbb N}\in L^q(X,\ell^2(\nu))$ and $x\in X$, by Lemma \ref{lem-cube}(a), we have
\begin{align*}
\sum_{\alpha\in\mathcal G_k}\left\|\left<F,\psi_\alpha^k\right>\right\|_{\ell^2(\nu)}^2
\frac{\mathbf 1_{Q_\alpha^{k+1}}(x)}{\mu(Q_\alpha^{k+1})}
&=\sum_{\alpha\in\mathcal G_k}\sum_{n=1}^\infty\left|\int_X F_n(y)\psi_{\alpha}^k(y)\,d\mu(x)\right|^2\nu(n)
\frac{\mathbf 1_{Q_\alpha^{k+1}}(x)}{\mu(Q_\alpha^{k+1})}\\
&=\sum_{n=1}^\infty\left|\sum_{\alpha\in\mathcal G_k}\int_X F_n(y)\psi_{\alpha}^k(y)
\frac{\mathbf 1_{Q_\alpha^{k+1}}(x)}{\sqrt{\mu(Q_\alpha^{k+1})}}\,d\mu(y)\right|^2\nu(n)\\
&=\sum_{n=1}^\infty\left|\int_X F_n(y)L_k(x,y)\,d\mu(y)\right|^2\nu(n)\\
&=\left\|\int_X\vec F(y)L_k(x,y)\,d\mu(y)\right\|_{\ell^2(\nu)}^2,
\end{align*}
where, for any $k\in\mathbb Z$ and $x, y\in X$,
$$
L_k(x,y):=\sum_{\alpha\in\mathcal G_k}\psi_{\alpha}^k(y)\frac{\mathbf 1_{Q_\alpha^{k+1}}(x)}{\sqrt {\mu(Q_\alpha^{k+1})}}
$$
and
$$
\int_X\vec F(y)L_k(x,y)\,d\mu(y):=\left\{\int_X F_n(y)L_k(x,y)\,d\mu(y)\right\}_{n\in\mathbb N}.
$$
Using this, we obtain, for any $\vec F\in L^q(X,\ell^2(\nu))$,
\begin{equation*}
\left\|\left[\sum_{\genfrac{}{}{0pt}{}{k\in{\mathbb Z}}{\alpha_1\in\mathcal G_{k}}}
\left\|\left<\vec F,\psi_\alpha^k\right>\right\|_{\ell^2(\nu)}^2
\frac{\mathbf 1_{Q_\alpha^{k+1}}(\cdot)}{\mu(Q_\alpha^{k+1})}\right]^{\frac 12}\right\|_{L^q(X)}
=\left\|\left[\sum_{k\in\mathbb Z}\left\|\int_X \vec F(y)
L_k(\cdot,y)\,d\mu(y)\right\|_{\ell^2(\nu)}^2\right]^{\frac 12}\right\|_{L^q(X)}.
\end{equation*}
We define the vector-valued operator $\vec T$ by setting, for any $\vec F\in L^q(X,\ell^2(\nu))$ and $x\in X$,
$$
\vec{T}\vec F(x):=\left\{T_k\vec F(x)\right\}_{k\in\mathbb Z}:=\left\{\int_X\vec F(y)L_k(x,y)\,d\mu(y)\right\}_{k\in\mathbb Z}.
$$
It remains to show that $\vec T$ is bounded from $L^q(X,\ell^2(\nu))$ to
$L^q(X,\ell^2(\mathbb Z,\ell^2(\nu)))$ for any given $q\in(1,2]$.

We first show that $\vec T$ is bounded from $L^2(X,\ell^2(\nu))$ to $L^2(X,\ell^2(\mathbb Z,\ell^2(\nu)))$. Indeed, using
the wavelet characterization of $L^2(X)$, we find that
\begin{align*}
\left\|\vec T \vec F\right\|_{L^2(X,\ell^2(\mathbb Z,\ell^2(\nu)))}^2
&=\int_X\sum_{k,\alpha}\left\|\left<\vec F,\psi_\alpha^k\right>\right\|_{\ell^2(\nu)}^2
\frac{\mathbf 1_{Q_\alpha^{k+1}}(x)}{\mu(Q_\alpha^{k+1})}\,d\mu(x)\\
&=\sum_{k,\alpha}\sum_{n=1}^\infty\int_X\left|\left<F_n,\psi_\alpha^k\right>\right|^2
\frac{\mathbf 1_{Q_{\alpha}^{k+1}}(x)}{\mu(Q_\alpha^{k+1})}\,d\mu(x)\nu(n)\\
&=\int_X\sum_{n=1}^\infty|F_n(x)|^2\nu(n)\,d\mu(x)=\int_X\|F(x)\|_{\ell^2(\nu)}^2
\,d\mu(x)=\|F\|_{L^2(X,\ell^2(\nu))},
\end{align*}
which implies that $\vec T$ is bounded from $L^2(X,\ell^2(\nu))$ to $L^2(X,\ell^2(\mathbb Z,\ell^2(\nu)))$.

To prove that $\vec T$ is bounded from $L^q(X,\ell^2(\nu))$ to $L^q(X,\ell^2(\mathbb Z,\ell^2(\nu)))$ for any given
$q\in(1,2)$, we show that $\vec T$ can be seen as a vector-valued Calder\'{o}n--Zygmund operator.
Moreover, by Theorem \ref{thm-wave}, Lemma \ref{lem-cube}(b), and
\cite[Lemma 6.4]{ah13}, we conclude that, for any $k\in\mathbb Z$ and $x,y\in X$,
\begin{align*}
|L_k(x,y)|
&=\sum_{\alpha\in\mathcal G_k}\left|\psi_\alpha^k(y)\right|
\frac{\mathbf 1_{Q_\alpha^{k+1}}(x)}{\sqrt{\mu(Q_\alpha^{k+1})}}
\lesssim\sum_{\alpha\in\mathcal G_k}\frac 1{V_{\delta^k}(y_\alpha^k)}\mathcal E_k^*(y,y_{\alpha}^k)\mathbf 1_{Q_\alpha^{k+1}}(x)\\
&
\lesssim\sum_{\alpha\in\mathcal G_k}\frac 1{V_{\delta^k}(y_\alpha^k)}\mathcal E_k^*(y,y_{\alpha}^k)\mathcal E_k^*(x,y_{\alpha}^k)
\lesssim\widetilde{\mathcal E}_k^*(x,y)
\lesssim \widetilde{\mathcal E}_k (x,y).
\end{align*}
This shows that $\vec T$ is well-defined for any $\vec F:=\{F_n\}_{n\in\mathbb N}\in L^q(X,\ell^2(\nu))$ with any given
$q\in(1,2)$. Moreover, applying an argument similar to that used in the proof of \cite[Theorem 1.1]{gly09}, we find that
it remains to prove that, for any $x, y, y'\in X$ with $x\neq y$ and $d(y,y')<(2A_0)^{-1}d(x,y)$,
\begin{equation}\label{eq-hormander}
\left[\sum_{k\in\mathbb Z}|L_k(x,y)-L_k(x,y')|^2\right]^{ \frac{1}{2}}\lesssim\left[\frac{d(y,y')}{d(x,y)}\right]^\eta\frac 1{V(x,y)}.
\end{equation}
To this end, by \cite[Theorem 3.3]{hlw18}, we obtain, for any given $\Gamma\in(0,\infty)$ and for any $k\in\mathbb Z$,
$\alpha\in\mathcal G_k$, and $y,y'\in X$,
$$
\frac{|\psi_\alpha^k(y)|}{\sqrt{\mu(Q_\alpha^{k+1})}}\lesssim P_{\Gamma}(y_\alpha^k,y;\delta^k)
$$
and, if $d(y,y')\le(2A_0)^{-1}[\delta^k+d(y_\alpha^k,y)]$,
$$
\frac{|\psi_\alpha^k(y)-\psi_\alpha^k(y')|}{\sqrt{\mu(Q_\alpha^{k+1})}}
\lesssim\left[\frac{d(y,y')}{\delta^k+d(y_\alpha^k,y)}\right]^\eta
P_{\Gamma}(y_\alpha^k,y;\delta^k).
$$
Combining the above two estimates, we find that,
for any $x\in Q_\alpha^{k+1}$ and $y,y'\in X$ such that
$d(y,y')\le(2A_0)^{-1}[\delta^k+d(x,y)]$,
$$
\frac{|\psi_\alpha^k(y)-\psi_\alpha^k(y')|}{\sqrt{\mu(Q_\alpha^{k+1})}}
\lesssim\left[\frac{d(y,y')}{\delta^k+d(x,y)}\right]^\eta
P_{\Gamma}(x,y;\delta^k).
$$
Thus, from this, we deduce that, for any fixed $\Gamma\in(0,\infty)$ and for any $k\in\mathbb Z$
and $x, y, y'\in X$ with $x\neq y$ and $d(y,y')\le(2A_0)^{-1}d(x,y)$,
\begin{equation*}
|L_k(x,y)-L_k(x,y')|=\sum_{\alpha\in\mathcal G_k}\left|\psi_\alpha^k(y)-\psi_\alpha^k(y')\right|
\frac{\mathbf 1_{Q_\alpha^{k+1}}(x)}{\sqrt{\mu(Q_\alpha^{k+1})}}
\lesssim\left[\frac{d(y,y')}{\delta^k+d(x,y)}\right]^\eta P_\Gamma(x,y;\delta^k),
\end{equation*}
which further implies that
\begin{align*}
&\left[\sum_{k\in\mathbb Z}|L_k(x,y)-L_k(x,y')|^2\right]^{\frac12}\\
&\quad\le\sum_{k\in\mathbb Z}|L_k(x,y)-L_k(x,y')|\lesssim\sum_{k\in\mathbb Z}\left[\frac{d(y,y')}{\delta^k+d(x,y)}\right]^\eta
P_\Gamma(x,y;\delta^k)\\
&\quad\lesssim\sum_{\{k:\ \delta^k>d(x,y)\}}\left[\frac{d(y,y')}{\delta^k}\right]^\eta\frac 1{V(x,y)}+
\sum_{\{k:\ \delta^k\le d(x,y)\}}\left[\frac{d(y,y')}{\delta^k}\right]^\eta\frac 1{V(x,y)}\left[\frac{\delta^k}{d(x,y)}\right]^\Gamma\\
&\quad\sim\left[\frac{d(y,y')}{d(x,y)}\right]^\eta\frac{1}{V(x,y)}.
\end{align*}
Therefore, \eqref{eq-hormander} holds. Consequently, $\vec T$ is bounded
from $L^q(X,\ell^2(\nu))$ to
$L^q(X,\ell^2(\mathbb Z,\ell^2(\nu)))$ for any given $q\in(1,2)$
with the operator norm independent of $\nu$.
This finishes the proof of Proposition \ref{prop-vector}.
\end{proof}

Now, we show Proposition \ref{prop-at} in the case $q\in(1,2)$.

\begin{proof}[Proof (II) of Proposition \ref{prop-at} With $q\in(1,2)$]
Let all the symbols be the same as in the proof of the case $q\in[2,\infty)$. Applying an argument similar to that used in the
proof of \eqref{eq-sa1}, we find that
$$
\left\|S(a)\mathbf 1_{\Omega^{**}}\right\|_{L^p}\lesssim 1.
$$
This is the desired estimate.

Now, we estimate $\|S(a)\mathbf 1_{(\Omega^{**})^\complement}\|_{L^p}$. By Minkowski's inequality, we conclude that,
for any $(x_1,x_2)\in X_1\times X_2$,
$$
S(a)(x_1,x_2)=S\left(\sum_{R\in\mathcal M_1(\Omega)}a_R+\sum_{R\in\mathcal M_2(\Omega)}a_R\right)
\le \sum_{R\in\mathcal M_1(\Omega)} S(a_R)(x_1,x_2)+\sum_{R\in\mathcal M_2(\Omega)} S(a_R)(x_1,x_2).
$$
By symmetry, we assume $R:=Q_1\times Q_2\in\mathcal M_1(\Omega)$. Further assume that $Q_1=Q_{1,\beta_1}^{j_1}$ and
$Q_2=Q_{2,\beta_2}^{j_2}$ for some $j_1, j_2\in\mathbb Z$, $\beta_1\in\mathcal A_{1,j_1}$, and $\beta_2\in\mathcal A_{2,j_2}$.
Let $\widehat{Q_2}$ be the maximal cube containing $Q_2$ such that
$$
\mu\left(\left(Q_1\times\widehat Q_2\right)\cap\Omega\right)>\frac 12\mu\left(Q_1\times\widehat{Q_2}\right).
$$
We consider three cases for $x_1$ and $x_2$.

{\it Case (II.1) $x_1\in Q_1^\complement$ and $x_2\in(\widehat{Q_2})^\complement$.} In this case,
applying an argument similar to that used in the proof of Case (I.1), we find that, for any fixed $\eta'\in(0,\eta)$,
\begin{align*}
S(a_R)(x_1,x_2)&\lesssim\left[\mu_1\left(Q_{1,\beta_1}^{j_1}\right)\mu_2\left(Q_{2,
\beta_2}^{j_2}\right)\right]^{1-\frac 1q}\|a_R\|_{L^q}
\frac 1{V_1(x_1,z_{1,\beta_1}^{j_1})}\frac 1{V_2(x_2,z_{2,\beta_2}^{j_2})}\\
&\quad\times\left[\frac{\delta^{j_1}}{d_1(x_1,z_{1,\beta_1}^{j_1})}\right]^{\eta'}
\left[\frac{\delta^{j_2}}{d_2(x_2,z_{2,\beta_2}^{j_2})}\right]^{\eta'}.
\end{align*}
Let $R_{c,1}:=[Q_1^\complement\times(\widehat{Q_2})^\complement]\cap\Omega^{**}$. Choosing $\eta'\in(0,\eta)$ such that
$p>\frac{\omega}{\omega+\eta'}$ and using an argument similar to the proof of \eqref{eq-sar1}, we conclude that
\begin{equation}\label{eq-sar1a}
\left\|S(a_R)\mathbf{1}_{R_{c,1}}\right\|_{L^p}^p
\lesssim[\mu(R)]^{1-\frac pq}\left[\frac{\ell(Q_2)}{\ell(\widehat{Q_2})}\right]^{p\eta'-\omega(1-p)}\|a_R\|_{L^q}^p.
\end{equation}
This is the desired estimate in this case.

{\it Case (II.2) $x_1\in Q_1$ and $x_2\in(\widehat{Q_2})^\complement$.} In this case, for any $z_1\in X_1$,
$k_2\in\mathbb Z$, and $\alpha_2\in\mathcal G_{2,k_2}$, we write
$$
F_{\alpha_2}^{k_2}(z_1):=\left<a_R(z_1,\cdot),\psi_{2,\alpha_2}^{k_2}\right>_2
=\int_{X_2} a_R(z_1,y_2)\psi_{2,\alpha_2}^{k_2}(y_2)\,d\mu(y_2)
$$
and $\nu_{x_2}(k_2,\alpha_2):=[\mu_2(Q_{2,\alpha_2}^{k_2+1})]^{-\frac 12}\mathbf 1_{Q_{2,\alpha_2}^{k_2+1}}(x_2)$. Therefore,
from Proposition \ref{prop-vector}, we deduce that
\begin{align*}
&\int_{X_1} [S(a_R)(z_1,x_2)]^q\,d\mu_1(x_1)\\
&\quad=\int_{X_1}\left[\sum_{\genfrac{}{}{0pt}{}{k_1\in{\mathbb Z}}{\alpha_1\in\mathcal G_{1,k_1}}}
\sum_{\genfrac{}{}{0pt}{}{k_2\in{\mathbb Z}}{\alpha_2\in\mathcal G_{2,k_2}}}
\left|\left<\left<a_R,\psi_{2,\alpha_2}^{k_2}\right>_2,\psi_{1,\alpha_1}^{k_1}\right>_1\right|^2
\frac{\mathbf 1_{Q_{2,\alpha_2}^{k_2+1}}(x_2)}{\mu_2(Q_{2,\alpha_2}^{k_2+1})}
\frac{\mathbf 1_{Q_{1,\alpha_1}^{k_1+1}}(z_1)}{\mu_1(Q_{1,\alpha_1}^{k_1+1})}\right]^{\frac q2}\,d\mu_1(z_1)\\
&\quad=\int_{X_1}\left[\sum_{k_1,\alpha_1}\sum_{k_2,\alpha_2}
\left|\left<F_{\alpha_2}^{k_2},\psi_{1,\alpha_1}^{k_1}\right>_1\right|^2
\frac{\mathbf 1_{Q_{2,\alpha_2}^{k_2+1}}(x_2)}{\mu_2(Q_{2,\alpha_2}^{k_2+1})}
\frac{\mathbf 1_{Q_{1,\alpha_1}^{k_1+1}}(z_1)}{\mu_1(Q_{1,\alpha_1}^{k_1+1})}\right]^{\frac q2}\,d\mu_1(z_1)\\
&\quad=\int_{X_1}\left[\sum_{k_1,\alpha_1}
\left\|\left<\vec F,\psi_{1,\alpha_1}^{k_1}\right>_1\right\|_{\ell^2(\nu_{x_2})}^2
\frac{\mathbf 1_{Q_{1,\alpha_1}^{k_1+1}}(z_1)}{\mu(Q_{1,\alpha_1}^{k_1+1})}\right]^{\frac q2}\,d\mu_1(z_1)\\
&\quad\lesssim\int_{X_1}\left\|\vec F(z_1)\right\|_{\ell^2(\nu_{x_2})}^q\,d\mu_1(z_1)\\
&\quad\sim\int_{X_1}\left[\sum_{k_2,\alpha_2}\left|\left<a_R(z_1,\cdot),
\frac{\psi_{2,\alpha_2}^{k_2}}{\sqrt{\mu_2(Q_{2,\alpha_2}^{k_2+1})}}\right>\right|^2
\mathbf 1_{Q_{2,\alpha_2}^{k_2+1}}(x_2)\right]^{\frac q2}\,d\mu_1(z_1);
\end{align*}
here $\vec F$ is defined by setting, for any $z_1\in X_1$,
$$
\vec F(z_1):=\left\{F_{\alpha_2}^{k_2}(z_1):\ k_2\in\mathbb Z,\alpha_2\in\mathcal G_{2,k_2}\right\}.
$$
Applying an argument similar to that used in the proof in Case (I.2), we find that, for any given
$\eta'\in(0,\eta)$ and for any $z_1\in X_1$ and $x_2\in(\widehat{Q_2})^\complement$,
\begin{align*}
\sum_{k_2,\alpha_2}\left|\left<a(z_1,\cdot),
\frac{\psi_{2,\alpha_2}^{k_2}}{\sqrt{\mu_2(Q_{2,\alpha_2}^{k_2+1})}}\right>\right|^2\mathbf 1_{Q_{2,\alpha_2}^{k_2+1}}(x_2)
\lesssim\left[\frac{1}{V_2(x_2,z_{2,\beta_2}^{j_2})}\right]^2\left[\frac{\delta^{j_2}}
{d_2(x_2,z_{2,\beta_2}^{j_2})}\right]^{2\eta'}
\|a_R(z_1,\cdot)\|_{L^1(X_2)}^2.
\end{align*}
Combining the two above inequalities, we conclude that
\begin{align*}
&\int_{X_1}[S(a_R)(z_1,x_2)]^q\,d\mu_1(z_1)\\
&\quad\sim\int_{X_1}\left[\sum_{k_2,\alpha_2}\left|\left<a_R(z_1,\cdot),
\frac{\psi_{2,\alpha_2}^{k_2}}{\sqrt{\mu_2(Q_{2,\alpha_2}^{k_2+1})}}\right>\right|^2
\mathbf 1_{Q_{2,\alpha_2}^{k_2+1}}(x_2)\right]^{\frac q2}\,d\mu_1(z_1)\\
&\quad\lesssim\left[\frac{1}{V_2(x_2,z_{2,\beta_2}^{j_2})}\right]^q
\left[\frac{\delta^{j_2}}{d_2(x_2,z_{2,\beta_2}^{j_2})}\right]^{q\eta'}
\int_{X_1}\|a_R(z_1,\cdot)\|_{L^1(X_2)}^q\,d\mu_1(z_1)\\
&\quad\lesssim\left[\frac{1}{V_2(x_2,z_{2,\beta_2}^{j_2})}\right]^q
\left[\frac{\delta^{j_2}}{d_2(x_2,z_{2,\beta_2}^{j_2})}\right]^{q\eta'}
[\mu(Q_2)]^{q-1}\int_{X_1}\|a_R(z_1,\cdot)\|_{L^q(X_2)}^q\,d\mu_1(z_1)\\
&\quad\sim\left[\frac{1}{V_2(x_2,z_{2,\beta_2}^{j_2})}\right]^q
\left[\frac{\delta^{j_2}}{d_2(x_2,z_{2,\beta_2}^{j_2})}\right]^{q\eta'}
[\mu(Q_2)]^{q-1}\|a_R\|_{L^q}^q.
\end{align*}
By this and H\"{o}lder's inequality, we have, for any $x_2\in(\widehat{Q_2})^\complement$,
\begin{align*}
&\int_{Q_1}[S(a_R)(z_1,x_2)]^p\,d\mu(z_1)\\
&\quad\le
[\mu(Q_1)]^{1-\frac pq}
\left\{\int_{X_1}[S(a_R)(z_1,x_2)]^q\,d\mu(z_1)\right\}^{\frac pq}\\
&\quad\lesssim
[\mu(Q_1)]^{1-\frac pq}
\left[\frac{1}{V_2(x_2,z_{2,\beta_2}^{j_2})}\right]^p\left[\frac{\delta^{j_2}}
{d_2(x_2,z_{2,\beta_2}^{j_2})}\right]^{p\eta'}
[\mu(Q_2)]^{p-\frac pq}\|a_R\|_{L^q}^p.
\end{align*}
Choose $\eta'\in(0,\eta)$ such that $p>\frac{\omega}{\omega+\eta'}$ and let
$R_{c,2}:=[Q_1\times(\widehat{Q_2})^\complement]\cap\Omega^{**}$. Then, by Fubini's theorem, we conclude
that
\begin{align}\label{eq-sar2a}
&\left\|S(a_R)\mathbf 1_{R_{c,2}}\right\|_{L^p}^p\nonumber\\
&\quad\lesssim\|a_R\|_{L^q}[\mu(Q_2)]^{p-\frac pq}\int_{d(x_2,z_{2,\beta_2}^{j_2})>C\delta^{j_2'}}
\left[\frac{1}{V_2(x_2,z_{2,\beta_2}^{j_2})}\right]^p\left[\frac{\delta^{j_2}}
{d_2(x_2,z_{2,\beta_2}^{j_2})}\right]^{p\eta'}
\,d\mu_2(x_2)\nonumber\\
&\quad\lesssim[\mu(R)]^{1-\frac pq}\|a_R\|_{L^q}^p\left[\frac{\ell(Q_2)}{\ell(\widehat{Q_2})}\right]^{p\eta'-\omega(1-p)}.
\end{align}
This is the desired estimate for this case.

{\it Case (II.3) $x_1\in Q_1^\complement$ and $x_2\in\widehat{Q_2}$.} In this case, since
$Q_1\times Q_2\in\mathcal M_1(\Omega)$, it follows that there exists a cube $P_2\supset Q_2$ such that
$Q_1\times P_2\in\mathcal M_2(\Omega)$. Then choose $\widehat{Q_1}\supset Q_1$ such that
$$
\mu\left(\left(\widehat{Q_1}\times P_2\right)\cap\Omega^*\right)
>\frac 12\mu\left(\widehat{Q_1}\times P_2\right)
$$
By the definition of $\widehat{Q_2}$, we find that, for any $(x_1,x_2)\in Q_1\times\widehat{Q_2}$,
\begin{align*}
\mathcal M_{\rm str}(\mathbf 1_\Omega)(x_1,x_2)&\ge\frac{1}{\mu_1(B(Q_1))\mu_2(B_2(\widehat{Q_2}))}
\int_{B_1(Q_1)}\int_{B_2(\widehat{Q_2})}\mathbf 1_{\Omega}(y_1,y_2)\,d\mu_2(y_2)\,d\mu_1(y_1)\\
&\gtrsim\frac{\mu((Q_1\times\widehat{Q_2})\cap\Omega)}{\mu(Q_1\times\widehat{Q_2})}\gtrsim 1.
\end{align*}
Moreover, we further conclude that $[CB_1(\widehat{Q_1})]\times[CB_2(P_2)]\subset\Omega^{**}$. Thus, since
$(x_1,x_2)\in(\Omega^{**})^\complement$ and $x_2\in C_2B_2(P_2)$, it follows that
$x_1\in[CB_1(\widehat{Q_1})]^\complement$. Let $\widehat{Q_1}:=Q_{1,\beta_1'}^{j_1'}$ with $j_1'\in\mathbb Z$ and
$\beta_1'\in\mathcal G_{1,j_1'}$. We consider the following two cases for $x_2$.

{\it Case (II.3.1) $x_2\in CB_2(Q_2)$.} In this case, applying an  argument similar
to that used in the proof of Case (I.2), we find that, for any given $\eta'\in(0,\eta)$
and for any $x_1\in[B_1(\widehat{Q_1})]^\complement$,
$$
\int_{CB_2(Q_2)}[S(a_R)(x_1,x_2)]^q\,d\mu_2(x_2)
\lesssim\left[\frac{1}{V_1(x_1,z_{1,\beta_1}^{j_1})}\right]^q
\left[\frac{\delta^{j_1}}{d_1(x_1,z_{1,\beta_1}^{j_1})}\right]^{q\eta'}\mu_1(Q_1)\|a_R\|_{L^q}^q,
$$
which, together with H\"older's inequality, further implies that
\begin{align}\label{eq-sar231}
&\left\|S(a_R)\mathbf 1_{[CB_1(\widehat{Q_1})]^\complement\times[CB_2(Q_2)]}\right\|_{L^p}^p\nonumber\\
&\quad=\int_{[CB_1(\widehat{Q_1})]^\complement}\int_{CB_2(Q_2)}[S(a_R)(x_1,x_2)]^p
\,d\mu_2(x_2)\,d\mu_1(x_1)\nonumber\\
&\quad\lesssim\|a_R\|_{L^q}^p[\mu_1(Q_1)]^{p-\frac pq}[\mu_2(Q_2)]^{1-\frac pq}\nonumber\\
&\quad\quad\times\int_{d_1(x_1,z_{1,\beta_1}^{j_1})>C\delta^{j_1'}}
\left[\frac{1}{V_1(x_1,z_{1,\beta_1}^{j_1})}\right]^p
\left[\frac{\delta^{j_1}}{d_1(x_1,z_{1,\beta_1}^{j_1})}\right]^{p,\eta'}\,d\mu_1(x_1)\nonumber\\
&\quad\lesssim[\mu(R)]^{1-\frac pq}\left[\frac{\ell(Q_1)}{\ell(\widehat{Q_1})}\right]^{p\eta'-\omega(1-p)}\|a_R\|_{L^q}^p.
\end{align}
This is the desired estimate for this case.

{\it Case (II.3.2) $x_2\in [CB_2(\widehat{Q_2})]\setminus[CB_2(Q_2)]$.} In this case, applying an
argument similar to that used in the proof of Case (I.1), we conclude that, for any given $\eta'\in(0,\eta)$
and for any $x_1\in[CB_1(\widehat{Q_1})]^\complement$,
\begin{equation*}
S(a_R)(x_1,x_2)\lesssim[\mu(R)]^{1-\frac 1q}\|a_R\|_{L^q}\frac{1}{V_1(x_1,z_{1,\beta_1}^{j_1})}\frac{1}{V_2(x_2,z_{2,\beta_2}^{j_2})}
\left[\frac{\delta^{j_1}}{d_1(x_1,z_{1,\beta_1})}\right]^{\eta'}
\left[\frac{\delta^{j_2}}{d_2(x_2,z_{2,\beta_2}^{j_2})}\right]^{\eta'},
\end{equation*}
and consequently
\begin{equation}\label{eq-sar232}
\left\|S(a_R)\mathbf{1}_{[CB_1(\widehat{Q_1})]^\complement\times\{[CB_2(\widehat{Q_2})]
\setminus[CB_2(Q_2)]\}}\right\|_{L^p}^p
\lesssim[\mu(R)]^{1-\frac qp}\left[\frac{\ell(Q_2)}{\ell(\widehat{Q_2})}\right]^{p\eta'-\omega(1-p)}
\|a_R\|_{L^q}^p.\nonumber
\end{equation}
This is also the desired estimate in this case.

To summarize, let $R_{c,3}:=(Q_1^\complement\times\widehat{Q_2})\cap(\Omega^{**})^\complement$.
Combining \eqref{eq-sar231} and \eqref{eq-sar232} and applying an argument similar to that used in the proof of Case (I.3),
we obtain, for any given $\eta'\in(0,\eta)$ satisfying $p>\frac{\omega}{\omega+\eta'}$,
\begin{equation}\label{eq-sar3a}
\left\|S(a_R)\mathbf 1_{R_{c,3}}\right\|_{L^p}^p\lesssim[\mu(R))]^{1-\frac pq}
\left[\frac{\ell(Q_1)}{\ell(\widehat{Q_1})}\right]^{p\eta'-\omega(1-p)}\|a_R\|_{L^q}^p.
\end{equation}

Finally, applying \eqref{eq-sar1a}, \eqref{eq-sar2a}, \eqref{eq-sar3a},
and an argument similar to that used in the proof
of \eqref{eq-sarp1}, we conclude that
$$
\left\|S(a)\mathbf 1_{(\Omega^{**})^\complement}\right\|_{L^p}\lesssim 1.
$$
This finishes the proof of Proposition \ref{prop-at} in the case $p\in(1,2)$ and hence  Proposition \ref{prop-at}.
\end{proof}

Finally, we present the proof of Proposition \ref{prop-replq}.

\begin{proof}[Proof of Proposition \ref{prop-replq}]
By the definition of $H^p$, we only need to show that $\widetilde a\in({\rm CMO}^p_{L^2})'$ and
$$
 \widetilde a=\sum_{\genfrac{}{}{0pt}{}{k_1\in{\mathbb Z}}{\alpha_1\in\mathcal G_{1,k_1}}}
\sum_{\genfrac{}{}{0pt}{}{k_2\in{\mathbb Z}}{\alpha_2\in\mathcal G_{2,k_2}}}
\left<a,\psi_{\alpha_1,\alpha_2}^{k_1,k_2}\right>\psi_{\alpha_1,\alpha_2}^{k_1,k_2}
$$
in $({\rm CMO}^p_{L^2})'$. To this end, for any $N\in\mathbb N$, let
$$
a_N:=\sum_{(j_1,j_2,\beta_1,\beta_2)\in\mathcal I_N}\left<a,\psi_{\beta_1,\beta_2}^{j_1,j_2}\right>\psi_{\beta_1,\beta_2}^{j_1,j_2}
$$
with $\mathcal I_N$ as in \eqref{eq-defin}. Since $\mathcal I_N$ is finite, it then follows that $a_N\in H^p_{L^2}$.
Moreover, for any $M,N\in\mathbb N$ with $M>N$, $k_1,k_2\in\mathbb Z$, $\alpha_1\in\mathcal G_{1,k_1}$,
and $\alpha_2\in\mathcal G_{2,k_2}$,
$$
\left< a_M-a_N,\psi_{\alpha_1,\alpha_2}^{k_1,k_2}\right>
=\begin{cases}
\left<a,\psi_{\alpha_1,\alpha_2}^{k_1,k_2}\right>
& \textup{if }(k_1,k_2,\alpha_1,\alpha_2)\in\mathcal I_M\setminus\mathcal I_N,\\
0 & \textup{otherwise},
\end{cases}
$$
which further implies that, for any $(x_1,x_2)\in X_1\times X_2$,
$$
 S(a_M-a_N)(x_1,x_2)=\left[\sum_{(k_1,k_2,\alpha_1,\alpha_2)\in\mathcal I_M\setminus\mathcal I_N}
\left|
\left< a,\psi_{\alpha_1,\alpha_2}^{k_1,k_2}\right>\right|^2\widetilde{\mathbf 1}_{Q_{\alpha_1,\alpha_2}^{k_1+1,k_2+1}}
(x_1,x_2)\right]^{\frac 12}.
$$
Using $ S(a)\in L^p$ and the dominated convergence theorem, we find that
$$
\lim_{m,n\to\infty}\|S(a_M-a_N)\|_{L^p}=0.
$$
This shows that $\{a_N\}_{N=1}^\infty$ is a Cauchy sequence of $H^p$. By Proposition \ref{prop-com1}, we find that
there exists $\widetilde a\in({\rm CMO}^p_{L^2})'$ such that $\lim_{N\to\infty}a_N=\widetilde{a}$ in
$({\rm CMO}^p_{L^2})'$ and
\begin{equation}\label{eq-rea}
\widetilde{a}=\sum_{k_1,\alpha_1}\sum_{k_2,\alpha_2}
\left<\widetilde{a},\psi_{\alpha_1,\alpha_2}^{k_1,k_2}\right>\psi_{\alpha_1,\alpha_2}^{k_1,k_2}
\end{equation}
in $({\rm CMO}^p_{L^2})'$. Now, we show that $\widetilde{a}=a$ in the following setting, for any
$\varphi\in{\rm CMO}^p_{L^2}\cap L^{q'}$,
\begin{equation}\label{eq-a=wa}
\left<\widetilde{a},\varphi\right>=\int_{X_1}\int_{X_2}a(x_1,x_2)\varphi(x_1,x_2)\,d\mu_2(x_2)\,d\mu_1(x_1).
\end{equation}
To achieve this, by \eqref{eq-rea} and the wavelet reproducing formula \eqref{eq-re}, it suffices to show that, for any
$k_1,k_2\in\mathbb Z$, $\alpha_1\in\mathcal G_{1,k_1}$, and $\alpha_2\in\mathcal G_{2,k_2}$,
$$
\left<\widetilde{a},\psi_{\alpha_1,\alpha_2}^{k_1,k_2}\right>=\left\langle a,\psi_{\alpha_1,\alpha_2}^{k_1,k_2}\right\rangle.
$$
Indeed, by the definition of $\widetilde a$, we conclude that
\begin{align*}
\left<\widetilde{a},\psi_{\alpha_1,\alpha_2}^{k_1,k_2}\right>&=\lim_{N\to\infty}\left\langle a_N,\psi_{\alpha_1,\alpha_2}^{k_1,k_2}\right\rangle
=\lim_{N\to\infty}\int_{X_1}\int_{X_2}a_N(x_1,x_2)\psi_{\alpha_1,\alpha_2}^{k_1,k_2}(x_1,x_2)
\,d\mu_2(x_2)\,d\mu_1(x_1)\\
&=\int_{X_1}\int_{X_2}a(x_1,x_2)\psi_{\alpha_1,\alpha_2}^{k_1,k_2}(x_1,x_2)\,d\mu_2(x_2)\,d\mu_1(x_1)
 =\left<a,\psi_{\alpha_1,\alpha_2}^{k_1,k_2}\right>.
\end{align*}
This proves \eqref{eq-a=wa}. Thus, we conclude that $ \widetilde a\in H^p$. This finishes the proof of Proposition \ref{prop-replq}.
\end{proof}

\subsection{A Discrete Product Calder\'{o}n-Type Reproducing Formula} \label{ss-ctrf}

In the next two subsections, we focus on the proof of the
``only if'' part of Theorem \ref{thm-h=a}. Since
any product exp-ATI does not have bounded support, the
Calder\'on reproducing formula derived from it is no
longer useful to this end. To remedy this, motivated
by Han et al. \cite[Proposition 2.5]{hhl16}, in this section
we establish a new Calder\'on-type reproducing formula,
which is of independent interest and may be useful in solving other analysis
problems.

To this end, we first recall the kernel with bounded support in \cite[Lemma 5.8]{HYY24}.

\begin{proposition}[{\cite[Proposition 5.6]{HYY24}}]
There exists a sequence of bounded linear operators $\{S_k\}_{k \in \mathbb{Z}}$ on $L^2(X)$ such that,
for any $k \in \mathbb Z$, the kernel of $S_k$, still denoted by $S_k$,
has the following properties:
\begin{enumerate}
\item for any $x, y\in X$, $S_k(x,y)=S_k(y,x)$ and $S_k(x,y)\neq 0$ implies that
$d(x,y)\le(2A_0)^3C^\natural\delta^k$ with $C^\natural$ and $\delta$ the same as in Lemma \ref{lem-cube},
\item for any $x,y\in X$,
$$
0\le S_k(x,y)\le\frac C{V_{\delta^k}(x)+V_{\delta^k}(y)},
$$
\item for any $x, x', y\in X$ with $d(x,x')\le\delta^k$,
$$
\left|S_k(x,y)-S_k(x',y)\right|\le\left[\frac{d(x,x')}{\delta^k}\right]^\eta\frac C{V_{\delta^k}(x)+V_{\delta^k}(y)}
$$
with $\eta$ the same as in Theorem \ref{thm-wave},
\item for any $x, x', y, y'\in X$ with $d(x,x')\le\delta^k$ and $d(y,y')\le\delta^k$,
$$
\left|\left[S_k(x,y)-S_k(x',y)\right]-\left[S_k(x,y')-S_k(x',y')\right]\right|
\le\left[\frac{d(x,x')}{\delta^k}\right]^\eta
\left[\frac{d(y,y')}{\delta^k}\right]^\eta\frac C{V_{\delta^k}(x)+V_{\delta^k}(y)},
$$
\item for any $x\in X$, $\int_X S_k(x,y)\,d\mu(y)=1$.
\end{enumerate}
\end{proposition}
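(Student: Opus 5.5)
The plan is to build $S_k$ from a smooth partition of unity at scale $\delta^k$ adapted to the dyadic centers $\mathcal X^k=\{z_\alpha^k\}_{\alpha\in\mathcal A_k}$ of Lemma \ref{lem-cube}, and then symmetrize it. Concretely, for each $k\in\mathbb Z$ we will produce functions $\{\phi_\alpha^k\}_{\alpha\in\mathcal A_k}$ such that $0\le\phi_\alpha^k\le1$, $\operatorname{supp}\phi_\alpha^k\subset B(z_\alpha^k,2A_0C^\natural\delta^k)$, $\phi_\alpha^k\ge c$ on $B(z_\alpha^k,c_\natural\delta^k)$, $\sum_{\alpha}\phi_\alpha^k\equiv1$, and $|\phi_\alpha^k(x)-\phi_\alpha^k(x')|\lesssim[d(x,x')/\delta^k]^\eta$. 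We then set
\begin{equation*}
S_k(x,y):=\sum_{\alpha\in\mathcal A_k}\frac{\phi_\alpha^k(x)\,\phi_\alpha^k(y)}{\int_X\phi_\alpha^k\,d\mu}.
\end{equation*}

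To construct the bumps, we first invoke the Macías--Segovia theorem: there is a metric-like $\rho$ and $\theta\in(0,1)$ with $\rho\sim d^\theta$ and $|\rho(x,z)-\rho(x',z)|\lesssim\rho(x,x')$. Define
\begin{equation*}
h_\alpha^k(x):=\max\Bigl\{0,\,1-\rho(x,z_\alpha^k)/(c\,\delta^{k\theta})\Bigr\},
\end{equation*}
with $c$ chosen so that $h_\alpha^k$ is supported in $B(z_\alpha^k,2A_0C^\natural\delta^k)$ and equals at least $1/2$ on $B(z_\alpha^k,C_0\delta^k)$. Each $h_\alpha^k$ is $\theta$-Hölder at scale $\delta^k$. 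By property (ii) of the centers every $x$ lies within $C_0\delta^k$ of some $z_\alpha^k$, so $\sum_\alpha h_\alpha^k\ge1/2$. By property (i) and doubling, the number of $\alpha$ with $h_\alpha^k(x)\neq0$ is uniformly bounded. Normalizing, $\phi_\alpha^k:=h_\alpha^k/\sum_\beta h_\beta^k$ inherits support, size and $\theta$-Hölder regularity; here the finite overlap and the lower bound of the denominator give the quotient estimate. The Hölder exponent is thereby $\theta$; since the wavelet exponent $\eta$ in Theorem \ref{thm-wave} may always be decreased, we take $\eta\le\theta$ (or the statement is read with $\eta$ replaced by $\min\{\eta,\theta\}$). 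I expect this matching of exponents, together with the quasi-metric issues in the Lipschitz bump construction, to be the main technical point. The alternative I would try first, if one insists on the exact $\eta$, is the Auscher--Hytönen randomized dyadic construction, truncated to bounded support.

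Verification of (i)--(v) is then routine. Symmetry is built in. If $S_k(x,y)\neq0$, then $x,y\in B(z_\alpha^k,2A_0C^\natural\delta^k)$ for some $\alpha$, so $d(x,y)\le(2A_0)^2C^\natural\delta^k\le(2A_0)^3C^\natural\delta^k$. For size, $\int\phi_\alpha^k\,d\mu\gtrsim\mu(B(z_\alpha^k,c_\natural\delta^k))\sim V_{\delta^k}(x)\sim V_{\delta^k}(y)$ by doubling, since $x,y$ are within $C\delta^k$ of $z_\alpha^k$. The bounded overlap then gives (ii). For (iii), vary only $\phi_\alpha^k(x)$; the number of relevant $\alpha$ is bounded when $d(x,x')\le\delta^k$. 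Property (iv) follows because the double difference factors as $[\phi_\alpha^k(x)-\phi_\alpha^k(x')][\phi_\alpha^k(y)-\phi_\alpha^k(y')]$ termwise. Finally, (v) holds because
\begin{equation*}
\int_X S_k(x,y)\,d\mu(y)=\sum_\alpha\phi_\alpha^k(x)=1.
\end{equation*}
Boundedness on $L^2$ follows from (i)--(ii) by Schur's test.
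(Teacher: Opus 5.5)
Your architecture is the right one: the symmetrized, normalized partition of unity $S_k(x,y)=\sum_\alpha\phi^k_\alpha(x)\phi^k_\alpha(y)/\int_X\phi^k_\alpha\,d\mu$ works. Your derivation of (i)--(v) and of $L^2$-boundedness from the listed properties of $\phi^k_\alpha$ is also correct; it rests on bounded overlap, $V_{\delta^k}(x)\sim V_{\delta^k}(y)\sim V_{\delta^k}(z^k_\alpha)$, the factorization of the double difference, and $\sum_\alpha\phi^k_\alpha\equiv1$.

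\textbf{The gap is the H\"older exponent.} Bumps obtained from the Mac\'{i}as--Segovia metrization are H\"older of order $\theta$. Here $\theta$ is dictated by $A_0$ (e.g.\ $(2A_0)^\theta=2$) and is unrelated to the Auscher--Hyt\"onen smoothness index $\eta$. Nothing in your argument gives $\theta\ge\eta$. This is automatic when $A_0=1$, where Lipschitz bumps give $\theta=1>\eta$, but you give no reason for it in general.

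Lowering $\eta$ to $\min\{\eta,\theta\}$ keeps Theorem \ref{thm-wave} true, but then you prove a weaker proposition. In this paper that is not a harmless renormalization, because $\eta$ is fixed as the smoothness index of the wavelets. The regularity of $D_k=S_k-S_{k-1}$ produces the decay $\delta^{|k-j|\eta}$ (when the wavelet is finer than $D_k$) in the proof of Lemma \ref{lem-slp}, and similarly in the proof of Theorem \ref{thm-ctype}. That decay must absorb the loss $\omega(1/r-1)$ from Proposition \ref{prop-estmax} with $r<p$. With $\theta$ in place of $\eta$, the reproducing formula and the atomic characterization would only follow for $p>\omega/(\omega+\min\{\eta,\theta\})$, not on the stated range.

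\textbf{The missing idea} is to take for $\phi^k_\alpha$ the Auscher--Hyt\"onen splines $s^k_\alpha$. These are the averages of $\mathbf 1_{\overline{Q^k_\alpha(\omega)}}$ over the random dyadic systems $\omega$ used to build the wavelets of Theorem \ref{thm-wave}. They satisfy
\begin{itemize}
\item $\sum_\alpha s^k_\alpha\equiv1$;
\item $\mathbf 1_{B(z^k_\alpha,c\delta^k)}\le s^k_\alpha\le\mathbf 1_{B(z^k_\alpha,C\delta^k)}$;
\item $|s^k_\alpha(x)-s^k_\alpha(x')|\lesssim[d(x,x')/\delta^k]^\eta$ with the \emph{same} $\eta$, since the wavelets lie in the closed span of the level-$(k+1)$ splines and inherit their H\"older exponent from them.
\end{itemize}
In particular the splines already have bounded support, so no truncation is needed. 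It is the wavelets, not the splines, that have exponential tails, and truncating would destroy either $\sum_\alpha\phi^k_\alpha\equiv1$ or the regularity. With $\phi^k_\alpha:=s^k_\alpha$ your verification of (i)--(v) goes through verbatim.

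A minor point about your own bumps: under the standard equivalence $\frac14 d^\theta\le\rho\le d^\theta$ with $(2A_0)^\theta=2$, the support radius $2A_0C^\natural\delta^k$ you prescribe for $h^k_\alpha$ is incompatible with $h^k_\alpha\ge 1/2$ on $B(z^k_\alpha,C_0\delta^k)$. Taking $c=2C_0^\theta$ gives support radius $(2A_0)^2C^\natural\delta^k$ instead, which (i) still permits.
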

For any $k\in\mathbb Z$, let
\begin{equation} \label{eq-pd}
D_k:=S_k-S_{k-1},
\end{equation}
then, in the sense of $L^2(X)$,
the identity operator $I$ can be written as $I = \sum_{k\in \mathbb{Z}} D_k$.

Now, we return to the product setting. For any $k_1, k_2\in\mathbb Z$, let $D_{k_1,k_2}:=D_{1,k_1}\otimes D_{2,k_2}$,
where, for any $i\in\{1,2\}$, $\{D_{i,k_i}\}_{k_i\in\mathbb Z}$ is the same as in \eqref{eq-pd}
associated with $X_i$.
More precisely, the kernel of $D_{k_1,k_2}$, still denoted by $D_{k_1,k_2}$, is defined by
setting, for any $(x_1, y_1),(x_2,y_2)\in X_1\times X_2$,
\begin{equation}\label{eq-pdd}
D_{k_1,k_2}(x_1,x_2,y_1,y_2):=D_{1,k_1}(x_1,y_1)D_{2,k_2}(x_2,y_2).
\end{equation}

The following Calder\'on-type reproducing formula
is the main result of this  subsection.
\begin{theorem}\label{thm-ctype}
Let $\{D_{k_1,k_2}\}_{k_1, k_2\in\mathbb Z}$ be the same as in \eqref{eq-pdd}. Then there exist $N,j_0\in\mathbb N$ such that, for
any $k_1,k_2\in\mathbb Z$, $\alpha_1\in \mathcal A_{1,k_1+j_0}$,
$\alpha_2\in\mathcal A_{2,k_2+j_0}$,
$x_{1,\alpha_1}^{k_1+j_0}\in Q_{1,\alpha_1}^{k_1+j_0}$, $x_{2,\alpha_2}^{k_2+j_0}\in Q_{2,\alpha_2}^{k_2+j_0}$,
and $f\in L^2\cap H^p$,
\begin{align}\label{eq-ctype}
f(x_1,x_2)
&=\sum_{\genfrac{}{}{0pt}{}{k_1\in{\mathbb Z}}{\alpha_1\in\mathcal  A_{1,k_1+j_0}}}
\sum_{\genfrac{}{}{0pt}{}{k_2\in{\mathbb Z}}{\alpha_2\in\mathcal A_{2,k_2+j_0}}}\mu_1\left(Q_{1, \alpha_1}^{k_1+j_0}\right)
\mu_2\left(Q_{2, \alpha_2}^{k_2+j_0}\right)\nonumber\\
&\quad\times D_{k_1,k_2}^N\left(x_1, x_2, x_{1, \alpha_1}^{k_1+j_0},x_{2, \alpha_2}^{k_2+j_0}\right)
D_{k_1,k_2}(g)\left(x_{1, \alpha_1}^{k_1+j_0},x_{2, \alpha_2}^{k_2+j_0}\right)
\end{align}
and
\begin{align}\label{eq-ctype2}
f(x_1,x_2)&=\sum_{\genfrac{}{}{0pt}{}{k_1\in{\mathbb Z}}{\alpha_1\in\mathcal  A_{1,k_1+j_0}}}
\sum_{\genfrac{}{}{0pt}{}{k_2\in{\mathbb Z}}{\alpha_2\in\mathcal A_{2,k_2+j_0}}}
\mu_1\left(Q_{1, \alpha_1}^{k_1+j_0}\right)\mu_2\left(Q_{2, \alpha_2}^{k_2+j_0}\right)\nonumber\\
&\quad\times D_{k_1,k_2}\left(x_1, x_2,x_{1, \alpha_1}^{k_1+j_0},x_{2, \alpha_2}^{k_2+j_0}\right)
D_{k_1,k_2}^N(h)\left(x_{1, \alpha_1}^{k_1+j_0},x_{2, \alpha_2}^{k_2+j_0}\right),
\end{align}
converging in both $L^2$ and $H^p$,
where $\|g\|_{L^2}\sim\|f\|_{L^2}$,  $ \|g\|_{H^p}\sim\|f\|_{H^p}$, $\|h\|_{L^2}\sim\|f\|_{L^2}$, and  $\|h\|_{H^p}\sim\|f\|_{H^p}$,
with the positive equivalent constants independent of $g$, $x_{1,\alpha_1}^{k_1+j_0}$, and $x_{2,\alpha_2}^{k_2+j_0}$ and where, for any $k_1, k_2\in\mathbb Z$,
$$D_{k_1,k_2}^N:=\sum_{|l_1-k_1|\le N}\sum_{|l_2-k_2|\le N}D_{k_1,k_2}.$$
\end{theorem}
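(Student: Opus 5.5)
The plan is a Coifman-type perturbation argument: build an approximate identity out of the bounded-support kernels $D_{k_1,k_2}$, show the error is small simultaneously on $L^2$ and on $H^p$ (restricted to $L^2\cap H^p$), and invert by a Neumann series.

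\emph{Step 1: splitting the identity.} Since $I=\sum_{k}D_{i,k}$ on $L^2(X_i)$, we have
\[
I=\sum_{k_1,k_2}\sum_{l_1,l_2}D_{l_1,l_2}D_{k_1,k_2}=\sum_{k_1,k_2}D^N_{k_1,k_2}D_{k_1,k_2}+R^{(1)}_N,
\]
where $R^{(1)}_N$ collects the terms with $|l_1-k_1|>N$ or $|l_2-k_2|>N$. Next I discretize $D^N_{k_1,k_2}D_{k_1,k_2}f(x_1,x_2)$. The integral $\int D^N_{k_1,k_2}(x,y)D_{k_1,k_2}f(y)\,d\mu(y)$ is split over the cubes $Q_{1,\alpha_1}^{k_1+j_0}\times Q_{2,\alpha_2}^{k_2+j_0}$, and on each cube the integrand is replaced by its value at the sample point $(x_{1,\alpha_1}^{k_1+j_0},x_{2,\alpha_2}^{k_2+j_0})$. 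This defines
\[
T_Nf:=\sum\mu_1\mu_2\,D^N_{k_1,k_2}(\cdot,x_{\alpha})\,D_{k_1,k_2}f(x_{\alpha}),
\]
so that $I=T_N+R^{(1)}_N+R^{(2)}_{N,j_0}$. Here $R^{(2)}$ is a sum of terms in which a difference of $D_{k_1,k_2}(\cdot)$ appears in one or both variables over distances $\lesssim\delta^{k_i+j_0}$. By the Hölder and double-difference regularity of $S_k$, these differences gain a factor $\delta^{j_0\eta}$ in the corresponding variable, at the cost of a factor $\lesssim N^2$ from $D^N$.

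\emph{Step 2: $L^2$ bounds.} The almost orthogonality estimate (cancellation of $D_k$ together with regularity)
\[
|D_{i,l}D_{i,k}(x,y)|\lesssim\delta^{|l-k|\eta'}P_{i,\Gamma}(x,y;\delta^{k\wedge l})
\]
and the Cotlar--Stein lemma, applied to each factor, give $\|R^{(1)}_N\|_{L^2\to L^2}\lesssim\delta^{N\eta'}$. A similar argument gives $\|R^{(2)}_{N,j_0}\|_{L^2\to L^2}\lesssim N^2\delta^{j_0\eta'}$.

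\emph{Step 3: $H^p$ bounds (the main obstacle).} I need, for $f\in L^2\cap H^p$,
\[
\|S(Rf)\|_{L^p}\lesssim\left(\delta^{N\eta'}+N^2\delta^{j_0\eta'}\right)\|S(f)\|_{L^p}
\]
for $R=R^{(1)}_N$ and $R=R^{(2)}_{N,j_0}$, and also $\|S(T_Nf)\|_{L^p}\lesssim\|S(f)\|_{L^p}$. I would follow the proof of Proposition~\ref{prop-hpweq}. Expand $f=\sum\langle f,\psi_{\alpha_1,\alpha_2}^{k_1',k_2'}\rangle\psi_{\alpha_1,\alpha_2}^{k_1',k_2'}$ in $L^2$ and estimate the matrix entries $\langle R\psi^{k_1',k_2'}_{\alpha'},\psi^{k_1,k_2}_{\alpha}\rangle$. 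The target is a bound by the small factor times
\[
\delta^{|k_1-k_1'|\eta'}\delta^{|k_2-k_2'|\eta'}P_{1,\Gamma}\,P_{2,\Gamma}\,\mu(\cdot)^{1/2}\text{-normalizations}.
\]
The difficulty is obtaining the smallness factor and the two-parameter off-diagonal decay at the same time. I would handle this by interpolating geometrically: take a small power of the "smallness" bound and the complementary power of the crude almost-diagonal bound, which only costs a slight reduction of $\eta'$. Then Proposition~\ref{prop-estmax}, with $r<p$ and $\eta'>\omega(1/r-1)$, together with Proposition~\ref{prop-fs} on $L^{p/r}(\ell^{2/r})$, gives the claim. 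This is exactly where $p>\frac{\omega}{\omega+\eta}$ is used.

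\emph{Step 4: inversion.} Fix $N$, and then $j_0$, so that $R:=R^{(1)}_N+R^{(2)}_{N,j_0}$ has norm at most $\frac12$ on $L^2$ and $\|S(R^mf)\|_{L^p}^p\le 2^{-m}\|S(f)\|_{L^p}^p$. Since $R$ maps $L^2\cap H^p$ into itself, $g:=\sum_{m\ge0}R^mf$ converges in $L^2$ and in $H^p$, using the $p$-triangle inequality and the completeness from Proposition~\ref{prop-com1}. It satisfies $\|g\|_{L^2}\sim\|f\|_{L^2}$, $\|g\|_{H^p}\sim\|f\|_{H^p}$, and $T_Ng=(I-R)g=f$, which is \eqref{eq-ctype}. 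For \eqref{eq-ctype2}, I would instead start from $I=\sum D_{k}D^N_{k}+\widetilde R^{(1)}_N$ and discretize $D_k$ at the sample points applied to $D^N_kh$. The same estimates hold, with the regularity now taken in the variable of $D_k$, and give $h:=(I-\widetilde R)^{-1}f$.

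\emph{Step 5: convergence of the series.} Finally, the series defining $T_Ng$ converges in $L^2$ and $H^p$. I would show this by applying the same matrix estimates to the tails over $|k_i|>M$ or to cubes far from a fixed point, and using dominated convergence in the square function, as in Proposition~\ref{prop-dense}.
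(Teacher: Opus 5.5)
Your proposal follows essentially the same route as the paper's proof. You split $I=\sum_{k_1,k_2}D^N_{k_1,k_2}D_{k_1,k_2}+R^{(1)}_N$, discretize at the sample points to get $T_N$ plus errors, bound every error on $L^2$ by Cotlar--Stein and on $H^p$ by wavelet matrix estimates combined with Propositions~\ref{prop-estmax} and~\ref{prop-fs}, fix $N$ and then $j_0$, and invert $T_N=I-R_N$ by a Neumann series.

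The differences are minor. The paper gets smallness and off-diagonal decay simultaneously from kernel estimates that split $\eta$ between $|l_1|$ and $|j_1-k_1|$ (its size and regularity bounds for $F^{j_1}_{1,\beta_1}$), rather than by your geometric-mean interpolation of matrix entries. Also, the discretization error carries an $N$-dependent constant roughly of order $\delta^{-N\eta}$, not $N^2$, because the finest piece $D_{i,k_i+N}$ of $D^N_{k_1,k_2}$ only has regularity at scale $\delta^{k_i+N}$; this is harmless since $N$ is fixed before $j_0$.
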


\begin{proof}
First, we let $\mathbb Z_N^2:=\{(l_1, l_2):\ |l_1| \le N , |l_2| \le N \}$ and
$$
(\mathbb Z_N^2)^\complement=\{(l_1, l_2):\ |l_1| \le N , |l_2| > N+1\}\cup
 \{(l_1, l_2): |l_1| > N+1 , l_2 \in \mathbb Z \}.
$$
Let $f\in L^2$. Then, by the definition of $\{D_{k_1,k_2}\}_{k_1,k_2\in\mathbb Z}$,
we have
\begin{align*}
f(\cdot, *)&=\sum_{k_1=-\infty}^\infty\sum_{k_2=-\infty}^\infty D_{k_1,k_2}f(\cdot, *)=
\sum_{k_1,k_2\in\mathbb Z}\sum_{l_1=-\infty}^\infty\sum_{l_2=-\infty}^\infty D_{k_1+l_1,k_2+l_2}D_{k_1,k_2}f(\cdot, *)\\
&=\sum_{k_1,k_2\in\mathbb Z}\sum_{(l_1, l_2) \in \mathbb Z_N^2}
D_{k_1+l_1,k_2+l_2}D_{k_1,k_2}f(\cdot, *)+\sum_{k_1,k_2\in\mathbb Z}\sum_{(l_1,l_2)\in(\mathbb Z_N^2)^\complement}
D_{k_1+l_1,k_2+l_2}D_{k_1,k_2}f(\cdot, *)\\
&=\sum_{k_1,k_2\in\mathbb Z}\int_XD_{k_1,k_2}^N(\cdot,*,y_1,y_2)D_{k_1,k_2}f(y_1,y_2)\,d\mu(y_1,y_2)+R_{N}^{(1)}f(\cdot,*),
\end{align*}
where
$$
R_{N}^{(1)}f:=\sum_{k_1,k_2\in\mathbb Z}\sum_{(l_1,l_2)\in(\mathbb Z_N^2)^\complement}D_{k_1+l_1,k_2+l_2}D_{k_1,k_2}f.
$$
Splitting the integral region into dyadic rectangles, we further obtain
\begin{align*}
f(\cdot, *)&=\sum_{\genfrac{}{}{0pt}{}{k_1\in{\mathbb Z}}{\alpha_1\in\mathcal  A_{1,k_1+j_0}}}
\sum_{\genfrac{}{}{0pt}{}{k_2\in{\mathbb Z}}{\alpha_2\in\mathcal A_{2,k_2+j_0}}}
\int_{Q_{\alpha_1,\alpha_2}^{k_1+j_0,k_2+j_0}}D_{k_1,k_2}^N(\cdot, *, y_1,y_2)D_{k_1,k_2}f(y_1,y_2)\,d\mu(y_1,y_2)
+R_{N}^{(1)}f(\cdot,*)\\
&=\sum_{k_1,\alpha_1}\sum_{k_2,\alpha_2}\mu\left(Q_{\alpha_1,\alpha_2}^{k_1+j_0,k_2+j_0}\right)
D_{k_1,k_2}^N\left(\cdot, *,x_{1,\alpha_1}^{k_1+j_0},x_{2,\alpha_2}^{k_2+j_0}\right)
D_{k_1,k_2}f\left(x_{1,\alpha_1}^{k_1+j_0},x_{2,\alpha_2}^{k_2+j_0}\right)+R_N^{(1)}f(\cdot,*)\\
&\quad+\sum_{k_1,\alpha_1}\sum_{k_2,\alpha_2}\int_{Q_{\alpha_1,\alpha_2}^{k_1+j_0,k_2+j_0}}
\left[D^N_{k_1,k_2}(\cdot, *, y_1,y_2)-D^N_{k_1,k_2}\left(\cdot, *,x_{1,\alpha_1}^{k_1+j_0},
x_{2,\alpha_2}^{k_2+j_0}\right)\right]\\
&\quad\times D_{k_1,k_2}f(y_1,y_2)\,d\mu(y_1,y_2)\\
&\quad+\sum_{k_1,\alpha_1}\sum_{k_2,\alpha_2}D_{k_1,k_2}^N
 \left(\cdot, *, x_{1,\alpha_1}^{k_1+j_0},x_{2,\alpha_2}^{k_2+j_0}\right)\\
&\quad\times\int_{Q_{\alpha_1,\alpha_2}^{k_1+j_0,k_2+j_0}}\left[D_{k_1,k_2}f(y_1,y_2)-D_{k_1,k_2}f
\left(x_{1,\alpha_1}^{k_1+j_0},x_{2,\alpha_2}^{k_2+j_0}\right)\right]\,d\mu(y_1,y_2)\\
&=:T_Nf(\cdot, *)+R_{N}^{(1)}f(\cdot, *)+R_{N}^{(2)}f(\cdot, *)+R_{N}^{(3)}f(\cdot, *),
\end{align*}
where all the series converge in $L^2$. Let $R_N:=R_{N}^{(1)}+R_{N}^{(2)}+R_{N}^{(3)}$. Now, we show
that, when $N$ is sufficiently large, we have
\begin{equation}\label{eq-ctrfaim}
\|R_N\|_{\mathcal L(L^2)}<\frac 12\textup{ and }\|R_N\|_{\mathcal L(H^p)}<\frac 12.
\end{equation}

We begin with proving the first inequality in \eqref{eq-ctrfaim}. To this end, we first estimate $\|R_N^{(1)}\|_{\mathcal{L}(L^2) }$.
Indeed, noting that $\{D_{k_1,k_2}\}_{k_1,\ k_2\in\mathbb Z}$ is of variable separable, applying an argument similar
to that used in the proof of \cite[Lemma 4.1(i)]{hlyy19}, we obtain, for any $k_1, k_2, j_1, j_2\in\mathbb Z$
and $(x_1,x_2), (y_1,y_2)\in X_1\times X_2$,
\begin{equation*}
\left|D_{j_1,j_2}D_{k_1,k_2}(x_1,x_2,y_1,y_2)\right|\lesssim\delta^{|j_1-k_1|\eta}\delta^{|j_2-k_2|\eta}
P_{1,\Gamma}(x_1,y_1;\delta^{j_1\wedge k_1})P_{2,\Gamma}(x_2,y_2;\delta^{j_2\wedge k_2}),
\end{equation*}
where $\Gamma\in(0,\infty)$ and the implicit positive constant may depend on $\Gamma$ and, for any $i\in \{1,2\}$,
$P_{i,\Gamma}$ is the same as in \eqref{eq-defpg}  associated with $X_i$. By this and Proposition  \ref{prop-fs},
we conclude that, for any $j_1, j_2, k_1, k_2\in\mathbb Z$,
$$
\left\|D_{j_1,j_2}D_{k_1,k_2}\right\|_{\mathcal L(L^2)}\lesssim\delta^{|j_1-k_1|\eta}\delta^{|j_2-k_2|\eta}.
$$
According to the symmetry of $D_{k_1,k_2}$, on one hand we find that, for any $k_1, k_2, k_1', k_2'\in\mathbb Z$
and $(l_1,l_2),(l_1',l_2')\in(\mathbb Z_N^2)^\complement$,
\begin{align*}
&\left\|\left(D_{k_1+l_1,k_2+l_2}D_{k_1,k_2}\right)^*D_{k_1'+l_1',k_2'+l_2'}D_{k_1',k_2'}\right\|_{\mathcal L(L^2)}\\
&\quad=\left\|D_{k_1,k_2}D_{k_1+l_1,k_2+l_2}D_{k_1'+l_1',k_2'+l_2'}D_{k_1',k_2'}\right\|_{\mathcal L(L^2)}
\lesssim\delta^{|l_1|\eta}\delta^{|l_1'|\eta}\delta^{|l_2|\eta}\delta^{|l_2'|\eta},
\end{align*}
and, on the other hand, we have
\begin{align*}
\left\|\left(D_{k_1+l_1,k_2+l_2}D_{k_1,k_2}\right)^*D_{k_1'+l_1',k_2'+l_2'}D_{k_1',k_2'}\right\|_{\mathcal L(L^2)}
&\lesssim\left\|D_{k_1+l_1,k_2+l_2}D_{k_1'+l_1',k_2'+l_2'}\right\|_{\mathcal L(L^2)}\\
&\lesssim\delta^{|(k_1+l_1)-(k_1'-l_1')|\eta}\delta^{|(k_2+l_2)-(k_2'-l_2')|\eta}\\
&\lesssim \delta^{|k_1-k_1'|\eta}\delta^{|k_2-k_2'|\eta}\delta^{-|l_1|\eta}\delta^{-|l_1'|\eta}\delta^{-|l_2|\eta}
\delta^{-|l_2'|\eta}.
\end{align*}
Taking the geometric mean of the two above inequality, we obtain, for any given $\theta\in(0, \frac{1}{2})$,
\begin{align*}
&\left\|\left(D_{k_1+l_1,k_2+l_2}D_{k_1,k_2}\right)^*D_{k_1'+l_1',k_2'+l_2'}D_{k_1',k_2'}\right\|_{\mathcal L(L^2)}\\
&\quad\lesssim\delta^{|k_1-k_1'|\eta\theta}\delta^{|k_2-k_2'|\eta\theta}
\delta^{|l_1|\eta(1-2\theta)}\delta^{|l_1'|\eta(1-2\theta)}\delta^{|l_2|\eta(1-2\theta)}
\delta^{|l_2'|\eta(1-2\theta)}.
\end{align*}
Therefore, we conclude that, for any fixed $\theta\in(0, \frac{1}{2})$ and any $k_1, k_2, k_1', k_2'\in\mathbb Z$,
\begin{align*}
&\left\|\left(\sum_{(l_1,l_2)\in(\mathbb Z_N^2)^\complement}D_{k_1+l_1,k_2+l_2}D_{k_1,k_2}\right)^*
\left(\sum_{(l_1',l_2')\in(\mathbb Z_N^2)^\complement}D_{k_1'+l_1',k_2'+l_2'}D_{k_1',k_2'}\right)\right\|_{\mathcal L(L^2)}\\
&\quad\lesssim\sum_{(l_1,l_2)\in(\mathbb Z_N^2)^\complement}\sum_{(l_1',l_2')\in(\mathbb Z_N^2)^\complement}
\left\|\left(D_{k_1+l_1,k_2+l_2}D_{k_1,k_2}\right)^*D_{k_1'+l_1',k_2'+l_2'}D_{k_1',k_2'}\right\|_{\mathcal L(L^2)}\\
&\quad\lesssim\sum_{(l_1,l_2)\in(\mathbb Z_N^2)^\complement}\sum_{(l_1',l_2')\in(\mathbb Z_N^2)^\complement}
\delta^{|k_1-k_1'|\eta\theta}\delta^{|k_2-k_2'|\eta\theta}
\delta^{|l_1|\eta(1-2\theta)}\delta^{|l_1'|\eta(1-2\theta)}\delta^{|l_2|\eta(1-2\theta)}
\delta^{|l_2'|\eta(1-2\theta)}\\
&\quad\lesssim\delta^{N\eta(1-2\theta)}\delta^{|k_1-k_1'|\eta\theta}\delta^{|k_2-k_2'|\eta\theta}.
\end{align*}
Similarly, for any given $\theta\in(0,1)$ and any $k_1, k_2,k_1', k_2'\in\mathbb Z$,
\begin{align*}
&\left\|\left(\sum_{(l_1,l_2)\in(\mathbb Z_N^2)^\complement}D_{k_1+l_1,k_2+l_2}D_{k_1,k_2}\right)
\left(\sum_{(l_1',l_2')\in(\mathbb Z_N^2)^\complement}D_{k_1'+l_1',k_2'+l_2'}D_{k_1',k_2'}\right)^*\right\|_{\mathcal L(L^2)}\\
&\quad\lesssim\delta^{N\eta(1-\theta)}\delta^{|k_1-k_1'|\eta\theta}\delta^{|k_2-k_2'|\eta\theta}.
\end{align*}
By the above two inequalities and the Cotlar--Stein lemma (see \cite[Lemma 5.12]{HYY24}), we find that, for any
$\varphi\in L^2$,
$$
R_N^{(1)}\varphi=\sum_{k_1,k_2\in\mathbb Z}\sum_{(l_1,l_2)\in(\mathbb Z_N^2)^\complement}D_{k_1+l_1,k_2+l_2}
D_{k_1,k_2}\varphi
$$
in $L^2$ and, for any fixed $\epsilon'\in(0,\eta)$,
$$
\left\|R_N^{(1)}\right\|_{\mathcal L(L^2)}\lesssim\delta^{N\epsilon'}.
$$
This is the desired estimate.

Now, we show the second inequality in \eqref{eq-ctrfaim} for $R_N^{(1)}$. To this end, for any $f\in L^2$,
we write
\begin{align*}
R_N^{(1)}f&=\sum_{k_1,k_2\in\mathbb Z}\sum_{(l_1,l_2)\in(\mathbb Z_N^2)^\complement}D_{k_1+l_1,k_2+l_2}
D_{k_1,k_2}f\\
&=\sum_{k_1,k_2\in\mathbb Z}\sum_{|l_1|\ge N+1}\sum_{l_2=-\infty}^\infty D_{k_1+l_1,k_2+l_2}
D_{k_1,k_2}f+\sum_{k_1,k_2\in\mathbb Z}\sum_{|l_1|\le N}\sum_{|l_2|\ge N+1}\cdots\\
&=:R_N^{(1,1)}f+R_N^{(1,2)}f.
 \end{align*}
Due to similarity, we only estimate $\|R_N^{(1,1)}\|_{\mathcal L(H^p)}$. Indeed, for any
$j_1',j_2'\in\mathbb Z$, $\beta_1'\in\mathcal G_{1,k_1'}$, and $\beta_2'\in\mathcal G_{2,k_2'}$,
\begin{align*}
\left<R_N^{(1,1)}f,\psi_{\beta_1',\beta_2'}^{j_1',j_2'}\right>
&=\sum_{\genfrac{}{}{0pt}{}{j_1\in{\mathbb Z}}{\beta_1\in\mathcal G_{1,j_1}}}
\sum_{\genfrac{}{}{0pt}{}{j_2\in{\mathbb Z}}{\beta_2\in\mathcal G_{2,j_2}}}
\left<f,\psi_{\beta_1,\beta_2}^{j_1,j_2}\right>\left<R_N^{(1,1)}\psi_{\beta_1,\beta_2}^{j_1,j_2},
\psi_{\beta_1',\beta_2'}^{j_1',j_2'}\right>\\
&=\sum_{j_1,\beta_1}\sum_{j_2,\beta_2}\sum_{k_1,k_2\in\mathbb Z}\sum_{|l_1|\ge N+1}\sum_{l_2=-\infty}^\infty
\left<f,\psi_{\beta_1,\beta_2}^{j_1,j_2}\right>\left<D_{k_1+l_1,k_2+l_2}D_{k_1,k_2}
\psi_{\beta_1,\beta_2}^{j_1,j_2},
\psi_{\beta_1',\beta_2'}^{j_1',j_2'}\right>.
\end{align*}
Moreover, by the definition of $\{D_{k_1,k_2}\}_{k_1,k_2\in\mathbb Z}$, we find that, for any $(x_1,x_2),(y_1,y_2)\in X_1\times X_2$,
$$
D_{k_1+l_1,k_2+l_2}D_{k_1,k_2}(x_1,x_2,y_1,y_2)=D_{1,k_1+l_1}D_{1,k_1}(x_1,y_1)
D_{2,k_2+l_2}D_{2,k_2}(x_2,y_2).
$$
Using this, we conclude that
\begin{align*}
&\left<D_{k_1+l_1,k_2+l_2}D_{k_1,k_2}\psi_{\beta_1,\beta_2}^{j_1,j_2},
\psi_{\beta_1',\beta_2'}^{j_1',j_2'}\right>\\
&\quad=\left[\int_{X_1}D_{1,k_1+l_1}D_{1,k_1}\psi_{1,\beta_1}^{j_1}(x_1)
\psi_{1,\beta_1'}^{j_1'}(x_1)\,d\mu_1(x_1)\right]
\left[\int_{X_2}D_{2,k_2+l_2}D_{2,k_2}\psi_{2,\beta_2}^{j_2}(x_2)
\psi_{2,\beta_2'}^{j_2'}(x_2)\,d\mu_2(x_2)\right]\\
&\quad=\left[\mu_1\left(Q_{1,\beta_1}^{j_1+1}\right)\mu_1\left(Q_{1,\beta_1'}^{j_1'+1}\right)
\mu_2\left(Q_{2,\beta_2}^{j_2+1}\right)\mu_2\left(Q_{2,\beta_2'}^{j_2'+1}\right)\right]^{\frac{1}{2}}
\int_{X_1}D_{1,k_1+l_1}D_{1,k_1}\Psi_{1,\beta_1}^{j_1}(x_1)\Psi_{1,\beta_1'}^{j_1'}(x_1)\,d\mu_1(x_1)\\
&\quad\quad\times\int_{X_2}D_{2,k_2+l_2}D_{2,k_2}\Psi_{2,\beta_2}^{j_2}(x_2)
\Psi_{2,\beta_2'}^{j_2'}(x_2)\,d\mu_2(x_2),
\end{align*}
where, for any $i\in\{1,2\}$, $j_i\in\mathbb Z$, and $\beta_i\in\mathcal G_{i,j_i}$,
$\Psi_{i,\beta_i}^{j_i}:=\psi_{i,\beta_i}^{j_i}/\mu_i(Q_{i,\beta_i}^{j_i+1})^{\frac{1}{2}}$. We only estimate the
term related to $X_1$ because the other terms are similar by the symmetry of $X_1$ and $X_2$.
Now, for any $x_1\in X_1$, let
$$
F_{1,\beta_1}^{j_1}(x_1):=\sum_{k_1=-\infty}^\infty\sum_{|l_1|\ge N+1}D_{1,k_1+l_1}D_{1,k_1}
\Psi_{1,\beta_1}^{j_1}(x_1)
$$
and
$$
F_{2,\beta_2}^{j_2}(x_2):=\sum_{k_2=-\infty}^\infty\sum_{l_2=-\infty}^\infty
D_{2,k_2+l_2}D_{2,k_2}\Psi_{2,\beta_2}^{j_2}(x_2).
$$
For any $i\in\{1,2\}$, $j_i'\in\mathbb Z$, and $\beta_i'\in\mathcal G_{i,j_i'}$, we have
\begin{align}\label{eq-rn1hp1}
&\left[\mu\left(Q_{\beta_1',\beta_2'}^{j_1'+1,j_2'+1}\right)\right]^{-\frac{1}{2}}
\left<R_N^{(1,1)}f,\psi_{\beta_1',\beta_2'}^{j_1',j_2'}\right>\nonumber\\
&\quad=\sum_{j_1,\beta_1}\sum_{j_2,\beta_2}\sum_{k_1,k_2\in\mathbb Z}\sum_{|l_1|\ge N+1}
\sum_{l_2=-\infty}^\infty\left<f,\psi_{\beta_1,\beta_2}^{j_1,j_2}\right>
\left<D_{k_1+l_1,k_2+l_2}D_{k_1,k_2}\psi_{\beta_1',\beta_2'}^{j_1',j_2'},
\Psi_{\beta_1,\beta_2}^{j_1,j_2}\right>\nonumber\\
&\quad=\sum_{j_1,\beta_1}\sum_{j_2,\beta_2}\left[\mu\left(Q_{\beta_1',
\beta_2'}^{j_1'+1,j_2'+1}\right)\right]^{\frac{1}{2}}
\left<f,\psi_{\beta_1,\beta_2}^{j_1,j_2}\right>\left<F_{1,\beta_1}^{j_1},\Psi_{1,\beta_1'}^{j_1'}\right>
\left<F_{2,\beta_2}^{j_2},\Psi_{2,\beta_2'}^{j_2'}\right>.
\end{align}
Next, we estimate $\langle F_{1,\beta_1}^{j_1},\Psi_{1,\beta_1'}^{j_1'}\rangle$
and
$\langle F_{2,\beta_2}^{j_2},\Psi_{2,\beta_2'}^{j_2'}\rangle$.
For the first item, we only consider
the case $j_1\le j_1'$ because the proof when $j_1>j_1'$ is similar and we omit the details here.
By the cancellation of $\Psi_{1,\beta_1'}^{j_1'}$, we find that
\begin{align}\label{eq-fpsi}
\left|\left<F_{1,\beta_1}^{j_1},\Psi_{1,\beta_1'}^{j_1'}\right>\right|
&=\int_{X_1}
  \left|F_{1,\beta_1}^{j_1}(x_1)
     -F_{1,\beta_1}^{j_1}\left(y_{1,\beta_1'}^{j_1'}\right)\right|
  \left|\Psi_{1,\beta_1'}^{j_1'}(x_1)\right|\,d\mu_1(x_1)\nonumber\\
&\le
  \int_{d_1(x_1,y_{1,\beta_1'}^{j_1'}) \le\delta^{j_1}}
  \left|F_{1,\beta_1}^{j_1}(x_1)
     -F_{1,\beta_1}^{j_1}\left(y_{1,\beta_1'}^{j_1'}\right)\right|
  \left|\Psi_{1,\beta_1'}^{j_1'}(x_1)\right|\,d\mu_1(x_1)\nonumber\\
&\quad
  +\int_{d_1(x_1,y_{1,\beta_1'}^{j_1'})>\delta^{j_1}}
  \left|F_{1,\beta_1}^{j_1}(x_1)\right|
  \left|\Psi_{1,\beta_1'}^{j_1'}(x_1)\right|\,d\mu_1(x_1)\nonumber\\
&\quad
  +\left|F_{1,\beta_1}^{j_1}\left(y_{1,\beta_1'}^{j_1'}\right)\right|
  \int_{d_1(x_1,y_{1,\beta_1'}^{j_1'})>\delta^{j_1}}
  \left|\Psi_{1,\beta_1'}^{j_1'}(x_1)\right|\,d\mu_1(x_1)\nonumber\\
&=:\mathrm{IV}+\mathrm{V}+\mathrm{VI}.
\end{align}
In order to show the desired estimates for $\rm IV$, $\rm V$, and $\rm VI$, we need some estimates
on the size and the regularity of $F_{1,\beta_1}^{j_1}$, which need the following two estimates (see \cite[Lemma 5.9]{HYY24}):
for any given $\eta'\in(0,\eta)$, $\eta''\in(0,\eta')$, and $\Gamma\in(\eta',\infty)$ and
for any $x_1\in X_1$,
\begin{equation}\label{eq-ddesize}
\left|D_{1,k_1+l_1}D_{1,k_1}
   \Psi_{1,\beta_1}^{j_1}(x_1)
\right|\lesssim\delta^{|l_1|(\eta-\eta')}
\delta^{|j_1-[k_1\wedge(k_1+l_1)]|\eta'}
P_{1,\Gamma}\left(x_1,y_{1,\alpha_1}^{j_1};\delta^{k_1\wedge j_1\wedge(k_1+l_1)}\right),
\end{equation}
and, for any $x_1,  x_1'\in X_1\times X_2$ with $d_1(x_1,x_1')\le\delta^{k_1\wedge j_1\wedge(k_1+l_1)}$,
\begin{align}\label{eq-ddereg}
&\left|D_{1,k_1+l_1}D_{1,k_1}\Psi_{1,\beta_1}^{j_1}(x_1)-D_{1,k_1+l_1}D_{1,k_1}
\Psi_{1,\beta_1}^{j_1}(x_1)\right|\\
&\quad\lesssim
\delta^{|l_1|(\eta-\eta')}
\delta^{|j_1-[k_1\wedge(k_1+l_1)]|(\eta'-\eta'')}
\left[\frac{d_1(x_1,x_1')}{\delta^{k_1\wedge j_1\wedge(k_1+l_1)}}\right]^{\eta''}
P_{1,\Gamma}\left(x_1,y_{1,\alpha_1}^{j_1};\delta^{k_1\wedge j_1\wedge(k_1+l_1)}\right).\nonumber
\end{align}
In the next, we will explain how to use
\eqref{eq-ddesize} and \eqref{eq-ddereg} to estimate
$F_{1,\beta_1}^{j_1}$.
Now, for any $x_1\in X_1$, we have
\begin{align*}
F_{1,\beta_1}^{j_1}(x_1)&=\sum_{k_1=-\infty}^\infty\left[\sum_{l_1=N+1}^\infty D_{1,k_1+l_1}D_{1,k_1}\Psi_{1,\beta_1}^{j_1}(x_1)
+\sum_{l_1=-\infty}^{-N-1}D_{1,k_1+l_1}D_{1,k_1}\Psi_{1,\beta_1}^{j_1}(x_1)\right]\\
&=:F_{1,\beta_1}^{j_1,1}(x_1)+F_{2,\beta_1}^{j_1,2}(x_1).
\end{align*}

We next consider the size and the regularity of $F_{1,\beta_1}^{j_1}$. By symmetry, we only consider
the function $F_{1,\beta_1}^{j_1,1}$ because the estimates of $F_{1,\beta_1}^{j_1,2}$ is similar and we omit the details.
Indeed, by \eqref{eq-ddesize}, we conclude that, for any $x_1\in X_1$,
\begin{align}\label{eq-sizef1}
\left|F_{1,\beta_1}^{j_1,1}(x_1)\right|
&=\left|\sum_{k_1=-\infty}^\infty\sum_{l_1\ge N+1}D_{1,k_1+l_1}D_{1,k_1}\Psi_{1,\beta_1}^{j_1}(x_1)\right|\\
&\lesssim\sum_{k_1=-\infty}^{j_1}\delta^{(j_1-k_1)\eta'}\sum_{l_1\ge N+1}\delta^{l_1(\eta-\eta')}P_{1,\Gamma}
\left(x_1,y_{1,\beta_1}^{j_1};\delta^{k_1}\right)\notag\\
&\quad+\sum_{k_1=j_1+1}^{\infty}\delta^{(k_1-j_1)\eta'}\sum_{l_1\ge N+1}\delta^{l_1(\eta-\eta')}
P_{1,\Gamma}\left(x_1,y_{1,\beta_1}^{j_1};\delta^{j_1}\right)\notag\\
&\lesssim\delta^{N(\eta-\eta')}P_{1,\Gamma}\left(x_1,y_{1,\beta_1}^{j_1};\delta^{j_1}\right)
\left[\sum_{k_1=-\infty}^{j_1}\delta^{(j_1-k_1)(\eta'-\Gamma)}
+\sum_{k_1=j_1+1}^{\infty}\delta^{(k_1-j_1)\eta'}\right]\notag\\
&\lesssim\delta^{N(\eta-\eta')}P_{1,\eta''}\left(x_1,y_{1,\beta_1}^{j_1};\delta^{j_1}\right).\nonumber
\end{align}
This is the desired estimate for the size of $F_{1,\beta_1}^{j_1,1}$.

Now, we consider the regularity of $F_{1,\beta_1}^{j_1,1}$. Suppose $x_1, x'_1\in X_1$ satisfying $d_1(x_1,x_1')\le\delta^{j_1}$.
Notice that, when $k_1\in\mathbb Z\cap(-\infty,j_1]$, we have $d_1(x_1,x_1')\le\delta^{k_1}$. By this and \eqref{eq-ddereg},
we have
\begin{align*}
&\left|F_{1,\beta_1}^{j_1,1}(x_1)-F_{1,\beta_1}^{j_1,1}(x_1')\right|\\
&\quad\le\sum_{k_1=-\infty}^{j_1}\sum_{l_1=N+1}^\infty\left|D_{1,k_1+l_1}D_{1,k_1}
\Psi_{1,\beta_1}^{j_1}(x_1)-D_{1,k_1+l_1}D_{1,k_1}\Psi_{1,\beta_1}^{j_1}(x_1')\right|\\
&\quad\quad+\sum_{k_1=j_1+1}^{\infty}\sum_{l_1=N+1}^\infty\left|D_{1,k_1+l_1}D_{1,k_1}
\Psi_{1,\beta_1}^{j_1}(x_1)-D_{1,k_1+l_1}D_{1,k_1}\Psi_{1,\beta_1}^{j_1}(x_1')\right|\\
&\quad\lesssim\sum_{k_1=-\infty}^{j_1}\delta^{(j_1-k_1)(\eta'-\eta'')}
\left[\frac{d_1(x_1,x_1')}{\delta^{k_1}}\right]^{\eta''}\sum_{l_1=N+1}^\infty
\delta^{l_1(\eta-\eta')}P_{1,\Gamma}\left(x_1,y_{1,\beta_1}^{j_1};\delta^{k_1}\right)\\
&\quad\quad+\sum_{k_1=j_1+1}^{\infty}\delta^{(k_1-j_1)(\eta'-\eta'')}
\left[\frac{d_1(x_1,x_1')}{\delta^{j_1}}\right]^{\eta''}\sum_{l_1=N+1}^\infty
\delta^{l_1(\eta-\eta')}P_{1,\Gamma}\left(x_1,y_{1,\beta_1}^{j_1};\delta^{j_1}\right)\\
&\quad\lesssim\delta^{N(\eta-\eta')}\sum_{k_1=-\infty}^{j_1}\delta^{(j_1-k_1)(\eta'-\eta'')}
\left[\frac{d_1(x_1,x_1')}{\delta^{j_1}}\right]^{\eta''}
P_{1,\Gamma}\left(x_1,y_{1,\beta_1}^{j_1};\delta^{j_1}\right)\\
&\quad\quad+\delta^{N(\eta-\eta')}\sum_{k_1=j_1+1}^{\infty}\delta^{(k_1-j_1)(\eta'-\eta'')}
\left[\frac{d_1(x_1,x_1')}{\delta^{j_1}}\right]^{\eta''}
P_{1,\Gamma}\left(x_1,y_{1,\beta_1}^{j_1};\delta^{j_1}\right)\\
&\quad\lesssim\delta^{N(\eta-\eta')}\left[\frac{d_1(x_1,x_1')}{\delta^{j_1}}\right]^{\eta''}
P_{1,\eta''}\left(x_1,y_{1,\beta_1}^{j_1};\delta^{j_1}\right).
\end{align*}
Thus, we find that, if $d_1(x_1,x_1')\le\delta^{j_1}$, then
\begin{align*}
\left|F_{1,\beta_1}^{j_1,1}(x_1)-F_{1,\beta_1}^{j_1,1}(x'_1)\right|
&\lesssim\delta^{N(\eta-\eta')}\left[\frac{d_1(x_1,x_1')}{\delta^{j_1}}\right]^{\eta''}
P_{1,\eta''}\left(x_1,y_{1,\beta_1}^{j_1};\delta^{j_1}\right)\\
&\lesssim\delta^{N(\eta-\eta')}\left[\frac{d_1(x_1,x_1')}{\delta^{j_1}}\right]^{\eta''}
\left[P_{1,\eta''}\left(x_1,y_{1,\beta_1}^{j_1};\delta^{j_1}\right)
+P_{1,\eta''}\left(x_1',y_{1,\beta_1}^{j_1};\delta^{j_1}\right)\right];
\end{align*}
on the other hand, when $d_1(x_1,x_1')>\delta^{j_1}$, by \eqref{eq-sizef1}, we obtain
\begin{equation*}
\left|F_{1,\beta_1}^{j_1,1}(x_1)-F_{1,\beta_1}^{j_1,1}(x'_1)\right|
\lesssim\delta^{N(\eta-\eta')}\left[P_{1,\eta''}\left(x_1, y_{1,\beta_1}^{j_1};\delta^{j_1}\right)
+P_{1,\eta''}\left(x_1',y_{1,\beta_1}^{j_1};\delta^{j_1}\right)\right].
\end{equation*}
Combining the above two inequalities, we conclude that, for any $x_1,\ x_1'\in X_1$,
\begin{align}\label{eq-regf1}
&\left|F_{1,\beta_1}^{j_1,1}(x_1)-F_{1,\beta_1}^{j_1,1}(x_1')\right|\nonumber\\
&\quad  \lesssim\delta^{N(\eta-\eta')}
\min\left\{1,\left[\frac{d_1(x_1,x_1')}{\delta^{j_1}}\right]^{\eta''}\right\}
\left[P_{1,\eta''}\left(x_1,y_{1,\beta_1}^{j_1};\delta^{j_1}\right)
+P_{1,\eta''}\left(x_1',y_{1,\beta_1}^{j_1};\delta^{j_1}\right)\right],
\end{align}
where $\eta'\in(0,\eta)$ and $\eta''\in(0,\eta')$. This concludes the desired estimate.

The size and the regularity of $F_{1,\beta_1}^{j_1}(x_1)$ provide the desired estimates for
$\mathrm{IV}$, $\mathrm{V}$, and $\mathrm{VI}$.
Indeed, by \eqref{eq-regf1} and the size of $\Psi_{1,\beta_1'}^{j_1'}$, we have
\begin{align*}
\mathrm{IV}&\lesssim\delta^{N(\eta-\eta')}\int_{d_1(x_1,y_{1,\beta_1'}^{j_1'})\le\delta^{j_1}}
\left[\frac{d_1(x_1,y_{1,\beta_1'}^{j_1'})}{\delta^{j_1}}\right]^{\eta''}\\
&\quad\times  \left[P_{1,\eta''}\left(y_{1,\beta_1}^{j_1},y_{1,\beta_1'}^{j_1'};\delta^{j_1}\right)
+P_{1,\eta''}\left(x_1,y_{1,\beta_1'}^{j_1'};\delta^{j_1}\right)\right]
\widetilde{\mathcal E}^*_1\left(x_1,y_{1,\beta_1'}^{j_1'}\right) d\mu_1(x_1)\nonumber\\
&\lesssim\delta^{N(\eta-\eta')}\delta^{(j_1'-j_1)\eta''}P_{1,\eta''}
\left(y_{1,\beta_1}^{j_1},y_{1,\beta_1'}^{j_1'};\delta^{j_1}\right).
\nonumber
\end{align*}
This is the desired estimate for $\rm IV$.

To deal with $\mathrm V$, by \eqref{eq-sizef1} and the size of $\Psi_{1,\beta_1'}^{j_1'}$, we conclude
that, for any given $\Gamma\in(0,\infty)$,
\begin{align*}
\mathrm{V}&\lesssim\delta^{N(\eta-\eta')}\int_{d_1(x_1,y_{1,\beta_1'}^{j_1'})>\delta^{j_1}}
P_{1,\eta''}\left(x_1,y_{1,\beta_1}^{j_1};\delta^{j_1}\right)
\widetilde{\mathcal E}^*_{1,j'_1}\left(x_1,y_{1,\beta_1'}^{j_1'}\right)\,d\mu_1(x_1)\\
&\lesssim\delta^{N(\eta-\eta')}\delta^{(j_1'-j_1)\Gamma}\int_{d_1(x_1,y_{1,\beta_1'}^{j_1'})>\delta^{j_1}}
P_{1,\eta''}\left(x_1,y_{1,\beta_1}^{j_1};\delta^{j_1}\right)
P_{1,\Gamma}\left(x_1,y_{1,\beta_1'}^{j_1'};\delta^{j_1}\right)\,d\mu_1(x_1)\nonumber\\
&\lesssim\delta^{N(\eta-\eta')}\delta^{(j_1'-j_1)\Gamma}P_{1,\eta''}
\left(y_{1,\beta}^{j_1},y_{1,\beta_1'}^{j_1'};\delta^{j_1}\right)
\int_{X_1}\frac{V_{1,\delta^{j_1}}(y_{1,\beta}^{j_1})+V_1(y_{1,\beta}^{j_1},y_{1,\beta_1'}^{j_1'})}
{V_{1,\delta^{j_1}}(y_{1,\beta}^{j_1})+V_1(x_1,y_{1,\beta_1}^{j_1})}\nonumber\\
&\quad\times\left[\frac{\delta^{k_1}+d_1(y_{1,\beta}^{j_1},y_{1,\beta_1'}^{j_1'})}
{\delta^{k_1}+d_1(x_1,y_{1,\beta}^{j_1})}\right]^{\eta''}
P_{1,\Gamma}\left(x_1,y_{1,\beta_1'}^{j_1'};\delta^{j_1}\right)\,d\mu_1(x_1)\nonumber\\
&\lesssim\delta^{N(\eta-\eta')}\delta^{(j_1'-j_1)\Gamma}P_{1,\eta''}
\left(y_{1,\beta}^{j_1},y_{1,\beta_1'}^{j_1'};\delta^{j_1}\right)
\int_{X_1}P_{1,\Gamma-\omega-\eta''}\left(x_1,y_{1,\beta_1'}^{j_1'};
\delta^{j_1}\right)\,d\mu_1(x_1)\nonumber\\
&\lesssim\delta^{N(\eta-\eta')}\delta^{(j_1'-j_1)\Gamma}P_{1,\eta''}
\left(y_{1,\beta}^{j_1},y_{1,\beta_1'}^{j_1'};\delta^{j_1}\right).\nonumber
\end{align*}
This is also the desired estimate.

Now, we estimate $\mathrm{VI}$. To this end, by \eqref{eq-sizef1} and the size of $\Psi_{1,\beta_1'}^{j_1'}$
again, we have, for any given $\Gamma\in(\eta',\infty)$,
\begin{align}\label{eq-est6}
\mathrm{VI}
&\lesssim
  \delta^{N(\eta-\eta')}P_{1,\eta''}\left(y_{1,\beta_1}^{j_1},y_{1,\beta_1'}^{j_1'};\delta^{j_1}\right)
  \int_{d_1(x_1,y_{1,\beta_1'}^{j_1'})>\delta^{j_1}}
  P_{1,\Gamma}\left(x_1,y_{1,\beta_1'}^{j_1'};\delta^{j_1'}\right)\,d\mu_1(x_1)\nonumber\\
&\lesssim\delta^{N(\eta-\eta')}\delta^{(j_1'-j_1)\Gamma}P_{1,\eta''}\left(y_{1,
\beta_1}^{j_1},y_{1,\beta_1'}^{j_1'};\delta^{j_1}\right)
\int_{d_1(x_1,y_{1,\beta_1'}^{j_1'})>\delta^{j_1}}
\frac{1}{V_1(y_{1,\beta_1'}^{j_1'},x_1)}\nonumber\\
&\quad\times\left[\frac{\delta^{j_1}}{d_1(y_{1,\beta_1'}^{j_1'},x_1)}\right]^\Gamma\,d\mu_1(x_1)\nonumber\\
&\lesssim\delta^{N(\eta-\eta')}\delta^{(j_1'-j_1)\eta''}P_{1,\Gamma}\left(y_{1,
\beta}^{j_1},y_{1,\beta_1'}^{j_1'};\delta^{j_1}\right).
\end{align}
This is the desired estimate.

Combining \eqref{eq-fpsi} through \eqref{eq-est6}, we conclude that, for any given $\eta'\in(0,\eta)$
and $\eta''\in(0,\eta')$,
\begin{equation}\label{eq-fpsi1}
\left|\left<F_{1,\beta_1}^{j_1},\Psi_{1,\beta_1'}^{j_1'}\right>\right|\lesssim
\delta^{N(\eta-\eta')}\delta^{|j_1'-j_1|\eta''}P_{1,\eta''}\left(y_{1,\beta}^{j_1},
y_{1,\beta_1'}^{j_1'};\delta^{j_1\wedge j_1'}\right).
\end{equation}

Applying an argument similar to that used in the estimations of \eqref{eq-sizef1} and \eqref{eq-regf1},
we conclude that $F_{2,\beta_2}^{j_2}$ has the following properties: for any given $\eta'\in(0,\eta)$
and $\eta''\in(0,\eta)$,
\begin{enumerate}
\item[(i)] for any $x_2\in X_2$,
\begin{equation*}
\left|F_{2,\beta_2}^{j_2}(x_2)\right|
\lesssim
P_{2,\eta'}\left(x_2,y_{2,\beta_2}^{j_2};\delta^{j_2}\right),
\end{equation*}
\item[(ii)] for any $x_2, x_2'\in X_2$,
\begin{align*}
 \left|F_{2,\beta_2}^{j_2}(x_2)-F_{2,\beta_2}^{j_2}(x_2')\right|
 \lesssim\min\left\{1,\left[\frac{d_2(x_2,x_2')}{\delta^{j_2}}\right]^{\eta'}\right\}
\left[P_{2,\eta''}\left(x_2,y_{2,\beta_2}^{j_2};\delta^{j_2}\right)
   +P_{2,\eta''}\left(x_2',y_{2,\beta_2}^{j_2};\delta^{j_2}\right)\right].
\end{align*}
\end{enumerate}
By these and an argument similar to that used
in the proof of \eqref{eq-fpsi1}, we can also obtain,
for any given $\eta'\in(0,\eta)$,
\begin{equation}\label{eq-fpsi2}
\left|\left<F_{2,\beta_2}^{j_2},\Psi_{2,\beta_2'}^{j_2'}\right>\right|\lesssim
\delta^{|j_2'-j_2|\eta'}P_{2,\eta'}\left(y_{2,\beta}^{j_2},y_{2,\beta_2'}^{j_2'};\delta^{j_2\wedge j_2'}\right).
\end{equation}
By this, \eqref{eq-fpsi1}, \eqref{eq-fpsi2},
\eqref{eq-rn1hp1}, Proposition \ref{prop-estmax}, and H\"older's inequality, we find that,
for any $j_1', j_2'\in\mathbb Z$, $\beta_1'\in\mathcal G_{1,j_1'}$, $\beta_2'\in\mathcal G_{1,j_2'}$, and $(x_1,x_2)\in X_1\times X_2$,
\begin{align*}
&\sum_{\genfrac{}{}{0pt}{}{j_1'\in{\mathbb Z}}{\beta_1'\in\mathcal G_{1,j_1'}}}
\sum_{\genfrac{}{}{0pt}{}{j_2'\in{\mathbb Z}}{\beta_2'\in\mathcal G_{2,j_2'}}}
\left[\left|\left<R_N^{(1,1)}f,\psi_{\beta_1',\beta_2'}^{j_1'+1,j_2'+1}\right>\right|
\widetilde{\mathbf 1}_{Q_{\beta_1',\beta_2'}^{j_1'+1,j_2'+1}}(x_1,x_2)\right]^2\\
&\quad\lesssim\sum_{j_1',\beta_1'}\sum_{j_2',\beta_2'}\left[\sum_{j_1,\beta_1}\sum_{j_2,\beta_2}
\mu\left(Q_{\beta_1,\beta_2}^{j_1+1,j_2+1}\right)\left[\mu\left(Q_{\beta_1,
\beta_2}^{j_1+1,j_2+1}\right)\right]^{-\frac12}\right.\\
&\quad\quad\times\left.\left|\left<f,\psi_{\beta_1,\beta_2}^{j_1,j_2}\right>
\left<F_{1,\beta_1}^{j_1},\Psi_{1,\beta_1'}^{j_1'}\right>\left<F_{2,\beta_2}^{j_2},
\Psi_{2,\beta_2'}^{j_2'}\right>\right|
{\mathbf 1}_{Q_{\beta_1',\beta_2'}^{j_1'+1,j_2'+1}}(x_1,x_2)\right]^2\\
&\quad\lesssim\delta^{N(\eta-\eta')}\sum_{j_1', j_2'\in\mathbb Z}\left[\sum_{j_1,\beta_1}\sum_{j_2,\beta_2}
\delta^{|j_1-j_1'|\eta''}\delta^{|j_2-j_2'|\eta'}\mu\left(Q_{\beta_1,\beta_2}^{j_1,j_2}\right)
\left[\mu\left(Q_{\beta_1,\beta_2}^{j_1+1,j_2+1}\right)\right]^{-\frac12}\left|\left<f,
\psi_{\beta_1,\beta_2}^{j_1,j_2}\right>\right|\right.\\
&\quad\quad\times P_{1,\eta''}\left(x_1,y_{1,\beta_1}^{j_1};\delta^{j_1\wedge j_1'}\right)
P_{2,\eta''}\left(x_2,y_{2,\beta_2}^{j_2};\delta^{j_2\wedge j_2'}\right)\Bigg]^2\\
&\quad\lesssim\delta^{N(\eta-\eta')}\sum_{j_1', j_2'\in\mathbb Z}\left\{\sum_{j_1,j_2\in\mathbb Z}
\delta^{|j_1-j_1'|\eta''-[j_1-(j_1\wedge j_1')]\omega(\frac 1r-1)}
\delta^{|j_2-j_2'|\eta''-[j_2-(j_2\wedge j_2')]\omega(\frac 1r-1)}\right.\\
&\quad\quad\times\left.\left[\mathcal M_{\rm str}\left(\sum_{\beta_1\in\mathcal G_{1,j_1}}
\sum_{\beta_2\in\mathcal G_{2,j_2}}\left|\left<f,\psi_{\beta_1,\beta_2}^{j_1,j_2}
\right>\widetilde{\mathbf 1}_{Q_{\beta_1,\beta_2}^{j_1+1,j_2+1}}\right|^r\right)(x_1,x_2)\right]^{\frac 1r}\right\}^2\\
&\quad\lesssim\delta^{N(\eta-\eta')}\sum_{j_1', j_2'\in\mathbb Z}\sum_{j_1, j_2\in\mathbb Z}
\delta^{|j_1-j_1'|\eta''-[j_1-(j_1\wedge j_1')]\omega(\frac 1r-1)}
\delta^{|j_2-j_2'|\eta''-[j_2-(j_2\wedge j_2')]\omega(\frac 1r-1)}\\
&\quad\quad\times\left[\mathcal M_{\rm str}\left(\sum_{\beta_1\in\mathcal G_{1,j_1}}
\sum_{\beta_2\in\mathcal G_{2,j_2}}\left|\left<f,\psi_{\beta_1,\beta_2}^{j_1,j_2}
\right>\widetilde{\mathbf 1}_{Q_{\beta_1,\beta_2}^{j_1+1,j_2+1}}
\right|^r\right)(x_1,x_2)\right]^{\frac 2r},
\end{align*}
where $r\in(\frac{\omega}{\omega+\eta''},1]$.
Choose $\eta'\in(0,\eta)$ and $\eta''\in(0,\eta')$ such that
$\frac{\omega}{\omega+\eta''}<p$. Thus, we may choose $r\in(p,1]$ and use Proposition \ref{prop-fs} to conclude that
\begin{align*}
 \left\|R_N^{(1,1)}f\right\|_{H^p}
&\lesssim\delta^{N(\eta-\eta')}\left\|\left\{\sum_{j_1',j_2'\in\mathbb Z}\sum_{j_1,j_2\in\mathbb Z}
\delta^{|j_1-j_1'|\eta''-[j_1-(j_1\wedge j_1')]\omega(\frac 1r-1)}
\delta^{|j_2-j_2'|\eta''-[j_2-(j_2\wedge j_2')]\omega(\frac 1r-1)}\right.\right.\\
&\quad\left.\left.{}\times
\left[\mathcal M_{\rm str}\left(\sum_{\beta_1\in\mathcal G_{1,j_1}}
\sum_{\beta_2\in\mathcal G_{2,j_2}}\left|\left<f,\psi_{\beta_1,\beta_2}^{j_1,j_2}\right>
\widetilde{\mathbf 1}_{Q_{\beta_1,\beta_2}^{j_1+1,j_2+1}}\right|^r\right)\right]^{\frac 2r}\right\}
^{\frac r2}\right\|_{L^{\frac pr}}^{\frac 1r}\\
&\lesssim\delta^{N(\eta-\eta')}\left\|\left\{\sum_{j_1,\beta_1}\sum_{j_2,\beta_2}
\left|\left<f,\psi_{\beta_1,\beta_2}^{j_1,j_2}\right>
\widetilde{\mathbf 1}_{Q_{\beta_1,\beta_2}^{j_1+1,j_2+1}}\right|^2\right\}^{\frac r2}\right\|_{L^{\frac pr}}
^{\frac 1r}\\
&\sim\delta^{N(\eta-\eta')}\left\|\left\{\sum_{j_1,\beta_1}\sum_{j_2,\beta_2}
\left|\left<f,\psi_{\beta_1,\beta_2}^{j_1,j_2}\right>\widetilde{\mathbf 1}_{Q_{\beta_1,\beta_2}^{j_1+1,j_2+1}}\right|^2\right\}^{\frac12}\right\|_{L^{p}}
\sim\delta^{N(\eta-\eta')}\|f\|_{H^p}.
\end{align*}

To summarize, we find that there exists $\eta'\in(0,\eta)$ such that
\begin{equation}\label{eq-rn1bdd}
\left\|R_N^{(1)}\right\|_{\mathcal L(L^2)}\lesssim\delta^{N\eta'} ~~\textup{and}~~
\left\|R_N^{(1)}\right\|_{\mathcal L(H^p)}\lesssim\delta^{N\eta'}.
\end{equation}

We next estimate $\|R_N^{(2)}\|_{\mathcal L(L^2)}$ and $\|R_N^{(3)}\|_{\mathcal L(L^2)}$.
We only estimate $\|R_N^{(2)}\|_{\mathcal L(L^2)}$ because the estimation
of $\|R_N^{(3)}\|_{\mathcal L(L^2)}$ is similar and we omit the details here.
Indeed, recall that, for any $f\in L^2$,
\begin{align*}
&R_N^{(2)}f(x_1, x_2)\\
&\quad=\sum_{k_1,\alpha_1}\sum_{k_2,\alpha_2}\int_{Q_{\alpha_1,\alpha_2}^{k_1+j_0,k_2+j_0}}
\left[D_{k_1,k_2}^N(x_1, x_2,y_1,y_2)
-D_{k_1,k_2}^N \left(x_1, x_2,x_{1,\alpha_1}^{k_1+j_0},x_{2,\alpha_2}^{k_2+j_0}\right)\right]\\
&\qquad\times D_{k_1,k_2}f(y_1,y_2)\,d\mu(y_1,y_2)\\
&\quad=\sum_{k_1,\alpha_1}\sum_{k_2,\alpha_2}\int_{Q_{\alpha_1,\alpha_2}^{k_1+j_0,k_2+j_0}}
\left[D_{k_1,k_2}^N(x_1, x_2, y_1, y_2)-D_{k_1, k_2}^N\left(x_1, x_2, y_1,x_{2,\alpha_2}^{k_2+j_0}\right)\right]
D_{k_1,k_2}f(y_1,y_2)\,d\mu(y_1,y_2)\\
&\quad\quad+\sum_{k_1,\alpha_1}\sum_{k_2,\alpha_2}\int_{Q_{\alpha_1,\alpha_2}^{k_1+j_0,k_2+j_0}}
\left[D_{k_1,k_2}^N\left(x_1, x_2,y_1,x_{2,\alpha_2}^{k_2+j_0}\right)-D_{k_1,k_2}^N
\left(x_1, x_2,x_{1,\alpha_1}^{k_1+j_0},x_{2,\alpha_2}^{k_2+j_0}\right)\right]\\
&\quad\quad\times D_{k_1,k_2}f(y_1,y_2)\,d\mu(y_1,y_2)\\
&\quad=:R_N^{(2,1)}f(x_1, x_2)+R_N^{(2,2)}f(x_1, x_2).
\end{align*}
We only give the details for $R_N^{(2,1)}f$.
For any $k_1, k_2\in\mathbb Z$ and $(x_1,x_2), (z_1,z_2)\in X_1\times X_2$, let
\begin{align*}
&G_{k_1,k_2}(x_1,x_2,z_1,z_2)\\
&\quad:=\sum_{\alpha_1\in\mathcal A_{1,k_1+j_0}}\sum_{\alpha_2\in\mathcal A_{2,k_2+j_0}}
\int_{Q_{\alpha_1,\alpha_2}^{k_1+j_0,k_2+j_0}}\left[D_{k_1,k_2}^N
\left(x_1,x_2,y_1,y_2\right)-D_{k_1,k_2}^N\left(x_1,x_2,y_1,x_{2,\alpha_2}^{k_2+j_0}\right)\right]\\
&\quad\quad\times D_{k_1,k_2}(y_1,y_2,z_1,z_2)\,d\mu(y_1,y_2);
\end{align*}
Moreover, by Fubini's theorem, we find that
\begin{align*}
&G_{k_1,k_2}(x_1,x_2,z_1,z_2)\\
&\quad=
\left[\sum_{\alpha_1\in\mathcal A_{1,k_1+j_0}}
    \int_{Q_{1,\alpha_1}^{k_1+j_0}}
    D_{1,k_1}^N(x_1,y_1)
    D_{1,k_1}(y_1,z_1)\,d\mu_1(y_1)
   \right]\\
&\quad\quad\times
   \left\{
      \sum_{\alpha_2\in\mathcal A_{2,k_2+j_0}}
      \int_{Q_{2,\alpha_2}^{k_2+j_0}}
      \left[D_{2,k_2}^N(x_2,y_2)-D_{2,k_2}^N
      \left(x_2,x_{2,\alpha_2}^{k_2+j_0}\right)\right]
      D_{2,k_2}(y_2,z_2)\,d\mu_1(y_2)\right
     \}\\
&\quad=:G_{1,k_1}(x_1,z_1)G_{2,k_2}(x_2,z_2).
\end{align*}
To obtain the desired estimates of $G_{k_1,k_2}$, it suffices to show the those of $G_{1,k_1}$ and $G_{2,k_2}$, respectively.
Indeed, by the definition of $G_{1,k_1}$, we find that $G_{1,k_1}$ has the  properties similar to those of $G_{2,k_2}$.
Thus, we consider the
properties of $G_{2, k_2}$. First, since $N$ is a fixed number, it follows that $D_{2,k_2}^N$ satisfies the
same condition as $D_{2,k_2}$ with the implicit positive constant only independent of $N$;
more precisely, for any $x_2, y_2\in X_2$, $D_{2,k_2}^N(x_2,y_2)\neq 0$ implies $d_2(x_2,y_2)\le c_{N}\delta^{k_2}$,
where $c_{N}$ is a positive constant depending on $N$.
By this, suppose $x_2, z_2\in X_2$ and $d_2(x_2,z_2)>2A_0 c_{N} \delta^{k_2}$.\
If $G_{2,k_2}(x_2,z_2)\neq 0$, then there exists $\alpha_2\in\mathcal A_{2,k_2+j_0}$ such that
$$
\int_{Q_{2,\alpha_2}^{k_2+j_0}}\left[D_{2,k_2}^N(x_2,y_2)
-D_{2,k_2}^N\left(x_2,x_{2,\alpha_2}^{k_2+j_0}\right)\right]D_{2,k_2}(y_2,z_2)\,d\mu_2(y_2)\neq 0,
$$
which further implies that there exists $y_2\in Q_{2,\alpha_2}^{k_2+j_0}$ such that
$$
D_{2,k_2}^N(x_2,y_2)-D_{2,k_2}^N\left(x_2,x_{2,\alpha_2}^{k_2+j_0}\right)\neq 0\textup{ and }
D_{2,k_2}(y_2,z_2)\neq 0.
$$
Using the second formulae, we find that $d_2(y_2,z_2)\le c_{N}\delta^{k_2}$. Meanwhile, since
$d_2(y_2,z_2)>(2A_0)^3c_{N}\delta^{k_2}$, it then follows that
$$
d_2(x_2,y_2)\ge\frac{2}{A_0}d_2(x_2,z_2)-d_2(y_2,z_2)>c_{(N)}\delta^{k_2},
$$
which further implies that $D_{2,k_2}(x_2,y_2)=0$. Moreover, by $x_{2,\alpha_2}^{k_2+j_0}
\in Q_{2,\alpha_2}^{k_2+j_0}$ and the choice of $j_0$, we find that
$$
d_2\left(x_{2,\alpha_2}^{k_2+j_0},z_2\right)\le A_0\left[d_2\left(x_{2,\alpha_2}^{k_2+j_0},z_{2,\alpha_2}^{k_2+j_0}\right)
+d_2\left(z_{2,\alpha_2}^{k_2+j_0},z_2\right)\right]\le 2A_0C^{\natural}\delta^{k_2+j_0}<\delta^{k_2}.
$$
From this, we deduce that
$$
d_2\left(x_2,x_{2,\alpha_2}^{k_2+j_0}\right)\ge\frac{2}{A_0}d_2(x_2,z_2)-d_2\left(x_{2,
\alpha_2}^{k_2+j_0},z_2\right)
>c_{N}\delta^{k_2},
$$
which further implies that $D_{2,k_2}^N(x_2,x_{2,\alpha_2}^{k_2+j_0})=0$. Therefore,
$D_{2,k_2}^N(x_2,y_2)-D_{2,k_2}^N(x_2,x_{2,\alpha_2}^{k_2+j_0})=0$, which leads a contradiction. Thus, when
$d_2(x_2,z_2)>2A_0\delta^{k_2}$, $D_{2,k_2}(x_2,z_2)=0$.
For simplicity of presentation, let $C\in(0,\infty)$ be
such that, for any $x_1, z_1\in X_1$ and $x_2, z_2\in X_2$, $G_{2,k_2}(x_2,z_2)\neq 0$
[resp.\ $G_{1,k_1}(x_1,z_1)=0$] implies $d_2(x_2,z_2)\le C\delta^{k_2}$ [resp.\ $d_1(x_1,z_1)\le C\delta^{k_1}$].
Moreover, since $G_{1,k_1}$ and $G_{2,k_2}$ have similar properties to $D_{1,k_1}$ and $D_{2,k_2}$
(for $G_{2,k_2}$, we have an additional term $\delta^{j_0\eta}$), it then follows that, for any
$k_1,  k_2,  j_1, j_2\in\mathbb Z$,
\begin{equation*}
\left\|(G_{k_1,k_2})^*G_{j_1,j_2}\right\|_{\mathcal L(L^2)}\lesssim\delta^{j_0\eta}\delta^{|k_1-j_1|\eta}
\delta^{|k_2-j_2|\eta}
\end{equation*}
and
\begin{equation*}
\left\|G_{k_1,k_2}(G_{j_1,j_2})^*\right\|_{\mathcal L(L^2)}\lesssim\delta^{j_0\eta}\delta^{|k_1-j_1|\eta}
\delta^{|k_2-j_2|\eta}.
\end{equation*}
By these and the Cotlar--Stein lemma (see \cite[Lemma 5.12]{HYY24}), we find that
$$
\left\|R_N^{(2,1)}\right\|_{\mathcal L(L^2)}\lesssim\delta^{j_0\eta}.
$$
This is the desired estimate.

Now, we estimate $\|R_N^{(2,1)}\|_{\mathcal L(H^p)}$. Indeed, for any $j_1',j_2'\in\mathbb Z$,
$\beta_1'\in\mathcal G_{1,j_1'}$, and $\beta_2'\in\mathcal G_{2,j_2'}$, we have
\begin{align}\label{eq-r2psi}
&\mu\left(Q_{\beta_1',\beta_2'}^{j_1'+1,j_2'+1}\right)^{-\frac{1}{2}}\left<R_N^{(2,1)}f,
\psi_{\beta_1',\beta_2'}^{j_1',j_2'}\right>
\nonumber\\
&\quad=\sum_{j_1,\beta_1}\sum_{j_2,\beta_2}\sum_{k_1,k_2\in\mathbb Z}\left<G_{k_1,k_2}\psi_{\beta_1,\beta_2}^{j_1,j_2},
\Psi_{\beta_1',\beta_2'}^{j_1',j_2'}\right>\left<f,\psi_{\beta_1,\beta_2}^{j_1,j_2}\right>\nonumber\\
&\quad=\sum_{j_1,\beta_1}\sum_{j_2,\beta_2}\mu\left(Q_{\beta_1,\beta_2}^{j_1+1,j_2+1}\right)
\mu\left(Q_{\beta_1,\beta_2}^{j_1+1,j_2+1}\right)^{-\frac{1}{2}}\left<f,
\psi_{\beta_1,\beta_2}^{j_1,j_2}\right>
\sum_{k_1, k_2 \in \mathbb{Z}}\left<G_{k_1,k_2}\Psi_{\beta_1,\beta_2}^{j_1,j_2},
\Psi_{\beta_1',\beta_2'}^{j_1',j_2'}\right>\nonumber\\
&\quad=\sum_{j_1,\beta_1}\sum_{j_2,\beta_2}\mu\left(Q_{\beta_1,\beta_2}^{j_1+1,j_2+1}\right)
\mu\left(Q_{\beta_1,\beta_2}^{j_1+1,j_2+1}\right)^{-\frac{1}{2}}\left<f,
\psi_{\beta_1,\beta_2}^{j_1,j_2}\right>\nonumber\\
&\quad\quad\times\sum_{k_1=-\infty}^\infty\left<G_{1,k_1}\Psi_{1,\beta_1}^{j_1},
\Psi_{1,\beta_1'}^{j_1'}\right>
\sum_{k_2=-\infty}^\infty\left<G_{2,k_2}\Psi_{2,\beta_2}^{j_2},\Psi_{2,\beta_2'}^{j_2'}\right>\nonumber\\
&\quad=\sum_{j_1,\beta_1}\sum_{j_2,\beta_2}\mu\left(Q_{\beta_1,\beta_2}^{j_1+1,j_2+1}\right)
\mu\left(Q_{\beta_1,\beta_2}^{j_1+1,j_2+1}\right)^{-\frac{1}{2}}\left<f,
\psi_{\beta_1,\beta_2}^{j_1,j_2}\right>
\left<H_{1,\beta_1}^{j_1},\Psi_{1,\beta_1'}^{j_1'}\right>\left<H_{2,
\beta_2}^{j_2},\Psi_{2,\beta_2'}^{j_2'}\right>,
\end{align}
here and thereafter, for any $i\in\{1,2\}$, $j_i\in\mathbb Z$, $\beta_i\in\mathcal G_{i,j_i}$, and $x_i\in X_i$,
$$
H_{i,\beta_i}^{j_i}(x_i):=\sum_{k_i=-\infty}^\infty G_{i,k_i}\Psi_{i,\beta_i}^{j_i}(x_i)
$$
Using an argument similar to the estimations of \eqref{eq-fpsi1} and \eqref{eq-fpsi2}, we find that, for any given $\eta'\in(0,\eta)$,
$$
\left|\left<H_{1,\beta_1}^{j_1},\Psi_{1,\beta_1'}^{j_1'}\right>\right|\lesssim\delta^{j_0\eta}
\delta^{|j_1-j_1'|\eta'}
P_{1,\eta'}\left(y_{1,\beta_1}^{j_1},y_{1,\beta_1'}^{j_1'};\delta^{j_1\wedge j_1'}\right)
$$
and
$$
\left|\left<H_{2,\beta_2}^{j_2},\Psi_{2,\beta_2'}^{j_2'}\right>\right|\lesssim\delta^{|j_2-j_2'|\eta'}
P_{2,\eta'}\left(y_{2,\beta_2}^{j_2},y_{2,\beta_2'}^{j_2'};\delta^{j_2\wedge j_2'}\right).
$$
By the above two inequalities, \eqref{eq-r2psi}, Proposition \ref{prop-estmax}, and H\"older's inequality, we conclude
that, for any $j_1',  j_2'\in\mathbb Z$ and $(x_1,x_2)\in X_1\times X_2$,
\begin{align*}
&\sum_{\beta_1'\in\mathcal G_{1,j_1'}}\sum_{\beta_2'\in\mathcal G_{2,j_2'}}
\left|\left<R_N^{(2,1)}f,\psi_{\beta_1',\beta_2'}^{j_1',j_2'}\right>
\widetilde{\mathbf 1}_{Q_{\beta_1',\beta_2'}^{j_1'+1,j_2'+1}}(x_1,x_2)\right|^2\\
&\quad\lesssim\delta^{j_0\eta}\sum_{\beta_1'\in\mathcal G_{1,j_1'}}\sum_{\beta_2'\in\mathcal G_{2,j_2'}}
\left|\sum_{j_1,\beta_1}\sum_{j_2,\beta_2}\delta^{|j_1-j_1'|\eta'}\delta^{|j_2-j_2'|\eta'}
\sum_{\beta_2\in\mathcal G_{2,j_2}}\mu\left(Q_{\beta_1,\beta_2}^{j_1+1,j_2+1}\right)
\mu\left(Q_{\beta_1,\beta_2}^{j_1+1,j_2+1}\right)^{-\frac 12}\right.\\
&\quad\quad\times\left.\left<f,\psi_{\beta_1,\beta_2}^{j_1,j_2}\right>
P_{1,\eta'}\left(y_{1,\beta_1}^{j_1},y_{1,\beta_1'}^{j_1'};\delta^{j_1\wedge j_1'}\right)
P_{2,\eta'}\left(y_{2,\beta_2}^{j_2},y_{2,\beta_2'}^{j_2'};\delta^{j_2\wedge j_2'}\right)
{\mathbf 1}_{Q_{\beta_1',\beta_2'}^{j_1'+1,j_2'+1}}(x_1,x_2)\right|^2\\
&\quad\sim\delta^{j_0\eta}\left|\sum_{j_1,\beta_1}\sum_{j_2,\beta_2}\delta^{|j_1-j_1'|\eta'}
\delta^{|j_2-j_2'|\eta'}
\mu\left(Q_{\beta_1,\beta_2}^{j_1+1,j_2+1}\right)\mu\left(Q_{\beta_1,\beta_2}^{j_1+1,j_2+1}
\right)^{-\frac12}
\left<f,\psi_{\beta_1,\beta_2}^{j_1,j_2}\right>\right.\\
&\quad\quad\times\Biggl.P_{1,\eta'}\left(x_1,y_{1,\beta_1}^{j_1};\delta^{j_1\wedge j_1'}\right)
P_{2,\eta'}\left(x_2,y_{2,\beta_2}^{j_2};\delta^{j_2\wedge j_2'}\right)\Biggr|^2\\
&\quad\lesssim\delta^{j_0\eta}\left\{\sum_{j_1,j_2\in\mathbb Z}\delta^{|j_1-j_1'|\eta-[j_1-(j_1\wedge j_1')]\omega(\frac{1}{r}-1)}
\delta^{|j_2-j_2'|\eta-[j_1-(j_1\wedge j_1')]\omega(\frac{1}{r}-1)}\right.\\
&\quad\quad\left.\times\left[\mathcal M_{\rm str}\left(\sum_{\beta_1\in\mathcal G_{1,j_1}}
\sum_{\beta_2\in\mathcal G_{2,j_2}}\left|\left<f,\psi_{\beta_1,\beta_2}^{j_1,j_2}\right>
\widetilde{\mathbf 1}_{Q_{\beta_1,\beta_2}^{j_1+1,j_2+1}}\right|^r\right)(x_1,x_2)\right]^{\frac 1r}\right\}^2\\
&\quad\lesssim\delta^{j_0 \eta}\sum_{j_1,j_2\in\mathbb Z}\delta^{|j_1-j_1'|\eta-[j_1-(j_1\wedge j_1')]\omega(\frac{1}{r}-1)}
\delta^{|j_2-j_2'|\eta-[j_1-(j_1\wedge j_1')]\omega(\frac{1}{r}-1)}\\
&\quad\quad\times\left[\mathcal M_{\rm str}\left(\sum_{\beta_1\in\mathcal G_{1,j_1}}
\sum_{\beta_2\in\mathcal G_{2,j_2}}\left|\left<f,\psi_{\beta_1,\beta_2}^{j_1,j_2}\right>
\widetilde{\mathbf 1}_{Q_{\beta_1,\beta_2}^{j_1+1,j_2+1}}\right|^r\right)(x_1,x_2)\right]^{\frac 2r},
\end{align*}
where $r\in (\frac{\omega}{\omega+\eta'},1]$. Choosing $\eta'\in(0,\eta)$ such that $\frac{\omega}{\omega+\eta'}<p$,
together with $r\in(p,1]$ and Proposition \ref{prop-fs}, we find that
\begin{align*}
\left\|R^{(2,1)}_Nf\right\|_{H^p}
&=\left\|\left(\sum_{j_1',\beta_1'}\sum_{j_2',\beta_2'}\left|\left<R_N^{(2,1)}f,
\psi_{\beta_1',\beta_2'}^{j_1',j_2'}\right>
\widetilde{\mathbf 1}_{Q_{\beta_1',\beta_2'}^{j_1'+1,j_2'+1}}\right|^2\right)^{\frac{1}{2}}\right\|_{L^p}\\
&\lesssim\delta^{j_0\eta}\left\|\left\{\sum_{j_1', j_2'\in\mathbb Z}\sum_{j_1,j_2\in\mathbb Z}
\delta^{|j_1-j_1'|\eta-[j_1-(j_1\wedge j_1')]\omega(\frac{1}{r}-1)}
\delta^{|j_2-j_2'|\eta-[j_1-(j_1\wedge j_1')]\omega(\frac{1}{r}-1)}\right.\right.\\
&\quad\times\left.\left.\left[\mathcal M_{\rm str}\left(\sum_{\beta_1\in\mathcal G_{1,j_1}}\sum_{\beta_2\in\mathcal G_{2,j_2}}
\left|\left<f,\psi_{\beta_1,\beta_2}^{j_1,j_2}\right>
\widetilde{\mathbf 1}_{Q_{\beta_1,\beta_2}^{j_1+1,j_2+1}}\right|^r\right)
\right]^{\frac{2}{r}}\right\}^{\frac{r}{2}}\right\|_{L^{\frac{p}{r}}}^{\frac{1}{r}}\\
&\lesssim\delta^{j_0\eta}\left\|\left[\sum_{j_1, j_2\in\mathbb Z}\left(\sum_{\beta_1\in\mathcal G_{1,j_1}}
\sum_{\beta_2\in\mathcal G_{2,j_2}}\left|\left<f,\psi_{\beta_1,\beta_2}^{j_1,j_2}\right>
\widetilde{\mathbf 1}_{Q_{\beta_1,\beta_2}^{j_1+1,j_2+1}}\right|^r\right)^{\frac{2}{r}}
\right]^{\frac{r}{2}}\right\|_{L^{\frac{p}{r}}}^{\frac{1}{r}}\\
&\sim\delta^{j_0\eta}\left\|\left[\sum_{j_1,\beta_1}\sum_{j_2,\beta_2}\left|\left<f,
\psi_{\beta_1,\beta_2}^{j_1,j_2}\right>
\widetilde{\mathbf 1}_{Q_{\beta_1,\beta_2}^{j_1+1,j_2+1}}\right|^{2}\right]^{\frac{1}{2}}\right\|_{L^{p}}
\sim\delta^{j_0\eta}\|f\|_{H^p}.
\end{align*}
Applying an argument similar to that used in the above estimation, we find that
\begin{equation}\label{eq-rn23l2}
\max\left\{\left\|R_N^{(2)}\right\|_{\mathcal L(L^2)},\left\|R_N^{(3)}\right\|_{\mathcal L(L^2)}\right\}
\lesssim\delta^{j_0\eta}
\end{equation}
and
\begin{equation}\label{eq-rn23hp}
\max\left\{\left\|R_N^{(2)}\right\|_{\mathcal L(H^p)},\left\|R_N^{(3)}\right\|_{\mathcal L(H^p)}\right\}
\lesssim\delta^{j_0\eta},
\end{equation}
where the implicit positive constants dependent on $N$ but independent of $j_0$.

Now, by \eqref{eq-rn1bdd}, we choose $N\in\mathbb N$ sufficiently large such that
$$
\left\|R_{N}^{(1)}\right\|_{\mathcal L(L^2)}<\frac 14
\textup{ and }\left\|R_{N}^{(1)}\right\|_{\mathcal L(H^p)}^p<\frac 1{2^{p+1}};
$$
fix such $N$ and, by \eqref{eq-rn23l2} and \eqref{eq-rn23hp}, we   choose $j_0\in\mathbb N$ such that
$$
\left\|R_{N}^{(2)}+R_N^{(3)}\right\|_{\mathcal L(L^2)}<\frac 14\textup{ and }
\left\|R_{N}^{(2)}+R_N^{(3)}\right\|_{\mathcal L(H^p)}^p\le\frac 1{2^{p+1}}.
$$
We then obtain \eqref{eq-ctrfaim}.

By \eqref{eq-ctrfaim}, we find that $T_N$ is invertible in $L^2$,
\begin{equation}\label{eq-tnl2}
\left\|T_N^{-1}\right\|_{\mathcal L(L^2)}\sim 1,
\end{equation}
and, for any $f\in L^2\cap H^p$,
$$
T_N^{-1}f=\sum_{j=0}^\infty(R_N)^jf
$$
in $L^2$. By this and Minkowski's inequality, we further obtain, for any $(x_1,x_2)\in X_1\times X_2$,
\begin{equation}\label{eq-tnhp}
S\left(T_N^{-1}f\right)(x_1,x_2)=S\left(\sum_{j=0}^\infty (R_N)^jf\right)(x_1,x_2)\le\sum_{j=0}^\infty
S\left((R_N)^jf\right)(x_1,x_2),
\end{equation}
which further implies that
\begin{align*}
 \left\|S\left(T_N^{-1}f\right)\right\|_{L^p}^p
\le\sum_{j=0}^\infty\left\|S\left((R_N)^jf\right)\right\|_{L^p}^p
&=\sum_{j=0}^\infty\left\|(R_N)^jf\right\|_{H^p}^p\lesssim 1.
\end{align*}
Letting $g:=T_N^{-1}f$, we then obtain \eqref{eq-ctype}.
By \eqref{eq-tnl2} and \eqref{eq-ctrfaim} we obtain
\begin{equation*}
\|g\|_{L^2}=\left\|T_N^{-1}f\right\|_{L^2}\le\left\|T_N^{-1}\right\|_{\mathcal L(L^2)}\|f\|_{L^2}
\lesssim\|f\|_{L^2}\sim\|T_Ng\|_{L^2}\lesssim\|I-R_N\|_{\mathcal L(L^2)}\|f\|_{L^2}
\lesssim\|f\|_{L^2},
\end{equation*}
and using \eqref{eq-tnhp} and \eqref{eq-ctrfaim} again we find that
\begin{align*}
\|g\|_{H^p}=\left\|S\left(T_N^{-1}\right)f\right\|_{L^p}\lesssim\|f\|_{H^p}
=\left\|(I-R_N)f\right\|_{H^p}\lesssim\|f\|_{H^p}.
\end{align*}
This finishes the proof of Theorem \ref{thm-ctype}.
\end{proof}

\subsection{Proof of
$H^p\subset H^{p,q}_{\mathrm at}$
(``Only If'' Part of Theorem \ref{thm-h=a})} \label{ss-h<a}

In this subsection, using the new Calder\'on-type reproducing
formula in Section \ref{ss-ctrf}, we give the proof of
$H^p\subset H^{p,q}_{\mathrm at}$ with $p$ and $q$ the same as in
Theorem \ref{thm-h=a} (namely the ``only if''
part of Theorem \ref{thm-h=a}). To achieve this, we first establish several
technical lemmas.

\begin{lemma}\label{lem-slp}
Let $p\in(\frac{\omega}{\omega+\eta},1]$ with $\omega$ and $\eta$ the same, respectively, as in \eqref{eq-doub}
and Theorem \ref{thm-wave}, and let $\{D_{k_1,k_2}\}_{k_1,k_2\in\mathbb Z}$ be the same as in Theorem \ref{thm-ctype}.
Suppose that $m\in\mathbb Z_+$ and, for any $k_1,k_2\in\mathbb Z$, $\alpha_1\in\mathcal A_{1,k_1}$, and
$\alpha_2\in\mathcal A_{2,k_2}$, $(x_{1,\alpha_1}^{k_1+m},x_{2,\alpha_2}^{k_2+m})\in Q_{\alpha_1,\alpha_2}^{k_1+m,k_2+m}$
is an arbitrary point. Then there exists a positive constant $C$, independent of
$(x_{1,\alpha_1}^{k_1+m_1},x_{2,\alpha_2}^{k_2+m_2})$, such that, for any $f\in H^p$,
\begin{align*}
&\left\|\left[\sum_{\genfrac{}{}{0pt}{}{k_1\in{\mathbb Z}}{\alpha_1\in\mathcal  A_{1,k_1+m}}}
\sum_{\genfrac{}{}{0pt}{}{k_2\in{\mathbb Z}}{\alpha_2\in\mathcal A_{2,k_2+m}}}\left|D_{k_1,k_2}f
\left(x_{1,\alpha_1}^{k_1+m_1},x_{2,\alpha_2}^{k_2+m_2}\right)\right|^2
\mathbf 1_{Q_{\alpha_1,\alpha_2}^{k_1+m,k_2+m}}\right]^{1/2}\right\|_{L^p} \le C\|f\|_{H^p}.\nonumber
\end{align*}
\end{lemma}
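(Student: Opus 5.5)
We reduce first to $f\in H^p_{L^2}$ (or to the finite sums $f_N$ from Proposition \ref{prop-dense}) and pass to general $f\in H^p$ by density and Fatou's lemma at the end. The interpretation is that $D_{k_1,k_2}f(x_1,x_2)=\langle f,D_{k_1,k_2}(x_1,x_2,\cdot,\cdot)\rangle$, where the kernel, for fixed $(x_1,x_2)$, lies in ${\rm CMO}^p_{L^2}$. For such $f$ we expand via the wavelet reproducing formula \eqref{eq-re}:
$$
D_{k_1,k_2}f(x_1,x_2)=\sum_{j_1,\beta_1}\sum_{j_2,\beta_2}\left\langle f,\psi_{\beta_1,\beta_2}^{j_1,j_2}\right\rangle D_{1,k_1}\psi_{1,\beta_1}^{j_1}(x_1)\,D_{2,k_2}\psi_{2,\beta_2}^{j_2}(x_2).
$$
The task then reduces to one-parameter almost-orthogonality estimates, exactly as in the proofs of Proposition \ref{prop-hpweq} and Theorem \ref{thm-ctype}.

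\textbf{Key kernel estimate.} For any $\eta'\in(0,\eta)$ and $\Gamma$ large,
$$
\left|D_{i,k_i}\Psi_{i,\beta_i}^{j_i}(x_i)\right|\lesssim \delta^{|k_i-j_i|\eta'}P_{i,\Gamma}\left(x_i,y_{i,\beta_i}^{j_i};\delta^{k_i\wedge j_i}\right),
$$
with $\Psi=\psi/\sqrt{\mu(Q^{j_i+1})}$. The two regimes are handled as follows.
\begin{itemize}
\item If $j_i\ge k_i$, we use the cancellation of $\psi$ together with the H\"older regularity of $D_{i,k_i}(x_i,\cdot)=S_k-S_{k-1}$ (property (iii) of $S_k$).
\item If $j_i<k_i$, we use the cancellation of $D_{i,k_i}$, which integrates to $0$ by property (v), together with the regularity of $\psi$.
\end{itemize}
The bounded support of $S_k$ makes the tails trivial. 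This is the same type of estimate as \eqref{eq-ddesize}, with one fewer operator, so it can be cited in the same way (via \cite[Lemma 5.9]{HYY24}). Since $x_{i,\alpha_i}^{k_i+m}$ and any $x_i\in Q_{i,\alpha_i}^{k_i+m}$ are at distance $\lesssim\delta^{k_i}$, the bound holds equally with $x_i$ in place of the sample point, uniformly in the choice of sample points (with constants depending on $m$).

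\textbf{Square function argument.} Combining the kernel estimate with the expansion, we apply Proposition \ref{prop-estmax} with $r\in(\frac{\omega}{\omega+\eta'},p)$. This gives, for $(x_1,x_2)\in Q_{\alpha_1,\alpha_2}^{k_1+m,k_2+m}$,
$$
\left|D_{k_1,k_2}f\left(x_{1,\alpha_1}^{k_1+m},x_{2,\alpha_2}^{k_2+m}\right)\right|\lesssim \sum_{j_1,j_2}\prod_{i=1}^2\delta^{|k_i-j_i|\eta'-[j_i-(j_i\wedge k_i)]\omega(\frac1r-1)}\left[\mathcal M_{\rm str}\left(\sum_{\beta_1,\beta_2}\left|\left\langle f,\psi_{\beta_1,\beta_2}^{j_1,j_2}\right\rangle\widetilde{\mathbf 1}_{Q^{j_1+1,j_2+1}_{\beta_1,\beta_2}}\right|^r\right)\right]^{\frac1r}.
$$
Here we used that $\mu(Q^{j+1})\sim\mu(Q^j)$ to move between normalizations. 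Because the cubes $Q_{\alpha_1,\alpha_2}^{k_1+m,k_2+m}$ are disjoint for fixed $(k_1,k_2)$, summing over $\alpha$ just replaces the indicator by $1$. We choose $\eta'$ with $\eta'>\omega(\frac1r-1)$, which is possible because $p>\frac{\omega}{\omega+\eta}$. The exponent is then summable in $k_i-j_i$, so by Cauchy--Schwarz the squared sum over $(k_1,k_2)$ is controlled by $\sum_{j_1,j_2}[\mathcal M_{\rm str}(\cdots)]^{2/r}$. Finally, Proposition \ref{prop-fs} in $L^{p/r}(\ell^{2/r})$, with $p/r>1$ and $2/r>1$, yields the bound $\|S(f)\|_{L^p}=\|f\|_{H^p}$, exactly as at the end of the proof of Proposition \ref{prop-hpweq}.

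\textbf{Main obstacle.} The delicate step is justifying the interchange: for general $f\in H^p$ we need $D_{k_1,k_2}f(x)$ to be well defined and given by the wavelet series. We handle this by proving the inequality for $f_N$ and then letting $N\to\infty$. Pointwise convergence $D_{k_1,k_2}f_N(x)\to D_{k_1,k_2}f(x)$ follows from the convergence of $f_N$ to $f$ in $({\rm CMO}^p_{L^2})'$, and Fatou's lemma then transfers the bound to $f$. A secondary care point is obtaining the kernel estimate uniformly over the arbitrary sample points.
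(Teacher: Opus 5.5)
Your proposal is correct and follows the paper's proof. Like the paper, you expand $D_{k_1,k_2}f$ at the sample points through the wavelet reproducing formula, bound each one-parameter pairing by $\delta^{|k_i-j_i|\eta}P_{i,\Gamma}(\cdot,y_{i,\beta_i}^{j_i};\delta^{k_i\wedge j_i})$, and finish with Proposition \ref{prop-estmax} and the Fefferman--Stein inequality (Proposition \ref{prop-fs}) with $r<p$; that choice of $r$ is the correct one, and the paper's ``$r\in(p,1]$'' is a misprint.

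The only difference is that you pass through the finite sums $f_N$ and Fatou's lemma, whereas the paper applies the reproducing formula in $(\mathrm{CMO}^p_{L^2})'$ directly to the test function $D_{k_1,k_2}(\cdot,*,x_{1,\alpha_1}^{k_1},x_{2,\alpha_2}^{k_2})$. Both versions tacitly use that this kernel lies in $\mathrm{CMO}^p_{L^2}$, which does hold for $p>\frac{\omega}{\omega+\eta}$.
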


\begin{proof}
Since the positive constant $C$ may be allowed to depend on $m$, without loss of generality, we may assume $m=0$. Let $f\in H^p$.
Then we have  $f\in(\mathrm{CMO}^p_{L^2})'$ with $\vec\beta, \vec\gamma\in(\omega(1/p-1),\eta)^2$
and $\omega$ and $\eta$ as, respectively, in \eqref{eq-doub} and Theorem \ref{thm-wave},
and hence
$$
f(\cdot)=\sum_{\genfrac{}{}{0pt}{}{j_1\in{\mathbb Z}}{\beta_1\in\mathcal G_{1,j_1}}}
\sum_{\genfrac{}{}{0pt}{}{j_2\in{\mathbb Z}}{\beta_2\in\mathcal G_{2,j_2}}}
\left<f,\psi_{\beta_1,\beta_2}^{j_1,j_2}\right>\psi_{\beta_1,\beta_2}^{j_1,j_2}(\cdot)
$$
in $(\mathrm{CMO}^p_{L^2})'$.
Thus, we have
\begin{align*}
&D_{k_1,k_2}f\left(x_{1,\alpha_1}^{k_1},x_{2,\alpha_2}^{k_2}\right)\\
&\quad=\sum_{j_1,\beta_1}\sum_{j_2,\beta_2}
\left<D_{k_1,k_2}\left(\cdot,*,x_{1,\alpha_1}^{k_1},x_{2,\alpha_2}^{k_2}\right),
\psi_{\beta_1,\beta_2}^{j_1,j_2}\right>\left<f,\psi_{\beta_1,\beta_2}^{j_1,j_2}\right>\\
&\quad=\sum_{j_1,\beta_1}\sum_{j_2,\beta_2}\mu\left(Q_{\beta_1,\beta_2}^{j_1,j_2}\right)
\left<D_{k_1,k_2}\left(\cdot,*,x_{1,\alpha_1}^{k_1},x_{2,\alpha_2}^{k_2}\right),
\Psi_{\beta_1,\beta_2}^{j_1,j_2}\right>
\mu\left(Q_{\beta_1,\beta_2}^{j_1,j_2}\right)^{-\frac{1}{2}}\left<f,
\psi_{\beta_1,\beta_2}^{j_1,j_2}\right>,
\end{align*}
where, for any $j_1,j_2\in\mathbb Z$, $\beta_1\in\mathcal G_{1,j_1}$, and $\beta_2\in\mathcal G_{2,j_2}$,
$\Psi_{\beta_1,\beta_2}^{j_1,j_2}=\Psi_{1,\beta_1}^{j_1}\otimes\Psi_{2,\beta_2}^{j_2}$.
Using an argument similar to the one-parameter estimation in \cite[Proposition 2.5]{hhl16},
 we conclude that,
for any $j_1, j_2, k_1, k_2\in\mathbb Z$, $\alpha_1\in\mathcal A_{1,k_1}$, $\alpha_2\in\mathcal A_{2,k_2}$, $\beta_1\in\mathcal G_{1,j_1}$,
$\beta_2\in\mathcal G_{2,j_2}$, and $(x_{1,\alpha_1}^{k_1},x_{2,\alpha_2}^{k_2})\in Q_{\alpha_1,\alpha_2}^{k_1,k_2}$,
\begin{align*}
&\left|\left<D_{k_1,k_2}\left(\cdot,*,x_{1,\alpha_1}^{k_1},x_{2,\alpha_2}^{k_2}\right),
\Psi_{\beta_1,\beta_2}^{j_1,j_2}\right>\right|\\
&\quad\lesssim\delta^{|k_1-j_1|\eta}\delta^{|k_2-j_2|\eta}
P_{1,\Gamma}\left(x_{1,\alpha_1}^{k_1},y_{1,\beta_1}^{j_1};
\delta^{j_1\wedge k_1}\right)P_{2,\Gamma}\left(x_{2,\alpha_2}^{k_2},y_{2,\beta_2}^{j_2};
\delta^{j_2\wedge k_2}\right),
\end{align*}
where $\Gamma\in(0,\infty)$ is fixed, which, from Proposition \ref{prop-estmax}, 
further implies that,
for any $k_1,k_2\in\mathbb Z$ and $(x_1,x_2)\in X_1\times X_2$,
\begin{align*}
&\sum_{\alpha_1\in\mathcal A_{1,k_1}}\sum_{\alpha_2\in\mathcal A_{2,k_2}}
\left|D_{k_1,k_2}f\left(x_{1,\alpha_1}^{k_1},x_{2,\alpha_2}^{k_2}\right)\right|^2
\mathbf 1_{Q_{\alpha_1,\alpha_2}^{k_1,k_2}}(x_1,x_2)\\
&\quad\lesssim\sum_{\alpha_1\in\mathcal A_{1,k_1}}\sum_{\alpha_2\in\mathcal A_{2,k_2}}\left[\sum_{j_1,\beta_1}\sum_{j_2,\beta_2}
\delta^{|k_1-j_1|\eta}\delta^{|k_2-j_2|\eta}
P_{1,\Gamma}\left(x_{1,\alpha_1}^{k_1},y_{1,\beta_1}^{j_1};\delta^{j_1\wedge k_1}\right)\right.\\
&\quad\quad\Biggl.{}\times
P_{2,\Gamma}\left(x_{2,\alpha_2}^{k_2},y_{2,\beta_2}^{j_2};\delta^{j_2\wedge k_2}\right)
\mu\left(Q_{\beta_1,\beta_2}^{j_1+1,j_2+1}\right)^{-\frac{1}{2}}
\left|\left<f,\psi_{\beta_1,\beta_2}^{j_1,j_2}\right>\right|\mathbf 1_{Q_{\alpha_1,\alpha_2}^{k_1,k_2}}(x_1,x_2)\Biggr]^2\\
&\quad\lesssim\sum_{j_1, j_2\in\mathbb Z}\delta^{|k_1-j_1|\eta}\delta^{|k_2-j_2|\eta}
\left[\sum_{\beta_1\in\mathcal G_{1,j_1}}\sum_{\beta_2\in\mathcal G_{2,j_2}}
P_{1,\Gamma}\left(x_1,y_{1,\beta_1}^{j_1};\delta^{j_1\wedge k_1}\right)
P_{2,\Gamma}\left(x_2,y_{2,\beta_2}^{j_2};\delta^{j_2\wedge k_2}\right)\right.\\
&\quad\quad\Biggl.{}\times\mu\left(Q_{\beta_1,\beta_2}^{j_1+1,j_2+1}\right)^{- \frac{1}{2}}
\left<f,\psi_{\beta_1,\beta_2}^{j_1,j_2}\right>\Biggr]^2\\
&\quad\lesssim\sum_{j_1, j_2\in\mathbb Z}
\delta^{|k_1-j_1|\eta-\omega(\frac{1}{r}-1)[k_1-(k_1\wedge j_1)]}
\delta^{|k_2-j_2|\eta-\omega(\frac{1}{r}-1)[k_2-(k_2\wedge j_2)]}\\
&\quad\quad\times\left[\mathcal M_{\rm str}\left(\sum_{\beta_1\in\mathcal G_{1,j_1}}\sum_{\beta_2\in\mathcal G_{2,j_2}}
\left|\left<f,\psi_{\beta_1,\beta_2}^{j_1,j_2}\right>
\widetilde{\mathbf 1}_{Q_{\beta_1,\beta_2}^{j_1+1,j_2+1}}\right|^r\right)(x_1,x_2)\right]^{\frac 2r},
\end{align*}
where $r\in(\frac{\omega}{\omega+\Gamma},1]$.
Since $p\in(\frac{\omega}{\omega+\eta},1]$,
we can choose $\Gamma\in(0,\infty)$ sufficiently large such that $\frac\omega{\omega+\Gamma}<p$.
Then we  choose $r\in(p,1]$ and use Proposition \ref{prop-fs} to conclude that
\begin{align*}
&\left\|\left[\sum_{k_1=-\infty}^\infty\sum_{k_2=-\infty}^\infty
\sum_{\alpha_1\in\mathcal A_{1,k_1}}\sum_{\alpha_2\in\mathcal A_{2,k_2}}
\left|D_{k_1,k_2}f\left(x_{1,\alpha_1}^{k_1},x_{2,\alpha_2}^{k_2}\right)\right|^2
\mathbf 1_{Q_{\alpha_1,\alpha_2}^{k_1,k_2}}\right]^{\frac{1}{2}}\right\|_{L^p}\\
&\quad\lesssim\left\|\left\{\sum_{k_1,k_2\in\mathbb Z}\sum_{j_1,j_2\in\mathbb Z}\delta^{|k_1-j_1|\eta-\omega(\frac{1}{r}-1)[k_1-(k_1\wedge j_1)]}
\delta^{|k_2-j_2|\eta-\omega(\frac{1}{r}-1)[k_2-(k_2\wedge j_2)]}\right.\right.\\
&\quad\quad\left.\left.\times\left[\mathcal M_{\rm str}\left(\sum_{\beta_1\in\mathcal G_{1,j_1}}\sum_{\beta_2\in\mathcal G_{2,j_2}}
\left|\left<f,\psi_{\beta_1,\beta_2}^{j_1,j_2}\right>
\widetilde{\mathbf 1}_{Q_{\beta_1,\beta_2}^{j_1+1,j_2+1}}\right|^r\right)\right]^{\frac{2}{r}}
\right\}^{\frac{1}{2}}\right\|_{L^p}\\
&\quad\sim\left\|\left\{\sum_{j_1,j_2\in\mathbb Z}\left[\mathcal M_{\rm str}\left(\sum_{\beta_1\in\mathcal G_{1,j_1}}
\sum_{\beta_2\in\mathcal G_{2,j_2}}\left|\left<f,\psi_{\beta_1,\beta_2}^{j_1,j_2}\right>
\widetilde{\mathbf 1}_{Q_{\beta_1,\beta_2}^{j_1+1,j_2+1}}\right|^r\right)\right]^{\frac{2}{r}}
\right\}^{\frac{r}{2}}\right\|_{L^{\frac pr}}^{\frac{1}{r}}\\
&\quad\lesssim\left\|\left\{\sum_{j_1,\beta_1}\sum_{j_2,\beta_2}
\left|\left<f,\psi_{\beta_1,\beta_2}^{j_1,j_2}\right>
\widetilde{\mathbf 1}_{Q_{\beta_1,\beta_2}^{j_1+1,j_2+1}}\right|^2\right\}^{\frac{r}{2}}\right\|_{L^{\frac pr}}^{\frac{1}{r}}
\sim\|f\|_{H^p}.
\end{align*}
This finishes the proof of Lemma \ref{lem-slp}.
\end{proof}

Now, we give the atomic decomposition theorem of $H^p$.

\begin{proposition}\label{prop-atd}
Let $p\in(\frac{\omega}{\omega+\eta}, 1] $ with $\omega$ and $\eta$ as,
respectively, in \eqref{eq-doub} and Theorem \ref{thm-wave}, $q\in(1,\infty)$, and $C_0$ be sufficiently large.
Then, for any $f\in H^p$, there exist a sequence of $(p,q,C_0)$-atoms $\{a_j\}_{j=1}^\infty$ and
a sequence $\{\lambda_j\}_{j=1}^\infty$ in $\mathbb C$ such that $f=\sum_{j=1}^\infty\lambda_ja_j$
and the series converges in $(\mathrm{CMO}^p_{L^2})'$. Moreover, there exists a constant $C\in(0,\infty)$,
independent of $f$, such that
$$
\left\{\sum_{j=1}^\infty\left|\lambda_j\right|^p\right\}^{\frac{1}{p}}\le C\|f\|_{H^p}.
$$
\end{proposition}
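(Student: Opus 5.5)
We follow the Chang--Fefferman stopping-time scheme, using Theorem \ref{thm-ctype} in place of the wavelet expansion. By Proposition \ref{prop-dense}(i) and the completeness in Proposition \ref{prop-com1}, it suffices to treat $f\in L^2\cap H^p$ with uniform constants. The general case then follows by approximating $f$ in $H^p$ by $f_n\in L^2\cap H^p$ with $\|f_{n+1}-f_n\|_{H^p}^p\le 2^{-n}\|f\|_{H^p}^p$, decomposing each difference, and summing; the resulting series converges in $({\rm CMO}^p_{L^2})'$ by Proposition \ref{prop-fg}.

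For $f\in L^2\cap H^p$, write $f$ via \eqref{eq-ctype} as
\[
f=\sum_{R}\mu(R)\,D^N_{k_1,k_2}(\cdot,*,x_R)\,D_{k_1,k_2}g(x_R),
\]
where $R=Q^{k_1+j_0}_{\alpha_1}\times Q^{k_2+j_0}_{\alpha_2}$ runs over dyadic rectangles and $\|g\|_{H^p}\sim\|f\|_{H^p}$. The sample points $x_R\in R$ will be chosen suitably below. Define the discrete square function
\[
\widetilde S(g):=\Bigl[\sum_R |D_{k_1,k_2}g(x_R)|^2\mathbf 1_R\Bigr]^{1/2}.
\]
Lemma \ref{lem-slp} gives $\|\widetilde S(g)\|_{L^p}\lesssim\|g\|_{H^p}\lesssim\|f\|_{H^p}$. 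For $l\in\mathbb Z$, set $\Omega_l:=\{\widetilde S(g)>2^l\}$ and $\widetilde\Omega_l:=\{\mathcal M_{\rm str}(\mathbf 1_{\Omega_l})>c\}$, and let
\[
\mathcal B_l:=\Bigl\{R:\ \mu(R\cap\Omega_l)>\tfrac12\mu(R),\ \mu(R\cap\Omega_{l+1})\le\tfrac12\mu(R)\Bigr\},
\]
exactly as in the proof of Proposition \ref{prop-dual}. Each $R$ with nonzero coefficient lies in exactly one $\mathcal B_l$. Put
\[
\lambda_l:=C\,2^l\mu(\widetilde\Omega_l)^{1/p},\qquad a_l:=\lambda_l^{-1}\sum_{R\in\mathcal B_l}\mu(R)D^N_{k_1,k_2}(\cdot,*,x_R)D_{k_1,k_2}g(x_R).
\]
Then $\sum_l|\lambda_l|^p\sim\sum_l2^{lp}\mu(\Omega_l)\sim\|\widetilde S(g)\|_{L^p}^p\lesssim\|f\|_{H^p}^p$, using \eqref{eq-omegasim}.

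Next we check the atom conditions. Each $R\in\mathcal B_l$ satisfies $R\subset\widetilde\Omega_l$. Since $D^N_{k_1,k_2}(\cdot,*,x_R)$ is supported in $[c_NB_1(Q_1)]\times[c_NB_2(Q_2)]$, the support lies in $\{\mathcal M_{\rm str}(\mathbf 1_{\widetilde\Omega_l})>c'\}$, which has measure $\sim\mu(\Omega_l)$ by Proposition \ref{prop-fs}. Here we use Lemma \ref{lem-basic1}(ii) with $\tau\sim c_N$, and we may enlarge $\widetilde\Omega_l$ accordingly. The cancellation in each variable comes from $\int D_{i,k_i}=0$, which holds since $\int S_k=1$. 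For the sub-decomposition $a_l=\sum a_{l,\bar R}$, we group each $R\in\mathcal B_l$ under a maximal dyadic rectangle $\bar R\in\mathcal M(\widetilde\Omega_l)$ containing it (for $q\ge2$), or under $\mathcal M_1$/$\mathcal M_2$ (for $q\in(1,2)$). The supports then sit in $C_0B_1(\bar Q_1)\times C_0B_2(\bar Q_2)$ for $C_0$ large depending on $N,j_0$.

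The size estimates go by duality. For $h\in L^{q'}$ with $\|h\|_{L^{q'}}\le1$,
\[
|\langle \lambda_la_l,h\rangle|\le\int\Bigl[\sum_{R\in\mathcal B_l}|D_{k_1,k_2}g(x_R)|^2\mathbf 1_R\Bigr]^{1/2}\Bigl[\sum_R|D^N_{k_1,k_2}h(x_R)|^2\mathbf 1_R\Bigr]^{1/2},
\]
after replacing $\mu(R)$ times the pointwise value by an integral over $R$. The second factor is bounded in $L^{q'}$ by the product Littlewood--Paley/maximal estimate: Proposition \ref{prop-fs} applied to a majorant of $D^N h$ by $\mathcal M_{\rm str}h$, together with Lemma \ref{lem-bds}-type square-function bounds for $\{D_k\}$. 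The first factor is supported in $\widetilde\Omega_l\setminus\Omega_{l+1}$ up to half measure. The standard argument of \eqref{eq-cl}, run with $q$ in place of $2$ via $\mathcal M_{\rm str}(\mathbf 1_{\widetilde\Omega_l\setminus\Omega_{l+1}})\gtrsim 1$ on each $R$ and the Fefferman--Stein inequality, gives the bound $\lesssim 2^l\mu(\Omega_l)^{1/q}$. Hence $\|a_l\|_{L^q}\lesssim\mu(\widetilde\Omega_l)^{1/q-1/p}$. The same computation restricted to each group gives (c), namely $\sum_{\bar R}\|a_{l,\bar R}\|_{L^q}^q\lesssim\mu(\widetilde\Omega_l)^{1-q/p}$, when $q\ge2$, using the disjointness of the sets $\{R\subset\bar R\}$ and $\ell^{q/2}\subset$ triangle inequalities.

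The main obstacle is (c)$'$ for $q\in(1,2)$, where the $L^q$ norms of the pieces are not additive. Here we would attach the weight $[\ell(Q_j)/\ell(\widehat{Q_j})]^\epsilon$, split the pieces according to the dyadic ratio $\ell(Q_2)/\ell(\widehat{Q_2})=\delta^i$, and bound each level by duality as above. Then we sum over $i$ using Lemma \ref{lem-j}, though we expect this step may need the vector-valued bound of Proposition \ref{prop-vector}. Finally, convergence of $\sum_l\lambda_la_l$ to $f$ in $({\rm CMO}^p_{L^2})'$ follows from the $L^2$ and $H^p$ convergence in Theorem \ref{thm-ctype} and Proposition \ref{prop-fg}.
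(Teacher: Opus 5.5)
Your scheme is the paper's. It uses a stopping time on the sampled square function controlled by Lemma \ref{lem-slp}, $\lambda_l\sim2^l\mu(\widetilde\Omega_l)^{1/p}$, the size bound by duality plus Fefferman--Stein localization to $\widetilde\Omega_l\setminus\Omega_{l+1}$, and superadditivity of $t\mapsto t^{q/2}$ for (c) when $q\ge2$. Using \eqref{eq-ctype} instead of the paper's \eqref{eq-ctype2}, and enlarging the support set, are harmless; you are in fact more careful about supports than the paper. One minor point: the $L^{q'}$ bound on $[\sum_R|D^N_{k_1,k_2}h(x_R)|^2\mathbf 1_R]^{1/2}$ should be proved like Lemma \ref{lem-slp} (wavelet expansion, Proposition \ref{prop-estmax}, Proposition \ref{prop-fs}, Lemma \ref{lem-bds}). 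A pointwise majorant by $\mathcal M_{\rm str}h$ loses the square sum.

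The genuine gap is (c)$'$ for $q\in(1,2)$, which you leave open, and the route you sketch does not close it by itself. Running the duality argument on all pieces at a fixed level $\ell(Q_2)/\ell(\widehat{Q_2})=\delta^i$ controls the norm of their sum. But (c)$'$ needs $\sum\|a_{R_1}\|_{L^q}^q$, and for $q<2$ and overlapping $R_1$ this cannot be recovered from the norm of the sum.

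The missing idea is to bound each piece by the measure of its own rectangle. For $R_1\in\mathcal M_1(\widetilde\Omega_l)$, let $\mathcal B_l(R_1)$ be the set of $R\in\mathcal B_l$ assigned to $R_1$. Each such $R$ lies in $R_1$ and has $\mu(R\cap\Omega_{l+1}^\complement)\ge\frac12\mu(R)$, so $\mathbf 1_R\lesssim\mathcal M_{\rm str}(\mathbf 1_{R\cap\Omega_{l+1}^\complement})$. Your duality argument for this single group, followed by Proposition \ref{prop-fs}, gives
\[
\|a_{R_1}\|_{L^q}\lesssim\lambda_l^{-1}\Bigl\|\Bigl[\sum_{R\in\mathcal B_l(R_1)}|D_{k_1,k_2}g(x_R)|^2\mathbf 1_{R\cap\Omega_{l+1}^\complement}\Bigr]^{1/2}\Bigr\|_{L^q}
\le\lambda_l^{-1}\bigl\|\widetilde S(g)\mathbf 1_{R_1\setminus\Omega_{l+1}}\bigr\|_{L^q}
\lesssim\mu(\widetilde\Omega_l)^{-\frac1p}\mu(R_1)^{\frac1q},
\]
since $\widetilde S(g)\le2^{l+1}$ off $\Omega_{l+1}$.

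Lemma \ref{lem-j} with $w(r)=r^\epsilon$ then gives $\sum_{R_1}\|a_{R_1}\|_{L^q}^q[\ell(Q_2)/\ell(\widehat{Q_2})]^\epsilon\lesssim_{\epsilon}\mu(\widetilde\Omega_l)^{1-q/p}$ at once. No splitting into levels is needed, and no vector-valued estimate such as Proposition \ref{prop-vector}. The same works for $\mathcal M_2$; the paper puts half of each term in each family.

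This is why the weight appears only for $q<2$. There Journé's lemma absorbs the overlap of the $R_i$, while for $q\ge2$ orthogonality does. For $q\ge2$ the per-piece bound would leave $\sum_{R\in\mathcal M(\Omega)}\mu(R)$, which is not controlled by $\mu(\Omega)$.
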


\begin{proof}
We first assume $f\in L^2\cap H^p$.
By Theorem \ref{thm-ctype}, we find that there exists $h\in L^2\cap H^p$ such that
$\|h\|_{L^2}\sim\|f\|_{L^2}$,  $\|h\|_{H^p}\sim\|f\|_{H^p}$, and
\begin{align*}
f(\cdot,*)&=\sum_{\genfrac{}{}{0pt}{}{k_1\in{\mathbb Z}}{\alpha_1\in\mathcal  A_{1,k_1+j_0}}}
\sum_{\genfrac{}{}{0pt}{}{k_2\in{\mathbb Z}}{\alpha_2\in\mathcal A_{2,k_2+j_0}}}
\mu_1\left(Q_{\alpha_1}^{k_1+j_0}\right)\mu_2\left(Q_{\alpha_2}^{k_2+j_0}\right)D_{k_1,k_2}
\left(\cdot,*,x_{\alpha_1}^{k_1+j_0},x_{\alpha_2}^{k_2+j_0}\right)\\
&\quad\times D_{k_1,k_2}^N(h)\left(x_{\alpha_1}^{k_1+j_0},x_{\alpha_2}^{k_2+j_0}\right),\nonumber
\end{align*}
where all the symbols are the same as in \eqref{eq-ctype2},
$x_{\alpha_1,\alpha_2}^{k_1+j_0,k_2+j_0}:=(x_{1,\alpha_1}^{k_1+j_0},x_{2,\alpha_2}^{k_2+j_0})$ is an
arbitrary point in $Q_{\alpha_1,\alpha_2}^{k_1+j_0,k_2+j_0}$, and the summation converges in both
$L^2$ and $H^p$.

Now, for any $(x_1,x_2)\in X_1\times X_2$, let
\begin{align*}
F(x_1,x_2)&:=\left\{\sum_{k_1,\alpha_1}\sum_{k_2,\alpha_2}
\left|D_{k_1,k_2}^Nh\left(x_{\alpha_1,\alpha_2}^{k_1+j_0,k_2+j_0}\right)\right|^2
\mathbf 1_{Q_{\alpha_1,\alpha_2}^{k_1+j_0,k_2+j_0}}(x_1,x_2)\right\}^{\frac 12}.
\end{align*}
By Lemma \ref{lem-slp} and Theorem \ref{thm-ctype}, we conclude that
\begin{equation*}
\left\|F\right\|_{L^p}\lesssim\|h\|_{H^p}\sim\|f\|_{H^p}<\infty,
\end{equation*}
where the implicit positive constants depend only on $N$ and $j_0$.
For any $l\in\mathbb Z$, let
$$
\Omega_l:=\left\{(x_1,x_2)\in X_1\times X_2:\ F(x_1,x_2)>2^l\right\}
$$
and
$\widetilde\Omega_l:=\{(x_1,x_2)\in X_1\times X_2:\ \mathcal M_{\rm str}(\mathbf 1_{\Omega_l})(x_1,x_2)>c\}$, where $c\in(0,1)$ is
a fixed positive constant which is determined later. Further, for any $l\in\mathbb Z$, let
$$
\mathcal R_l:=\left\{Q\in\mathcal D:\ \mu(Q\cap\Omega_l)\ge \frac 12\mu(Q)
\textup{ and }\mu(Q\cap\Omega_{l+1})<\frac 12\mu(Q)\right\}.
$$
Notice that, for any $(x_1,x_2)\in\bigcup_{Q\in\mathcal R_l}Q$, we find that there exists $Q_1\times Q_2\in \mathcal R_l$
such that $(x_1,x_2)\in Q_1\times Q_2$ and
$$
\frac 1{\mu(Q_1\times Q_2)}\int_{Q_1\times Q_2}\mathbf 1_{\Omega_l}(y_1,y_2)\,d\mu(y_1,y_2) \ge\frac{1}{2}.
$$
Consequently, we have, for any $l\in\mathbb Z$,
\begin{align*}
\mathcal M_{\rm str}\left(\mathbf 1_{\Omega_l}\right)(x_1,x_2)&\ge\frac{1}{\mu(B_1(Q_1)\times B_2(Q_2))}
\int_{B_1(Q_1)\times B_2(Q_2)}\mathbf 1_{\Omega_l}(y_1,y_2)\,d\mu(x_1,x_2)\\
&\gtrsim\frac 1{\mu(Q_1\times Q_2)}\int_{Q_1\times Q_2}\mathbf 1_{\Omega_l}(y_1,y_2)
\,d\mu(y_1,y_2)\sim 1.
\end{align*}
Therefore, we may choose $c\in(0,1)$   such that, for any $l\in\mathbb Z$, $\mathcal M_{\rm str}(\mathbf 1_{\Omega_l})(x_1,x_2)>c$.
By this, we find that
\begin{equation}\label{eq-suppatm}
\bigcup_{Q\in\mathcal R_l}Q\subset\widetilde\Omega_l.
\end{equation}

On the other hand, for any $Q:=Q_{\alpha_1,\alpha_2}^{k_1+j_0,k_2+j_0}$ with
$k_1, k_2\in\mathbb Z$, $\alpha_1\in\mathcal A_{1,k_1+j_0}$, and $\alpha_2\in\mathcal A_{2,k_2+j_0}$,
let $D_Q:=D_{k_1,k_2}$, $D_Q^N:=D^N_{k_1,k_2}$, and $x_Q:=(x_{Q,1},x_{Q,2}):=
(x_{1,\alpha_1}^{k_1+j_0},x_{2,\alpha_2}^{k_2+j_0})\in Q$. Then, applying
an argument similar to that used
in the proof of \eqref{eq-dual1}, we obtain
\begin{align*}
f(\cdot,*)&=\sum_{k_1,\alpha_1}\sum_{k_2,\alpha_2}\mu_1\left(Q_{\alpha_1}^{k_1+j_0}\right)
\mu_2\left(Q_{\alpha_2}^{k_2+j_0}\right)
D_{k_1,k_2}\left(\cdot,*,x_{\alpha_1}^{k_1+j_0},x_{\alpha_2}^{k_2+j_0}\right)
D_{k_1,k_2}^N(h)\left(x_{\alpha_1}^{k_1+j_0},x_{\alpha_2}^{k_2+j_0}\right) \\
&=:\sum_{l=-\infty}^\infty\sum_{Q\in\mathcal R_l}\mu(Q)D_Q(\cdot,*, x_Q)D_Q^N h(x_Q)
\end{align*}
in $L^2$. Observe that, for any $l\in\mathbb Z$,
\begin{align*}
\sum_{Q\in\mathcal R_l}\mu(Q)\left|D_Q^N h(x_Q)\right|^2
&\le 2\sum_{Q\in\mathcal R_l}\mu(Q\setminus\Omega_{l+1})\left|D_k^N h(x_Q)\right|^2\\
&\lesssim\int_{\widetilde\Omega_l\setminus\Omega_{l+1}}\sum_{Q\in \mathcal R_l}\left|D_Q^N h\left(x_Q\right)\right|^2
\mathbf 1_{Q}(x_1,x_2)\,d\mu(x_1,x_2)\\
&\lesssim\int_{\widetilde\Omega_l\setminus\Omega_{l+1}}\left[F(x_1,x_2)\right]^2\,d\mu(x_1,x_2)
\lesssim 2^{2l}\mu\left(\widetilde\Omega_l\right).
\end{align*}
Thus, we claim that, for any $l\in\mathbb Z$, if we let $\lambda_l:=2^{l}\mu(\widetilde\Omega_l)^{\frac{1}{p}}$ and
$$
a_l(\cdot):=\frac 1{\lambda_l}\sum_{Q\in \mathcal R_l}\mu(Q)D_Q(\cdot, x_Q)D^N_Q h(x_Q),
$$
then $a_l$ is a harmless constant multiple of a $(p,q,C_0)$-atom  with some sufficiently large $C_0$.
Moreover, the above summation converges in
$L^q$. Indeed, by \eqref{eq-suppatm},
we have $\mathop{\rm supp} a_l\subset\widetilde\Omega_l$.
For any $g\in L^{q'}$ with $\|g\|_{L^{q'}}\le 1$,
we obtain
\begin{align}\label{eq-atomlq}
|\langle a_l,g\rangle|
&=2^{-l}\mu\left(\widetilde\Omega_l\right)^{-\frac{1}{p}}\left|\left<\sum_{Q\in\mathcal R_l}
\mu(Q)D_Q(\cdot,\cdot,x_Q)D_Q^Nh(x_Q), g\right>\right|\nonumber\\
&=2^{-l}\mu\left(\widetilde\Omega_l\right)^{-\frac{1}{p}}\left|\sum_{Q\in\mathcal R_l}
\mu(Q)D_Q g(x_Q) D^N_Q h(x_Q)\right|\nonumber\\
&=2^{-l}\mu\left(\widetilde\Omega_l\right)^{-\frac{1}{p}}
\left|\int_{X_1\times X_2}\sum_{Q\in\mathcal R_l}D_Q g(x_Q)D_Q^N h(x_Q)
\mathbf 1_{Q}(x_1,x_2)\,d\mu(x_1,x_2)\right|\nonumber\\
&\le 2^{-l}\mu\left(\widetilde\Omega_l\right)^{-\frac{1}{p}}
\left\{\int_{X_1\times X_2}\left[\sum_{Q\in \mathcal R_l}\left|D_Q g(x_Q)\right|^2
\mathbf 1_{Q}(x_1,x_2)\right]^{\frac{q'}{2}}\,d\mu(x_1,x_2)\right\}^{\frac{1}{q'}}\nonumber\\
&\quad\times\left\{\int_{X_1\times X_2}\left[\sum_{Q\in \mathcal R_l}\left|D_Q^N h(x_Q)\right|^2
\mathbf 1_{Q}(x_1,x_2)\right]^{\frac{q}{2}}\,d\mu(x_1,x_2)\right\}^{\frac{1}{q}}\nonumber\\
&=:\mathrm I\times \mathrm J.
\end{align}

Obviously, applying an argument similar to that used in the proof of Lemma \ref{lem-slp}, we find that
$\mathrm I\lesssim\|g\|_{L^{q'}}$.

Now, we estimate $\mathrm J$. Indeed, by \eqref{eq-suppatm}, we obtain,
for any $(x_1,x_2)\in Q\in\mathcal R_l$, $\mu(Q\cap\Omega_{l+1})\le \frac{\mu(Q)}{2}$, which further
implies that $\mu(Q\cap\Omega_{l+1}^\complement)\ge \frac{\mu(Q)}{2}$.
By this and \eqref{eq-suppatm}, we have, for any $(x_1, x_2) \in Q$,
$$
\mathcal M_{\rm str}\left(\mathbf 1_{Q\cap\widetilde\Omega_l\cap\Omega_{l+1}^\complement}\right)
(x_1,x_2)>c.
$$
From this and Proposition \ref{prop-fs}, we deduce that
\begin{align}\label{eq-atomestJ}
\mathrm J
&\lesssim\left\{\int_{X_1\times X_2}\left[\sum_{Q\in\mathcal R_l}\left|D_Q^N h(x_Q)\mathcal M_{\rm str}
\left(\mathbf 1_{Q\cap\widetilde\Omega_l\cap\Omega_{l+1}^\complement}\right)(x_1,x_2)\right|^2\right]^{\frac{q}{2}}
\,d\mu(x_1,x_2)\right\}^{\frac{1}{q}}\nonumber\\
&\lesssim\left\{\int_{X_1\times X_2}\left[\sum_{Q\in\mathcal R_l}\left|D_Q^N h(x_Q)
\mathbf 1_{Q\cap\widetilde\Omega_l\cap\Omega_{l+1}^\complement}(x_1,x_2)\right|^2\right]^{\frac{q}{2}}
\,d\mu(x_1,x_2)\right\}^{\frac{1}{q}}\nonumber\\
&\sim\left\{\int_{\widetilde\Omega_l\cap\Omega_{l+1}^\complement}
\left[\sum_{Q\in\mathcal R_l}\left|D_Q^N h(x_Q)
\mathbf 1_{Q}(x_1,x_2)\right|^2\right]^{\frac{q}{2}}\,d\mu(x_1,x_2)\right\}^{\frac{1}{q}}
\lesssim 2^l\mu\left(\widetilde\Omega_l\right)^{\frac{1}{q}}.
\end{align}
By \eqref{eq-atomlq} and \eqref{eq-atomestJ}, we find that
\begin{equation}\label{eq-atomlqre}
|\langle a_l, g\rangle|\lesssim \mu\left(\widetilde\Omega_l\right)^{\frac{1}{q}-\frac{1}{p}}\|g\|_{L^{q'}},
\end{equation}
which, together with the arbitrariness of $g\in L^{q'}$,  further conclude that
$$
\|a_l\|_{L^q}\lesssim\mu\left(\widetilde\Omega_l\right)^{\frac{1}{q}-\frac{1}{p}}.
$$
This is the desired estimate of the size condition.

Now, we prove that $a_l$ satisfies Definition \ref{def-atom}(ii). To this end, we consider two cases
for $q$.

{\it Case (1) $q\in[2,\infty)$.} In this case, for any $Q:=Q_1\times Q_2\in\mathcal R_l$, we first construct
$m(Q)$ as follows: Choose $P_1\in\mathcal D_1$ being the maximal cube such that
$P_1\times Q_2\subset\widetilde\Omega_l$; then choose $P_2\in\mathcal D_2$ being the maximal cube such that
$P_1\times P_2\subset\widetilde\Omega_l$, and denote $P_1\times P_2$ by $m(Q)$. Obviously, for any $Q\in\mathcal R_l$,
we have $Q\subset m(Q)\subset\widetilde\Omega_l$.

In the next, we show that $m(Q)\in\mathcal M(\widetilde\Omega_l)$. Otherwise, there exists
$\widetilde R:=\widetilde P_1\times \widetilde P_2\in\mathcal M(\widetilde\Omega_l)$ and
$\widetilde R\supsetneqq m(Q)$. Then $\widetilde P_1\supset P_1$,
$\widetilde P_2\supset P_2$, and at least either of $\widetilde P_1\supsetneqq P_1$ or
$\widetilde P_2\supsetneqq P_2$ holds. However, if $\widetilde P_1\supsetneqq P_1$, then
$\widetilde P_1\times Q_2\supsetneqq P_1\times Q_2\supset Q_1\times Q_2$,
and this contradicts the definition of $P_1$, which implies $\widetilde P_1=P_1$; similarly,
$\widetilde P_2=P_2$. Then we have $\widetilde R=R$, which makes a contradiction again. Thus, we have,
for any $Q\in\mathcal R_l$, $m(Q)\in\mathcal M(\widetilde\Omega_l)$.

Now, for any $R\in\mathcal M(\widetilde\Omega_l)$, we let
$$
a_{l,R}(\cdot,*):=2^{-l}\mu\left(\widetilde\Omega_l\right)^{-\frac{1}{p}}
\sum_{Q\in\mathcal R_l, m(Q)=R}\mu(Q)D_Q (\cdot,*,x_Q)D_Q^N h(x_Q).
$$
Applying an argument similar to that used in the proof of \eqref{eq-atomlqre}, we find that
\begin{align*}
&\sum_{R\in\mathcal M(\widetilde\Omega_l)}\|a_{l,R}\|_{L^q}^q\\
&\quad\lesssim2^{-lq}\mu\left(\widetilde\Omega_l\right)^{-\frac{q}{p}}
\int_{\widetilde\Omega_{l}\cap\Omega_{l+1}^\complement}\sum_{R\in\mathcal M(\widetilde\Omega_l)}
\left[\sum_{\{Q\in\mathcal R_l:\ m(Q)=R\}}\mu(Q)
\left|D_Q^N h(x_Q)\right|^2\mathbf 1_{Q}(x_1,x_2)\right]^{\frac{q}{2}}\,d\mu(x_1,x_2)\\
&\quad\lesssim 2^{-lq}\mu\left(\widetilde\Omega_l\right)^{-\frac{q}{p}}\int_{\widetilde\Omega_{l}
\cap\Omega_{l+1}^\complement}
\left[\sum_{R\in\mathcal M(\widetilde\Omega_l)}\sum_{\{Q\in\mathcal R_l:\ m(Q)=R\}}\left|D_Q^N h(x_Q)\right|^2
\mathbf 1_{Q}(x_1,x_2)\right]^{\frac{q}{2}}\,d\mu(x_1,x_2)\\
&\quad\lesssim\mu\left(\widetilde\Omega_l\right)^{1-\frac{q}{p}}.
\end{align*}
Moreover, by the cancellation of $D_Q$, we conclude that, for any $x_1\in X_1$ and $x_2\in X_2$,
$$
\int_{X_2}a_{l,R}(x_1,y_2)\,d\mu_2(y_2)=0=\int_{X_1}a_{l,R}(x_1,y_2)\,d\mu_2(y_2).
$$
This proves that  $a_{l,R}$ satisfies Definition \ref{def-atom}(ii) in this case.

{\it Case (2) $q\in(1,2)$.}
In this case, we find that, for any $Q:=Q_1\times Q_2\in\mathcal R_l$, there exist unique rectangles
$m_1(Q):=P_1\times Q_2\in\mathcal M_1(\widetilde\Omega_l)$
and $m_2(Q):=Q_1\times P_2\in\mathcal M_2(\widetilde\Omega_l) $ such that
$Q_1\subset P_1$ and $Q_2\subset P_2$. For any $l\in\mathbb Z$, $i\in\{1,2\}$, and $R_i\in\mathcal M_i(\widetilde\Omega_l)$,
let
$$
a_{i,l,R_i}(\cdot,*):=2^{-l-1}\mu\left(\widetilde\Omega_l\right)^{-\frac1p}
\sum_{\{Q\in\mathcal R_l:\ m_i(Q)=R_i\}}\mu(Q)D_Q(\cdot,*,x_Q)D_Q^N h(x_Q).
$$
Then we have
$$
a_l=\sum_{R_1\in\mathcal M_1(\widetilde\Omega_1)}a_{1,l,R_1}+\sum_{R_2\in\mathcal M_2(\widetilde\Omega_1)}a_{2,l,R_2}.
$$
Besides, applying an argument similar to that used in the estimation of \eqref{eq-atomlq}, we obtain
\begin{align*}
\left\|a_{i,l,R_i}\right\|_{L^q}^q
&\lesssim 2^{-lq}\mu\left(\widetilde\Omega_l\right)^{-\frac qp}\int_{X_1\times X_2}\left[\sum_{\{Q\in\mathcal R_l:\ m_i(Q)=R_i\}}
\left|D_Qh(x_Q)\right|^2\mathbf 1_{Q}(x_1,x_2)\right]^{\frac q2}\,d\mu(x_1,x_2)\\
&\lesssim 2^{-lq}\mu\left(\widetilde\Omega_l\right)^{-\frac qp}\int_{X_1\times X_2}\left[\sum_{\{Q\in\mathcal R_l:\ m_i(Q)=R_i\}}\left|D_Qh(x_Q)\mathcal M_{\rm str}
\left(\mathbf 1_{Q\cap R_i\cap\Omega_{l+1}^\complement}\right)(x_1,x_2)\right|^2\right]^{\frac q2} \,d\mu(x_1,x_2)\\
&\lesssim 2^{-lq}\mu\left(\widetilde\Omega_l\right)^{-\frac qp}
\int_{R_i\cap\Omega_{l+1}^\complement}\left[\sum_{\{Q\in\mathcal R_l:\ m_i(Q)=R_i\}}
\left|D_Qh(x_Q)\right|^2\mathbf 1_{Q}(x_1,x_2)\right]^{\frac q2}\,d\mu(x_1,x_2)\\
&\lesssim\mu\left(\widetilde\Omega_l\right)^{-\frac qp}\mu(R_i).
\end{align*}
By this and Lemma \ref{lem-j}, we find that
\begin{align*}
&\left\{\sum_{P_{i,1}\times P_{i,2}=R_i\in\mathcal M_i(\widetilde\Omega_l)}\left\|a_{R_i}\right\|_{L^q}^q
\left[\frac{\ell(P_{i,j})}{\ell(\widehat{P_{i,j}})}\right]^\epsilon\right\}^{\frac1q}\\
&\quad\lesssim\mu\left(\widetilde\Omega_l\right)^{-\frac 1p}\left\{\sum_{P_{i,1}\times P_{i,2}=R_i\in\mathcal M_i(\widetilde\Omega_l)}
\mu(R_i)\left[\frac{\ell(P_{i,j})}{\ell(\widehat{P_{i,j}})}\right]^\epsilon\right\}^{\frac1q}
\lesssim\mu\left(\widetilde\Omega_l\right)^{\frac 1q-\frac 1p},
\end{align*}
where we chose, for any $r\in(0,\infty)$, $w(r):=r^\epsilon$. Moreover, by
\eqref{eq-atomlqre} and the cancellation of $D_Q$, we have, for any $x_1\in X_1$ and $x_2\in X_2$,
$$
\int_{X_2} a_{i,l,R_i}(x_1,y_2)\,d\mu_2(y_2)=0=\int_{X_1}a_{i,l,R_i}(y_1,x_2)\,d\mu_1(y_1).
$$
Then we show that $a_l$ is a harmless multiple of a $(p,q,C_0)$-atom.

To summarize, for any $f\in H^p\cap L^2$, there exist a sequence of $(p,q,C_0)$-atoms $\{a_l\}_{l=-\infty}^\infty$
and a sequence $\{\lambda_l\}_{l=-\infty}^\infty\subset\mathbb C$ such that $f=\sum_{l=-\infty}^\infty \lambda_la_l$ and
\begin{align*}
\sum_{l=-\infty}^\infty|\lambda_l|^p&=\sum_{l=-\infty}^\infty  2^{lp}\mu\left(\widetilde\Omega_l\right)\\
&\lesssim\sum_{l=-\infty}^\infty  2^{lp}\int_{\{(x_1,x_2)\in X_1\times X_2:\ \mathcal M_{\rm str}(\mathbf 1_{\Omega_l})(x_1,x_2)>c\}}
\left[\mathcal M_{\rm str}(\mathbf 1_{\Omega_l})(x_1,x_2)\right]^{\frac{1}{p}}\,d\mu(x_1,x_2)\\
&\lesssim\sum_{l=-\infty}^\infty  2^{lp}\int_{X_1\times X_2}\left[\mathbf 1_{\Omega_l} (x_1,x_2)\right]^{\frac{1}{p}} \, d\mu(x_1,x_2)\\
& \lesssim\sum_{l=-\infty}^\infty  2^{lp}\mu(\Omega_l)\lesssim\|h\|^p_{H^p}\lesssim\|f\|^p_{H^p},
\end{align*}
where the penultimate inequality comes from
\begin{align*}
\mu(\Omega_l)&=\mu\left(\left\{(x_1,x_2)\in X_1\times X_2:\ F(x_1,x_2)>2^l\right\}\right)
\le 2^{-lp} \int_{X_1\times X_2}[F(x_1,x_2)]^p \,d\mu(x_1,x_2)\\
&=2^{-lp}\|F\|^p_{L^p}\lesssim 2^{-lp}\|h\|^p_{H^p}.
\end{align*}
We have proved that, if $f\in H^p\cap L^2$, then $f$ has an atomic decomposition.
So, next, we need to show that the
atomic decomposition for $f$ obtained above must converge in the dual of
$\mathrm{CMO}^p_{L^2}$. Indeed, for any $g \in \mathrm{CMO}^p_{L^2}$, by Proposition \ref{prop-dual}, we have
\begin{align*}
\left|\left< f-\sum_{|l| \le N} \lambda_l a_l, g \right> \right|
\lesssim \mathcal C_p(g)\left\|f-\sum_{|l| \le N} \lambda_l a_l\right\|_{H^p},
\end{align*}
where the last term tends to  $0$ as  $N$ tends to infinity because
$$
\lim_{N\to\infty}\left\|f-\sum_{|l| \le N} \lambda_l a_l\right\|_{H^p}=0.
$$
It means that $\sum_{l} \lambda_l a_l$ converges to $f$ in the dual of $\mathrm{CMO}^p_{L^2}$.

Now, if $f\in H^p$, since $H^p\cap L^2$ is dense in $H^p$ (see Proposition \ref{prop-dense}),
applying a standard density argument similar to that used in the proof of \cite[Theorem 4.13]{MS79b},
we obtain all the desired conclusions of in Proposition \ref{prop-atd}, which completes the proof of
Proposition \ref{prop-atd}.
\end{proof}

\begin{remark}\label{r-prange}
We assert that the range $p\in(\frac{\omega}{\omega+\eta},1]$ in Theorem \ref{thm-h=a}
is sharp in some sense. Indeed, let $X$ be a space of homogeneous type. The real-variable theory
of the single-parameter Hardy space $H^p(X)$ is only meaningful in $p\in(\frac{\omega}{\omega+\eta},1]$
because its atoms only have the cancellation of moment $0$. If $X=\mathbb R^n$, then atoms with cancellation of moment $0$
can only be used characterize $H^p(\mathbb R^n)$ when $p\in(\frac{n}{n+1},1]$.
In this sense, the range of $p$ in Theorem \ref{thm-h=a} is sharp and the best possible.
\end{remark}

\subsection{A Criterion of Boundedness of Operators
from $H^p$ to $L^p$ and Its Applications}\label{ss-bhl}

In this section, we give an application of atomic characterizations to the boundedness of linear operators. To this end,
we first introduce the concept of rectangle atoms.
\begin{definition}\label{def-rat}
Let $p\in(0,1]$ and $C_0\in(0,\infty)$. A function $a$ is called a \emph{$(p,2,C_0)$-rectangle atom} if it satisfies
\begin{enumerate}
\item [(i)] $\mathop{\rm supp} a\subset C_0B_1(Q_1)\times C_0B_2(Q_2)$,
\item [(ii)] $\|a\|_{L^2}\leq [\mu_1(Q_1)\mu_2(Q_2)]^{\frac{1}{2}-\frac{1}{p}}$,
\item [(iii)] for any $x_1\in X_1$ and $x_2\in X_2$,
$$
\int_{C_0B_1(Q_1)}a(y_1,x_2)\,d\mu_1(y_1)=0=\int_{C_0B_2(Q_2)}a(x_1,y_2)\,d\mu_2(y_2).
$$
\end{enumerate}
\end{definition}

In the past, it was conjectured that product Hardy spaces can be characterized in terms of rectangle atoms.
However, this conjecture was disproved by a counter-example constructed by Carleson \cite{car74}.
Although, the space $H^p$ cannot be decomposed into rectangle atoms, we can combine the atom decomposition in
Theorem \ref{thm-h=a} with  Fefferman's method (see \cite{F87}) by considering the action of linear operator $T$ on rectangle atoms to
establish the boundedness of linear operators from product Hardy spaces to corresponding Lebesgue spaces.

\begin{theorem}\label{thm-bddhl}
Let $r\in(0,\eta]$ and $p\in(\frac{\omega}{\omega+\eta},1]$ with $\omega$ and $\eta$ as, respectively,
in \eqref{eq-doub} and Theorem \ref{thm-wave}. Assume that $C_0\in(0,\infty)$ as in Theorem \ref{thm-h=a}
and that $T$ is a bounded linear operator on $L^2$. If, for any \emph{$(p,2,C_0)$}-rectangle atom $a$
supported in $C_0B_1(Q_1)\times C_0B_2(Q_2)$ and for any $\gamma\in[2A_0,\infty)$,
\begin{equation}\label{eq-bddhl}
\int_{\{[\gamma C_0B_1(Q_1)]\times [\gamma C_0B_2(Q_2)]\}^\complement}|Ta(x_1,x_2)|^p\, d\mu(x_1,x_2)\leq \gamma^{-r},
\end{equation}
then $T$ can be extended to a bounded linear operator from $H^p$ to $L^p$, still denoted by $T$, and there
exists a constant $C\in(0,\infty)$ such that $\|T\|_{H^p\to L^p}\le C$.
\end{theorem}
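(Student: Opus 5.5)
By the atomic characterization in Theorem \ref{thm-h=a} (with $q=2$) and the density of $L^2\cap H^p$ in $H^p$ (Proposition \ref{prop-dense}), it suffices to show that $\|Ta\|_{L^p}\lesssim1$ uniformly over all $(p,2,C_0)$-atoms $a$. Granting this, for $f\in L^2\cap H^p$ we take the decomposition $f=\sum_l\lambda_la_l$ produced in the proof of Proposition \ref{prop-atd}. That decomposition converges in $L^2$, since the reproducing formula of Theorem \ref{thm-ctype} converges in $L^2$ and the atoms $a_l$ are built from its pieces. Because $T$ is bounded on $L^2$, we then get $Tf=\sum_l\lambda_lTa_l$ in $L^2$, hence almost everywhere along a subsequence of partial sums. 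The $p$-triangle inequality now gives
\[
\|Tf\|_{L^p}^p\le\sum_l|\lambda_l|^p\|Ta_l\|_{L^p}^p\lesssim\|f\|_{H^p}^p,
\]
and $T$ extends to all of $H^p$ by density and completeness (Proposition \ref{prop-com1}).

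To bound a single atom $a$ supported in $\Omega$, we follow Fefferman's method. With $c$ a small constant, set $\Omega^*:=\{\mathcal M_{\rm str}(\mathbf 1_\Omega)>c\}$ and $\Omega^{**}$ defined likewise from $\Omega^*$. Proposition \ref{prop-fs} gives $\mu(\Omega^{**})\sim\mu(\Omega)$. On $\Omega^{**}$ we use H\"older's inequality and the $L^2$-boundedness of $T$:
\[
\|Ta\|_{L^p(\Omega^{**})}^p\le\|Ta\|_{L^2}^p\,\mu(\Omega^{**})^{1-\frac p2}\lesssim\mu(\Omega)^{\frac p2-1}\mu(\Omega)^{1-\frac p2}=1.
\]
Off $\Omega^{**}$ we expand $a=\sum_{R\in\mathcal M(\Omega)}a_R$. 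For $R=Q_1\times Q_2$ we use the enlargement factor $\gamma_R\sim\ell(\widehat{Q_1})/\ell(Q_1)$ (respectively the analogue built from $\widehat{Q_2}$), where $\widehat{Q_1}$ is as in Lemma \ref{lem-j}. As in Case (I.3) of the proof of Proposition \ref{prop-at}, the set $[\gamma_RC_0B_1(Q_1)]\times[\gamma_RC_0B_2(Q_2)]$ is contained in $\Omega^{**}$ once $c$ is small enough. Each $a_R$ normalized by $\|a_R\|_{L^2}\mu(R)^{\frac1p-\frac12}$ is a $(p,2,C_0)$-rectangle atom, since it has the required support and the two-sided cancellation. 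Hypothesis \eqref{eq-bddhl} therefore gives
\[
\int_{(\Omega^{**})^\complement}|Ta_R|^p\,d\mu\lesssim\gamma_R^{-r}\,\mu(R)^{1-\frac p2}\|a_R\|_{L^2}^p.
\]

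Summing over $R$ and applying H\"older's inequality with exponents $2/p$ and $(2/p)'$ gives
\[
\Bigl[\sum_R\|a_R\|_{L^2}^2\Bigr]^{\frac p2}\Bigl[\sum_R\mu(R)\gamma_R^{-r(\frac2p)'}\Bigr]^{1-\frac p2}.
\]
The first factor is at most $\mu(\Omega)^{\frac p2-1}$ by condition (iii)(c) of Definition \ref{def-atom}. The second factor is bounded by $C\mu(\Omega)^{1-\frac p2}$ by the Journ\'e-type Lemma \ref{lem-j} with $w(t)=t^{r(2/p)'}$, applied as in the estimate of $\mathrm Y_1,\mathrm Y_2$ in the proof of Proposition \ref{prop-at}. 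This uses $\mathcal M(\Omega)\subset\mathcal M_i(\Omega)$, together with the passage through $\mathcal M_2(\Omega^*)$ for the enlargement that involves $\widehat{Q_1}$.

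I expect the main obstacle to be geometric. One must arrange a \emph{single} enlargement $\gamma_R$ that works in both directions at once, so that the whole enlarged double rectangle lies in $\Omega^{**}$, since the hypothesis \eqref{eq-bddhl} uses the same $\gamma$ in each factor. The plan is to split $R\cap(\Omega^{**})^\complement$-type regions as in the three-case decomposition of Proposition \ref{prop-at}, and to take $\gamma_R$ as the minimum of the two one-directional enlargement ratios, so that each region lies outside the corresponding enlarged rectangle. Lemma \ref{lem-j} is then applied to each of the two resulting sums separately.
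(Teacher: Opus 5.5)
Your proposal is correct and is essentially the paper's proof: the same reduction via the $L^2$-convergent decomposition of Proposition \ref{prop-atd}, the same $\Omega^{**}$ splitting, the renormalized $a_R$ used as rectangle atoms, and H\"older plus Lemma \ref{lem-j} with the detour through $\mathcal M_2(\Omega^*)$. The one variation is that, instead of your $\gamma_R=\min\{\gamma_R^{(1)},\gamma_R^{(2)}\}$, the paper uses $[3A_0^2C_0B_1(\widehat{Q_1})]\times[3A_0^2C_0B_2(P_2)]\subset\widetilde{\widetilde\Omega}$ to cover the complement by the two slabs $[3A_0^2C_0B_1(\widehat{Q_1})]^\complement\times X_2$ and $X_1\times[3A_0^2C_0B_2(\widehat{Q_2})]^\complement$. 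Each slab lies outside $[\gamma C_0B_1(Q_1)]\times[\gamma C_0B_2(Q_2)]$ for its own $\gamma=2A_0\delta^{j_i'-j_i}$, so the ``single $\gamma$'' obstacle you anticipate never arises.

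In either version, $\widehat{Q_1}$ must be the Case (I.3) cube built from $Q_1\times P_2\in\mathcal M_2(\Omega^*)$ with $P_2\supset\widehat{Q_2}$. It cannot be the cube of Lemma \ref{lem-j} taken relative to $Q_1\times Q_2$ and $\Omega$, as you literally state. With that choice the double enlargement can leave $\Omega^{**}$, for example when $\Omega$ is essentially the cross $(\widehat{Q_1}\times Q_2)\cup(Q_1\times\widehat{Q_2})$, where $\mu(\widehat{Q_1}\times\widehat{Q_2})\gg\mu(\Omega)\sim\mu(\Omega^{**})$.
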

\begin{proof}
We first show that, for any $(p,2,C_0)$-atom $a$ supported in some open set $\Omega$ with $\mu(\Omega)<\infty$,
\begin{equation}\label{eq-talp}
\|Ta\|_{L^p}\lesssim 1.
\end{equation}
To achieve this, let
$$
\widetilde{\Omega} := \{(x_1,x_2)\in X_1\times X_2:\ \mathcal M_{\rm str}(\mathbf{1}_\Omega)(x_1,x_2)>c\}
$$
and
$$
\widetilde{\widetilde{\Omega}} := \left\{(x_1,x_2)\in X_1\times X_2:\ \mathcal M_{\rm str}
\left(\mathbf{1}_{\widetilde{\Omega}}\right)(x_1,x_2) > c\right\},
$$
where $c\in(0,\infty)$ is a constant which is determined later. Then, 
by Proposition \ref{prop-fs}, we find that
\begin{equation}\label{eq-omegasim2}
\mu\left(\widetilde{\widetilde{\Omega}}\right) \sim \mu\left(\widetilde{\Omega}\right) \sim \mu(\Omega).
\end{equation}
By this and the boundedness of $T$ on $L^2(X)$, we obtain
$$
\left\|(Ta)\mathbf{1}_{\widetilde{\widetilde{\Omega}}}\right\|_{L^2} \leq \|Ta\|_{L^2} \lesssim \|a\|_{L^2}
\lesssim\mu(\Omega)^{\frac{1}{2}-\frac{1}{p}}.
$$
This, together with H\"older's inequality and \eqref{eq-omegasim2}, further implies that
\begin{align}\label{eq-ta1}
\left\|(Ta)\mathbf{1}_{\widetilde{\widetilde{\Omega}}}\right\|_{L^p}
&=\left[\int_{\widetilde{\widetilde{\Omega}}}|T_a(x_1,x_2)|^p\,d\mu(x_1,x_2)\right]^{\frac{1}{p}}
\le \left[\int_{\widetilde{\widetilde{\Omega}}}| T_a(x_1,x_2)|^2\, d \mu(x_1) d \mu(x_2)\right]^{\frac{1}{2}}
\mu\left(\widetilde{\widetilde{\Omega}}\right)^{(1-\frac{2}{p})\cdot\frac{1}{2}}\nonumber\\
&\lesssim \mu(\Omega)^{\frac{1}{2}-\frac{1}{p}}\mu\left(\widetilde{\widetilde{\Omega}}
\right)^{\frac{1}{p}-\frac{1}{2}} \sim 1.
\end{align}

It therefore suffices to handle $\|(Ta ) \mathbf{1}_{(\widetilde{\widetilde{\Omega}})^\complement}\|_{L^p}$.
Notice that
\begin{equation}\label{eq-ta2}
\int_{(\widetilde{\widetilde{\Omega}})^\complement} | Ta(x_1,x_2)|^p\,d \mu(x_1,x_2)
\leq \sum_{R \in\mathcal M(\Omega)} \int_{(\widetilde{\widetilde{\Omega}})^\complement}| Ta_R(x_1,x_2)|^p\,d \mu(x_1,x_2),
\end{equation}
where $\{a_R\}_{R\in\mathcal M(\Omega)}$ is the same as in Definition \ref{def-atom}(iii).
For each $R:=Q_1\times Q_2\in \mathcal{M}(\Omega)\subset \mathcal{M}_1(\Omega)$, we can choose a maximal dyadic cube
$\widehat{Q_2}$ such that
$$
\mu\left(\left(Q_1\times\widehat{Q}_2\right) \cap\Omega\right) > \frac{1}{2}\mu\left(Q_1\times\widehat{Q}_2\right).
$$
Since $Q_1\times\widehat{Q}_2 \subset \widetilde{\Omega}$, it follows
that there exists a maximal dyadic cube $P_2$ for the
first variable such that $Q_1\times P_2 \in \mathcal{M}_2(\widetilde{\Omega})$.
Therefore, there also exists
a maximal dyadic cube $\widehat{Q}_1$ for the first variable containing  $Q_1$ such that
$$
\mu\left(\left(\widehat{Q}_1\times P_2\right) \cap \widetilde{\Omega}\right) > \frac{1}{2}\mu\left(\widehat{Q}_1\times P_2\right).
$$
We may choose $c\in(0,1)$ such that $[3A_0^2C_0B_1(\widehat{Q}_1)]\times[3A_0^2C_0B_2(P_2)]\subset\widetilde{\widetilde{\Omega}}$.
It follows from Lemma \ref{lem-cube}(d) that
\begin{align*}
&\int_{(\widetilde{\widetilde{\Omega}})^\complement} | Ta_R(x_1,x_2)|^p\,d \mu(x_1,x_2)\\
&\quad\leq \int_{\{[3A_0^2C_0B_1(\widehat{Q}_1)]\times[3A_0^2C_0B_2(P_2)]\}^\complement}|Ta_R(x_1,x_2)|^p d \mu(x_1,x_2)\\
&\quad\leq \int_{[3A_0^2C_0B_1(\widehat{Q}_1)]^\complement\times X_2} | Ta_R(x_1,x_2)|\,d \mu(x_1,x_2)
+\int_{X_1\times[3A_0^2C_0B_2(P_2)]^\complement}\cdots\\
&\quad\leq \int_{[3A_0^2C_0B_1(\widehat{Q}_1)]^\complement\times X_2} | Ta_R(x_1,x_2)|\,d \mu(x_1,x_2)
+\int_{X_1\times[3A_0^2C_0B_2(\widehat Q_2)]^\complement}\cdots\\
&\quad=:\mathrm I+\mathrm{II}.
\end{align*}

Without loss of generality, we may assume that, for any $R\in\mathcal M(\Omega)$, $a_R \neq 0$.
Thus, for any $R\in\mathcal M(\Omega)$, $\frac{a_R}{\|a_R\|_{L^2}}\mu(R)^{\frac{1}{2}-\frac{1}{p}}$ is a
$(p, 2,C_0)$-atom supported in $R$, which, together with \eqref{eq-bddhl}, shows that, for any $\gamma\in[2A_0,\infty)$,
\begin{equation}\label{eq-bddhl2}
\int_{\{[\gamma C_0B_1(Q_1)]\times [\gamma C_0B_2(Q_2)]\}^\complement} | Ta_R(x_1,x_2) |^p d\mu(x_1,x_2)
\le \gamma^{-r} \left\| a_R \right\|_{L^2}^p\mu(R)^{1 - \frac{p}{2}}.
\end{equation}

For the item $\rm II$, we assume that $Q_2=Q_{2,\beta_2}^{j_2}$ and $\widehat{Q_2} = Q_{2,\beta'_2}^{j_2'}$ for some
$j_2,j_2'\in\mathbb Z$, $\beta_2\in\mathcal A_{2,j_2}$, and $\beta_2'\in\mathcal A_{2,j_2'}$. Since $Q_2\subset \widehat{Q}_2$,
it follows that $j_2\ge j_2'$ and, from Lemma \ref{lem-cube}(d), that $B_2(Q_2) \subset B_2(\widehat{Q_2})$. Thus,
$d(z_{2,\beta_1}^{j_2}, z_{2,\beta_2'}^{j_2'}) \leq C^\natural \delta^{j_2'}$ and, consequently,
\begin{align*}
2A_0\delta^{j_2'-j_2}C_0B_2(Q_2)&=B_2\left(z_{2,\beta_2}^{j_2},2A_0C_0C^\natural\delta^{j_2'}\right)
\subset B_2\left(z_{2,\beta_2'}^{j_2'}, 3A_0^2C_0 C^\natural \delta^{j_2'}\right)=3A_0^2C_0B_2\left(\widehat{Q}_2\right),
\end{align*}
which further implies that $[3A_0^2C_0 B_2(\widehat{Q}_2)]^\complement \subset [2A_0\delta^{j'_2-j_2} C_0B_2(Q_2)]^\complement$.
By this and \eqref{eq-bddhl2}, we find that
\begin{align}\label{eq-es2}
{\rm II}&\le\int_{X_1\times[2A_0\delta^{j'_2-j_2} C_0B_2(Q_2)]^\complement}|Ta_R(x_1,x_2)|^p\,d\mu(x_1,x_2)\nonumber\\
&\le \int_{\{[2A_0\delta^{j'_2-j_2} C_0B_1(Q_1)]\times[2A_0\delta^{j'_2-j_2} C_0B_2(Q_2)]\}^\complement}
|Ta_R(x_1,x_2)|^p\,d\mu(x_1,x_2)\nonumber\\
&\lesssim\left[\frac{\ell(Q_2)}{\ell(\widehat{Q}_2)}\right]^r\|a_R\|_{L^2}\mu(R)^{1-\frac p2}.
\end{align}
Summing this over $R\in\mathcal M(\Omega)$ and using H\"older's inequality and Lemma \ref{lem-j}, we conclude that
\begin{align}\label{eq-es2s}
&\sum_{R \in \mathcal{M}(\Omega)} \int_{X_1\times[3A_0^2C_0B_2(P_2)]^\complement} | Ta_R(x_1,x_2) |^p
\, d \mu(x_1) d \mu(x_2)\nonumber\\
&\quad\lesssim\sum_{R \in \mathcal{M}_1(\Omega)} \left[\frac{\ell(Q_2)}{\ell(\widehat{Q_2})} \right]^r \left\|a_R\right\|_{L^2}^p
\mu(R)^{1-\frac{p}{2}}\nonumber\\
&\quad\lesssim \left[\sum_{R \in \mathcal{M}(\Omega)} \left\|a_R\right\|_{L^2}^2 \right]^{\frac{p}{2}}
\left\{\sum_{R \in \mathcal{M}(\Omega)} \left[\frac{\ell(Q_2)}{\ell(\widehat{Q_2})} \right]^{r(\frac{p}{2})'}
\mu(R)^{(1-\frac{p}{2})(\frac{2}{p})'} \right\}^{1-\frac{p}{2}}\nonumber\\
&\quad\lesssim\mu(\Omega)^{\frac{p}{2}-1}\mu(\Omega)^{1-\frac{p}{2}} \sim 1,
\end{align}
This is the desired estimate.

Next, we estimate $\rm I$. From an argument similar to that used in the estimation of \eqref{eq-es2}, we deduce that
\begin{equation}\label{eq-es1}
\mathrm{I}\lesssim\left[\frac{\ell(Q_1)}{\ell(\widehat{Q_1})}\right]^r\|a_R\|_{L_2}^p\mu(R)^{1-\frac{2}{p}}.
\end{equation}
Although this estimate is similar to \eqref{eq-es2}, when summing over $R\in\mathcal M(\Omega)$, the estimation differs from
\eqref{eq-es2s}. Suppose $Q_1\times Q_2\in\mathcal M(\Omega)$. Fix $Q_1$ and let $m_2^{Q_1}(Q_2)=P_2$ be the maximal dyadic cube
in $X_2$ such that $Q_1\times P_2\subset\widetilde{\Omega}$. It then follows that, for any fixed $Q_1$,
$\{P_2:\ P_2=m_2^{Q_1}(Q_2),Q_1\times Q_2\in\mathcal M(\Omega)\}$ is mutually disjoint. By this, \eqref{eq-es1},
H\"older's inequality, and Lemma \ref{lem-j}, we conclude that
\begin{align}\label{eq-es1s}
&\sum_{R \in \mathcal{M}(\Omega)} \int_{[3A_0^2C_0B_1(\widehat{Q}_1)]^\complement\times X_2}
| Ta_R(x_1,x_2) |^p\, d \mu(x_1,x_2)\nonumber\\
&\quad\lesssim \left[\sum_{R \in \mathcal{M}(\Omega)} \left\|a_R\right\|_{L^2}^2 \right]^{\frac{p}{2}}
\left\{ \sum_{R \in \mathcal{M}(\Omega)}\left[\frac{\ell(Q_1)}{\ell(\widehat{Q_1})} \right]^{r(\frac{p}{2})'}
\mu_1(Q_1) \mu_2(Q_2) \right\}^{1 - \frac{p}{2}}\nonumber\\
&\quad\lesssim\mu(\Omega)^{\frac{p}{2}-1}\left\{ \sum_{Q_1} \mu_1(Q_1)
\sum_{ \{Q_1:\ Q_1\times Q_2 \in \mathcal{M}(\Omega)\} }
\mu_2(Q_2) \left[ \frac{\ell(Q_1)}{\ell(\widehat{Q_1})} \right]^{r(\frac{p}{2})'} \right\}^{1 - \frac{p}{2}}\nonumber\\
&\quad\sim\mu(\Omega)^{\frac{p}{2} - 1} \left\{ \sum_{Q_1} \mu_1(Q_1) \sum_{P_2}
\left( \frac{\ell(Q_1)}{\ell(\widehat{Q_1})} \right)^{r(\frac{p}{2})'}
\sum_{\{Q_2:\ Q_1\times Q_2 \in \mathcal M(\Omega), m_2^{Q_1}(Q_2)=P_2\}} \mu_2(Q_2) 
\right\}^{1 - \frac{p}{2}}\nonumber\\
&\quad\lesssim\mu(\Omega)^{\frac{p}{2} - 1} \left\{ \sum_{Q_1} \mu_1(Q_1) \sum_{P_2}
\left[\frac{\ell(Q_1)}{\ell(\widehat{Q_1})} \right]^{r(\frac{p}{2})'} \mu_2(P_2) \right\}^{1-\frac{p}{2}}\nonumber\\
&\quad\lesssim\mu(\Omega)^{\frac{p}{2} - 1} \left\{ \sum_{Q_1\times P_2 \in \mathcal{M}_2(\widetilde{\Omega})}
\mu(Q_1\times P_2) \left[\frac{\ell(Q_1)}{\ell(\widehat{Q_1})} \right]^{r(\frac{p}{2})'}\right\}^{1-\frac p2}\nonumber\\
&\quad\lesssim\mu(\Omega)^{\frac{p}{2} - 1}\mu(\widetilde{\Omega})^{1 - \frac{p}{2}}
\leq\mu(\Omega)^{\frac{p}{2} -1}\mu(\Omega)^{1 - \frac{p}{2}} \sim 1.
\end{align}
This is also the desired estimate. By \eqref{eq-ta1}, \eqref{eq-ta2}, \eqref{eq-es1s}, and \eqref{eq-es2s}, we finally obtain
\eqref{eq-talp}.

Now, we suppose $f\in H^p\cap L^2$. Then, from an argument similar to that used in the proof of
Proposition \ref{prop-atd}, it follows that there exist a sequence of $(p,2,C_0)$-atoms $\{a_j\}_{j=1}^\infty$,
where $C_0$ is sufficiently large, and a sequence $\{\lambda_j\}_{j=1}^\infty\subset(0,\infty)$ such that
\begin{equation}\label{eq-fsim2}
\sum_{j=1}^\infty|\lambda_j|^p\lesssim\|f\|_{H^p}^p
\end{equation}
and
$f=\sum_{j=1}^\infty\lambda_ja_j$ in $L^2$. By the boundedness of $T$ on $L^2$, we have
$Tf=\sum_{j=1}^\infty\lambda_jTa_j$. Using this and the Riesz theorem, we find that there exists
$\{N_k\}_{k=1}^\infty\subset\mathbb N$ with $\lim_{k\to\infty}N_k=\infty$ such that
$$
Tf(x)=\lim_{k\to\infty}\sum_{j=1}^{N_k}\lambda_jTa_j(x)
$$
holds for almost every $x\in X_1\times X_2$, which, combined with $p\in(0,1]$, further implies that
$$
|Tf(x)|^p\le\sum_{j=1}^{\infty}|\lambda_j|^p|Ta_j(x)|^p.
$$
From this, \eqref{eq-talp}, and \eqref{eq-fsim2}, we deduce that
$$
\|Tf\|_{L^p}^p\le\sum_{j=1}^\infty|\lambda_j|^p\|Ta_j\|_{L^p}^p
\lesssim\sum_{j=1}^\infty|\lambda_j|^p\lesssim\|f\|_{H^p}^p.
$$
Using this and the density of $H^p\cap L^2$ in $H^p$, we obtain the desired extension of $T$ and its desired boundedness, which completes the proof of Theorem \ref{thm-bddhl}.
\end{proof}

In the remainder of this section, we give an application of Theorem \ref{thm-bddhl}. To this end,
we first recall the concepts of Calder\'on--Zygmund kernels on a given space of homogeneous type.

\begin{definition}\label{def-wczok}
Let $X$ be a space of homogeneous type and $\epsilon\in(0,\infty)$. A function
$K:\ (X\times X)\setminus\{(x,x):\ x\in X\}$ is called a \emph{weak $\epsilon$-Calder\'on--Zygmund kernel}
if there exists $C\in(0,\infty)$ such that $K$ satisfies the following H\"older's regularity condition:
for any $x,y,y'\in X$ with $x\neq y$ and $d(y,y')\le(2A_0)^{-1}d(x,y)$,
\begin{equation}\label{eq-holderreg}
|K(x,y)-K(x,y')|\le\left[\frac{d(y,y')}{d(x,y)}\right]^\epsilon\frac{C}{V(x,y)}.
\end{equation}
Define $C_K:=\inf\{C\in(0,\infty):\ \textup{\eqref{eq-holderreg} holds}\}$.
\end{definition}

\begin{definition}\label{def-wczo}
Let $X$ be a space of homogeneous type and $\epsilon\in(0,\eta]$. A linear operator $T$ is called
a \emph{weak $\epsilon$-Calder\'on--Zygmund operator} if $T$ satisfies the following conditions:
\begin{enumerate}
\item $T$ is a bounded operator on $L^2(X)$,
\item there exists a weak $\epsilon$-Calder\'on--Zygmund kernel $K$ such that, for any
$f\in L^2(X)$ with bounded support and for any $x\notin\overline{\mathop{\rm supp}f}$,
$$
Tf(x)=\int_X K(x,y)f(y)\,d\mu(y).
$$
\end{enumerate}
For any weak $\epsilon$-Calder\'on--Zygmund operator $T$, let
$$
\|T\|_{\mathrm{WCZO}_\epsilon(X)}:=\|T\|_{\mathcal L(L^2(X))}+C_K.
$$
\end{definition}

Now, we turn to the multiparameter case. In what follows, let $X_1$ and $X_2$ be two
spaces of homogeneous type and
\begin{equation}\label{eq-diag}
\Delta:=\left\{(x_1,x_2,y_1,y_2)\in(X_1\times X_2)^2:\ x_1=y_1\textup{ or }x_2=y_2\right\}.
\end{equation}
\begin{definition}\label{def-wpczk}
Let $\epsilon\in(0,\eta]$ with $\eta$ the same as in Theorem \ref{thm-wave}.
A linear operator $T$ is said to belong to the \emph{$\epsilon$-Journ\'e's class}
if it has the following properties:
\begin{enumerate}
\item $T$ is bounded on $L^2$,
\item there exists a function $(X_1\times X_2)^2\setminus\Delta\to\mathbb C$ such that, for any
$f\in L^2$ with $\mathop{\rm supp}f\subset B_1\times B_2$ for some ball $B_i$ in $X_i$ with $i\in\{1,2\}$,
and, for any $(x_1,x_2)\in(2A_0B_1)^\complement\times(2A_0B_2)^\complement$
$$
Tf(x_1,x_2)=\int_{X_1\times X_2}K(x_1,x_2,y_1,y_2)f(y_1,y_2)\,d\mu(y_1,y_2),
$$
\item for any $x_1,y_1\in X_1$ with $x_1\neq y_1$, there exists a weak $\epsilon$-Calder\'on--Zygmund
operator $T^{(2)}(x_1,y_1)$ on $X_2$, whose kernel is $K(x_1,\cdot,y_1,*)$, 
such that, for any $a\in L^2$ satisfying $\mathop{\rm supp}a\subset B_1\times B_2$ 
with balls $B_1\subset X_1$ and $B_2\subset X_2$ and for any $x_2\in(2A_0B_2)^\complement$,
$$
Ta(x_1,x_2)=\int_{X_1}T^{(2)}(x_1,y_1)a(y_1,\cdot)(x_2)\,d\mu_1(y_1);
$$
moreover, there exists a positive constant $C$ such that, for any $x_1,y_1,y_1'\in X_1$ with
$x_1\neq y_1$ and $d_1(y_1,y_1')\le(2A_0)^{-1}d_2(x_1,y_1)$,
$$
\left\|T^{(2)}(x_1,y_1)-T^{(2)}(x_1,y_1')\right\|_{{\rm WCZO}_\epsilon(X_2)}
\le\left[\frac{d_1(y_1,y_1')}{d_1(x_1,y_1)}\right]^\epsilon\frac{C}{V_1(x_1,y_1)},
$$
\item property (iii) also holds with $X_1$ and $X_2$ interchanged.
\end{enumerate}
\end{definition}

The next theorem shows that an operator belonging to Journ\'e's class is bounded from the Hardy space
to the corresponding Lebesgue space.

\begin{theorem}\label{thm-bdj}
Let $\epsilon\in(0,\eta]$ and $T$ belongs to $\epsilon$-Journ\'e's class with $\eta$ the same as in
Theorem \ref{thm-wave}. Then, for any $p\in(\frac{\omega}{\omega+\epsilon},1]$ with $\omega$ the same as in
\eqref{eq-doub}, $T$ can be extended to a bounded linear operator from $H^p$ to $L^p$.
\end{theorem}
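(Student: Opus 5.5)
The plan is to apply Theorem \ref{thm-bddhl} with $r:=p\epsilon-\omega(1-p)\cdot$(suitably adjusted) — more precisely, with some positive $r$ depending on $p$, $\epsilon$, $\omega$, guaranteed positive since $p>\frac{\omega}{\omega+\epsilon}$. Since $T$ is bounded on $L^2$ by Definition \ref{def-wpczk}(i), it suffices to verify \eqref{eq-bddhl}. That is, for any $(p,2,C_0)$-rectangle atom $a$ supported in $C_0B_1(Q_1)\times C_0B_2(Q_2)$ and any $\gamma\ge 2A_0$, we need
$\int_{\{[\gamma C_0B_1(Q_1)]\times[\gamma C_0B_2(Q_2)]\}^\complement}|Ta|^p\,d\mu\lesssim\gamma^{-r}$.
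Write $B_i:=C_0B_i(Q_i)$ with radius $r_i$ and center $z_i$. The complement splits into $(\gamma B_1)^\complement\times(\gamma B_2)^\complement$, $(\gamma B_1)^\complement\times\gamma B_2$, and $\gamma B_1\times(\gamma B_2)^\complement$. The last two are symmetric by Definition \ref{def-wpczk}(iv).

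\textbf{Region $(\gamma B_1)^\complement\times(\gamma B_2)^\complement$.} Here Definition \ref{def-wpczk}(ii) gives the kernel representation. I use the cancellation of $a$ in both variables to replace $K(x_1,x_2,y_1,y_2)$ by the double difference
$K(x_1,x_2,y_1,y_2)-K(x_1,x_2,z_1,y_2)-K(x_1,x_2,y_1,z_2)+K(x_1,x_2,z_1,z_2)$.
Combining (iii) (Hölder regularity in $y_1$ of the $\mathrm{WCZO}_\epsilon$ norm) with the weak C--Z kernel regularity in $y_2$ of the difference operator, this is bounded by
$\frac{r_1^\epsilon}{d_1(x_1,z_1)^\epsilon V_1(x_1,z_1)}\frac{r_2^\epsilon}{d_2(x_2,z_2)^\epsilon V_2(x_2,z_2)}$.
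Then $|Ta(x)|\lesssim$ this product times $\|a\|_{L^1}\le\mu(B_1\times B_2)^{1-1/p}$. Raising to the power $p$ and integrating over the annuli $d_i\sim2^m\gamma r_i$ gives a factor $\prod_i\sum_{m}(2^m\gamma)^{-p\epsilon+\omega(1-p)}$ times $\mu(B_1\times B_2)^{p-1+1-p}=1$, hence $\lesssim\gamma^{-2(p\epsilon-\omega(1-p))}$. This is where $p>\omega/(\omega+\epsilon)$ is used.

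\textbf{Region $(\gamma B_1)^\complement\times\gamma B_2$ (main obstacle).} Here $x_2$ may lie near the support, so no kernel in $x_2$ is available. Instead I use the operator-valued representation in (iii) with roles swapped, as given by (iv):
$Ta(x_1,x_2)=\int_{X_2}T^{(1)}(x_2,y_2)\ldots$,
or equivalently, by (iii) for $x_1\notin 2A_0B_1$,
$Ta(x_1,\cdot)=\int_{B_1}[T^{(2)}(x_1,y_1)-T^{(2)}(x_1,z_1)]a(y_1,\cdot)\,d\mu_1(y_1)$,
using cancellation in $y_1$. By the $L^2(X_2)$ bound encoded in the $\mathrm{WCZO}_\epsilon$ norm, Minkowski's inequality, and (iii),
$\|Ta(x_1,\cdot)\|_{L^2(X_2)}\lesssim\frac{r_1^\epsilon}{d_1(x_1,z_1)^\epsilon V_1(x_1,z_1)}\mu_1(B_1)^{1/2}\|a\|_{L^2}$.
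Then Hölder on $\gamma B_2$ gives
$\int_{\gamma B_2}|Ta(x_1,\cdot)|^p\le\mu_2(\gamma B_2)^{1-p/2}\|Ta(x_1,\cdot)\|_{L^2}^p$.
Integrating in $x_1$ over $(\gamma B_1)^\complement$, with $\|a\|_{L^2}\le\mu(B_1\times B_2)^{1/2-1/p}$ and $\mu_2(\gamma B_2)\lesssim\gamma^\omega\mu_2(B_2)$, the total is
$\lesssim\gamma^{-(p\epsilon-\omega(1-p))}\gamma^{\omega(1-p/2)}$.

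The growth factor $\gamma^{\omega(1-p/2)}$ is the delicate point. To absorb it, I further split $\gamma B_2$ into $2A_0B_2$ and the annuli $2^{m+1}A_0B_2\setminus 2^mA_0B_2$. On the annuli, I also use cancellation in $y_2$ and the C--Z kernel regularity of the difference operator $T^{(2)}(x_1,y_1)-T^{(2)}(x_1,z_1)$ in the second variable. This gives pointwise decay in $x_2$ exactly as in the first region. The Hölder-plus-$L^2$ argument is therefore only needed on $2A_0B_2$, where the factor is a constant. Summing the resulting geometric series yields $\gamma^{-r}$ with $r=p\epsilon-\omega(1-p)>0$ (shrinking to $r\le\eta$ if needed). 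Theorem \ref{thm-bddhl} then gives the boundedness from $H^p$ to $L^p$.
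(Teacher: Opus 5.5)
Your proposal is correct. Once you refine the middle region into $2A_0B_2$ plus annuli (handled by the double-difference kernel bound), it coincides with the paper's proof, which splits the complement into $[\gamma C_0B_1(Q_1)]^\complement\times[2A_0C_0B_2(Q_2)]^\complement$ (kernel representation with cancellation in both variables), $[\gamma C_0B_1(Q_1)]^\complement\times 2A_0C_0B_2(Q_2)$ (cancellation in $y_1$, the $\mathrm{WCZO}_\epsilon(X_2)$ bound on $T^{(2)}(x_1,y_1)-T^{(2)}(x_1,x_1^{(0)})$, and H\"older) and the two symmetric pieces, obtains the decay $\gamma^{-[p\epsilon-\omega(1-p)]}$, and invokes Theorem \ref{thm-bddhl}; the only step you pass over is the identity $\int_{X_1}T^{(2)}(x_1,z_1)a(y_1,\cdot)\,d\mu_1(y_1)=0$ behind ``using cancellation in $y_1$'', which requires commuting the bounded operator $T^{(2)}(x_1,z_1)$ with the $y_1$-integral and which the paper isolates as Lemma \ref{lem-can}, proved by approximating $a$ in $L^2$ by finite sums of tensor products.
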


To prove this theorem, we need the following technical lemma.

\begin{lemma}\label{lem-can}
Let $T^{(2)}$ be a bounded linear operator on $L^2(X_2)$. If $f\in L^2$ satisfies the following
two conditions:
\begin{enumerate}
\item[\rm(i)] $\mathop{\rm supp} f\subset B_1\times X_2$  for some ball $B_1\subset X_1$,
\item[\rm(ii)] for any $z_2\in X_2$, $\int_{X_1}f(y_1,z_2)\,d\mu_1(y_1)=0$,
\end{enumerate}
then, for almost every $x_2\in X_2$, it holds that
\begin{equation}\label{eq-tcan}
\int_{X_1}T^{(2)}f(y_1,\cdot)(x_2)\,d\mu_1(y_1)=T^{(2)}\left(\int_{X_1}f(y_1,\cdot)\,d\mu_1(y_1)\right)
(x_2)=0.
\end{equation}
\end{lemma}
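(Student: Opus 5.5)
The plan is to prove \eqref{eq-tcan} by duality, testing against $g\in L^2(X_2)$, and exchanging the $y_1$-integral with $T^{(2)}$ through Fubini's theorem and the $L^2$-boundedness of $T^{(2)}$.

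First, since $f\in L^2$, Fubini's theorem gives $f(y_1,\cdot)\in L^2(X_2)$ for almost every $y_1\in X_1$, with
\[
\int_{X_1}\|f(y_1,\cdot)\|_{L^2(X_2)}^2\,d\mu_1(y_1)=\|f\|_{L^2}^2 .
\]
Because $f(y_1,\cdot)$ vanishes for $y_1\notin B_1$, the Cauchy--Schwarz inequality gives
\[
\int_{X_1}\|f(y_1,\cdot)\|_{L^2(X_2)}\,d\mu_1(y_1)\le[\mu_1(B_1)]^{\frac12}\|f\|_{L^2}<\infty .
\]
So $y_1\mapsto f(y_1,\cdot)$ is Bochner integrable as an $L^2(X_2)$-valued function. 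Its Bochner integral equals the pointwise function
\[
F(z_2):=\int_{X_1}f(y_1,z_2)\,d\mu_1(y_1),
\]
which by assumption (ii) is identically $0$. To check that the two integrals agree, pair with any $g\in L^2(X_2)$ and apply Fubini's theorem. This is justified because $|f(y_1,z_2)g(z_2)|$ is integrable over $B_1\times X_2$, using the bound above.

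Second, set $G(y_1):=T^{(2)}(f(y_1,\cdot))$. This is measurable in $y_1$ as an $L^2(X_2)$-valued function, since it is a bounded operator applied to a strongly measurable function, and $\|G(y_1)\|_{L^2(X_2)}\le\|T^{(2)}\|\,\|f(y_1,\cdot)\|_{L^2(X_2)}$. Hence $G$ is also Bochner integrable. Bounded linear operators commute with Bochner integrals, so
\[
\int_{X_1}G(y_1)\,d\mu_1(y_1)=T^{(2)}\left(\int_{X_1}f(y_1,\cdot)\,d\mu_1(y_1)\right)=T^{(2)}(0)=0
\]
in $L^2(X_2)$.

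The main obstacle is interpreting the left-hand side of \eqref{eq-tcan} as a pointwise integral in $y_1$ for almost every $x_2$, rather than only as a Bochner integral. For this we choose a jointly measurable representative $\widetilde G(y_1,x_2)$ of $G$; one exists because $G\in L^1(B_1;L^2(X_2))$, which embeds into the space of measurable functions on $B_1\times X_2$. Next we check that $\int_{X_1}|\widetilde G(y_1,x_2)|\,d\mu_1(y_1)<\infty$ for almost every $x_2$, by integrating against $|g|$ for $g\in L^2(X_2)$ supported on sets of finite measure. Finally, the identity
\[
\int_{X_2}\left[\int_{X_1}\widetilde G(y_1,x_2)\,d\mu_1(y_1)\right]g(x_2)\,d\mu_2(x_2)=0,
\]
which follows from Fubini's theorem and the second step, shows that the pointwise integral vanishes for almost every $x_2$. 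This gives \eqref{eq-tcan}.
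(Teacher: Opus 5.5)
Your proof is correct, but it takes a different route from the paper. The paper never uses vector-valued integration. It approximates $f$ in $L^2$ by functions in $\mathcal S(B_1,X_2)$, i.e.\ finite sums $\sum_k f_{1,k}\otimes f_{2,k}$ with $\mathop{\rm supp} f_{1,k}\subset B_1$, obtained by truncating finite wavelet sums with $\mathbf 1_{B_1}\otimes\mathbf 1_{X_2}$. It checks the interchange of $\int_{X_1}$ and $T^{(2)}$ on such tensors by linearity. It then passes to the limit in $L^2(X_2)$ on both sides using the Cauchy--Schwarz bound $\mu_1(B_1)\|T^{(2)}\|_{\mathcal L(L^2(X_2))}^2\|f-f_n\|_{L^2}^2$ from \eqref{eq-can=1}, and hypothesis (ii) enters only in the last line.

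In effect, the paper reproves by hand, in this concrete setting, the fact you quote: a bounded operator commutes with the Bochner integral. Approximation by elementary tensors plays the role of approximation by simple functions.

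Your route is shorter and more conceptual, and it makes explicit something the paper leaves implicit. When the paper applies H\"older and Fubini in \eqref{eq-can=1}, it treats $(y_1,x_2)\mapsto T^{(2)}f(y_1,\cdot)(x_2)$ as jointly measurable. Your third step actually produces a jointly measurable representative and proves a.e.\ finiteness of $\int_{X_1}|\widetilde G(y_1,x_2)|\,d\mu_1(y_1)$. This is what makes the pointwise ``for almost every $x_2$'' reading of \eqref{eq-tcan} meaningful.

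The price is reliance on standard Bochner theory. The one point you should cite rather than assert is strong measurability of $y_1\mapsto f(y_1,\cdot)$. This follows from Pettis's theorem, since $L^2(X_2)$ is separable (it has the countable Auscher--Hyt\"onen wavelet basis), or from the identification $L^2(X_1\times X_2)\cong L^2(X_1;L^2(X_2))$. It is the exact counterpart of the paper's density step.

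A small simplification is also available. Since $\|G(y_1)\|_{L^2(X_2)}\le\|T^{(2)}\|\,\|f(y_1,\cdot)\|_{L^2(X_2)}$, you have $G\in L^2(B_1;L^2(X_2))\cong L^2(B_1\times X_2)$. This gives the jointly measurable representative directly, without passing through $L^1(B_1;L^2(X_2))$.
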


\begin{proof}
Let
$$
\mathcal S(B_1,X_2):=\mathop{\rm span}\left\{f_1\otimes f_2:\ f_1\in L^2(X_1),\ \mathop{\rm supp} f_1\subset B_1,\ 
f_2\in L^2(X_2)\right\}.
$$
For any $f$ as in the present lemma, we first show that there exists a sequence
$\{f_n\}_{n=1}^\infty$ in $\mathcal S(B_1,X_2)$ such that
$\lim_{n\to\infty}\|f_n-f\|_{L^2}=0$. Indeed, since $f\in L^2$, it follows from \eqref{eq-re} that
$$
f=\sum_{\genfrac{}{}{0pt}{}{k_1\in\mathbb Z}{\alpha_1\in\mathcal G_{1,k_1}}}
\sum_{\genfrac{}{}{0pt}{}{k_2\in\mathbb Z}{\alpha_2\in\mathcal G_{2,k_2}}}
\left<f,\psi_{\alpha_1,\alpha_2}^{k_1,k_2}\right>\psi_{1,\alpha_1}^{k_1}\otimes\psi_{2,\alpha_2}^{k_2}
$$
in $L^2$. Thus, for any $n\in\mathbb N$, we may choose $g_n$ as the finite linear combination of
$\{\psi_{1,\alpha_1}^{k_1}\otimes\psi_{2,\alpha_2}^{k_2}:\ k_1,k_2\in\mathbb Z,\alpha_1\in\mathcal G_{1,k_1},
\alpha_2\in\mathcal G_{2,k_2}\}$ such that $\lim_{n\to\infty}\|f-g_n\|_{L^2}=0$. Now, for any $n\in\mathbb N$,
let $f_n:=g_n(\mathbf 1_{B_1}\otimes\mathbf 1_{X_2})$. Then, for any $n\in\mathbb N$,
$f_n\in\mathcal S(B_1,X_2)$ and, by the choice of $g_n$, we find that
$$
\|f-f_n\|_{L^2}=\left\|f-g_n\left(\mathbf 1_{B_1}\otimes\mathbf 1_{X_2}\right)\right\|_{L^2}
=\left\|(f-g_n)\left(\mathbf 1_{B_1}\otimes\mathbf 1_{X_2}\right)\right\|_{L^2}
\le\|f-g_n\|_{L^2}\to 0
$$
as $n\to\infty$. Thus, the above claim holds.

Next, we show \eqref{eq-tcan}. We first assume that $f\in\mathcal S(B_1,X_2)$. Then we have,
for any $(x_1,x_2)\in X_1\times X_2$, $f(x_1,x_2)=\sum_{k=1}^N f_{1,k}(x_1)f_{2,k}(x_2)$,
where, for any $k\in\{1,\ldots,N\}$, $f_{1,k}\in L^2(X_1)$ with $\mathop{\rm supp}f_{1,k}\subset B_1$
and $f_{2,k}\in L^2(X_2)$. By this and $T^{(2)}\in\mathcal L(L^2(X_2))$, we conclude that,
for any $x\in X_2$,
\begin{align}\label{eq-can=}
&\int_{X_1} T^{(2)}f(y_1,\cdot)(x_2)\,d\mu_1(y_1)\nonumber\\
&\quad=\sum_{k=1}^N\int_{X_1}f_{1,k}(y_1)T^{(2)}f_{2,k}(x_2)\,d\mu_1(y_1)
=\sum_{k=1}^N T^{(2)}f_{2,k}(x_2)\int_{X_1}f_{1,k}(y_1)\,d\mu_1(y_1)\nonumber\\
&\quad=\sum_{k=1}^N T^{(2)}\left(\int_{X_1}f_{1,k}(y_1)\,d\mu_1(y_1)\cdot f_{2,k}\right)(x_2)\nonumber\\
&\quad=T^{(2)}\left(\sum_{k=1}^N\int_{X_1}f_{1,k}(y_1)f_{2,k}(\cdot)\,d\mu_1(y_1)
\right)(x_2)=T^{(2)}\left(\int_{X_1}f(y_1,\cdot)\,d\mu_1(y_1)\right)(x_2).
\end{align}
This is the desired result.

Now, for any $f$ satisfying (i) and (ii) of the present lemma, by the previous argument,
there exists a sequence $\{f_n\}_{n=1}^\infty$ in $\mathcal S(B_1,X_2)$ such that
$\lim_{n\to\infty}\|f-f_n\|_{L^2}=0$. Then, on one side, when $y_1\notin B_1$,
we find that, for any $n\in\mathbb N$, $T^{(2)}f(y_1,\cdot)=0=T^{(2)}f_n(y_1,\cdot)$.
Using this, H\"older's inequality, and the assumption that $T^{(2)}\in\mathcal L(L^2(X_2))$, 
we obtain
\begin{align}\label{eq-can=1}
&\left\|\int_{X_1}T^{(2)}f(y_1,\cdot)(*)\,d\mu_1(y_1)
-\int_{X_1}T^{(2)}f_n(y_1,\cdot)(*)\,d\mu_1(y_1)\right\|_{L^2(X_2)}^2\nonumber\\
&\quad=\int_{X_2}\left|\int_{X_1}T^{(2)}
(f(y_1,\cdot)-f_n(y_1,\cdot))(x_2)\,d\mu_1(y_1)\right|^2\,d\mu_2(x_2)\nonumber\\
&\quad=\int_{X_2}\left|\int_{B_1}T^{(2)}
(f(y_1,\cdot)-f_n(y_1,\cdot))(x_2)\,d\mu_1(y_1)\right|^2\,d\mu_2(x_2)\nonumber\\
&\quad\le\mu_1(B_1)\int_{B_1}\int_{X_2}\left|T^{(2)}
(f(y,\cdot)-f_n(y,\cdot))(x_2)\right|^2\,d\mu_2(x_2)\,d\mu_1(y_1)\nonumber\\
&\quad\le\mu_1(B_1)\left\|T^{(2)}\right\|_{\mathcal L(L^2(X_2))}^2
\int_{B_1}\|f(y_1,\cdot)-f_n(y_1,\cdot)\|_{L^2(X_2)}^2\,d\mu_1(y_1)\nonumber\\
&\quad\le\mu_1(B_1)\left\|T^{(2)}\right\|_{\mathcal L(L^2(X_2))}^2
\|f-f_n\|_{L^2}^2\to 0
\end{align}
as $n\to\infty$. On the other hand, from H\"older's inequality and, for any $n\in\mathbb N$,
$\mathop{\rm supp}f,\mathop{\rm supp}f_n\subset B_1\times X_2$, we deduce that
\begin{align*}
&\left\|\int_{X_1}f_n(y_1,\cdot)\,d\mu_1(y_1)-\int_{X_1}f(y_1,\cdot)\,d\mu_1(y_1)\right\|_{L^2(X_2)}^2\\
&\quad=\int_{X_2}\left|\int_{B_1}[f_n(y_1,x_2)-f(y_1,x_2)]\,d\mu_1(y_1)\right|^2\,d\mu_2(x_2)\\
&\quad\le\mu_1(B_1)\int_{X_2}\int_{B_1}|f_n(y_1,x_2)-f(y_1,x_2)|^2\,d\mu_1(y_1)\,d\mu_2(x_2)\\
&\quad=\mu_1(B_1)\|f_n-f\|_{L^2}\to 0
\end{align*}
as $n\to\infty$. By this and the fact $T^{(2)}\in\mathcal L(L^2(X_2))$,
we further find that
\begin{equation}\label{eq-can=2}
\lim_{n\to\infty}\left\|T^{(2)}\left(\int_{X_1}f_n(y_1,\cdot)\,d\mu_1(y_1)\right)
-T^{(2)}\left(\int_{X_1}f(y_1,\cdot)\,d\mu_1(y_1)\right)\right\|_{L^2(X_2)}=0.
\end{equation}
From \eqref{eq-can=1}, \eqref{eq-can=}, $\{f_n\}_{n=1}^\infty\subset\mathcal S(B_1,X_2)$,
 and \eqref{eq-can=2}, we infer that
\begin{align*}
&\left\|\int_{X_1}T^{(2)}f(y_1,\cdot)(*)\,d\mu_1(y_1)
-T^{(2)}\left(\int_{X_1}f(y_1,\cdot)\,d\mu_1(y_1)\right)(*)\right\|_{L^2(X_2)}\\
&\quad\le\left\|\int_{X_1}T^{(2)}f(y_1,\cdot)(*)\,d\mu_1(y_1)
-\int_{X_1}T^{(2)}f_n(y_1,\cdot)(*)\,d\mu_1(y_1)\right\|_{L^2(X_2)}\\
&\quad\quad+\left\|\int_{X_1}T^{(2)}f_n(y_1,\cdot)(*)\,d\mu_1(y_1)
-T^{(2)}\left(\int_{X_1}f_n(y_1,\cdot)\,d\mu_1(y_1)\right)(*)\right\|_{L^2(X_2)}\\
&\quad\quad+\left\|T^{(2)}\left(\int_{X_1}f_n(y_1,\cdot)\,d\mu_1(y_1)\right)(*)
-T^{(2)}\left(\int_{X_1}f(y_1,\cdot)\,d\mu_1(y_1)\right)(*)\right\|_{L^2(X_2)}\\
&\quad\to 0
\end{align*}
as $n\to\infty$. Using this and (ii), we conclude that, for almost every $x_2\in X_2$,
$$
\int_{X_1}T^{(2)}f(y_1,\cdot)(x_2)\,d\mu_1(y_1)
=T^{(2)}\left(\int_{X_1}f(y_1,\cdot)\,d\mu_1(y_1)\right)(x_2)=0.
$$
This proves \eqref{eq-tcan}, which then completes the proof of Lemma \ref{lem-can}.
\end{proof}

Applying Lemma \ref{lem-can}, we now can give the proof of Theorem \ref{thm-bdj}.

\begin{proof}[Proof of Theorem \ref{thm-bdj}]
By Theorem \ref{thm-bddhl}, it suffices to show that, for any $(p,2,C_0)$-rectangle atom
$a$ supported in $[C_0B_1(Q_1)]\times [C_0B_2(Q_2)]$ with $C_0$ sufficiently large,
some $Q_1\in\mathcal D_1$, and $Q_2\in\mathcal D_2$,
\begin{equation}\label{eq-ta}
\int_{\{[\gamma C_0B_1(Q_1)]\times [\gamma C_0B_2(Q_2)]\}^\complement}|Ta(x_1,x_2)|^p
\, d\mu(x_1,x_2)\lesssim\gamma^{-\epsilon}.
\end{equation}
To achieve this, notice that
\begin{align}\label{eq-tasum}
&\int_{\{[\gamma C_0B_1(Q_1)]\times [\gamma C_0B_2(Q_2)]\}^\complement}|Ta(x_1,x_2)|^p
\, d\mu(x_1,x_2)\nonumber\\
&\quad\le\int_{[\gamma C_0B_1(Q_1)]^\complement}\int_{[2A_0C_0B_2(Q_2)]^\complement}
|Ta(x_1,x_2)|^p\,d\mu_2(x_2)\,d\mu_1(x_1)+\int_{[\gamma C_0B_1(Q_1)]^\complement}
\int_{2A_0C_0B_2(Q_2)}\cdots\nonumber\\
&\quad\quad
+\int_{[\gamma C_0B_1(Q_1)]\setminus[2A_0C_0B_1(Q_1)]}\int_{[\gamma C_0B_2(Q_2)]^\complement}\cdots
+\int_{2A_0C_0B_1(Q_1)}\int_{[\gamma C_0B_2(Q_2)]^\complement}\cdots\nonumber\\
&\quad=:{\rm I}+{\rm II}+{\rm III}+{\rm IV}.
\end{align}
It suffices to estimate $\rm I$, $\rm II$, and $\rm III$, and $\rm IV$ respectively. To this end, we
suppose that $x_1^{(0)}$ and $x_2^{(0)}$ are respectively the centers of $Q_1$ and $Q_2$ and
that $k_0$ is the largest integer
such that $(2A_0)^{k_0}\le\gamma$. Since $\gamma\ge 2A_0$, it then follows that $k_0\ge 1$.

We first estimate $\rm I$. Indeed, let $x_1\in[\gamma C_0B_1(Q_1)]^\complement$ and
$x_2\in[2A_0C_0B_2(Q_2)]^\complement$. By (ii) and (iv) of Definition \ref{def-wpczk},
the cancellation of $a$, and H\"older's inequality, we conclude that
\begin{align*}
|Ta(x_1,x_2)|&=\left|\int_{C_0B_1(Q_1)}\int_{C_0B_2(Q_2)}K(x_1,x_2,y_1,y_2)f(y_1,y_2)
\,d\mu_2(y_2)\,d\mu_1(y_1)\right|\\
&\le\int_{C_0B_1(Q_1)}\int_{C_0B_2(Q_2)}\left|K(x_1,x_2,y_1,y_2)-K\left(x_1,x_2,x_1^{(0)},y_2\right)
-K\left(x_1,x_2,y_1,x_2^{(0)}\right)\right.\\
&\quad\left.{}+K\left(x_1,x_2,x_1^{(0)},x_2^{(0)}\right)\right|a(y_1,y_2)\,d\mu_2(y_2)\,d\mu_1(x_1)\\
&\lesssim\left[\frac{\ell(Q_1)}{d_1(x_1^{(0)},x_1)}\right]^\epsilon
\left[\frac{\ell(Q_2)}{d_2(x_2^{(0)},x_2)}\right]^\epsilon
\frac{1}{V_1(x_1^{(0)},x_1)}\frac{1}{V_2(x_2^{(0)},x_2)}\|a\|_{L^1}\\
&\lesssim\left[\frac{\ell(Q_1)}{d_1(x_1^{(0)},x_1)}\right]^\epsilon
\left[\frac{\ell(Q_2)}{d_2(x_2^{(0)},x_2)}\right]^\epsilon
\frac{1}{V_1(x_1^{(0)},x_1)}\frac{1}{V_2(x_2^{(0)},x_2)}[\mu(R)]^{1-\frac 1p}.
\end{align*}
In what follows, we denote by $\ell(Q_1)$ and $\ell(Q_2)$ the edge-lengthes of $Q_1$
and $Q_2$, respectively. Using this, \eqref{eq-doub}, and the assumption that $p>\frac{\omega}{\omega+\eta}$, we obtain
\begin{align}\label{eq-czo1}
\mathrm{I}&\lesssim\mu(R)^{p-1}\int_{[\gamma C_0B_1(Q_1)]^\complement}\int_{[2A_0C_0B_2(Q_2)]^\complement}
\left[\frac{\ell(Q_1)}{d_1(x_1^{(0)},x_1)}\right]^{p\epsilon}
\left[\frac{\ell(Q_1)}{d_1(x_1^{(0)},x_1)}\right]^{p\epsilon}\nonumber\\
&\quad\times\left[\frac{1}{V_1(x_1^{(0)},x_1)}\right]^p\left[\frac{1}{V_2(x_2^{(0)},x_2)}\right]^p
\,d\mu_2(x_2)\,d\mu_1(x_1)\nonumber\\
&\lesssim\mu(R)^{p-1}\sum_{k_1=k_0}^\infty\sum_{k_2=1}^\infty
\mu_1\left((2A_0)^{k_1}C_0B_1(Q_1)\right)^{1-p}\mu_2\left((2A_0)^{k_2}C_0B_2(Q_2)\right)^{1-p}\nonumber\\
&\quad\times(2A_0)^{-k_1p\epsilon}(2A_0)^{-k_2p\epsilon}\nonumber\\
&\lesssim\sum_{k_1=k_0}^\infty\sum_{k_2=1}^\infty 2^{-k_1[p\epsilon-\omega(1-p)]}
2^{-k_2[p\epsilon-\omega(1-p)]}
\sim 2^{-k_0[p\epsilon-\omega(1-p)]}\sim\gamma^{p\epsilon-\omega(1-p)}.
\end{align}
This is the desired estimate for $\mathrm I$.

Next, we estimate $\rm II$. To this end, let $x_1\in[\gamma C_0B_1(Q_1)]^\complement$.
By Definitions \ref{def-wczo}(ii) and \ref{def-rat}, and Lemma \ref{lem-can}, we find that,
for almost every $x_2\in2A_0C_0B_2(Q_2)$,
$$
\int_{X_1}T^{(2)}\left(x_1,x_1^{(0)}\right)a(y_1,\cdot)(x_2)\,d\mu_1(y_1)=0
$$
and hence
$$
Ta(x_1,x_2)=\int_{X_1}\left[T^{(2)}(x_1,y_1)-T^{(2)}\left(x_1,x_1^{(0)}\right)\right]a(y_1,\cdot)
(x_2)\,d\mu_1(y_1)=\int_{C_0B_1(Q_1)}\ldots.
$$
Using this, H\"older's inequality, and (iii) and (iv) of Definition \ref{def-wpczk}, we conclude that
\begin{align*}
&\int_{2A_0C_0B_2(Q_2)}|Ta(x_1,x_2)|^p\,d\mu_2(x_2)\\
&\quad\lesssim\mu_2(Q_2)^{1-p}\left[\int_{2A_0C_0B_2(Q_2)}|Ta(x_1,x_2)|\,d\mu_2(x_2)\right]^p\\
&\quad\sim\mu_2(Q_2)^{1-p}\left\{\int_{B_1(Q_1)}\left|\int_{2A_0C_0B_2(Q_2)}\right.\right.\\
&\qquad\left.\times\left[T^{(2)}(x_1,y_1)-T^{(2)}\left(x_1,x_1^{(0)}\right)\right]
a(y_1,\cdot)(x_2)\Bigg|\,d\mu_2(x_2)\,d\mu_1(y_1)\right\}^p\\
&\quad\lesssim\mu_1(Q_1)^{\frac p2}\mu_2(Q_2)^{1-\frac p2}\\
&\qquad\times\left[\int_{B_1(Q_1)}\left\|T^{(2)}(x_1,y_1)-T^{(2)}\left(x_1,x_1^{(0)}\right)
\right\|_{{\rm WCZO}_\epsilon(X_2)}^2\|a(y_1,\cdot)\|_{L^2(X_2)}^2\,d\mu_1(y_1)\right]^{\frac p2}\\
&\quad\lesssim\mu_1(Q_1)^{\frac p2}\mu_2(Q_2)^{1-\frac p2}\left[\frac 1{V_1(x_1,x_1^{(0)})}\right]^p
\left[\frac{\ell(Q_1)}{d_1(x_1,x_1^{(0)})}\right]^{p\epsilon}\|a\|_{L^2}^p\\
&\quad\lesssim\mu_1(Q_1)^{p-1}\left[\frac 1{V_1(x_1,x_1^{(0)})}\right]^p
\left[\frac{\ell(Q_1)}{d_1(x_1,x_1^{(0)})}\right]^{p\epsilon}.
\end{align*}
From this, \eqref{eq-doub}, and the assumption that $p>\frac{\omega}{\omega+\epsilon}$, we further deduce that
\begin{align}\label{eq-czo2}
{\rm II}&\lesssim\mu_1(Q_1)^{p-1}\int_{[\gamma C_0B_1(Q_1)]^\complement}
\left[\frac 1{V_1(x_1,x_1^{(0)})}\right]^p
\left[\frac{\ell(Q_1)}{d_1(x_1,x_1^{(0)})}\right]^{p\epsilon}\,d\mu_1(x_1)\nonumber\\
&\lesssim\mu_1(Q_1)^{p-1}\sum_{k=k_0}^\infty\mu_1\left((2A_0)^kC_0B_1(Q_1)\right)^{1-p}
(2A_0)^{-kp\epsilon}\nonumber\\
&\lesssim\sum_{k=k_0}^\infty(2A_0)^{-k[p\epsilon-\omega(1-p)]}\sim(2A_0)^{k_0[p\epsilon-\omega(1-p)]}
\sim\gamma^{p\epsilon-\omega(1-p)}.
\end{align}
This is the desired estimate for $\rm II$.

Finally, for $\rm III$ and $\rm IV$, applying the symmetry of $T$ [see Definition \ref{def-wpczk}(iv)]
and an argument similar to that used  respectively in the estimations of $\rm I$ and $\rm II$,
we find that
\begin{equation*}
{\rm III}+{\rm IV}\lesssim\gamma^{p\epsilon-\omega(1-p)}.
\end{equation*}
Combining this with \eqref{eq-czo1}, \eqref{eq-czo2}, and \eqref{eq-tasum},
we immediately obtain \eqref{eq-ta}. This finishes the proof of Theorem \ref{thm-bdj}.
\end{proof}

\section{Boundedness of Product Calder\'on--Zygmund Operators \\
on Product Hardy Spaces}\label{s-czo}

In this section, we establish the boundedness of product Calder\'on--Zygmund operators on product Hardy spaces.
We first recall the concept of one-parameter Calder\'on--Zygmund operators on a given space of homogeneous type  $X$,
which can be referred to \cite[Definition 7.2]{whhy21}.

Let $s \in (0,1]$. The \emph{space $\dot C^s(X)$} is defined to be the set 
of all functions $f$ on $X$ such that
$$
\|f\|_{\dot C^s(X)}:=\sup_{x\neq y}\frac{|f(x)-f(y)|}{[d(x,y)]^s}<\infty.
$$
A function $\varphi$ is called an \emph{$s$-bump function} associated with the ball
$B(x,r)$, for some $x\in X$ and $r\in(0,\infty)$, if $\varphi$ is supported in $B(x,r)$ and there exists a constant $C\in(0,\infty)$ such that, for any $\epsilon\in(0,s]$,
\begin{equation}\label{eq-defbump}
\|\varphi\|_{\dot C^\epsilon(X)}\le Cr^{-\epsilon}.
\end{equation}
If $\varphi$ further satisfies \eqref{eq-defbump} with $C:=1$ therein, then $\varphi$ is called a \emph{normalized
$s$-bump function}.

The space $C^s(X)$ denotes the space of all functions $f\in\dot C^s(X)\cap L^\infty(X)$,
equipped with the norm
$$
\|\cdot\|_{C^s(X)}:=\|\cdot\|_{\dot C^s(X)}+\|\cdot\|_{L^\infty(X)}.
$$

Let $\mathring C^s_{\rm b}(X)$ be the space of all functions $f\in C^s(X)$ with bounded support and
$$
\int_X f(x)\,d\mu(x)=0,
$$
equipped with the inductive topology, and denote by $(\mathring C^s_{\rm b}(X))'$ the dual space of
$\mathring C^s_{\rm b}(X)$, equipped with the weak-$*$ topology.

\begin{definition}[{\cite{whhy21}}]
Let $(X,d,\mu)$ be a space of homogeneous type and $\epsilon\in(0,\eta)$ with $\eta$ the same as in Theorem  \ref{thm-wave}.
A linear operator $T$, which is bounded from
$\mathring C_\mathrm b^{\epsilon'}(X)$ to $(\mathring C_\mathrm b^{\epsilon'}(X))'$ for any given
$\epsilon'\in(0,\epsilon)$, is called an \emph{$\epsilon$-Calder\'on--Zygmund operator}
if there exists a constant $C\in(0,\infty)$ such that the following
conditions hold:
\begin{enumerate}
\item if $\varphi$ is a normalized $\epsilon$-bump function associated with a ball $B(x,r)$ for some $x\in X$ and
$r\in(0,\infty)$, then, for any $\epsilon'\in(0,\epsilon)$,
$\|T\varphi\|_{\dot C^{\epsilon'}(X)}\le C\|\varphi\|_{\dot C^{\epsilon'}(X)}$,
\item[(ii)] there exists a function $K:\ X\times X\setminus\{(x,x):\ x\in X\}
\to\mathbb C$ such that, for any
$\varphi, \psi\in C^{\epsilon'}_{\mathrm b}(X)$ with $\epsilon'\in(0,\epsilon)$ and disjoint supports,
$$
\langle T\varphi,\psi\rangle=\int_X\left[\int_X K(x,y)\varphi(y)\,d\mu(y)\right]\psi(x)\,d\mu(x),
$$
\item[(iii)] the function $K$ in (ii) has the following properties:
\begin{enumerate}
\item for any $x, y\in X$ with $x\neq y$,
$$
|K(x,y)|\le\frac C{V(x,y)},
$$
\item for any $x, x', y\in X$ with $x\neq y$ and $d(x,x')\le(2A_0)^{-1}d(x,y)$,
$$
|K(x,y)-K(x',y)|\le\left[\frac{d(x,x')}{d(x,y)}\right]^\epsilon\frac C{V(x,y)},
$$
\end{enumerate}
\item[(iv)] the conditions (i), (ii), and (iii)
also hold with $x$ and $y$ interchanged, that is, these
conditions also hold for $T^*$, the adjoint operator of $T$, defined by setting, for any
$\epsilon'\in(0,\infty)$ and $\varphi,\psi\in\dot C^{\epsilon'}(X)$,
$\langle T^*\varphi,\psi\rangle:=\langle T\psi,\varphi\rangle$.
\end{enumerate}
Suppose $\epsilon\in(0,\eta)$ with $\eta$ the same as in Theorem  \ref{thm-wave}.
Let $\mathrm{CZO}_\epsilon(X)$ denote
the set of all $\epsilon$-Calder\'on--Zygmund operators. For any $T\in\mathrm{CZO}_\epsilon(X)$, let
$$
\|T\|_{\mathrm{CZO}_\epsilon(X)}
:=\inf\left\{C\in(0,\infty):\ \textup{(i), (iii), and (iv) hold}\right\}.
$$
\end{definition}

Now we introduce the product case.
Let $X_1$ and $X_2$ be two spaces of homogeneous type and $s_1,s_2\in(0,1)$.
The \emph{product $(s_1,s_2)$-H\"older space $\dot C^{(s_1,s_2)}$} is defined to be the set of all
functions $f$ on $X_1\times X_2$ such that
$$
\|f\|_{\dot C^{(s_1,s_2)}}:=\sup_{\substack{x_1\neq y_1 \\ x_2\neq y_2}}
\frac{|f(x_1,x_2)-f(x_1,y_2)-f(y_1,x_2)+f(y_1,y_2)|}{[d_1(x_1,y_1)]^{s_1}[d_2(x_2,y_2)]^{s_2}}
<\infty.
$$
For any given $s\in(0,1]$, we denote $\dot C^{(s,s)}$ simply by $\dot C^s$. The \emph{space $C^s$}
is defined by setting $C^s:=\dot C^s\cap L^\infty$, equipped with the norm
$$
\|\cdot\|_{C^s}:=\|\cdot\|_{\dot C^s}+\|\cdot\|_{L^\infty}.
$$
The space $\mathring C^s_\mathrm b$ is defined to be the set
of all functions $f\in C^s$ with bounded support satisfying that, for any $x_1\in X_1$ and $x_2\in X_2$,
$$
\int_{X_1} f(y_1,x_2)\,d\mu_1(y_1)=0=\int_{X_2} f(x_1,y_2)\,d\mu_2(y_2),
$$
equipped with the inductive topology. Denote by $(\mathring C^s_\mathrm b)'$ the dual space of
$\mathring C^s_\mathrm b$, equipped with the weak-$*$ topology.

\begin{definition}[{\cite{HYY24}}]\label{def-pcz}
Let $X_1$ and $X_2$ be two spaces of homogeneous type, $\epsilon\in(0,1)$, and $\Delta$
be the same as in \eqref{eq-diag}.
A function $K:\ (X_1\times X_2)^2\setminus
\Delta\to\mathbb C$ is called a \emph{product $\epsilon$-Calder\'on--Zygmund kernel} if there exists a constant
$C\in(0,\infty)$ such that $K$ has the following properties:
\begin{enumerate}
\item[(i)] (the \emph{size condition}) for any $(x_1,x_2,y_1,y_2)\in(X_1\times X_2)^2$ with $x_1\neq y_1$ and
$x_2\neq y_2$,
$$
|K(x_1,x_2,y_1,y_2)|\le \frac{C}{V_1(x_1,y_1)V_2(x_2,y_2)},
$$
\item[(ii)] (the \emph{single H\"ormander condition}) for any $(x_1,x_2,y_1,y_2)\in(X_1\times X_2)^2$, $y_1'\in X_1$, and
$y_2'\in X_2$ such that $x_1\neq y_1$, $x_2\neq y_2$, $d_1(y_1,y_1')\le(2A_0)^{-1}d_1(x_1,y_1)$,
and $d_2(y_2,y_2')\le(2A_0)^{-1}d_2(x_2,y_2)$,
\begin{align*}
&\left|K(x_1,x_2,y_1,y_2)-K\left(x_1,x_2,y_1',y_2\right)\right|
+\left|K(x_1,x_2,y_1,y_2)-K\left(x_1,x_2,y_1,y_2'\right)\right|\\
&\quad\le\left\{\left[\frac{d_1(y_1,y_1')}{d_1(x_1,y_1)}\right]^\epsilon
+\left[\frac{d_2(y_2,y_2')}{d_2(x_2,y_2)}\right]^\epsilon\right\}
\frac{C}{V_1(x_1,y_1)V_2(x_2,y_2)},
\end{align*}
\item[(iii)] (the \emph{double H\"ormander condition}) for any $(x_1,x_2,y_1,y_2)\in(X_1\times X_2)^2$, $y_1'\in X_1$, and
$y_2'\in X_2$ such that $x_1\neq y_1$, $x_2\neq y_2$, $d_1(y_1,y_1')\le(2A_0)^{-1}d_1(x_1,y_1)$,
and $d_2(y_2,y_2')\le(2A_0)^{-1}d_2(x_2,y_2)$,
\begin{align*}
&\left|\left[K(x_1,x_2,y_1,y_2)-K\left(x_1,x_2,y_1',y_2\right)\right]-
\left[K\left(x_1,x_2,y_1,y_2'\right)-K\left(x_1,x_2,y_1',y_2'\right)\right]\right|\\
&\quad\le\left[\frac{d_1(y_1,y_1')}{d_1(x_1,y_1)}\right]^\epsilon
\left[\frac{d_2(y_2,y_2')}{d_2(x_2,y_2)}\right]^\epsilon
\frac{C}{V_1(x_1,y_1)V_2(x_2,y_2)}.
\end{align*}
\end{enumerate}
\end{definition}

Now, it is natural to introduce the concept of product Calder\'on--Zygmund operators.

\begin{definition}[{\cite{HYY24}}]
Let $\epsilon\in(0,\eta)$ with $\eta$ the same as in Theorem \ref{thm-wave},
and let $T:\ \mathring C^{\epsilon'}_\mathrm b\to(\mathring C^{\epsilon'}_\mathrm b)'$
for any given $\epsilon'\in(0,\epsilon)$. Then $T$ is called a
\emph{product $\epsilon$-Calder\'on--Zygmund operator} if there exist a product
$\epsilon$-Calder\'on--Zygmund kernel $K$ and a constant $C\in(0,\infty)$ such that
\begin{enumerate}
\item[(i)] (the \emph{weak boundedness property}) for any given
$i\in\{1,2\}$ and any normalized $\epsilon$-bump function
$\varphi_i$ associated with $B_i(x_i,r_i)\subset X_i$ for some $x_i\in X_i$ and $r_i\in(0,\infty)$,
$$
\|T(\varphi_1\otimes\varphi_2)\|_{\dot C^{\epsilon'}}\le C r_1^{-\epsilon'}r_2^{-\epsilon'},
$$
\item[(ii)] for any given $\epsilon'\in(0,\epsilon)$ and for any $i\in\{1,2\}$ and
$\varphi_i,\psi_i\in C_{\mathrm b}^{\epsilon'}(X_i)$
with disjoint supports,
\begin{align*}
\langle T(\varphi_1\otimes\varphi_2),\psi_1\otimes\psi_2\rangle
&=\int_{X_1\times X_2}\left[\int_{X_1\times X_2} K(x_1,x_2,y_1,y_2)\varphi_1(y_1)\varphi_2(y_2)\,d\mu_1(y_1)\,d\mu_2(y_2)\right]\\
&\quad\times\psi_1(x_1)\psi_2(x_2)\,d\mu_1(x_1)\,d\mu_2(x_2),
\end{align*}
\item[(iii)] for any given $x_2\in X_2$ and any given normalized $\epsilon$-bump function $\varphi_2$ on $X_2$, there exists
$T^{\varphi_2,x_2}\in\mathrm{CZO}_\epsilon(X_1)$ such that, for any $\epsilon$-bump function $\varphi_1$ on $X_1$,
$$
\langle T(\varphi_1\otimes\varphi_2),\psi_1\otimes\psi_2\rangle=\int_{X_2}
\left<T^{\varphi_2,x_2}\varphi_1,\psi_1\right>
\psi_2(x_2)\,d\mu_2(x_2);
$$
moreover, if $\mathop{\rm supp} \varphi_2\subset B(x_2^{(0)},r_2)$ for some $x_2^{(0)}\in X_2$ and $r_2\in(0,\infty)$,
then, for any $x_2,x_2'\in X_2$ and $\epsilon'\in(0,\epsilon)$,
$$
\left\|T^{\varphi_2,x_2}-T^{\varphi_2,x_2'}\right\|_{{\rm CZO}_\epsilon(X_1)}
\le C\left[\frac{d(x_2,x_2')}{r_2}\right]^{\epsilon'},
$$
\item[(iv)]  the properties (i) through (iii) also hold with $x$ and $y$
or $X_1$ and $X_2$ interchanged.
\end{enumerate}
\end{definition}

\begin{theorem}\label{thm-czhp}
Let $p\in(\frac{\omega}{\omega+\eta},1]$, $\epsilon\in(\omega(\frac{1}{p}-1),\eta)$, and $T$ be a product $\epsilon$-Calder\'on--Zygmund
operator. Further assume that $T$ is bounded  on $L^{2}$.
Then $f$ can be extended to a bounded linear operator on $H^p$. Moreover, there exists a positive constant
$C$ such that, for any $f\in H^p$, $\|Tf\|_{H^p}\le C\|f\|_{H^p}$.
\end{theorem}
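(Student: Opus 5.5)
Since $H^p\cap L^2$ is dense in $H^p$ (Proposition \ref{prop-dense}) and $T$ is bounded on $L^2$, it suffices to prove $\|Tf\|_{H^p}\lesssim\|f\|_{H^p}$ for all $f\in L^2\cap H^p$. Once that holds, completeness (Proposition \ref{prop-com1}) gives the unique bounded extension to $H^p$.

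For such $f$ I would apply the discrete Calder\'on-type reproducing formula \eqref{eq-ctype} of Theorem \ref{thm-ctype}. It writes
\[
f=\sum_{k_1,\alpha_1}\sum_{k_2,\alpha_2}\mu\bigl(Q_{\alpha_1,\alpha_2}^{k_1+j_0,k_2+j_0}\bigr)\,D^N_{k_1,k_2}\bigl(\cdot,*,x_Q\bigr)\,D_{k_1,k_2}g(x_Q),
\]
with convergence in $L^2$ and $\|g\|_{H^p}\sim\|f\|_{H^p}$. Because $T$ is $L^2$-bounded, we may apply it term by term. For each $(j_1',\beta_1',j_2',\beta_2')$ this gives
\[
\bigl\langle Tf,\psi^{j_1',j_2'}_{\beta_1',\beta_2'}\bigr\rangle=\sum_{Q}\mu(Q)\,\bigl\langle TD^N_{k_1,k_2}(\cdot,*,x_Q),\psi^{j_1',j_2'}_{\beta_1',\beta_2'}\bigr\rangle\,D_{k_1,k_2}g(x_Q).
\]

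The key step is an almost-orthogonality estimate for the matrix entries. For fixed $x_Q$, the function $D^N_{k_1,k_2}(\cdot,*,x_Q)$, normalized by $\mu(Q)$, is a product of bump functions with bounded support, H\"older regularity, and cancellation in each variable. The claim to prove is that, for some $\epsilon''\in(\omega(\frac1p-1),\epsilon)$ and a large $\Gamma$,
\[
\mu(Q)\,\Bigl|\bigl\langle TD^N_{k_1,k_2}(\cdot,*,x_Q),\Psi^{j_1',j_2'}_{\beta_1',\beta_2'}\bigr\rangle\Bigr|\lesssim \mu(Q)\,\delta^{|k_1-j_1'|\epsilon''}\delta^{|k_2-j_2'|\epsilon''}P_{1,\Gamma}\bigl(x_{Q,1},y^{j_1'}_{1,\beta_1'};\delta^{k_1\wedge j_1'}\bigr)P_{2,\Gamma}\bigl(x_{Q,2},y^{j_2'}_{2,\beta_2'};\delta^{k_2\wedge j_2'}\bigr).
\]
To prove it, I would split into the four cases $k_i\le j_i'$ or $k_i>j_i'$, for $i=1,2$. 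In each variable, the cancellation goes on whichever function lives at the finer scale, and the smoothness is used on the other function.

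The smoothness needed depends on the case. When the cancellation sits on the wavelet, we need the smoothness of $TD^N$. This comes from the weak boundedness property (i) together with the kernel regularity from the double H\"ormander condition (Definition \ref{def-pcz}(iii)). When the cancellation sits on $D^N$, we need the smoothness of $T^*\psi$, which comes from (iv).

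The mixed cases, where one variable is coarse and the other fine, are where I would use condition (iii) of the definition. That condition gives a one-parameter operator $T^{\varphi_2,x_2}\in\mathrm{CZO}_\epsilon(X_1)$ whose dependence on $x_2$ is H\"older. This lets the one-parameter estimates, in the style of \cite{whhy21}, run in the first variable while the second variable contributes the size/regularity factor. This four-case kernel estimate is the main obstacle. In particular, the decay $P_\Gamma$ with large $\Gamma$ is delicate: it comes from bounded support combined with the exponential decay of the wavelets, and the size condition on $K$ alone only gives order-$\epsilon$ decay. If needed, I would accept $\Gamma=\epsilon''$ and check that Proposition \ref{prop-estmax} still closes, since it requires $r>\omega/(\omega+\Gamma)$ and $p>\omega/(\omega+\epsilon'')$ allows this.

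Given the estimate, the rest parallels the proofs of Lemma \ref{lem-slp} and of the bound for $R_N^{(1,1)}$ in Theorem \ref{thm-ctype}:
\begin{enumerate}
\item Insert the estimate and bound the $\beta'$-sum pointwise on $Q^{j_1'+1,j_2'+1}_{\beta_1',\beta_2'}$.
\item Apply Proposition \ref{prop-estmax} with $r\in(\omega/(\omega+\epsilon''),p)$. This gives
\[
S(Tf)\lesssim\Bigl\{\sum_{j'}\Bigl[\sum_{k}\delta^{|k-j'|\epsilon''-\omega(\frac1r-1)[k-(k\wedge j')]}\,\mathcal M_{\rm str}\Bigl(\sum_{\alpha}|D_{k}g(x_Q)|^r\mathbf 1_Q\Bigr)^{1/r}\Bigr]^2\Bigr\}^{1/2}.
\]
\item Sum the geometric series using $\epsilon''>\omega(\frac1r-1)$.
\item Apply the Fefferman--Stein inequality (Proposition \ref{prop-fs}) on $L^{p/r}(\ell^{2/r})$.
\end{enumerate}
The result is
\[
\|Tf\|_{H^p}\lesssim\Bigl\|\Bigl\{\sum_{k,\alpha}|D_kg(x_Q)|^2\mathbf 1_Q\Bigr\}^{1/2}\Bigr\|_{L^p}\lesssim\|g\|_{H^p}\sim\|f\|_{H^p},
\]
where the middle inequality is Lemma \ref{lem-slp} with $m=j_0$.

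Finally, I would check that $Tf$ lies in $H^p$ in the sense of Definition \ref{def-hcmo}. Since $Tf\in L^2$ with $S(Tf)\in L^p$, we have $Tf\in H^p_{L^2}\subset H^p$.
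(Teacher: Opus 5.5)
Your outline works, but it departs from the paper at the step that carries the weight. The paper never computes the wavelet coefficients of $Tf$. It expands $D_{k_1',k_2'}Tf(x_{1,\alpha_1'}^{k_1'},x_{2,\alpha_2'}^{k_2'})$ through the reproducing formula and bounds the entries $D_{k_1',k_2'}T(D^N_{k_1,k_2}(x_Q,\cdot,*))(x_{Q'})$ by $\delta^{|k_1-k_1'|\epsilon'}\delta^{|k_2-k_2'|\epsilon'}P_{1,\epsilon'}P_{2,\epsilon'}$, citing the almost-orthogonality estimate of \cite[Theorem 5.4]{HYY24}. It then returns to $\|Tf\|_{H^p}$ via Lemma \ref{lem-whpp}: the discrete $D$-square function dominates the $H^p$ quasi-norm, which rests on the $H^p$-invertibility of $T_N$ from the proof of Theorem \ref{thm-ctype}.

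What the paper's route buys is that both functions in the Calder\'on--Zygmund estimate, $D^N_{k_1,k_2}(x_Q,\cdot,*)$ and $D_{k_1',k_2'}(x_{Q'},\cdot,*)$, have bounded support. That is precisely the class on which a product $\epsilon$-Calder\'on--Zygmund operator is defined and controlled: weak boundedness on bumps, and kernel representation for disjointly supported $C^{\epsilon'}_{\rm b}$ functions, together with their analogues for $T^*$. So the cited estimate applies verbatim.

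Your route skips Lemma \ref{lem-whpp} and works directly with $S$, which is more natural given Definition \ref{def-hcmo}. The price is that one side of your key estimate is a wavelet, which has no bounded support. Where the cancellation sits on $D^N$, and in the mixed cases, you need regularity of $T^*\Psi^{j_1',j_2'}_{\beta_1',\beta_2'}$. The definition does not give this directly, so you must cut $\Psi$ into bounded-support pieces and resum using its exponential decay. This is work the paper never has to do.

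Also, the large-$\Gamma$ form of your estimate should not be expected for a general product Calder\'on--Zygmund operator, since the off-diagonal decay is limited by the $\epsilon$-smoothness of the kernel. So your fallback $\Gamma=\epsilon''$ is not optional. It is exactly the $P_{i,\epsilon'}$ the paper uses, and it is where the hypothesis $\epsilon>\omega(\frac1p-1)$ enters, through the choice $r\in(\frac{\omega}{\omega+\epsilon'},p)$.
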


To prove Theorem \ref{thm-czhp}, we need the following lemma.

\begin{lemma}\label{lem-whpp}
Let $\{D_{k_1,k_2}\}_{k_1,k_2\in\mathbb Z}$ be as in Theorem \ref{thm-ctype}, $j_0\in\mathbb Z$, $p\in(\frac\omega{\omega+\eta},1]$
with $\omega$ and $\eta$ as, respectively, in \eqref{eq-doub} and Theorem \ref{thm-wave},
and, for any $i\in\{1,2\}$, $k_i\in\mathbb Z$, and $\alpha_i\in\mathcal A_{i,k_i+j_0}$,
$x_{i,\alpha_i}^{k_i+j_0}\in Q_{i,\alpha_i}^{k_i+j_0}$ be an arbitrary point.
Then there exists a positive constant $C$, independent of $x_{1,\alpha_{1}}^{k_{1}+j_0}$ 
and $x_{2,\alpha_{2}}^{k_{2}+j_0}$, such that,
for any $f\in L^2$,
\begin{align}\label{eq-whpp}
 \|f\|_{H^p}
 \le C\left\|\left[\sum_{\genfrac{}{}{0pt}{}{k_1\in{\mathbb Z}}{\alpha_1\in\mathcal  A_{1,k_1+j_0}}}
\sum_{\genfrac{}{}{0pt}{}{k_2\in{\mathbb Z}}{\alpha_2\in\mathcal A_{2,k_2+j_0}}}
\left|D_{k_1,k_2}f\left(x_{1,\alpha_1}^{k_1+j_0},x_{2,\alpha_2}^{k_2+j_0}\right)\right|^2
\mathbf 1_{Q_{\alpha_1,\alpha_2}^{k_1+j_0,k_2+j_0}}\right]^{\frac 12}\right\|_{L^p}.
\end{align}
\end{lemma}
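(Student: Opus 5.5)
We prove \eqref{eq-whpp} by reconstructing $f$ from the sampled values $D_{k_1,k_2}f(x_{1,\alpha_1}^{k_1+j_0},x_{2,\alpha_2}^{k_2+j_0})$ via Theorem \ref{thm-ctype}, and then estimating the wavelet coefficients of the reconstruction directly. The choice of variant matters. \eqref{eq-ctype2} carries $D_{k_1,k_2}^N(h)$ evaluated at the sample points, which is not what appears on the right-hand side of \eqref{eq-whpp}. We therefore work at the level of the operator $T_N$ from the proof of Theorem \ref{thm-ctype}, in the variant where the discretization is placed on the outer kernel.

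Writing $f=\sum_{k_1,k_2}D_{k_1,k_2}^N D_{k_1,k_2}f+R^{(1)}_Nf$ and discretizing only the outer integral, we get
\begin{equation*}
f=\widetilde T_N f+\widetilde R_N f,\qquad \widetilde T_Nf:=\sum_{k_1,\alpha_1}\sum_{k_2,\alpha_2}\mu\left(Q_{\alpha_1,\alpha_2}^{k_1+j_0,k_2+j_0}\right)D^N_{k_1,k_2}\left(\cdot,*,x_{\alpha_1,\alpha_2}^{k_1+j_0,k_2+j_0}\right)D_{k_1,k_2}f\left(x_{\alpha_1,\alpha_2}^{k_1+j_0,k_2+j_0}\right).
\end{equation*}
Here $\widetilde R_N=R_N^{(1)}+R_N^{(2)}+R_N^{(3)}$. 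By \eqref{eq-ctrfaim}, with $N$ and $j_0$ chosen as there, $\|\widetilde R_N\|_{\mathcal L(H^p)}^p<\frac12$ and $\|\widetilde R_N\|_{\mathcal L(L^2)}<\frac12$. For $f\in L^2\cap H^p$ this gives $\|f\|_{H^p}^p\le\|\widetilde T_Nf\|_{H^p}^p+\frac12\|f\|_{H^p}^p$, hence $\|f\|_{H^p}^p\le 2\|\widetilde T_Nf\|_{H^p}^p$. The main estimate is then
\begin{equation*}
\|\widetilde T_Nf\|_{H^p}\lesssim\left\|\left[\sum_{k_1,\alpha_1}\sum_{k_2,\alpha_2}\left|D_{k_1,k_2}f\left(x_{\alpha_1,\alpha_2}^{k_1+j_0,k_2+j_0}\right)\right|^2\mathbf 1_{Q_{\alpha_1,\alpha_2}^{k_1+j_0,k_2+j_0}}\right]^{\frac12}\right\|_{L^p}.
\end{equation*}
To prove it, compute $\langle\widetilde T_Nf,\psi_{\beta_1,\beta_2}^{j_1,j_2}\rangle$. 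The kernel $D^N_{k_1,k_2}$ is a finite sum of $D_{l_1,l_2}$ with $|l_i-k_i|\le N$, and it has bounded support, size, regularity and cancellation at scale $\delta^{k_i}$. The argument of Lemma \ref{lem-slp} (the one-parameter estimate of \cite[Proposition 2.5]{hhl16}, tensorized) then gives
\begin{equation*}
\left|\left\langle D^N_{k_1,k_2}\left(\cdot,*,x_{\alpha_1,\alpha_2}^{k_1+j_0,k_2+j_0}\right),\Psi_{\beta_1,\beta_2}^{j_1,j_2}\right\rangle\right|\lesssim_N\delta^{|k_1-j_1|\eta'}\delta^{|k_2-j_2|\eta'}P_{1,\Gamma}\left(x_{1,\alpha_1}^{k_1+j_0},y_{1,\beta_1}^{j_1};\delta^{k_1\wedge j_1}\right)P_{2,\Gamma}\left(x_{2,\alpha_2}^{k_2+j_0},y_{2,\beta_2}^{j_2};\delta^{k_2\wedge j_2}\right)
\end{equation*}
for $\eta'<\eta$.

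Plugging this in, we apply Proposition \ref{prop-estmax} with coefficients $a_{\alpha_1,\alpha_2}^{k_1+j_0,k_2+j_0}:=D_{k_1,k_2}f(x_{\alpha_1,\alpha_2}^{k_1+j_0,k_2+j_0})$ on the cubes of level $k_i+j_0$. The shift $j_0$ only costs a constant $\delta^{-2j_0\omega(1/r-1)}$. We then sum over $(k_1,k_2)$ with the decay $\delta^{|k_i-j_i|\eta'-[k_i-(k_i\wedge j_i)]\omega(1/r-1)}$, which is summable once $r\in(\frac{\omega}{\omega+\eta'},p)$. Finally, Proposition \ref{prop-fs} on $L^{p/r}(\ell^{2/r})$ yields the displayed bound, exactly as in the proofs of Proposition \ref{prop-hpweq} and Lemma \ref{lem-slp}.

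The main obstacle is that we need $\|f\|_{H^p}<\infty$ to absorb the error term, whereas $f$ is only assumed to lie in $L^2$. If the right-hand side of \eqref{eq-whpp} is infinite there is nothing to prove, so assume it is finite. Rather than absorbing, iterate: $f=\sum_{m=0}^{M}\widetilde R_N^m\widetilde T_Nf+\widetilde R_N^{M+1}f$. In $L^2$ the remainder $\widetilde R_N^{M+1}f$ tends to $0$. In $H^p$, the partial sums are Cauchy with bound $\lesssim\|\widetilde T_Nf\|_{H^p}$, provided the $H^p$ estimate for $\widetilde R_N$ is applied to $\widetilde T_Nf\in L^2\cap H^p$. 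The limits in $L^2$ and $(\mathrm{CMO}^p_{L^2})'$ coincide, which can be checked on wavelet coefficients via Proposition \ref{prop-com1}. Fatou's lemma applied to $S$ then gives $\|f\|_{H^p}\lesssim\|\widetilde T_Nf\|_{H^p}$, completing the proof.
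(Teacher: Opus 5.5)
Your proposal is correct and follows the paper's proof. Your $\widetilde T_N$ and $\widetilde R_N$ are exactly the $T_N$ and $R_N=R_N^{(1)}+R_N^{(2)}+R_N^{(3)}$ from the proof of Theorem \ref{thm-ctype}, and you bound $\|T_Nf\|_{H^p}$ by the right-hand side of \eqref{eq-whpp} through the same wavelet-coefficient argument as in Lemma \ref{lem-slp} (almost orthogonality, Proposition \ref{prop-estmax}, Proposition \ref{prop-fs}), then invert $T_N$ by the Neumann series; your explicit iteration $f=\sum_{m\le M}R_N^mT_Nf+R_N^{M+1}f$, with the $H^p$ bound applied only to $T_Nf\in L^2\cap H^p$, is simply a more carefully justified version of the paper's one-line step $\|f\|_{H^p}=\|T_N^{-1}T_Nf\|_{H^p}$ via \eqref{eq-tnhp}.
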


\begin{proof}
Let $f\in L^2$ and all the symbols be as in the proof Theorem \ref{thm-ctype}. By the boundedness of
$T_N$ on $L^2$, we find that $T_Nf\in L^2$ and
\begin{align*}
T_Nf(\cdot,*)&=\sum_{\genfrac{}{}{0pt}{}{k_1\in{\mathbb Z}}{\alpha_1\in\mathcal  A_{1,k_1+j_0}}}
\sum_{\genfrac{}{}{0pt}{}{k_2\in{\mathbb Z}}{\alpha_2\in\mathcal A_{2,k_2+j_0}}}
\mu\left(Q_{\alpha_1,\alpha_2}^{k_1+j_0,k_2+j_0}\right)
D_{k_1,k_2}^N\left(\cdot,*,x_{1,\alpha_1}^{k_1+j_0},x_{2,\alpha_2}^{k_2+j_0}\right)
D_{k_1,k_2}f\left(x_{1,\alpha_1}^{k_1+j_0},x_{2,\alpha_2}^{k_2+j_0}\right)
\end{align*}
in $L^2$. Thus, for any $k_1',k_2'\in\mathbb Z$, $\alpha_1'\in\mathcal G_{1,k_1'}$, and $\alpha_2'\in\mathcal G_{2,k_2'}$,
\begin{align*}
\left<T_Nf,\psi_{\alpha_1',\alpha_2'}^{k_1',k_2'}\right>&=\sum_{k_1,\alpha_1}\sum_{k_2,\alpha_2}
\mu\left(Q_{\alpha_1,\alpha_2}^{k_1+j_0,k_2+j_0}\right)
D_{k_1,k_2}^N\psi_{\alpha_1',\alpha_2'}^{k_1',k_2'}\left(x_{1,\alpha_1}^{k_1+j_0},
x_{2,\alpha_2}^{k_2+j_0}\right)
D_{k_1,k_2}f\left(x_{1,\alpha_1}^{k_1+j_0},x_{2,\alpha_2}^{k_2+j_0}\right).
\end{align*}
Consequently, applying an argument similar to that used in the proof of Lemma \ref{lem-slp}, we find that
\begin{equation*}
\left\|T_Nf\right\|_{H^p}
\lesssim\left\|\left[\sum_{k_1,\alpha_1}\sum_{k_2,\alpha_2}
\left|D_{k_1,k_2}f\left(x_{1,\alpha_1}^{k_1+j_0},x_{2,\alpha_2}^{k_2+j_0}\right)\right|^2
\mathbf 1_{Q_{\alpha_1,\alpha_2}^{k_1+j_0,k_2+j_0}}\right]^{\frac12}\right\|_{L^p}.
\end{equation*}
This, together with \eqref{eq-tnhp}, further implies that
$$
\|f\|_{H^p}=\left\|T_N^{-1}T_Nf\right\|_{H^p}\lesssim\left\|\left[\sum_{k_1,\alpha_1}\sum_{k_2,\alpha_2}
\left|D_{k_1,k_2}f\left(x_{1,\alpha_1}^{k_1+j_0},x_{2,\alpha_2}^{k_2+j_0}\right)\right|^2
\mathbf 1_{Q_{\alpha_1,\alpha_2}^{k_1+j_0,k_2+j_0}}\right]^{\frac12}\right\|_{L^p}.
$$
This finishes the proof of \eqref{eq-whpp} and hence Lemma \ref{lem-whpp}.
\end{proof}

\begin{proof}[Proof of Theorem \ref{thm-czhp}]
Since $L^2\cap H^p$ is dense in $H^p$, without loss of generality, we may assume that $f\in L^2\cap H^p$.
Then, by Theorem \ref{thm-ctype}, we have
\begin{align}\label{eq-5.2x}
f(\cdot, *)&=\sum_{\genfrac{}{}{0pt}{}{k_1\in{\mathbb Z}}{\alpha_1\in\mathcal G_{1,k_1}}}
\sum_{\genfrac{}{}{0pt}{}{k_2\in{\mathbb Z}}{\alpha_2\in\mathcal G_{2,k_2}}}
\mu\left(Q_{\alpha_1,\alpha_2}^{k_1+j_0,k_2+j_0}\right)
D_{k_1,k_2}^N\left(x_{1,\alpha_1}^{k_1+j_0},x_{2,\alpha_2}^{k_2+j_0},\cdot,*\right)
D_{k_1,k_2}(g)\left(x_{1,\alpha_1}^{k_1+j_0},x_{2,\alpha_2}^{k_2+j_0}\right)
\end{align}
with the series converging in $L^2\cap H^p$, where $g \in L^2 \cap H^p$ and
$\|g\|_{H^p} \sim \|f\|_{H^p}$, Consequently, from this and the boundedness of $T$ on $L^2$, we deduce that
\begin{align*}
Tf(\cdot,*)&=\sum_{k_1,\alpha_1}\sum_{k_2,\alpha_2}
\mu\left(Q_{\alpha_1,\alpha_2}^{k_1+j_0,k_2+j_0}\right)
T\left(D_{k_1,k_2}^N\left(x_{1,\alpha_1}^{k_1+j_0},x_{2,\alpha_2}^{k_2+j_0},\cdot,*\right)\right)
 D_{k_1,k_2}(g)\left(x_{1,\alpha_1}^{k_1+j_0},x_{2,\alpha_2}^{k_2+j_0}\right),
\end{align*}
and, similarly, for any $i\in\{1,2\}$, $k_i'\in\mathbb Z$,
$\alpha_i\in\mathcal A_{i,k_i}$, and $x_{i,\alpha_{i}'}^{k_{i}'}\in Q_{i,\alpha_{i}'}^{k_{i}'}$,
from \eqref{eq-5.2x} and the boundedness of $D_{k_1',k_2'}T$ on $L^2$, we infer that
\begin{align*}
&D_{k_1',k_2'}Tf\left(x_{1,\alpha_1'}^{k_1'},x_{2,\alpha_2'}^{k_2'}\right)\\
&\quad=\sum_{k_1,\alpha_1}\sum_{k_2,\alpha_2}\mu\left(Q_{\alpha_1,\alpha_2}^{k_1+j_0,k_2+j_0}\right)
D_{k_1,k_2}T\left(D_{k_1,k_2}^N\left(x_{1,\alpha_1}^{k_1+j_0},x_{2,\alpha_2}^{k_2+j_0},\cdot,*\right)\right)
\left(x_{1,\alpha_1'}^{k_1'},x_{2,\alpha_2'}^{k_2'}\right)\\
&\qquad\times D_{k_1,k_2}g\left(x_{1,\alpha_1}^{k_1+j_0},x_{2,\alpha_2}^{k_2+j_0}\right).
\end{align*}
Using the following  almost orthogonality estimate in \cite[Theorem 5.4]{HYY24} that,  for any given
$\epsilon'\in(0,\epsilon)$ and for any $k_1, k_2, k_1', k_2'\in\mathbb Z$ and
$x_{\alpha_1,\alpha_2}^{k_1,k_2}\in Q_{\alpha_1,\alpha_2}^{k_1,k_2}$,
\begin{align*}
&\left|D_{k_1,k_2}T\left(D_{k_1,k_2}^N\left(x_{1,\alpha_1}^{k_1+j_0},x_{2,
\alpha_2}^{k_2+j_0},\cdot,*\right)\right)
\left(x_{1,\alpha_1'}^{k_1'},x_{2,\alpha_2'}^{k_2'}\right)\right|\\
&\quad\lesssim\delta^{|j_1-k_1|\epsilon'}\delta^{|j_2-k_2|\epsilon'}
P_{1,\epsilon'}\left(x_{1,\alpha_1}^{k_1+j_0},x_{1,\alpha_1'}^{k_1'};\delta^{k_1'\wedge k_1}\right)
P_{2,\epsilon'}\left(x_{2,\alpha_2}^{k_2+j_0},x_{2,\alpha_2'}^{k_2'};\delta^{k_1'\wedge k_1}\right)
\end{align*}
and using Proposition \ref{prop-estmax}, we find that, for any $(x_1,x_2)\in X_1\times X_2$,
\begin{align} \label{eq-T-bound}
&\sum_{\alpha_1'\in\mathcal A_{1,k_1'}}\sum_{\alpha_2'\in\mathcal A_{2,k_2'}}
\left|D_{k_1',k_2'}Tf\left(x_{1,\alpha_1'}^{k_1'},x_{2,\alpha_2'}^{k_2'}\right)\right|^2
\mathbf 1_{Q_{\alpha_1',\alpha_2'}^{k_1',k_2'}}(x_1,x_2)    \notag\\
&\quad\lesssim\sum_{\alpha_1'\in\mathcal A_{1,k_1'}}\sum_{\alpha_2'\in\mathcal A_{2,k_2'}}
\left[\sum_{k_1,\alpha_1}\sum_{k_2,\alpha_2}\delta^{|k_1-k_1'|\epsilon'}\delta^{|k_2-k_2'|\epsilon'}
\mu\left(Q_{\alpha_1,\alpha_2}^{k_1+j_0,k_2+j_0}\right)\right.  \notag\\
&\quad\quad\left.{}\times
P_{1,\epsilon'}\left(x_{1,\alpha_1}^{k_1+j_0},x_{1,\alpha_1'}^{k_1'};\delta^{k_1'\wedge k_1}\right)P_{2,\epsilon'}\left(x_{2,\alpha_2}^{k_2+j_0},x_{2,\alpha_2'}^{k_2'};\delta^{j_1\wedge k_1}\right)
\mathbf 1_{Q_{\alpha_1',\alpha_2'}^{k_1',k_2'}}(x_1,x_2)\right]^2  \notag\\
&\quad\sim\left[\sum_{k_1,\alpha_1}\sum_{k_2,\alpha_2}\delta^{|k_1-k_1'|\epsilon'}
\delta^{|k_2-k_2'|\epsilon'}
\mu\left(Q_{\alpha_1,\alpha_2}^{k_1+j_0,k_2+j_0}\right)P_{1,\epsilon'}
\left(x_{1,\alpha_1}^{k_1+j_0},x_1;\delta^{j_1\wedge k_1}\right)
P_{2,\epsilon'}\left(x_{2,\alpha_2}^{k_2+j_0},x_2;\delta^{j_1\wedge k_1}\right)\right]^2\notag\\
&\quad\lesssim\left[\sum_{k_1,\alpha_1}\sum_{k_2,\alpha_2}\delta^{|k_1-k_1'|\epsilon'}
\delta^{|k_2-k_2'|\epsilon'}
\mu\left(Q_{\alpha_1,\alpha_2}^{k_1+j_0,k_2+j_0}\right)P_{1,\epsilon''}
\left(x_{1,\alpha_1}^{k_1+j_0},x_1;\delta^{k_1'\wedge k_1}\right)
P_{2,\epsilon'}\left(x_{2,\alpha_2}^{k_2+j_0},x_2;\delta^{k_1'\wedge k_1}\right)\right]^2\notag\\
&\quad\lesssim\left\{\sum_{k_1, k_2\in\mathbb Z}\delta^{|k_1-k_1'|\epsilon'}\delta^{\omega(1-\frac{1}{r})[k_1-(k_1\wedge k_1')]}
\delta^{|k_2-k_2'|\epsilon'}\delta^{\omega(1-\frac{1}{r})[k_2-(k_2\wedge k_2')]}\right.\notag\\
&\quad\quad\left.{}\times\left[\mathcal M_{\rm str}\left(
\sum_{\alpha_1\in\mathcal A_{1,k_1}}\sum_{\alpha_2\in\mathcal A_{2,k_2}}\left|D_{k_1,k_2}g\left(x_{1,\alpha_1}^{k_1+j_0},
x_{2,\alpha_2}^{k_2+j_0}\right)\right|^r\mathbf 1_{Q_{\alpha_1,\alpha_2}^{k_1+j_0,k_2+j_0}}\right)(x_1,x_2)\right]^{ \frac{1}{r}}\right\}^2\notag\\
&\quad\lesssim\sum_{k_1, k_2\in\mathbb Z}\delta^{|k_1-k_1'|\epsilon'}\delta^{\omega(1-\frac{1}{r})[k_1-(k_1\wedge k_1')]}
\delta^{|k_2-k_2'|\epsilon'}\delta^{\omega(1-\frac{1}{r})[k_2-(k_2\wedge k_2')]}\notag\\
&\quad\quad\times\left[\mathcal M_{\rm str}\left(
\sum_{\alpha_1\in\mathcal A_{1,k_1}}\sum_{\alpha_2\in\mathcal A_{2,k_2}}\left|D_{k_1,k_2}g\left(x_{1,\alpha_1}^{k_1+j_0},
x_{2,\alpha_2}^{k_2+j_0}\right)\right|^r\mathbf 1_{Q_{\alpha_1,\alpha_2}^{k_1+j_0,k_2+j_0}}\right)(x_1,x_2)\right]^{\frac{2}{r}},
\end{align}
where we chose $r\in(\frac\omega{\omega+\epsilon'},1]$.
Since $\epsilon\in(\omega(\frac1p-1),\eta)$, it follows that we can choose $\epsilon'$ and
$r$ such that $r\in(\frac{\omega}{\omega+\epsilon'},p)$.
By Lemma \ref{lem-whpp}, \eqref{eq-T-bound}, Proposition  \ref{prop-fs}, 
Lemma \ref{lem-slp},  and Theorem \ref{thm-ctype},
we further obtain
\begin{align*}
 \left\|Tf\right\|_{H^p}
&\lesssim\left\|\left[\sum_{\genfrac{}{}{0pt}{}{k_1'\in{\mathbb Z}}{\alpha_1'\in\mathcal  A_{1,k_1'+j_0}}}
\sum_{\genfrac{}{}{0pt}{}{k_2'\in{\mathbb Z}}{\alpha_2'\in\mathcal A_{2,k_2'+'j_0}}}
\left|D_{k_1',k_2'}Tf\left(x_{1,\alpha_1'}^{k_1'},x_{2,\alpha_2'}^{k_2'}\right)\right|^2
\mathbf 1_{Q_{\alpha_1', \alpha_2'}^{k_1'+j_0, k_2'+j_0}}\right]^{\frac 12}\right\|_{L^p}\\
&\lesssim\left\|\left\{\sum_{k_1,k_2\in\mathbb Z}\left[\mathcal M_{\rm str}\left(
\sum_{\alpha_1\in\mathcal A_{1,k_1}}\sum_{\alpha_2\in\mathcal A_{2,k_2}}
\left|D_{k_1,k_2}g\left(x_{1,\alpha_1}^{k_1+j_0},x_{2,\alpha_2}^{k_2+j_0}\right)\right|^r
\mathbf 1_{Q_{\alpha_1,\alpha_2}^{k_1+j_0,k_2+j_0}}\right)\right]^{\frac2r}\right\}^{\frac12}\right\|_{L^p}\\
&\sim\left\|\left\{\sum_{k_1, k_2\in\mathbb Z}\left[\mathcal M_{\rm str}\left(
\sum_{\alpha_1\in\mathcal A_{1,k_1}}\sum_{\alpha_2\in\mathcal A_{2,k_2}}\left|D_{k_1,k_2}g
\left(x_{1,\alpha_1}^{k_1+j_0},x_{2,\alpha_2}^{k_2+j_0}\right)\right|^r
\mathbf 1_{Q_{\alpha_1,\alpha_2}^{k_1+j_0,k_2+j_0}}\right)
\right]^{\frac{2}{r}}\right\}^{\frac{r}{2}}\right\|_{L^{\frac{p}{r}}}^{\frac{1}{r}}\\
&\lesssim\left\|\left\{\sum_{k_1,k_2\in\mathbb Z}\left(\sum_{\alpha_1\in\mathcal A_{1,k_1}}\sum_{\alpha_2\in\mathcal A_{2,k_2}}
\left|D_{k_1,k_2}g\left(x_{1,\alpha_1}^{k_1+j_0},x_{2,\alpha_2}^{k_2+j_0}\right)\right|^r
\mathbf 1_{Q_{\alpha_1,\alpha_2}^{k_1+j_0,k_2+j_0}}\right)^{\frac{2}{r}}\right\}^{\frac{r}{2}}
\right\|_{L^{\frac{p}{r}}}^{\frac{1}{r}}\\
&\lesssim\left\|\left\{\sum_{k_1,\alpha_1}\sum_{k_2,\alpha_2}
\left|D_{k_1,k_2}g\left(x_{1,\alpha_1}^{k_1+j_0},x_{2,\alpha_2}^{k_2+j_0}\right)\right|^2
\mathbf 1_{Q_{\alpha_1,\alpha_2}^{k_1+j_0,k_2+j_0}}\right\}^{\frac{1}{2}}\right\|_{L^{p}}
\lesssim\|g\|_{H^p}\sim\|f\|_{H^p}.
\end{align*}
This finishes the proof of Theorem \ref{thm-czhp}.
\end{proof}

\begin{remark}
In \cite{hlpw21}, Han et al. introduce a new and interesting kind
of atoms on product Hardy spaces and characterize these Hardy
spaces via these new atoms. However, these atoms may {\emph not} be useful for the boundedness of product Calder\'on--Zygmund
operators on product Hardy spaces. This is because a product Calder\'on--Zygmund operator should operate on a function with bounded
support, while an atom in \cite{hlpw21} may not have bounded support. Thus, it is \emph{unknown} whether an atom in
\cite{hlpw21} can be acted on by a product Calder\'on--Zygmund operator.
\end{remark}

 \bigskip \bigskip

\noindent Ziyi He

\medskip

\noindent Key Laboratory of Mathematics and Information Networks (Ministry of Education of China),
 School of Mathematical Sciences, Beijing University of Posts and
Telecommunications, Beijing 100876, The People's Republic of China

\smallskip

\noindent{{\it E-mail:}} \texttt{ziyihe@bupt.edu.cn}

\bigskip

\noindent Dachun Yang

\medskip

\noindent Laboratory of Mathematics and Complex Systems (Ministry of Education of China),
School of Mathematical Sciences, Beijing Normal University, Beijing 100875,
The People's Republic of China

\smallskip

\noindent{\it E-mail:} \texttt{dcyang@bnu.edu.cn}

\bigskip

\noindent Taotao Zheng

\medskip

\noindent
Department of Mathematics,
Zhejiang University of Science and Technology,
Hangzhou 310023, The People's Republic of China

\smallskip

\noindent{{\it E-mail:}}
\texttt{zhengtao@zust.edu.cn}


\begin{thebibliography}{99}

\bibitem{agh20}
R. Alvarado, P. G\'orka and P. Haj{\l}asz, Sobolev embedding for $M^{1,p}$ spaces is equivalent to a lower bound of the measure,
J. Funct. Anal. 279 (2020), Paper No. 108628, 39 pp.

\vspace{-0.3cm}

\bibitem{am15}
R. Alvarado and M. Mitrea, Hardy spaces on Ahlfors-Regular Quasi Metric Spaces. A Sharp Theory, Lecture Notes in Mathematics 2142,
Springer, Cham, 2015.

\vspace{-0.3cm}

\bibitem{awyy23}
R. Alvarado, F. Wang, D. Yang and W. Yuan,
Pointwise characterization of Besov and Triebel--Lizorkin spaces on spaces of homogeneous type,
Studia Math.  268 (2023), 121--166.

\vspace{-0.3cm}

\bibitem{ayy22}
R. Alvarado, D. Yang and W. Yuan,
A measure characterization of embedding and extension domains for Sobolev,
Triebel--Lizorkin, and Besov spaces on spaces of homogeneous type,
J. Funct. Anal. 283 (2022), Paper No. 109687, 71 pp.

\vspace{-0.3cm}

\bibitem{ayy24}
R. Alvarado, D. Yang and W. Yuan,
Optimal embeddings for Triebel--Lizorkin and Besov spaces on quasi-metric measure spaces,
Math. Z. 307 (2024), Paper No. 50, 59 pp.

\vspace{-0.3cm}

\bibitem{ah13} P. Auscher and T. Hyt\"{o}nen, Orthonormal bases of regular wavelets in spaces of homogeneous
type, Appl. Comput. Harmon. Anal. 34 (2013), 266--296.

\vspace{-0.3cm}

\bibitem{ah15}
P. Auscher and T. Hyt\"onen,
Addendum to Orthonormal bases of regular wavelets in spaces of
homogeneous type [Appl. Comput. Harmon. Anal. 34(2) (2013) 266--296],
Appl. Comput. Harmon. Anal. 39 (2015), 568--569.

\vspace{-0.3cm}

\bibitem{bdk20} T. A. Bui, X. T. Duong and L. D. Ky, Hardy spaces associated 
to critical functions and applications to $T1$ theorems,
J. Fourier Anal. Appl. 26 (2020),  1--67.

\vspace{-0.3cm}

\bibitem{bdl17} T. A. Bui, X. T. Duong and J. Li, VMO spaces associated with 
operators with Gaussian upper bounds on product domains,
J. Geom. Anal. 27 (2017), 1065--1085.

\vspace{-0.3cm}

\bibitem{bdl18} T. A. Bui, X. T. Duong and F. K. Ly, Maximal function characterizations for new local Hardy-type spaces on spaces of
homogeneous type, Trans. Amer. Math. Soc. 370 (2018), 7229--7292.

\vspace{-0.3cm}

\bibitem{bdl20} T. A. Bui, X. T. Duong and F. K. Ly, Maximal function
characterizations for Hardy spaces on spaces of homogeneous type
with finite measure and applications, J. Funct. Anal. 278 (2020), Paper No. 108423, 55 pp.

\vspace{-0.3cm}

\bibitem{cgr22} J. Cao and A. Grigor'yan, Heat kernels and Besov spaces on metric
measure spaces, J. Anal. Math. 148 (2022), 637--680.

\vspace{-0.3cm}

\bibitem{cgl21} J. Cao, A. Grigor'yan and L. Liu, Hardy's inequality
and Green function on metric measure spaces, J. Funct. Anal. 281 (2021),
Paper No. 109020, 78 pp.

\vspace{-0.3cm}

\bibitem{car74}
L. Carleson, A counterexample for measures bounded on $H^p$ for the bidisc. Mittag-Leffler Report No. 7, 1974.

\vspace{-0.3cm}

\bibitem{CF80}
 S. Y. A. Chang and R. Fefferman, A continuous version of duality of $H^1$ with BMO on the bidisc, Ann. of Math. (2) 112 (1980), 179--201.

\vspace{-0.3cm}

\bibitem{CF82}
 S. Y. A. Chang and R. Fefferman,  The Calder\'on--Zygmund decomposition on product domains, Amer.
J. Math. 104 (1982), no. 3, 455--468.

\vspace{-0.3cm}

\bibitem{cyz10} D.-C. Chang, D. Yang and Y. Zhou, Boundedness of sublinear
operators on product Hardy spaces and its application,
J. Math. Soc. Japan 62 (2010), 321--353.

\vspace{-0.3cm}

\bibitem{clow24} D.-C. Chang, J. Li, A. Ottazzi
and L. Wu, Taylor polynomial and Hardy spaces on the Shilov
boundary of product domains in ${\mathbb C}^{2n}$, Ann. Math.
Sci. Appl. 9 (2024), 643--669.

\vspace{-0.3cm}

\bibitem{cl19} D.-C. Chang and S. Li, On the boundedness of multipliers,
commutators and the second derivatives of Green's operators on
$H^1$ and BMO, Ann. Scuola Norm. Sup. Pisa Cl. Sci. (4) 28 (1999), 341--356.

\vspace{-0.3cm}

\bibitem{cyy24} D.-C. Chang, S. Yang and Z. Yang, A two-weight boundedness
criterion on spaces of homogeneous type with its application to some
elliptic boundary value problems, Math. Methods Appl. Sci. 47 (2024),
12989--13006.

\vspace{-0.3cm}

\bibitem{cg22}
G. Cleanthous and A. G. Georgiadis,
Product $(\alpha_1,\alpha_2)$-modulation spaces,
Sci. China Math.
65 (2022), 1599--1640.

\vspace{-0.3cm}

\bibitem{c74} R. R. Coifman, A real variable characterization of $H^{p}$,
Studia Math., 51 (1974), 269--274.

\vspace{-0.3cm}
\bibitem{cw71} R. R. Coifman and G. Weiss, Analyse Harmonique Non-Commutative sur Certains Espaces Homog\`enes.
(French) \'Etude de Certaines int\'egrales singuli\`eres, Lecture Notes in Mathematics 242. Springer-Verlag,
Berlin--New York, 1971.

\vspace{-0.3cm}

\bibitem{cw77} R. R. Coifman and G. Weiss, Extensions of Hardy spaces
and their use in analysis, Bull. Amer.
Math. Soc. 83 (1977), 569--645.

\vspace{-0.3cm}

\bibitem{dk21} N. A. Dao and S. G. Krantz, Lorentz boundedness
and compactness characterization of integral commutators on
spaces of homogeneous type, Nonlinear Anal. 203 (2021),
Paper No. 112162, 22 pp.

\vspace{-0.3cm}

\bibitem{FS72} C. Fefferman and E. M. Stein, $H^p$ spaces of several variables, Acta Math. 129 (1972),
137--193.

\vspace{-0.3cm}

\bibitem{F87}
R. Fefferman, Harmonic analysis on product spaces, Ann. of Math. (2) 126 (1987), 109--130.

\vspace{-0.3cm}

\bibitem{fmy20} X. Fu, T. Ma and D. Yang, Real-variable characterizations of Musielak--Orlicz
Hardy spaces on spaces of homogeneous type, Ann. Acad. Sci. Fenn. Math. 45 (2020), 343--410.


\vspace{-0.3cm}

\bibitem{gkp21} 		
A. G. Georgiadis, G. Kyriazis and P. Petrushev,
Product Besov and Triebel--Lizorkin spaces with application to nonlinear approximation,
Constr. Approx. 53 (2021), 39--83.

\vspace{-0.3cm}

\bibitem{gkp-2312}
A.G. Georgiadis, G. Kyriazis and P. Petrushev, Spaces of distributions on product metric spaces associated with operators,
Dissertationes Math. 603 (2025), 1--114.

\vspace{-0.3cm}

\bibitem{gly09} L. Grafakos, L. Liu and D. Yang, Vector-valued singular integrals and maximal functions on
spaces of homogeneous type, Math. Scand. 104 (2009), 296--310.

\vspace{-0.3cm}
\bibitem{gs79} R. Gundy and E. M. Stein, $H^p$ theory for the polydisc,  Proc. Nat. Acad. Sci.  76 (1979), 1026--1029.

\vspace{-0.3cm}

\bibitem{hhl16} Ya. Han, Yo. Han and J. Li, Criterion of the boundedness of singular integrals on
spaces of homogeneous type, J. Funct. Anal. 271 (2016), 3423--3464.

\vspace{-0.3cm}

\bibitem{hhl17} Ya. Han, Yo. Han and J. Li, Geometry and Hardy spaces on spaces of homogeneous
type in the sense of Coifman and Weiss, Sci. China Math. 60  (2017),  2199--2218.

\vspace{-0.3cm}

\bibitem{hll10} Y. Han,  M.-Y. Lee, C.-C. Lin and Y.-C. Lin,
Calder\'on--Zygmund operators on product Hardy spaces,
J. Funct. Anal. 258 (2010), 2834--2861.

\vspace{-0.3cm}

\bibitem{hll16-Pisa} Y. Han, J. Li and C.-C. Lin, Criterion of the $L^2$ boundedness and sharp endpoint estimates
for singular integral operators on product spaces of homogeneous type, Ann. Sc. Norm. Super. Pisa Cl. Sci.
(5) 16 (2016), 845--907.


\vspace{-0.3cm}

\bibitem{hll13-Trans}
Y. Han, J. Li and  G. Lu, Multiparameter Hardy space theory on Carnot--Caratheodory
spaces and product spaces of homogeneous type, Trans. Amer. Math. Soc. 365 (2013), 319--360.

\vspace{-0.3cm}

\bibitem{hlpw21} Y. Han, J. Li, M. C. Pereyra and L. A. Ward, Atomic decomposition of product Hardy spaces via
wavelet bases on spaces of homogeneous type, New York J. Math. 27 (2021), 1173--1239.

\vspace{-0.3cm}

\bibitem{hlw18} Y. Han, J. Li and L. A. Ward, Hardy space theory on spaces of homogeneous type via
orthonormal wavelet bases, Appl. Comput. Harmon. Anal. 45 (2018), 120--169.

\vspace{-0.3cm}

\bibitem{HMY06} Y. Han, D. M\"{u}ller and D. Yang, Littlewood--Paley characterizations for Hardy spaces on
spaces of homogeneous type, Math. Nachr. 279 (2006), 1505--1537.

\vspace{-0.3cm}

\bibitem{HMY08} Y. Han, D. M\"{u}ller and D. Yang, A theory of Besov and
Triebel--Lizorkin spaces on metric
measure spaces modeled on Carnot--Carath\'eodory spaces,
Abstr. Appl. Anal. 2008, Art. ID 893409,
1--250.

\vspace{-0.3cm}

\bibitem{hy05} Y. Han and D. Yang, $H^p$ boundedness of
Calder\'on--Zygmund operators on product spaces,
Math. Z. 249 (2005), 869–-881.

\vspace{-0.3cm}

\bibitem{hhllyy19}
Z. He, Y. Han, J. Li, L. Liu, D. Yang and W. Yuan,
A complete real-variable theory of Hardy
spaces on spaces of homogeneous type,
J. Fourier Anal. Appl. 25 (2019), 2197--2267.

\vspace{-0.3cm}

\bibitem{hlyy19}
Z. He, L. Liu, D. Yang and W. Yuan,
New Calder\'{o}n reproducing formulae with exponential
decay on spaces of homogeneous type,
Sci. China Math. 62 (2019), 283--350.

\vspace{-0.3cm}

\bibitem{hwyy21}
Z. He, F. Wang, D. Yang and W. Yuan,
Wavelet characterization of Besov and Triebel--Lizorkin
spaces on spaces of homogeneous type and its applications,
Appl. Comput. Harmon. Anal. 54 (2021), 176--226.

\vspace{-0.3cm}

\bibitem{HYY24}
Z. He, X. Yan and D. Yang,
Calder\'on reproducing formulae on product spaces of homogeneous type and
their applications, Math. Nachr. 298 (2025), 1839--1921.

\vspace{-0.3cm}

\bibitem{hyy21}
Z. He, D. Yang and W. Yuan,
Real-variable characterizations of local Hardy spaces on spaces
of homogeneous type,
Math. Nachr. 294 (2021), 900--955.

\vspace{-0.3cm}

\bibitem{ho07} K.-P. Ho,
Annihilator, completeness and convergence of wavelet system,
Nagoya Math. J.  188 (2007), 59--105.

\vspace{-0.3cm}

\bibitem{ho08}
K.-P. Ho,  Remarks on Littlewood--Paley analysis, Canad. J. Math.
60 (2008), 1283--1305.

\vspace{-0.3cm}

\bibitem{ho17}
K.-P. Ho,  Strong maximal operator and singular integral operators in weighted Morrey spaces on product domains,
Math. Nachr. 290 (2017), 2629--2640.

\vspace{-0.3cm}

\bibitem{hk12} T. Hyt\"{o}nen and A. Kairema, Systems of dyadic cubes in a doubling metric space,
Colloq. Math. 126 (2012), 1--33.

\vspace{-0.3cm}

\bibitem{k19}
N. Karak,
Measure density and embeddings of Haj{\l}asz--Besov and Haj{\l}asz--Triebel--Lizorkin spaces,
J. Math. Anal. Appl. 475 (2019), 966--984.

\vspace{-0.3cm}

\bibitem{k20}
N. Karak,
Lower bound of measure and embeddings of Sobolev, Besov and Triebel--Lizorkin spaces,
Math. Nachr. 293 (2020), 120--128.

\vspace{-0.3cm}

\bibitem{kd23} N. Karak and D. Mondal, Besov and Triebel--Lizorkin
capacity in metric spaces, Math. Slovaca 73 (2023), 937--948.

\vspace{-0.3cm}

\bibitem{kd24} N. Karak and D. Mondal,  Capacity in Besov and Triebel--Lizorkin
spaces with generalized smoothness, Georgian Math. J. 31 (2024), 973--985.

\vspace{-0.3cm}

\bibitem{k24} S. G. Krantz, A new theory of atomic $H^p$ spaces with
applications to smoothness of functions, Expo. Math. 42 (2024),
Paper No. 125532, 18 pp.

\vspace{-0.3cm}

\bibitem{kl01-1} S. G. Krantz and S.-Y. Li, Boundedness and compactness
of integral operators on spaces of homogeneous type and applications. I,
J. Math. Anal. Appl. 258 (2001), 629--641.

\vspace{-0.3cm}

\bibitem{kl01-2} S. G. Krantz and S.-Y. Li, Boundedness and compactness
of integral operators on spaces of homogeneous type and applications. II,
J. Math. Anal. Appl. 258 (2001), 642--657.

\vspace{-0.3cm}

\bibitem{l78} R. H. Latter, A characterization of $H^{p}({\mathbb R}^{n})$ in terms of atoms.
Studia Math., 62 (1978), 93--101.

\vspace{-0.3cm}

\bibitem{lmv20-2}
K. Li, H. Martikainen and E. Vuorinen, Bilinear Calder\'on--Zygmund theory on product spaces,
J. Math. Pures Appl. (9)  138 (2020), 356--412.

\vspace{-0.3cm}

\bibitem{lmv20}
K. Li, H. Martikainen and E. Vuorinen,
Bloom type upper bounds in the product BMO setting, J. Geom. Anal. 30 (2020), 3181--3203.

\vspace{-0.3cm}

\bibitem{lcfy17} L. Liu,  D.-C. Chang, X. Fu and D. Yang,
Endpoint boundedness of commutators on spaces of homogeneous
type, Appl. Anal. 96 (2017), 2408–-2433.

\vspace{-0.3cm}

\bibitem{lcfy18} L. Liu,  D.-C. Chang, X. Fu and D. Yang,
Endpoint estimates of linear commutators on Hardy spaces
over spaces of homogeneous type,
Math. Methods Appl. Sci. 41 (2018), 5951--5984.

\vspace{-0.3cm}

\bibitem{MS79b}
R. A. Mac\'{i}as and C. Segovia, A decomposition into atoms of distributions on spaces of
homogeneous type, Adv. in Math. 33 (1979), 271--309.

\vspace{-0.3cm}

\bibitem{MS79} R. A. Mac\'{i}as and C. Segovia, Lipschitz functions on spaces of homogeneous type,
Adv. in Math. 33 (1979), 257--270.

\vspace{-0.3cm}

\bibitem{NS04} A. Nagel and E. M. Stein, On the product theory of singular integrals, Rev. Mat.
Iberoamericana 20 (2004), 531--561.

\vspace{-0.3cm}

\bibitem{n06}
E. Nakai, The Campanato, Morrey and H\"older spaces on spaces of homogeneous type,
Studia Math. 176 (2006), 1--19.

\vspace{-0.3cm}

\bibitem{ny97} E. Nakai and K. Yabuta, Pointwise multipliers for functions of weighted bounded mean
oscillation on spaces of homogeneous type, Math. Japon. 46 (1997), 15--28.

\vspace{-0.3cm}

\bibitem{SHYY17} Y. Sawano, K.-P. Ho, D. Yang, and S. Yang, Hardy spaces for ball quasi-Banach
function spaces, Dissertationes Math. (Rozprawy Mat.) 525 (2017), 1--102.

\vspace{-0.3cm}

\bibitem{sw60} E. M. Stein and G. Weiss, On the theory of harmonic functions of several variables. I. The
theory of $H^p$-spaces, Acta Math. 103 (1960), 25--62.

\vspace{-0.3cm}

\bibitem{whhy21}
F. Wang, Y. Han,  Z. He and D. Yang, Besov and Triebel--Lizorkin spaces on spaces of homogeneous type
with applications to boundedness of Calder\'{o}n--Zygmund operators, Dissertationes Math.
565 (2021), 1--113.

\vspace{-0.3cm}

\bibitem{yan24} X. Yan, Littlewood-Paley $g_\lambda^*$-function characterizations of
Musielak--Orlicz Hardy spaces on spaces of homogeneous type, Probl. Anal. Issues Anal. 13(31)
(2024), 100--123.

\vspace{-0.3cm}

\bibitem{yhyy22} X. Yan, Z. He, D. Yang and W. Yuan, Hardy spaces associated with ball
quasi-Banach function spaces on spaces of homogeneous type: Littlewood--Paley characterizations
with applications to boundedness of Calder\'on--Zygmund operators, Acta Math. Sin. (Engl. Ser.) 38 (2022),
1133--1184.

\vspace{-0.3cm}

\bibitem{yhyy23} X. Yan, Z. He, D. Yang and W. Yuan, Hardy spaces associated with ball
quasi-Banach function spaces on spaces of homogeneous type:
characterizations of maximal functions, decompositions, and dual spaces, Math. Nachr. 296 (2023),
3056--3116.

\vspace{-0.3cm}
\bibitem{ylk17} D. Yang, Y. Liang and L. D. Ky, Real-Variable Theory of Musielak--Orlicz Hardy Spaces,
Lecture Notes in Mathematics 2182, Springer, Cham, 2017.

\vspace{-0.3cm}

\bibitem{yz08} D. Yang and Y. Zhou, Boundedness of sublinear operators in Hardy spaces on RD-spaces via atoms,
J. Math. Anal. Appl. 339 (2008), 622--635.

\vspace{-0.3cm}

\bibitem{YZ10} D. Yang and Y. Zhou, Radial maximal function characterizations of Hardy spaces on
RD-spaces and their applications, Math. Ann. 346 (2010), 307--333.

\vspace{-0.3cm}

\bibitem{YZ11} D. Yang and Y. Zhou, New properties of Besov and Triebel--Lizorkin spaces on RD-spaces.
Manuscripta Math. 134 (2011), 59--90.

\vspace{-0.3cm}

\bibitem{zcy17} J. Zhang,  D.-C. Chang and D. Yang, Characterizations
of Sobolov spaces associated to operators satisfying off-diagonal
estimates on balls, Math. Methods Appl. Sci. 40 (2017), 2907--2929.

\vspace{-0.3cm}

\bibitem{ZXT24} T. Zheng, Y. Xiao and X. Tao, Weighted estimates for product singular integral
operators in Journ\'es class on RD-spaces,
Forum Math.  37 (2025), 593--627.

\vspace{-0.3cm}

\bibitem{zhy20} X. Zhou, Z. He and D. Yang, Real-variable characterizations
of Hardy--Lorentz spaces on spaces of homogeneous type with applications to real
interpolation and boundedness of Calder\'on--Zygmund operators,
Anal. Geom. Metr. Spaces 8 (2020), 182--260.

\vspace{-0.3cm}

\bibitem{ZSY16} C. Zhuo, Y. Sawano and D. Yang, Hardy spaces with variable exponents on RD-spaces and
applications. Dissertationes Math. (Rozprawy Mat.) 520 (2016), 1--74.

\end{thebibliography}
\end{document}